\theoremstyle{plain}
\newtheorem{thm}{Theorem}[section]
\newtheorem{lem}[thm]{Lemma}
\newtheorem{prop}[thm]{Proposition}
\def\@rst #1 #2other{#1}
\newcommand\MR[1]{\relax\ifhmode\unskip\spacefactor3000 \space\fi
  \MRhref{\expandafter\@rst #1 other}{#1}}
\newcommand{\MRhref}[2]{\href{http://www.ams.org/mathscinet-getitem?mr=#1}{MR#2}}
\theoremstyle{definition}
\newtheorem{defn}[thm]{Definition}
\newtheorem{remark}[thm]{Remark}
\numberwithin{equation}{section}
\newcommand{\dsb}{\begin{adjustwidth}{2.5em}{0pt}
\begin{footnotesize}}
\newcommand{\dse}{\end{footnotesize}
\end{adjustwidth}}
\newcommand{\ssb}{\begin{adjustwidth}{2.5em}{0pt}}
\newcommand{\sse}{\end{adjustwidth}}
\newcommand{\aryb}{\begin{eqnarray*}}
\newcommand{\arye}{\end{eqnarray*}}
\def\alb#1\ale{\begin{align*}#1\end{align*}}
\newcommand{\eqb}{\begin{equation}}
\newcommand{\eqe}{\end{equation}}
\newcommand{\eqbn}{\begin{equation*}}
\newcommand{\eqen}{\end{equation*}}
\newcommand{\BB}{\mathbbm}
\newcommand{\ol}{\overline}
\newcommand{\ul}{\underline}
\newcommand{\op}{\operatorname}
\newcommand{\frk}{\mathfrak}
\newcommand{\eqD}{\overset{d}{=}}
\newcommand{\ep}{\epsilon}
\newcommand{\rta}{\rightarrow}
\newcommand{\wt}{\widetilde}
\newcommand{\wh}{\widehat} 
\newcommand{\mcl}{\mathcal}
\newcommand{\bdy}{\partial}
\let\originalleft\left
\let\originalright\right
\renewcommand{\left}{\mathopen{}\mathclose\bgroup\originalleft}
\renewcommand{\right}{\aftergroup\egroup\originalright}
\title{Scaling limit of the uniform infinite half-plane quadrangulation in the Gromov-Hausdorff-Prokhorov-uniform topology}
\date{  }
\author{
\begin{tabular}{c} Ewain Gwynne\\[-5pt]\small MIT \end{tabular}
\begin{tabular}{c} Jason Miller\\[-5pt]\small Cambridge \end{tabular}
}
\begin{document}

\maketitle

\begin{abstract}
We prove that the uniform infinite half-plane quadrangulation (UIHPQ), with either general or simple boundary, equipped with its graph distance, its natural area measure, and the curve which traces its boundary, converges in the scaling limit to the Brownian half-plane.  The topology of convergence is given by the so-called Gromov-Hausdorff-Prokhorov-uniform (GHPU) metric on curve-decorated metric measure spaces, which is a generalization of the Gromov-Hausdorff metric whereby two such spaces $(X_1, d_1 , \mu_1,\eta_1)$ and $(X_2, d_2 , \mu_2,\eta_2)$ are close if they can be isometrically embedded into a common metric space in such a  way that the spaces $X_1$ and $X_2$ are close in the Hausdorff distance, the measures $\mu_1$ and $\mu_2$ are close in the Prokhorov distance, and  the curves $\eta_1$ and $\eta_2$ are close in the uniform distance.
\end{abstract}


\tableofcontents

\section{Introduction}
\label{sec-intro}

\subsection{Overview}
\label{sec-overview}

There has been substantial interest in recent years in the scaling limits of random planar maps.  Various uniform random planar maps (equipped with the graph distance) have been shown to converge in the Gromov-Hausdorff topology to \emph{Brownian surfaces}, the best known of which is the Brownian map, which is the scaling limit of uniform random quadrangulations of the sphere~\cite{legall-uniqueness,miermont-brownian-map}. These results have been generalized in~\cite{ab-simple,bjm-uniform, abraham-bipartite} to other ensembles of random maps on the sphere and in~\cite{curien-legall-plane} (resp.\ \cite{bet-mier-disk}) to give the convergence of the uniform infinite plane quadrangulation (resp.\ uniformly random quadrangulations with boundary) toward the Brownian plane (resp.\ disk).
 
A planar map is naturally endowed with a measure $\mu$ (e.g., the one which assigns mass to each vertex equal to its degree).  Many interesting random planar maps $M$ are also equipped with a curve $\eta$.  Examples include:
\begin{enumerate}
\item The path which visits the boundary $\bdy M$ in cyclic order of a planar map $M$ with boundary.
\item A simple random walk or self-avoiding walk (SAW) on $M$.
\item The Peano curve associated with a distinguished spanning tree of $M$.
\item The exploration path associated with a percolation configuration on $M$.  
\end{enumerate}
Hence it is natural to consider scaling limits of random planar maps in a topology which describes not only their metric structure but also a distinguished measure and curve. 

This article has two main aims.  First, we will introduce such a topology, which arises from the \emph{Gromov-Hausdorff-Prokhorov-uniform (GHPU) metric} on $4$-tuples $(X,d,\mu,\eta)$ consisting of a metric space $(X,d)$, a measure $\mu$ on $X$, and a curve $\eta$ in $X$. Two such $4$-tuples $(X_1, d_1 , \mu_1,\eta_1)$ and $(X_2, d_2 , \mu_2,\eta_2)$ are close in the GHPU metric if they can be isometrically embedded into a common metric space $(W,D)$ in such a way that $X_1$ and $X_2$ are close in the $D$-Hausdorff distance, $\mu_1$ and $\mu_2$ are close in the $D$-Prokhorov distance, and $\eta_1$ and $\eta_2$ are close in the $D$-uniform distance. 
We will consider a version of the GHPU metric for compact spaces as well as a local version for locally compact spaces. 
See Section~\ref{sec-ghpu-def} for a precise definition. 

The definition of the GHPU metric is inspired by other metrics on types of metric spaces such as the Gromov-Hausdorff metric~\cite{bbi-metric-geometry,gromov-metric-book}, the Gromov-Prokhorov metric~\cite{gpw-metric-measure}, and the Gromov-Hausdorff-Prokhorov metric~\cite{adh-ghp,miermont-tess}. 
 
Second, we will prove scaling limit results for the uniform infinite half-plane quadrangulation (UIHPQ) in the local GHPU topology. The UIHPQ is the Benjamini-Schramm local limit~\cite{benjamini-schramm-topology} of uniform random quadrangulations with boundary as the total number of edges and then the perimeter tends to $\infty$~\cite{curien-miermont-uihpq,caraceni-curien-uihpq}, where the map is viewed from a root which is chosen uniformly at random from the boundary. There are two variants of the UIHPQ.  The first is the UIHPQ with general boundary (which we will refer to as the UIHPQ), which may have boundary vertices with multiplicity greater than 1 in the external face; and the UIHPQ with simple boundary (UIHPQ$_{\op{S}}$), where we require that the boundary is simple (i.e., it is a path with no self-intersections). In this paper, we will prove that both the UIHPQ and the UIHPQ$_{\op{S}}$ (equipped with the measure which assigns mass to each vertex equal to its degree and the curve which traces the boundary) converge in the scaling limit in the local GHPU topology to the Brownian half-plane, which we define in Section~\ref{sec-bhp} below (see also~\cite[Section~5.3]{caraceni-curien-uihpq} for a different definition, which we expect is equivalent). Along the way, we will also improve the Gromov-Hausdorff scaling limit result for finite uniform quadrangulations with boundary toward the Brownian disk in~\cite{bet-mier-disk} to a scaling limit result in the GHPU topology.

One particular reason to be interested in random quadrangulations with simple boundary (such as the UIHPQ$_{\op{S}}$) is that one can glue two such surfaces along their boundary to obtain a uniform random quadrangulation decorated by a SAW.  See~\cite[Section~8.2]{bet-disk-tight}  (which builds on~\cite{bbg-recursive-approach,bg-simple-quad}) for the case of finite quadrangulations with simple boundary and~\cite[Part~III]{caraceni-thesis},~\cite{caraceni-curien-saw} for the case of the UIHPQ$_{\op{S}}$.

In~\cite{gwynne-miller-saw}, we will build upon the present work to prove, among other things, that the random planar map obtained by gluing a pair of independent UIHPQ$_{\op{S}}$'s together along the boundary rays lying to the right of their respective root edges (i.e., the uniform infinite SAW-decorated half-plane) converges in the scaling limit in the GHPU topology, with the SAW playing the role of the distinguished curve, to a pair of independent Brownian half-planes glued together in the same way.  We will also prove analogous scaling limit results for two independent UIHPQ$_{\op{S}}$'s glued along their entire boundary and for a single UIHPQ$_{\op{S}}$ with its positive and negative boundary rays glued together.  The proofs of these results use the scaling limit statement for the UIHPQ$_{\op{S}}$ proven in the present paper.  
See also~\cite{gwynne-miller-perc,gwynne-miller-simple-quad} for additional GHPU scaling limit results.
 
\begin{remark}
\label{rem::bmr-uihpq}
In an independent (and essentially simultaneous) work~\cite{bmr-uihpq}, Baur, Miermont, and Ray proved several scaling limit results for uniform quadrangulations with general boundary which include the statement that the UIHPQ with general boundary converges in the scaling limit to the Brownian half-plane in the Gromov-Hausdorff topology~\cite[Theorem~3.6]{bmr-uihpq}. The work \cite{bmr-uihpq} also includes a number of more general scaling limit statements for uniform random quadrangulations with boundary under different scaling regimes that we do not treat here.  In the present paper we will deduce the scaling limit of the UIHPQ to the Brownian half-plane in a stronger topology than in \cite{bmr-uihpq} and also treat the case of the UIHPQ$_{\op{S}}$.  Our proof is somewhat simpler than that in \cite{bmr-uihpq} since our coupling statement is less general. 
\end{remark}

\begin{figure}[ht!!]
\begin{center}
\includegraphics[page=3,scale=0.7]{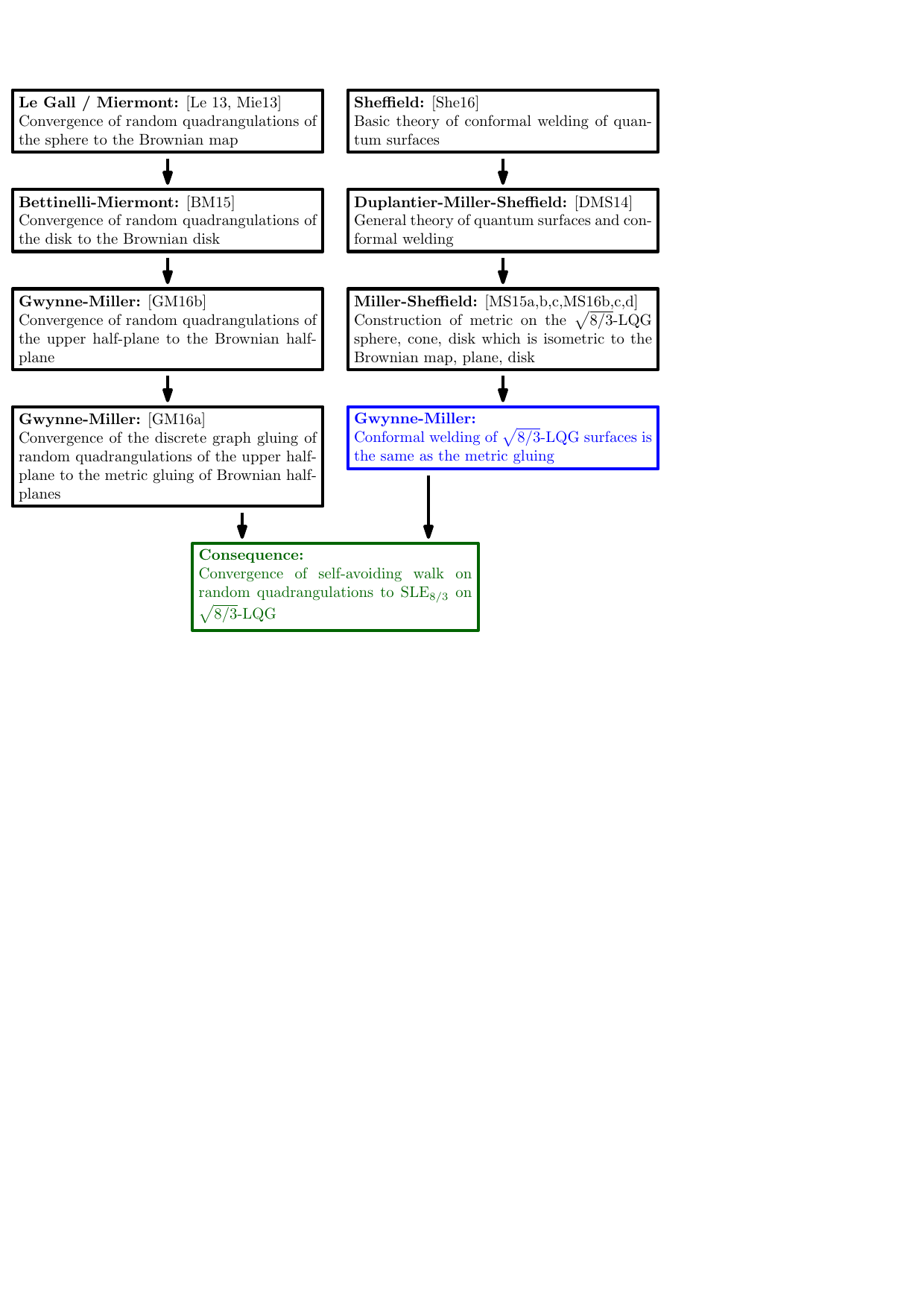}	
\end{center}
\vspace{-0.02\textheight}
\caption{\label{fig-chart} A chart of the different components which serve as input into the proof that self-avoiding walk on random quadrangulations converges to SLE$_{8/3}$ on $\sqrt{8/3}$-LQG.  The present article corresponds to the blue box and implies that a random quadrangulation of the upper half-plane converges in the GHPU topology to the Brownian half-plane.  (See also \cite{bmr-uihpq} for another proof that the UIHPQ converges to the Brownian half-plane, and a more general treatment of scaling limits of quadrangulations with boundary.)}
\end{figure}
  
We will now explain how the aforementioned results about scaling limits of glued UIHPQ$_{\op{S}}$'s allow us to identify the scaling limit of the SAW on a random quadrangulation with SLE$_{8/3}$ on a $\sqrt{8/3}$-Liouville quantum gravity (LQG) surface.  Recently, it has been proven by Miller and Sheffield~\cite{tbm-characterization,sphere-constructions,lqg-tbm1,lqg-tbm2,lqg-tbm3}, building on \cite{qle}, that Brownian surfaces are equivalent to $\sqrt{8/3}$-LQG surfaces. Heuristically speaking, a $\gamma$-LQG surface for $\gamma \in (0,2)$ is the random Riemann surface parameterized by a domain $D\subset \BB C$ whose Riemannian metric tensor is $e^{\gamma h}\,dx\otimes\,dy$, where $dx\otimes\,dy$ is the Euclidean metric tensor on $D$ and $h$ is some variant of the Gaussian free field (GFF) on $D$~\cite{shef-kpz,shef-gff,ss-contour,ig1}. This definition does not make rigorous sense since the GFF is a generalized function, not a function, so does not take values at points.

Miller and Sheffield showed that in the special case when $\gamma = \sqrt{8/3}$, one can make rigorous sense of a $\sqrt{8/3}$-LQG surface as a metric space. Certain particular types of $\sqrt{8/3}$-LQG surfaces introduced in~\cite{wedges}, namely the quantum sphere, quantum disk, and weight-$4/3$ quantum cone, respectively, are isometric to the Brownian map, Brownian disk, and Brownian plane, respectively~\cite[Corollary~1.5]{lqg-tbm2}.  In this paper we will extend this identification by proving that the Brownian half-plane is isometric to the weight-$2$ quantum wedge.

The results of~\cite{gwynne-miller-gluing} together with the identification between the Brownian half-plane and the weight-$2$ wedge proven in the present paper imply that the gluing of two Brownian half-planes along their positive boundary has the same law as a weight-$4$ quantum wedge (a particular type of $\sqrt{8/3}$-LQG surface) decorated by an independent chordal SLE$_{8/3}$ curve~\cite{schramm0}, which is the gluing interface.  Hence the scaling limit result of~\cite{gwynne-miller-saw} discussed above yields the convergence of the SAW on a random quadrangulation to SLE$_{8/3}$ on a $\sqrt{8/3}$-LQG surface.  See Figure~\ref{fig-chart} for a diagram of how the different works fit together to establish this result.

LQG surfaces arise as the scaling limits of random planar maps for all values of $\gamma \in (0,2)$, not just $\gamma = \sqrt{8/3}$.  Values of $\gamma$ other than $\sqrt{8/3}$ correspond to maps sampled with probability proportional to the partition function of some statistical mechanics model, rather than sampled uniformly. For general values of $\gamma$, certain random planar map models decorated by a space-filling curve, which is the Peano curve of a certain spanning tree, have been shown to converge to SLE-decorated LQG in the so-called \emph{peanosphere topology}. This means that the joint law of the contour functions (or some variant thereof) of the spanning tree and its dual, appropriately re-scaled, converges to the law of the correlated Brownian motion which encodes a $\gamma$-LQG cone or sphere decorated by a space-filling SLE$_{16/\gamma^2}$ curve in~\cite{wedges,sphere-constructions}. See~\cite{shef-burger,kmsw-bipolar,gkmw-burger,gms-burger-cone,gms-burger-local,gms-burger-finite,ghs-bipolar} for results of this type.

Neither peanosphere convergence nor GHPU convergence implies the other. However, we expect that the curve-decorated planar maps which converge to SLE$_{16/\gamma^2}$-decorated $\gamma$-LQG in the peanosphere topology also converge in the GHPU topology (this uses the $\gamma$-LQG metric space, which has so far only been constructed for $\gamma=\sqrt{8/3}$), and in fact converge in both topologies jointly. 
In the case of site percolation on a uniform triangulation (which corresponds to $\gamma = \sqrt{8/3}$), this joint GHPU/peanosphere convergence will be proven in the forthcoming work~\cite{ghs-metric-peano}, building on~\cite{gwynne-miller-perc} which shows GHPU convergence of a random planar map decorated by a single percolation interface.
However, it remains open for other models.

\bigskip

\noindent{\bf Acknowledgements}
We thank two anonymous referees for helpful comments on an earlier version of this paper.
E.G.\ was supported by the U.S. Department of Defense via an NDSEG fellowship.  E.G.\ also thanks the hospitality of the Statistical Laboratory at the University of Cambridge, where this work was started.  J.M.\ thanks Institut Henri Poincar\'e for support as a holder of the Poincar\'e chair, during which this work was completed.

\subsection{Preliminary definitions}
\label{sec-notation}

Before stating our main results, we set some standard notation and definitions which will be used throughout this paper. 

\subsubsection{Basic notation}

We write $\BB N$ for the set of positive integers and $\BB N_0 = \BB N\cup \{0\}$. 
\vspace{6pt}

\noindent
For $a < b \in \BB R$, we define the discrete intervals $[a,b]_{\BB Z} := [a, b]\cap \BB Z$ and $(a,b)_{\BB Z} := (a,b)\cap \BB Z$.
\vspace{6pt}

\noindent
If $a$ and $b$ are two quantities, we write $a\preceq b$ (resp.\ $a \succeq b$) if there is a constant $C$ (independent of the parameters of interest) such that $a \leq C b$ (resp.\ $a \geq C b$). We write $a \asymp b$ if $a\preceq b$ and $a \succeq b$.
\vspace{6pt}

\noindent
If $f$ is a function, we write $a = o_b(f(b))$ if $a/f(b) \rta 0$ as $b\rta\infty$ or as $b\rta 0$, depending on context. We write $a = O_b(f(b))$ if there is a constant $C>0$, independent of the parameters of interest, such that $a \leq C f(b)$. 
\vspace{6pt} 

\subsubsection{Graphs}

\noindent
For a planar map $G$, we write $\mcl V(G)$, $\mcl E(G)$, and $\mcl F(G)$, respectively, for the set of vertices, edges, and faces of~$G$.
\vspace{6pt}

\noindent
By a \emph{path} in $G$, we mean a function $ \lambda : I \rta \mcl E(G)$ for some (possibly infinite) discrete interval $I\subset \BB Z$, with the property that the edges $\{\lambda(i)\}_{i\in I}$ can be oriented in such a way that the terminal endpoint of $\lambda(i)$ coincides with the initial endpoint of $\lambda(i+1)$ for each $i\in I$ other than the right endpoint of $I$. We define the \emph{length} of~$\lambda$, denoted $|\lambda|$, to be the integer $\# I$.   
We say that $\lambda$ is \emph{simple} if the vertices hit by $\lambda$ are all distinct.
\vspace{6pt} 

\noindent
For sets $A_1,A_2$ consisting of vertices and/or edges of $G$, we write $\op{dist}\left(A_1 , A_2 ; G\right)$ for the graph distance from~$A_1$ to~$A_2$ in~$G$, i.e.\ the minimum of the lengths of paths in $G$ whose initial edge either has an endpoint which is a vertex in $A_1$ or shares an endpoint with an edge in $A_1$; and whose final edge satisfies the same condition with $A_2$ in place of $A_1$.
\vspace{6pt}

\noindent
For $r>0$, we define the graph metric ball $B_r\left( A_1 ; G\right)$ to be the subgraph of $G$ consisting of all vertices of $G$ whose graph distance from $A_1$ is at most $r$ and all edges of $G$ whose endpoints both lie at graph distance at most $r$ from $A_1$.  
If $A_1 = \{x\}$ is a single vertex or edge, we write $B_r\left( \{x\} ; G\right) =  B_r\left( x ; G\right)$.
\vspace{6pt}

\subsubsection{Metric spaces}

\noindent
If $(X,d)$ is a metric space, $A\subset X$, and $r>0$, we write $B_r(A;d)$ for the set of $x\in X$ with $d (x,A) \leq r$. We emphasize that $B_r(A;d)$ is closed (this will be convenient when we work with the local GHPU topology). 
If $A = \{y\}$ is a singleton, we write $B_r(\{y\};d) = B_r(y;d)$.   
\vspace{6pt}

\noindent
For a curve $\gamma : [a,b] \rta X$, the \emph{$d $-length} of $\gamma$ is defined by 
\eqbn
\op{len}\left( \gamma ; d  \right) := \sup_P \sum_{i=1}^{\# P} d (\gamma(t_i) , \gamma(t_{i-1})) 
\eqen
where the supremum is over all partitions $P : a= t_0 < \dots < t_{\# P} = b$ of $[a,b]$. Note that the $d$-length of a curve may be infinite. 
\vspace{6pt}
 
\noindent
We say that $(X,d)$ is a \emph{length space} if for each $x,y\in X$ and each $\ep > 0$, there exists a curve of $d$-length at most $d(x,y) + \ep$ from $x$ to $y$. 
\vspace{6pt}

\subsection{The Gromov-Hausdorff-Prokhorov-uniform metric}
\label{sec-ghpu-def}

In this paper (and in~\cite{gwynne-miller-saw}) we will consider scaling limits of metric measure spaces endowed with a distinguished continuous curve. A natural choice of topology for this convergence is the one induced by the \emph{Gromov-Hausdorff-Prokhorov-uniform (GHPU) metric}, which we introduce in this subsection and study further in Section~\ref{sec-ghpu-metric}. This topology generalizes the Gromov-Hausdorff topology~\cite{gromov-metric-book,bbi-metric-geometry}, the Gromov-Prokhorov topology~\cite{gpw-metric-measure}, and the Gromov-Hausdorff-Prokhorov topology~\cite{miermont-tess,adh-ghp}. 
 
Let $(X,d)$ be a metric space. The metric $d$ gives rise to the $d$-Hausdorff metric $\BB d_d^{\op{H}}$ on compact subsets of $X$ and the $d$-Prokhorov metric $\BB d_d^{\op{P}}$ on finite measures on $X$ in the standard way. 

The definition of the $d$-uniform metric on curves in~$X$ requires some discussion since we want to allow curves defined on an arbitrary interval in $\BB R$. 
Let $C_0(\BB R , X)$ be the set of continuous curves $\eta : \BB R\rta X$ such that for each $\ep>0$, there exists $T>0$ such that $d(\eta(t) , \eta(T)) \leq \ep$ and $d(\eta(-t) , \eta(-T)) \leq \ep$ whenever $t\geq T$. 
If $\eta: [a,b] \rta X$ is a curve defined on a compact interval, we identify $\eta$ with the element of $C_0(\BB R ,X)$ which agrees with $\eta$ on $[a,b]$ and satisfies $\eta(t) = a$ for $t \leq a$ and $\eta(t) = b$ for $t\geq b$. 
We equip $C_0(\BB R,X)$ with the \emph{$d$-uniform metric}, defined by 
\eqb \label{eqn-uniform-def}
\BB d_d^{\op{U}}(\eta_1,\eta_2) = \sup_{t\in \BB R} d(\eta_1(t) ,\eta_2(t)) ,\quad \forall \eta_1,\eta_2\in C_0(\BB R , X) . 
\eqe  

\begin{remark}[Graphs as connected metric spaces] \label{remark-ghpu-graph}
In this paper we will often be interested in a graph~$G$ equipped with its graph distance~$d_G$.  In order to study continuous curves in~$G$, we need to linearly interpolate~$G$. We do this by identifying each edge of~$G$ with a copy of the unit interval $[0,1]$.  We extend the graph metric on~$G$ by requiring that this identification is an isometry. 

If $\lambda$ is a path in $G$, mapping some discrete interval $[a,b]_{\BB Z}$ to $\mcl E(G)$, we extend $\lambda$ from $[a,b]_{\BB Z}$ to $[a-1,b] $ by linear interpolation, so that for $i\in [a,b]_{\BB Z}$, $\lambda$ traces each edge $\lambda(i)$ at unit speed during the time interval $[i-1,i]$.  In particular, the Gromov-Hausdorff-Prokhorov-uniform metric and its local variant, to be defined below, make sense for graphs equipped with a measure and a curve.
\end{remark}

\subsubsection{Compact case}
\label{sec-ghpu-def-compact}

Let $\BB M^{\op{GHPU}}$ be the set of $4$-tuples $\frk X  = (X , d , \mu , \eta)$ where $(X,d)$ is a compact metric space, $d$ is a metric on $X$, $\mu$ is a finite Borel measure on $X$, and $\eta  \in C_0( \BB R , X)$. We remark that an element of $\BB M^{\op{GHPU}}$ has a natural marked point, namely $\eta(0)$. 
 
Suppose that we are given elements $\frk X_1 = (X_1 , d_1, \mu_1 , \eta_1) $ and $\frk X_2 =  (X_2, d_2,\mu_2,\eta_2) $ of $ \BB M^{\op{GHPU}}$. 
For a compact metric space $(W, D)$ and isometric embeddings $\iota_1 : X_1\rta W$ and $\iota_2 : X_2\rta W$, we define their \emph{GHPU distortion} by
\begin{align} \label{eqn-ghpu-var}
\op{Dis}_{\frk X_1,\frk X_2}^{\op{GHPU}}\left(W,D , \iota_1, \iota_2 \right)   
:=  \BB d^{\op{H}}_D \left(\iota_1(X_1) , \iota_2(X_2) \right) +   
\BB d^{\op{P}}_D \left(( (\iota_1)_*\mu_1 ,(\iota_2)_*\mu_2) \right) + 
\BB d_D^{\op{U}}\left( \iota_1 \circ \eta_1 , \iota_2 \circ\eta_2 \right) .
\end{align}
We define the \emph{Gromov-Hausdorff-Prokhorov-uniform (GHPU) distance} by
\begin{align} \label{eqn-ghpu-def}
 \BB d^{\op{GHPU}}\left( \frk X_1 , \frk X_2 \right) 
 = \inf_{(W, D) , \iota_1,\iota_2}  \op{Dis}_{\frk X_1,\frk X_2}^{\op{GHPU}}\left(W,D , \iota_1, \iota_2 \right)      ,
\end{align}
where the infimum is over all compact metric spaces $(W,D)$ and isometric embeddings $\iota_1 : X_1 \rta W$ and $\iota_2 : X_2\rta W$.

It will be proven in Lemma~\ref{prop-ghpu-triangle} below that $\BB d^{\op{GHPU}}$ defines a pseudometric on $\BB M^{\op{GHPU}}$. It is not quite a metric since two elements $(X_1 , d_1, \mu_1 , \eta_1) , (X_2, d_2,\mu_2,\eta_2)  \in  \BB M^{\op{GHPU}}$ lie at GHPU distance zero if there is a measure-preserving isometry from $X_1$ to $X_2$ which takes $\eta_1$ to $\eta_2$. 
Let $\ol{\BB M}^{\op{GHPU}}$ be the set of equivalence classes of elements of $\BB M^{\op{GHPU}}$ under the equivalence relation whereby $(X_1 , d_1, \mu_1 , \eta_1) \sim (X_2, d_2,\mu_2,\eta_2)$ if and only if there exists such an isometry $f: (X_1,d_1) \rta (X_2,d_2)$ with $f_*\mu_1 = \mu_2$ and $f\circ\eta_1 = \eta_2$. 

The following statement will be proven in Section~\ref{sec-ghpu-properties}. 

\begin{prop} \label{prop-ghpu-metric}
The function $\BB d^{\op{GHPU}}$ is a complete separable pseudometric on $\BB M^{\op{GHPU}}$ and the quotient metric space $\BB M^{\op{GHPU}}/\{ \BB d^{\op{GHPU}} = 0\}$ is $\ol{\BB M}^{\op{GHPU}}$.
\end{prop}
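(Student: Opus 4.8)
The plan is to establish the three asserted properties—that $\BB d^{\op{GHPU}}$ is a pseudometric (already granted by Lemma~\ref{prop-ghpu-triangle}), that it is complete, that it is separable, and that the quotient by the zero set is exactly $\ol{\BB M}^{\op{GHPU}}$—by reducing each to the corresponding known fact for the Gromov-Hausdorff-Prokhorov (GHP) metric, using the fact that the uniform distance on curves is itself a complete separable metric when $X$ is compact. First, for the identification of the quotient, one direction is immediate from the definition of $\sim$: if there is a measure-preserving isometry taking $\eta_1$ to $\eta_2$, embedding both into the common space $X_2$ via $f$ and $\op{id}$ makes all three distortion terms vanish. For the converse, suppose $\BB d^{\op{GHPU}}(\frk X_1,\frk X_2) = 0$; then in particular the GHP distance is zero, so the standard GHP theory (e.g.\ \cite{adh-ghp}) produces a measure-preserving isometry $f : X_1 \to X_2$. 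One must then upgrade this to a single $f$ that \emph{also} intertwines the curves. The clean way is a compactness/diagonal argument: take embeddings $\iota_1^n, \iota_2^n$ into spaces $W_n$ with distortion $\to 0$, pass to the disjoint-union-type correspondence picture, extract a subsequential limiting correspondence $\mathcal R \subset X_1 \times X_2$ which is the graph of an isometry (using compactness of $X_1, X_2$), check it is measure preserving as a limit of $\ep$-measure-preserving maps, and check $f \circ \eta_1 = \eta_2$ because $D(\iota_1^n \circ \eta_1(t), \iota_2^n \circ \eta_2(t)) \to 0$ uniformly in $t$.

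For separability, I would exhibit a countable dense set: finite metric spaces with rational distances, rational-valued atomic measures supported on finitely many points, and curves that are piecewise-linear (in the sense of interpolating through finitely many of those points) with rational breakpoints. Given an arbitrary $\frk X = (X,d,\mu,\eta) \in \BB M^{\op{GHPU}}$, approximate $(X,d,\mu)$ in GHP by such a finite space $\frk X'$ (standard), realize both in a common $W$, then approximate $\eta$ in the uniform metric on $W$ by a curve through finitely many points of the finite space — here one uses uniform continuity of $\eta$ (which holds since $\eta \in C_0(\BB R, X)$ and $X$ compact, so $\eta$ is uniformly continuous and eventually constant on both ends) to control the modulus. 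Some care is needed to choose the finite set of interpolation points to be among the points of the approximating finite metric space, but this is routine bookkeeping once the GHP approximation is in hand.

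For completeness, take a Cauchy sequence $\frk X_n = (X_n, d_n, \mu_n, \eta_n)$ in $\BB d^{\op{GHPU}}$. It is then Cauchy in GHP, so by completeness of the GHP metric there is a limit $(X_\infty, d_\infty, \mu_\infty)$; the standard proof of GHP completeness in fact embeds (a subsequence of) the $X_n$ and $X_\infty$ into a single compact metric space $(W,D)$ with $\iota_n(X_n) \to \iota_\infty(X_\infty)$ in Hausdorff distance and $(\iota_n)_*\mu_n \to (\iota_\infty)_*\mu_\infty$ in Prokhorov. I would run that construction and simultaneously track the curves: the $\iota_n \circ \eta_n$ form a Cauchy sequence in $(C_0(\BB R, W), \BB d_D^{\op{U}})$, which is complete (a uniform limit of continuous curves satisfying the two-ended near-constancy condition again satisfies it), so they converge uniformly to some $\eta_\infty \in C_0(\BB R, W)$; since $\iota_n(X_n) \to \iota_\infty(X_\infty)$ and each $\eta_n$ takes values in $\iota_n(X_n)$, the limit $\eta_\infty$ takes values in $\iota_\infty(X_\infty)$, so it pulls back to a curve in $X_\infty$. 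Then $\frk X_n \to (X_\infty, d_\infty, \mu_\infty, \eta_\infty)$ in $\BB d^{\op{GHPU}}$.

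The main obstacle I expect is \emph{simultaneity of the embeddings}: for each of separability and completeness I must arrange a \emph{single} common space $(W,D)$ that makes all three of the Hausdorff, Prokhorov, and uniform terms small at once, rather than three different spaces. For completeness this means opening up the proof of GHP completeness and verifying it can be run while carrying the curves along (the curves do not obstruct anything, since the uniform metric behaves well under isometric embedding into a larger space), and for the quotient identification it means the subsequential-correspondence extraction must be done once and shown to respect all three structures. None of these steps is conceptually hard given the GHP results and the completeness/separability of $(C_0(\BB R, X), \BB d_d^{\op{U}})$ for compact $X$, but the arguments are somewhat technical, which is presumably why the paper defers the proof to Section~\ref{sec-ghpu-properties}.
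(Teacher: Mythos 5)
Your overall strategy — reduce everything to the GH/GHP machinery plus the uniform metric on curves, with a single common embedding $(W,D)$ and a subsequential extraction of a limiting isometry for the quotient identification — is essentially the paper's. The quotient identification is handled in the paper exactly as you describe (a disjoint-union metric with distortion $\to 0$, a common compact $(W,D)$ via Proposition~\ref{prop-ghpu-embed}, and then Lemma~\ref{prop-isometry-limit} to extract one isometry respecting all three structures at once), and your completeness argument is a valid variant: the paper instead routes completeness through a GHPU compactness criterion (Lemma~\ref{prop-ghpu-compact-cond}, i.e.\ Gromov's criterion plus Prokhorov plus Arz\'ela--Ascoli after a common embedding), but opening up the GHP completeness proof and carrying the curves along, as you propose, works just as well once the embeddings are chained into one $W$ along a rapidly Cauchy subsequence.

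There is, however, one concrete gap, and it is precisely the point the paper singles out as the subtlety of the separability proof: your proposed countable dense family of ``finite metric spaces with rational distances \ldots{} and curves that are piecewise-linear'' is not well-defined. A finite metric space admits no non-constant continuous curves, so there is nothing to ``interpolate through'' — the element $(X,d,\mu,\eta)$ you want to write down does not exist unless you enlarge $X$ to contain the image of the curve. Likewise, ``approximate $\eta$ in the uniform metric by a curve through finitely many points of the finite space'' has no meaning if the curve is required to take values in that finite space. The fix is the device the paper uses: isometrically embed the finite $\ep$-net $A$ (augmented by finitely many sample points of $\eta$) into $(\BB R^N, L^\infty)$ with $N = \#A$ (Fr\'echet--Kuratowski), take the approximating space to be the union of $\iota(A)$ with the finitely many \emph{linear segments} of a piecewise-linear path interpolating the sampled curve points, and only then rationalize all the data. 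Without some such enlargement step your dense family does not consist of elements of $\BB M^{\op{GHPU}}$, so the separability argument as written does not go through.
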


Restricting $\BB d^{\op{GHPU}}$ to elements of $\BB M^{\op{GHPU}}$ for which the measure $\mu$ is identically equal to zero and/or the curve $\eta$ is constant gives a natural metric on the space of compact metric spaces which are not equipped with a measure and/or a curve. In particular, the Gromov-Hausdorff and Gromov-Hausdorff-Prokhorov metrics are special cases of the GHPU metric and we also obtain a metric on curve-decorated compact metric spaces (which should be called the Gromov-uniform metric). 

A particularly useful fact about the GHPU metric, which will be proven in Section~\ref{sec-ghpu-properties} and used in the proof of Proposition~\ref{prop-ghpu-metric}, is that GHPU convergence is equivalent to Hausdorff, Prokhorov, and uniform convergence within a fixed compact metric space, in a sense which we will now make precise. 

\begin{defn}[HPU convergence] \label{def-hpu}
Let $(W,D)$ be a metric space. Let $\frk X = (X,d,\mu,\eta)$ and  $\frk X^n = (X^n , d^n , \mu^n , \eta^n)$ for $n\in\BB N$ be elements of $\BB M^{\op{GHPU}}$ such that $X , X^n\subset W$, $D|_X = d$, and $D|_{X^n} = d^n$. 
We say that $\frk X^n\rta \frk X$ in the \emph{$D$-Hausdorff-Prokhorov-uniform (HPU) sense} if $X^n\rta X$ in the $D$-Hausdorff metric, $\mu^n\rta\mu$ in the $D$-Prokhorov metric, and $\eta^n\rta \eta$ in the $D$-uniform metric. 
\end{defn}
 
\begin{prop} \label{prop-ghpu-embed}
Suppose $\frk X^n = (X^n , d^n , \mu^n , \eta^n)$ for $n\in\BB N$ and $ \frk X = (X, d , \mu ,\eta)$ are elements of $\BB M^{\op{GHPU}}$. Then $\frk X^n \rta \frk X$ in the GHPU metric if and only if there exists a compact metric space $(W,D)$ and isometric embeddings $  X\rta W$ and $   X^n \rta W$ for $n\in\BB N$ such that if we identify $X$ and $X^n$ with their images under these embeddings, then $\frk X^n\rta \frk X$ in the $D$-HPU sense.  
\end{prop}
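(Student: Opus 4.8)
\emph{Proof proposal.} The plan is to prove the two implications separately, the ``if'' direction being immediate and the ``only if'' direction requiring a gluing construction. For the easy direction, suppose $(W,D)$ is a compact metric space with isometric embeddings $\alpha : X\hookrightarrow W$ and $\beta_n : X^n\hookrightarrow W$ such that $\frk X^n\rta\frk X$ in the $D$-HPU sense. Then plugging these embeddings into the infimum defining $\BB d^{\op{GHPU}}$ in~\eqref{eqn-ghpu-def} gives
\[
\BB d^{\op{GHPU}}(\frk X^n,\frk X) \le \BB d^{\op{H}}_D(X^n,X) + \BB d^{\op{P}}_D(\mu^n,\mu) + \BB d^{\op{U}}_D(\eta^n,\eta) \rta 0 ,
\]
so $\frk X^n\rta\frk X$ in the GHPU metric.

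For the converse, suppose $\BB d^{\op{GHPU}}(\frk X^n,\frk X)\rta 0$. For each $n$ I would use~\eqref{eqn-ghpu-def} to choose a compact metric space $(W_n,D_n)$ and isometric embeddings $\alpha_n:X\rta W_n$ and $\beta_n:X^n\rta W_n$ with GHPU distortion at most $\delta_n := \BB d^{\op{GHPU}}(\frk X^n,\frk X)+1/n$, and then glue all the $W_n$ together along the common copy of~$X$. Concretely, on the disjoint union $W^\circ := X\sqcup\bigsqcup_{n}X^n$ define $D(p,q)$ to be the infimum of $\sum_i w(p_{i-1},p_i)$ over finite chains $p=p_0,\dots,p_k=q$ in $W^\circ$, where $w$ equals $d$ on $X\times X$, equals $d^n$ on $X^n\times X^n$, equals $(x,y)\mapsto D_n(\alpha_n(x),\beta_n(y))$ on $X\times X^n$, and equals $+\infty$ between $X^n$ and $X^m$ for $n\ne m$ (so finite-length chains must route through $X$). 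This $D$ is automatically a pseudometric. The key point is a path-reduction argument: collapsing maximal sub-chains lying in a single one of the spaces $X,X^1,X^2,\dots$ via the triangle inequality there, and handling each sub-chain of the form $x$--$y$--$x'$ with $x,x'\in X$, $y\in X^n$ via the triangle inequality in $W_n$ together with the fact that $\alpha_n$ is isometric, one shows that $D$ restricts to $d$ on $X$, to $d^n$ on $X^n$, and to $(x,y)\mapsto D_n(\alpha_n(x),\beta_n(y))$ on $X\times X^n$. Passing to the quotient by $\{D=0\}$ and completing, I obtain a metric space $(W,D)$ together with isometric embeddings of $(X,d)$ and of each $(X^n,d^n)$; moreover $W$ is compact because $X\cup\bigcup_n X^n$ is totally bounded, since $\BB d^{\op{H}}_D(X^n,X)\le\delta_n\rta 0$ reduces covering this set by finitely many balls to covering $X$ together with the finitely many $X^n$ with $\delta_n$ large.

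It then remains to verify that $\frk X^n\rta\frk X$ in the $D$-HPU sense, i.e.\ to transfer the three distortion bounds from $W_n$ to $W$. Here one uses that, by construction, $D$ agrees with $D_n$ on (the images of) $X\cup X^n$, while $\mu,\mu^n$ are supported on $X,X^n$ and $\eta,\eta^n$ take values in $X,X^n$. This gives $\BB d^{\op{H}}_D(X^n,X)\le\delta_n$ directly; $\BB d^{\op{U}}_D(\eta^n,\eta)=\sup_{t\in\BB R} D_n(\alpha_n(\eta(t)),\beta_n(\eta^n(t)))\le\delta_n$; and, once one notes that the $\ep$-neighbourhoods of subsets of $X$ (resp.\ $X^n$) that are relevant to these measures are computed identically in $W$ and in $W_n$, also $\BB d^{\op{P}}_D(\mu^n,\mu)\le\delta_n$. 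One also checks that the embedded curves still belong to $C_0(\BB R,W)$, which is automatic since the embeddings are isometric and the defining tail condition only refers to distances within a single $X$ or $X^n$. All three bounds tend to $0$, which completes the proof.

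The step I expect to be the main obstacle is the gluing: one must check carefully that the ``shortest chain through $X$'' pseudometric $D$ really does restrict to the original metrics on $X$ and on each $X^n$, and does not alter the $X$-to-$X^n$ distances --- if it contracted distances within one piece the corresponding embedding would fail to be isometric, and if it changed the cross distances the transferred Hausdorff/Prokhorov/uniform bounds would be false. This is the same combinatorial path-reduction used for the Gromov-Hausdorff-Prokhorov metric (cf.~\cite{adh-ghp}); adding the distinguished curve introduces nothing essentially new, since curve values are points and the uniform distance between curves is controlled by exactly the pointwise distances that the gluing preserves. The remaining points --- compactness of $W$ and membership of the embedded curves in $C_0(\BB R,W)$ --- are routine bookkeeping.
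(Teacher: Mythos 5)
Your proposal is correct and follows essentially the same route as the paper: glue all the spaces along the common copy of $X$ using near-optimal couplings, check that the glued (pseudo)metric restricts to the original metrics on each piece and preserves the $X$-to-$X^n$ cross-distances, transfer the three distortion bounds, and verify compactness via total boundedness exactly as you do. The only cosmetic difference is that the paper first invokes Lemma~\ref{prop-ghpu-coprod} to replace each ambient $(W_n,D_n)$ by a genuine metric on the disjoint union $X\sqcup X^n$ (adding an $\ep$ so no quotient is needed) and then writes the glued metric via the explicit two-step infimum through $X$, which is precisely what your chain-infimum reduces to after path reduction.
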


Analogs of Proposition~\ref{prop-ghpu-embed} for the Gromov-Hausdorff and Gromov-Prokhorov metrics are proven in~\cite[Lemmas~5.8 and~A.1]{gpw-metric-measure}, respectively. The proof of Proposition~\ref{prop-ghpu-embed} below will be similar to the proofs of these lemmas.

\subsubsection{Non-compact case}
\label{sec-ghpu-def-local}

In this paper we will also have occasion to consider non-compact curve-decorated metric measure spaces (such as the Brownian half-plane).  In this subsection we consider a variant of the GHPU metric in this setting.  We restrict attention to length spaces to avoid technical complications with convergence of metric balls. However, we expect that it is possible to relax this restriction with some modifications to the definition. See~\cite[Section~8.1]{bbi-metric-geometry} for a definition of the local Gromov-Hausdorff topology which does not require the length space condition. 

Let $\BB M_\infty^{\op{GHPU}}$ be the set of $4$-tuples $\frk X = (X,d,\mu,\eta)$ where $(X,d)$ is a locally compact length space, $\mu$ is a measure on $X$ which assigns finite mass to each finite-radius metric ball in~$X$, and $\eta : \BB R\rta X$ is a curve in~$X$. Note that $\BB M^{\op{GHPU}}$ is not contained in $\BB M_\infty^{\op{GHPU}}$ since elements of the former are not required to be length spaces.

Let $\ol{\BB M}_\infty^{\op{GHPU}}$ be the set of equivalence classes of elements of $\BB M_\infty^{\op{GHPU}}$ under the equivalence relation whereby $(X_1 , d_1, \mu_1 , \eta_1) \sim (X_2, d_2,\mu_2,\eta_2)$ if and only if there is an isometry $f : X_1\rta X_2$ such that $f_*\mu_1=\mu_2$ and $f\circ \eta_1=\eta_2$. 

We will define a local version of the GHPU metric on $\ol{\BB M}_\infty^{\op{GHPU}}$ by truncating $\frk X$ at the metric ball $B_r( \eta(0) ;d)$, then integrating the GHPU metric over all metric balls. The truncation is done in the following manner. 
 
\begin{defn} \label{def-ghpu-truncate}
Let $\frk X = (X , d, \mu,\eta)$ be an element of $\BB M_\infty^{\op{GHPU}}$. For $r > 0$, let 
\eqb
\ul\tau_r^\eta := (-r) \vee \sup\left\{t < 0 \,:\, d(\eta(0) ,\eta(t)) = r\right\} \quad \op{and}\quad \ol\tau_r^\eta := r\wedge \inf\left\{t > 0 \,:\, d(\eta(0),\eta(t)) = r\right\} .
\eqe
The \emph{$r$-truncation} of $\eta$ is the curve $\frk B_r\eta \in C_0(\BB R ; X)$ defined by
\eqbn
\frk B_r\eta(t) = 
\begin{cases}
\eta(\ul\tau_r^{\eta}) ,\quad &t\leq \ul\tau_r^\eta  \\
\eta(t) ,\quad &t\in (\ul\tau^\eta , \ol\tau_r^\eta) \\
\eta( \ol\tau_r^{\eta}) ,\quad &t\geq  \ol\tau_r^\eta  .
\end{cases}
\eqen
The \emph{$r$-truncation} of $X$ is the curve-decorated metric measure space
\eqbn
\frk B_r \frk X  = \left( B_r(\eta(0) ;d) , d|_{B_r(\eta(0) ;d)} , \mu|_{B_r(\eta(0) ;d)} , \frk B_r\eta \right) .
\eqen
\end{defn}

If $\frk X = (X,d,\mu,\eta)\in \BB M_\infty^{\op{GHPU}}$, then the Hopf-Rinow theorem~\cite[Theorem~2.5.28]{bbi-metric-geometry} implies that every closed metric ball in $X$ is compact. 
Hence for $\frk X\in \BB M_\infty^{\op{GHPU}}$, we have $\frk B_r\frk X \in  \BB M^{\op{GHPU}}$ for each $r>0$. Furthermore, for $R > r> 0$ we have $\frk B_r\frk B_R \frk X = \frk B_r \frk X$. 

The \emph{local GHPU metric} on $\BB M_\infty^{\op{GHPU}}$ is the function on $\BB M_\infty^{\op{GHPU}} \times  \BB M_\infty^{\op{GHPU}} \rta [0,\infty)$ defined by
\eqb \label{eqn-ghpu-local-def}
\BB d^{\op{GHPU}}_\infty \left( \frk X_1,\frk X_2\right) = \int_0^\infty e^{-r} \left(1 \wedge \BB d^{\op{GHPU}}\left(\frk B_r\frk X_1, \frk B_r\frk X_2 \right)  \right) \, dr
\eqe 
where $\BB d^{\op{GHPU}}$ is as in~\eqref{eqn-ghpu-def}. 
 
We let $\ol{\BB M}_\infty^{\op{GHPU}}$ be the set of equivalence classes of elements of $\BB M_\infty^{\op{GHPU}}$ under the equivalence relation whereby $(X_1 , d_1, \mu_1 , \eta_1) \sim (X_2, d_2,\mu_2,\eta_2)$ if and only if there is an isometry $f : (X_1,d_1) \rta (X_2,d_2)$ such that $f_*\mu_1=\mu_2$ and $f\circ \eta_1=\eta_2$.   
The following is the analog of Proposition~\ref{prop-ghpu-metric} for the local GHPU metric, and will be proven in Section~\ref{sec-ghpu-local}.  

\begin{prop} \label{prop-local-ghpu-metric}
The function $\BB d_\infty^{\op{GHPU}}$ is a complete separable pseudometric on $\BB M_\infty^{\op{GHPU}}$ and the quotient metric space $\BB M_\infty^{\op{GHPU}}/\{ \BB d_\infty^{\op{GHPU}} = 0\}$ is $\ol{\BB M}_\infty^{\op{GHPU}}$.
\end{prop}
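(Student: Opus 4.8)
The plan is to mirror the proof of Proposition~\ref{prop-ghpu-metric}, transferring each of the four assertions from the compact space $(\BB M^{\op{GHPU}},\BB d^{\op{GHPU}})$ to $\BB M_\infty^{\op{GHPU}}$ one metric ball at a time via the truncation operator $\frk B_r$. The foundation is a pair of elementary facts about $\frk B_r$ which I would establish first. (i) For fixed $\frk X = (X,d,\mu,\eta)\in\BB M_\infty^{\op{GHPU}}$ the map $r\mapsto\frk B_r\frk X$ from $(0,\infty)$ into $(\BB M^{\op{GHPU}},\BB d^{\op{GHPU}})$ is continuous at all but countably many $r$: the Hausdorff part $r\mapsto\ol B_r(\eta(0);d)$ is $1$-Lipschitz because $\frk X$ is a length space which, by Hopf--Rinow, is proper and geodesic (so balls are filled out by geodesics from $\eta(0)$); the measure part fails to be continuous only at the countably many $r$ with $\mu(\{d(\eta(0),\cdot)=r\})>0$; and the curve part $r\mapsto\frk B_r\eta$ fails to be continuous only at the discontinuities of the monotone first-hitting-time functions $r\mapsto\ul\tau_r^\eta$ and $r\mapsto\ol\tau_r^\eta$, again countably many. (ii) If $\BB d^{\op{GHPU}}(\frk B_R\frk X_1,\frk B_R\frk X_2)\le\delta$ then for $r<R$ one has $\BB d^{\op{GHPU}}(\frk B_r\frk X_1,\frk B_r\frk X_2)\le C\delta + \mu_1(S_{r,C\delta}) + \mu_2(S_{r,C\delta}) + (\text{a curve term supported on a countable set of exceptional radii})$, where $S_{r,C\delta}$ denotes a thin spherical shell of width $C\delta$ about the radius-$r$ sphere around the basepoint; here the $1$-Lipschitz property in (i) is used to absorb the ``radius slack'' that is inherent in comparing same-radius balls of two nearby spaces once one passes to a common embedding via Proposition~\ref{prop-ghpu-embed}. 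Combining (i), (ii), the bounded convergence theorem, and the truncation $1\wedge(\cdot)$ yields the working characterization: $\BB d_\infty^{\op{GHPU}}(\frk X_k,\frk X)\to0$ if and only if there is a sequence $r_k\to\infty$ with $\BB d^{\op{GHPU}}(\frk B_{r_k}\frk X_k,\frk B_{r_k}\frk X)\to0$; and, for a single pair, $\BB d_\infty^{\op{GHPU}}(\frk X_1,\frk X_2)=0$ if and only if $\BB d^{\op{GHPU}}(\frk B_r\frk X_1,\frk B_r\frk X_2)=0$ for Lebesgue-a.e.\ $r$.

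Given this, the pseudometric properties are routine: symmetry is inherited from $\BB d^{\op{GHPU}}$, finiteness ($\BB d_\infty^{\op{GHPU}}\le1$) is immediate from $\int_0^\infty e^{-r}\,dr=1$, and the triangle inequality follows from Lemma~\ref{prop-ghpu-triangle}, the inequality $1\wedge(a+b)\le(1\wedge a)+(1\wedge b)$, and integration against $e^{-r}\,dr$. To identify the quotient: if there is a measure-preserving isometry $f:\frk X_1\to\frk X_2$ carrying $\eta_1$ to $\eta_2$, then $f(\eta_1(0))=\eta_2(0)$ and $f$ preserves distances to the basepoint, hence the truncation times, so $f$ restricts to such an isometry $\frk B_r\frk X_1\to\frk B_r\frk X_2$ for every $r$ and $\BB d_\infty^{\op{GHPU}}(\frk X_1,\frk X_2)=0$. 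Conversely, $\BB d_\infty^{\op{GHPU}}(\frk X_1,\frk X_2)=0$ forces $\BB d^{\op{GHPU}}(\frk B_r\frk X_1,\frk B_r\frk X_2)=0$ for a.e.\ $r$; choosing $r_n\to\infty$ among such $r$, Proposition~\ref{prop-ghpu-metric} supplies measure-preserving isometries $f_n:\frk B_{r_n}\frk X_1\to\frk B_{r_n}\frk X_2$ matching the truncated curves. Since the balls $\ol B_{r}(\eta_i(0);d_i)$ are compact (Hopf--Rinow) and the $f_n$ are $1$-Lipschitz, an Arzel\`a--Ascoli argument together with diagonalization over an increasing exhaustion of $X_1$ by such balls produces a limiting global isometry $f:X_1\to X_2$; weak continuity of pushforward under uniform convergence gives $f_*\mu_1=\mu_2$, and $\ul\tau_r^{\eta_i}\to-\infty$, $\ol\tau_r^{\eta_i}\to+\infty$ as $r\to\infty$ gives $f\circ\eta_1=\eta_2$ on all of $\BB R$. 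Thus $\{\BB d_\infty^{\op{GHPU}}=0\}$ is exactly the equivalence relation defining $\ol{\BB M}_\infty^{\op{GHPU}}$.

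For completeness, take a $\BB d_\infty^{\op{GHPU}}$-Cauchy sequence $(\frk X_k)$ and pass to a subsequence with $\sum_k\BB d_\infty^{\op{GHPU}}(\frk X_k,\frk X_{k+1})<\infty$; by Tonelli, $\sum_k\BB d^{\op{GHPU}}(\frk B_r\frk X_k,\frk B_r\frk X_{k+1})<\infty$ for a.e.\ $r$, so for a.e.\ $r$ the sequence $(\frk B_r\frk X_k)_k$ is $\BB d^{\op{GHPU}}$-Cauchy and hence (Proposition~\ref{prop-ghpu-metric}) converges to some $\frk Y_r\in\BB M^{\op{GHPU}}$. The substantive step is to assemble the $\frk Y_r$ into a single $\frk Y\in\BB M_\infty^{\op{GHPU}}$ with $\frk B_r\frk Y=\frk Y_r$ for a cofinite set of good $r$; this is the standard construction of a pointed Gromov--Hausdorff limit (see~\cite[Section~8.1]{bbi-metric-geometry}), using Proposition~\ref{prop-ghpu-embed} to place the $\frk B_{r_n}\frk X_k$ (for $r_n$ in a fixed increasing sequence of good radii) into common compact spaces, passing to limits, invoking fact~(i) to check that the limits are compatible under further truncation (the countably many bad radii of each intermediate limit space being avoided by the diagonal choice), and gluing the compatible family by a direct limit. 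Each closed ball of $\frk Y$ is a Hausdorff limit of the compact balls $\ol B_r(\eta_k(0);d_k)$, hence compact, so $\frk Y$ is proper, in particular locally compact and complete; and $\frk Y$ is a length space because outside any compact set it is a Gromov--Hausdorff limit of (truncations of) length spaces. Finally the characterization of the first paragraph, applied to $\frk X_k$ and $\frk Y$, gives $\BB d_\infty^{\op{GHPU}}(\frk X_k,\frk Y)\to0$ along the subsequence, hence for the whole Cauchy sequence.

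For separability, the characterization again reduces matters to the compact case. Given $\frk X\in\BB M_\infty^{\op{GHPU}}$ and $\ep>0$, choose $R$ with $e^{-R}<\ep$, approximate the compact space $\frk B_R\frk X$ to within $\ep$ in $\BB d^{\op{GHPU}}$ by a finite weighted graph $\frk G$ carrying a piecewise-linear curve and a finitely supported rational measure (such objects are dense in $\BB M^{\op{GHPU}}$, by the construction underlying the separability clause of Proposition~\ref{prop-ghpu-metric}), and let $\frk G^+\in\BB M_\infty^{\op{GHPU}}$ be obtained by attaching an infinite ray to $\frk G$ at a point of maximal distance from its basepoint. The family of all such $\frk G^+$ is countable, and $\BB d_\infty^{\op{GHPU}}(\frk X,\frk G^+)$ is small: the ray does not affect the $r$-truncation for $r<R$, and by fact~(ii) (with the $1\wedge$-truncation and the bounded convergence theorem absorbing the shell masses of $\mu^{\frk X}$ and of the atomic-plus-length measure of $\frk G$) the integral over $r<R$ of $e^{-r}\,\bigl(1\wedge\BB d^{\op{GHPU}}(\frk B_r\frk X,\frk B_r\frk G^+)\bigr)$ is $O(\ep)$. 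I expect the main obstacle throughout to be exactly the discontinuity of the fixed-radius truncation $\frk B_r$: because $\frk B_r$ jumps at the (harmless but genuinely present) exceptional radii, every step — the convergence characterization, the extraction of the limiting global isometry, the gluing of the truncated limits in the completeness proof, and the density argument — must be routed through almost-every-radius statements, the Lipschitz property of metric balls in proper length spaces, and the $1\wedge$/bounded-convergence mechanism, rather than through any single clean continuity statement.
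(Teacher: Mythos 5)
Your overall architecture --- reduce everything to the compact case via the truncation operator $\frk B_r$, using a stability statement for truncation (your facts (i)--(ii)) and the resulting characterization of local GHPU convergence in terms of GHPU convergence of truncations at good radii --- is exactly the paper's route: your facts (i) and (ii) are Lemma~\ref{prop-ball-ghpu-conv} and your ``working characterization'' is Lemma~\ref{prop-local-ghpu-subsequence}, and your extraction of a global isometry from the per-radius isometries by Arzel\`a--Ascoli plus diagonalization is what the paper does via Proposition~\ref{prop-ghpu-embed-local} and Lemma~\ref{prop-isometry-limit}. You diverge on the last two assertions. For completeness the paper does not glue the per-radius limits $\frk Y_r$ by hand; it proves a local compactness criterion (Lemma~\ref{prop-ghpu-compact-cond-local}) by outsourcing the assembly of the limit space to the known compactness theorem for the local Gromov--Hausdorff--Prokhorov topology~\cite[Theorem~2.9]{adh-ghp} and then adding the curve by Arzel\`a--Ascoli. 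Your direct-limit construction is viable and self-contained, but the compatibility $\frk B_r\frk Y_R=\frk Y_r$ and the verification that the glued space is a length space are precisely the points you would have to write out carefully (note that your stated justification --- that $\frk Y$ is ``a GH limit of truncations of length spaces'' --- is off, since truncations with the restricted metric are generally \emph{not} length spaces; the correct argument is via approximate midpoints taken in the slightly larger balls of the approximating spaces).

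The one genuine soft spot is separability. Elements of $\BB M_\infty^{\op{GHPU}}$ must be locally compact \emph{length spaces}, and the countable dense family produced in the proof of Lemma~\ref{prop-ghpu-separable} consists of unions of points and segments in $\BB R^N$ carrying the restricted $L^\infty$ metric --- these are not length spaces, so neither is your $\frk G^+$, and attaching a ray does not repair this. You would need the separate (standard but not free) fact that finite metric graphs with their intrinsic path metrics, decorated with rational atomic measures and piecewise-linear curves, are GHPU-dense among compact length spaces. The ray is also unnecessary (compact length spaces already belong to $\BB M_\infty^{\op{GHPU}}$) and is actively harmful when $X$ is bounded, since for radii between $\op{diam}(X)$ and $R$ the truncation of $\frk G^+$ picks up part of the ray while the truncation of $\frk X$ does not. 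The paper sidesteps all of this with a different device: it shows $\BB M^{\op{GHPU}}\cap\BB M_\infty^{\op{GHPU}}$ is dense in $\BB M_\infty^{\op{GHPU}}$ by replacing $\frk B_r\frk X$ with the quotient of $B_r(\eta(0);d)$ that collapses $\bdy B_r(\eta(0);d)$ to a point and re-metrizes by the induced length metric, checking that this agrees with $\frk X$ on $B_{r/3}(\eta(0);d)$, so that $\BB d_\infty^{\op{GHPU}}(\frk X,\frk X_r)\leq e^{-r/3}$; separability then follows from the compact case without any new density-of-graphs input.
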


We next want to state an analog of Proposition~\ref{prop-ghpu-embed} for the local GHPU metric. 
To this end, we make the following definition.
  
\begin{defn}[Local HPU convergence] \label{def-hpu-local}
Let $(W ,D)$ be a metric space. Let $\frk X^n = (X^n , d^n , \mu^n , \eta^n)$ for $n\in\BB N$ and $\frk X = (X,d,\mu,\eta)$ be elements of $\BB M^{\op{GHPU}}_\infty$ such that $X$ and each $X^n$ is a subset of $W$ satisfying $D|_X = d$ and $D |_{X^n} = d^n$. We say that $\frk X^n\rta \frk X$ in the \emph{$D$-local Hausdorff-Prokhorov-uniform (HPU) sense} if the following is true. 
\begin{itemize}
\item For each $r>0$ we have $B_r(\eta^n(0) ; d^n) \rta B_r(\eta(0) ; d)$ in the $D$-Hausdorff metric.
\item For each $r > 0$ such that $\mu\left(\bdy B_r(\eta(0) ;d)\right) = 0$, we have $\mu^n|_{B_r(\eta^n(0) ;d^n)} \rta \mu|_{B_r(\eta(0) ;d)}$ in the $D$-Prokhorov metric. 
\item For each $a,b\in\BB R$ with $a<b$, we have $\eta^n|_{[a,b]}\rta \eta|_{[a,b]}$ in the $D$-uniform metric.  
\end{itemize} 
\end{defn}
 
The following is our analog of Proposition~\ref{prop-ghpu-embed} for the local GHPU metric.  
 
\begin{prop} \label{prop-ghpu-embed-local}
Let $ \frk X^n = (X^n , d^n , \mu^n , \eta^n)$ for $ n\in\BB N $ and $\frk X = (X,d,\mu,\eta)$ be elements of $\BB M_\infty^{\op{GHPU}}$. Then $\frk X^n\rta \frk X$ in the local GHPU topology if and only if there exists a boundedly compact (i.e., closed bounded sets are compact) metric space $(W , D )$ and isometric embeddings $X^n \rta W$ for $n\in\BB N$ and $X\rta W$ such that the following is true. If we identify $X^n$ and $X$ with their embeddings into $W$, then $\frk X^n \rta \frk X$ in the $D$-local HPU sense.
\end{prop}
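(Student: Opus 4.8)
We prove the two implications separately; the implication ``$\Leftarrow$'' (a common space gives local GHPU convergence) is the routine one, and ``$\Rightarrow$'' (local GHPU convergence produces a common space) is the substantive one.

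\medskip
\noindent\emph{The ``$\Leftarrow$'' direction.} Suppose $(W,D)$, the embeddings, and the $D$-local HPU convergence are given; we must show $\BB d_\infty^{\op{GHPU}}(\frk X^n,\frk X)\to 0$. By~\eqref{eqn-ghpu-local-def} and dominated convergence (the integrand is bounded by $e^{-r}$), it is enough to prove $\BB d^{\op{GHPU}}(\frk B_r\frk X^n,\frk B_r\frk X)\to 0$ for Lebesgue-a.e.\ $r>0$. Fix $r>0$ outside the countable set of discontinuity points of the three monotone functions $s\mapsto\mu(B_s(\eta(0);d))$, $s\mapsto\ul\tau_s^\eta$, $s\mapsto\ol\tau_s^\eta$. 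Since $\eta^n(0)\to\eta(0)$ (take $a=-1$, $b=1$ in the last bullet of Definition~\ref{def-hpu-local}), for $n$ large the metric spaces underlying $\frk B_r\frk X^n$ and $\frk B_r\frk X$ all lie in $\ol B_{r+1}(\eta(0);D)$, which is compact because $W$ is boundedly compact. Inside this fixed compact space one checks that $\frk B_r\frk X^n\to\frk B_r\frk X$ in the $D$-HPU sense of Definition~\ref{def-hpu}: the Hausdorff convergence of the balls is part of the hypothesis; the Prokhorov convergence of the restricted measures follows from the second bullet of Definition~\ref{def-hpu-local} together with $\mu(\bdy B_r(\eta(0);d))=0$; and $\frk B_r\eta^n\to\frk B_r\eta$ uniformly because $t\mapsto D(\eta^n(0),\eta^n(t))$ converges to $t\mapsto D(\eta(0),\eta(t))$ uniformly on $[-r,r]$, which forces $\ul\tau_r^{\eta^n}\to\ul\tau_r^\eta$ and $\ol\tau_r^{\eta^n}\to\ol\tau_r^\eta$ (the choice of $r$ rules out the curve touching distance $r$ and returning), after which the uniform convergence of the truncated curves is routine. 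Proposition~\ref{prop-ghpu-embed}, applied with $\ol B_{r+1}(\eta(0);D)$ as the common compact space, then gives $\BB d^{\op{GHPU}}(\frk B_r\frk X^n,\frk B_r\frk X)\to 0$.

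\medskip
\noindent\emph{The ``$\Rightarrow$'' direction, Step 1: scale-by-scale convergence.} Assume $\BB d_\infty^{\op{GHPU}}(\frk X^n,\frk X)\to 0$, and let $\mcl R\subset(0,\infty)$ be the countable set of $r$ at which one of the three monotone functions above (built from $\frk X$) is discontinuous. I claim $\BB d^{\op{GHPU}}(\frk B_r\frk X^n,\frk B_r\frk X)\to 0$ for every $r\notin\mcl R$. The key input is a continuity property of the compact-case truncation: if $\frk Z_m=\frk B_\rho\frk Y_m$ and $\frk Z=\frk B_\rho\frk Y$ for elements $\frk Y_m,\frk Y$ of $\BB M_\infty^{\op{GHPU}}$ and a common $\rho>0$, if $\frk Z_m\to\frk Z$ in $\BB d^{\op{GHPU}}$, and if $0<r<\rho$ with $r$ outside the exceptional set $\mcl R_{\frk Y}$ of $\frk Y$, then $\frk B_r\frk Z_m\to\frk B_r\frk Z$ in $\BB d^{\op{GHPU}}$. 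This is proven by embedding the $\frk Z_m$ and $\frk Z$ in a common compact space via Proposition~\ref{prop-ghpu-embed} and arguing as in the ``$\Leftarrow$'' direction above, the niceness of $r$ being used to control the boundary sphere $\bdy B_r(\eta_{\frk Z}(0);d_{\frk Z})$ and the behavior of the curve at distance $r$, and the length-space property of $\frk Y$ being used to show that the radius-$r$ balls still converge in the Hausdorff metric even when $r$ is attained (a distance-$r$ point of $\frk Z$ is approximated from within $\frk Z$ by moving toward $\eta_{\frk Z}(0)$ along a geodesic, which stays inside $B_\rho$). Granting this, fix $r\notin\mcl R$ and suppose for contradiction that $\BB d^{\op{GHPU}}(\frk B_r\frk X^{n_j},\frk B_r\frk X)\geq\ep$ along some subsequence. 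Since $\BB d_\infty^{\op{GHPU}}(\frk X^{n_j},\frk X)\to 0$, the integrand $1\wedge\BB d^{\op{GHPU}}(\frk B_\rho\frk X^{n_j},\frk B_\rho\frk X)$ in~\eqref{eqn-ghpu-local-def} tends to $0$ in $L^1(e^{-\rho}\,d\rho)$, hence along a further subsequence it tends to $0$ for a.e.\ $\rho$; fix one such $\rho>r$. Since $\frk B_r\frk X^{n}=\frk B_r\frk B_\rho\frk X^{n}$, $\frk B_r\frk X=\frk B_r\frk B_\rho\frk X$, and $\mcl R_{\frk X}=\mcl R$, the continuity property applied with $\frk Y_m=\frk X^{n_{j_l}}$, $\frk Y=\frk X$ yields $\frk B_r\frk X^{n_{j_l}}\to\frk B_r\frk X$, contradicting the lower bound.

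\medskip
\noindent\emph{The ``$\Rightarrow$'' direction, Step 2: construction of $(W,D)$.} Choose radii $r_1<r_2<\cdots\to\infty$ with each $r_k\notin\mcl R$. By Step 1 and Proposition~\ref{prop-ghpu-embed}, for each $k$ there is $N_k\in\BB N$ such that for $n\geq N_k$ the truncations $\frk B_{r_k}\frk X^n$ and $\frk B_{r_k}\frk X$ embed isometrically into a common compact metric space with GHPU distortion less than $1/k$. Set $k(n):=\max\{k\leq n : n\geq N_k\}$, so $k(n)\to\infty$, and let $\phi^n,\psi^n$ be the resulting embeddings of $B_{r_{k(n)}}(\eta^n(0);d^n)$ and $B_{r_{k(n)}}(\eta(0);d)$. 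Now take $W:=X\sqcup\bigsqcup_n X^n$ equipped with the metric $D$ that restricts to $d$ on $X$ and to $d^n$ on $X^n$, and for which the distance from $X^n$ to $X$ (resp.\ from $X^n$ to $X^m$, via $X$) is obtained by infimizing $d^n(\cdot,a)+\delta_n+d(a',\cdot)$ over pairs $(a,a')$ with $a\in B_{r_{k(n)}}(\eta^n(0);d^n)$, $a'\in B_{r_{k(n)}}(\eta(0);d)$ and $\phi^n(a),\psi^n(a')$ within distance $\delta_n$ in the corresponding compact space, where $\delta_n\downarrow 0$ is chosen comparable to $1/k(n)$ and large enough that $D$ is a genuine metric restricting to the given metrics on each piece. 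That $D$ is well defined and that $\frk X^n\to\frk X$ in the $D$-local HPU sense is a direct adaptation of the construction behind Proposition~\ref{prop-ghpu-embed}: for any fixed $r>0$ and all $n$ with $r_{k(n)}>r+3\delta_n$ the scale-$r$ truncations of $\frk X^n$ and $\frk X$ are HPU-close in $W$, and $|D(y,\eta(0))-d^n(y,\eta^n(0))|\leq 3\delta_n$ for $y\in B_{r_{k(n)}}(\eta^n(0);d^n)$. The same estimate shows that for each $\rho>0$ the closed ball $\ol B_\rho(\eta(0);D)$ meets $X^n$, for $n$ large, in a set squeezed between the two $d^n$-balls of radii $\rho\pm3\delta_n$ about $\eta^n(0)$; these converge to $B_\rho(\eta(0);d)$ in the $D$-Hausdorff metric, so $\ol B_\rho(\eta(0);D)$ is a countable union of compact sets accumulating onto a compact set, hence (noting that $W$ is complete, since a Cauchy sequence not eventually inside one piece accumulates onto $X$) is itself compact. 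Thus $W$ is boundedly compact, which completes the proof.

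\medskip
The main obstacle is the continuity property of the truncation operation used in Step 1. It genuinely fails without the exceptional set $\mcl R$---the truncation map is discontinuous at radii carrying positive $\mu$-mass on the sphere $\bdy B_r$, or where the boundary curve touches distance $r$ and returns---so one must identify exactly the right niceness conditions and propagate them through both directions, and the length-space bookkeeping needed to compare balls of equal radius in nearby spaces is where the real care is required. The remaining pieces (the dominated-convergence reduction and the amalgamation construction of $W$) are localized versions of arguments already present in the compact case and in the Gromov--Prokhorov setting.
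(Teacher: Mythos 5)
Your proposal is correct and follows essentially the same route as the paper: reduce local GHPU convergence to GHPU convergence of the truncations $\frk B_r\frk X^n\rta\frk B_r\frk X$ at all but countably many "good" radii (your Step 1 is in substance Lemmas~\ref{prop-ball-ghpu-conv} and~\ref{prop-local-ghpu-subsequence}, which the paper states separately and whose proofs it largely omits), and then amalgamate the spaces into $W=X\sqcup\bigsqcup_n X^n$ by routing cross-distances through near-optimal correspondences between the truncated balls, exactly as in the paper's proof. The only difference is presentational: you inline the truncation-continuity lemma and give more detail on the ``$\Leftarrow$'' direction, which the paper treats as immediate.
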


\subsection{Basic definitions for quadrangulations}
\label{sec-quad-def}

The main results of this paper are scaling limit statements for quadrangulations with boundary in the GHPU topology. In this subsection we introduce notation to describe these objects. 
\vspace{6pt}

\noindent
A \emph{quadrangulation with boundary} is a (finite or infinite) planar map $Q$ with a distinguished face $f_\infty$, called the \emph{exterior face}, such that every face of $Q$ other than $f_\infty$ has degree 4. The \emph{boundary} of $Q$, denoted by $\bdy Q$, is the smallest subgraph of $Q$ which contains every edge of $Q$ incident to $f_\infty$. The \emph{perimeter} $\op{Perim}(Q)$ of $Q$ is defined to be the degree of the exterior face, with edges counted with multiplicity (i.e., the number of half-edges on the boundary). 
\vspace{6pt}

\noindent
A \emph{boundary path} of $Q$ is a path $\lambda$ from $[1,\op{Perim}(Q)]_{\BB Z}$ (if $\bdy Q$ is finite) or $\BB Z$ (if $\bdy Q$ is infinite) to $\mcl E(\bdy Q)$ which traces the edge of $\bdy Q$  (counted with multiplicity) in cyclic order around the exterior face. Choosing a boundary path is equivalent to choosing an oriented root (half-)edge on the boundary. This root edge is $\lambda(\op{Perim}(Q))$, oriented toward $\lambda(1)$ in the finite case; or $\lambda(0)$, oriented toward $\lambda(1)$, in the infinite case (here we note that a quadrangulation cannot have any self-loops).  
\vspace{6pt}

\noindent
We say that $\bdy Q$ is \emph{simple} if some (equivalently every) boundary path for $Q$ hits each vertex exactly once.
\vspace{6pt}

\noindent
For $n,l\in \BB N_0$, we write $\mcl Q(n,l)$ for the set of quadrangulations with general boundary having $n$ interior faces and $2l$ boundary edges (counted with multiplicity).  

We write $\mcl Q^\bullet(n,l)$ for the set of triples $(Q, \BB e_0 , \BB v_*)$ where $Q\in \mcl Q(n,l)$, $\BB e_0  $ is a distinguished oriented half-edge of $\bdy Q$ (meaning that if $\BB e_0$ has multiplicity 2, we need to specify a ``side" of $\BB e_0$), and $\BB v_* \in \mcl V(Q)$ is a distinguished vertex. 
\vspace{6pt}

\noindent
The \emph{uniform infinite half-plane quadrangulation (UIHPQ)} is the infinite boundary-rooted quadrangulation $(Q_\infty , \BB e_\infty)$ which is the limit in law with respect to the Benjamini-Schramm topology~\cite{benjamini-schramm-topology} of a uniform sample from $\mcl Q(n,l)$ (rooted at a uniformly random boundary edge) if we first send $n \rta \infty$ and then $l\rta\infty$~\cite{curien-miermont-uihpq,caraceni-curien-uihpq}. 
\vspace{6pt}

\noindent
The \emph{uniform infinite planar quadrangulation with simple boundary} (UIHPQ$_{\op{S}}$) is the infinite boundary-rooted quadrangulation $(Q_{\op{S}} , \BB e_{\op{S}})$ with simple boundary which is the limit in law with respect to the Benjamini-Schramm topology~\cite{benjamini-schramm-topology} of a uniformly random quadrangulation with simple boundary (rooted at a uniformly random boundary edge) with $n$ interior faces and $2l$ boundary edges if we first send $n \rta \infty$ and then $l\rta\infty$. 
The UIHPQ$_{\op{S}}$ can be recovered from the UIHPQ by ``pruning" the dangling quadrangulations which are disconnected from $\infty$ by a single vertex~\cite{curien-miermont-uihpq,caraceni-curien-uihpq}; see Section~\ref{sec-pruning} for a review of this procedure.

\subsection{The Brownian half-plane}
\label{sec-bhp}

The limiting object in the main scaling limit results of this paper is the Brownian half-plane, which we define in this section. 
The construction given here is of the ``unconstrained" type (corresponding to the version of the Schaeffer bijection in which labels are not required to be positive). There is also a constrained construction of the Brownian half-plane in~\cite[Section~5.3]{caraceni-curien-uihpq}. We expect (but do not prove) that this construction is equivalent to the one we give here. Our construction is a continuum analog of the Schaeffer-type construction of the UIHPQ$_{\op{S}}$ found in~\cite{curien-miermont-uihpq} (c.f.~\cite{caraceni-curien-uihpq}), which we review in Section~\ref{sec-uihpq}. 

Let $X_\infty : \BB R\rta[0,\infty)$ be the process such that $\{X_\infty(t) \}_{t\geq 0}$ is a standard linear Brownian motion and $\{X_\infty(-t)\}_{t\geq 0}$ is an independent Brownian motion conditioned to stay positive (i.e., a 3-dimensional Bessel process). 
For $r\in \BB R$, let
\eqbn
T_\infty(r) := \inf\left\{ t \in \BB R \,:\, X_\infty(t) = -r\right\} ,
\eqen
so that $r\mapsto T_\infty(r)$ is non-decreasing and for each $r\in \BB R$,   
\eqb \label{eqn-bhp-reroot}
\{ X_\infty(T_\infty(r) + t)  + r \}_{t\in\BB R} \eqD \{ X_\infty(t) \}_{t\in\BB R}  .
\eqe 
Also let $T_\infty^{-1} : \BB R\rta \BB R$ be the right-continuous inverse of $T$, so that
\eqb \label{eqn-bhp-inverse}
T_\infty^{-1}(t) = -\inf\left\{ X_\infty(s) : s \leq t   \right\} .
\eqe 
 
For $s,t\in \BB R$, let
\eqb \label{eqn-X_infty-dist}
d_{X_\infty}(s,t) := X_\infty(s) + X_\infty(t)  - 2\inf_{u \in [s\wedge t , s\vee t]} X_\infty(u) .
\eqe 
Then $d_{X_\infty}$ defines a pseudometric on $\BB R$ and the quotient metric space $\BB R / \{d_{X_\infty} = 0\}$ is a forest of continuum random trees, indexed by the excursions of $X_\infty$ away from its running infimum. 
 
Conditioned on $X_\infty$, let $ Z_\infty^0$ be the centered Gaussian process with
\eqb \label{eqn-Z^0-def-infty}
\op{Cov}(Z_\infty^0(s) , Z_\infty^0(t) ) = \inf_{u\in [s\wedge t , s\vee t]} \left( X_\infty(u) - \inf_{v \leq u } X_\infty(v) \right) , \quad s,t\in \BB R.
\eqe 
By the Kolmogorov continuity criterion, $Z_\infty^0$ a.s.\ admits a continuous modification which is locally $\alpha$-H\"older continuous for each $\alpha <1/4$. For this modification we have $Z_\infty^0(s) = Z_\infty^0(t)$ whenever $d_{X_\infty}(s,t) = 0$, so $Z_\infty^0$ defines a function on the continuum random forest $\BB R / \{d_{X_\infty} = 0\}$. 

Let $\frk b_\infty : \BB R\rta \BB R$ be $\sqrt 3$ times a two-sided standard linear Brownian motion. For $t\in \BB R$, define
\eqbn
Z_\infty(t) := Z_\infty^0(t)  +  \frk b_\infty(T_\infty^{-1}(t))  ,
\eqen
with $T_\infty^{-1}$ as in~\eqref{eqn-bhp-inverse}. 

For $s,t\in \BB R$, define
\eqb \label{eqn-d_Z-infty}
d_{Z_\infty} \left(s,t \right) = Z_\infty(s) + Z_\infty(t) - 2\inf_{u\in [s\wedge t , s\vee t]} Z_\infty(u) .
\eqe 
Also define the pseudometric
\eqb \label{eqn-dist0-def-infty}
d_\infty^0(s,t) = \inf \sum_{i=1}^k d_{Z_\infty}(s_i , t_i)
\eqe 
where the infimum is over all $k\in\BB N$ and all $2k+2$-tuples $( t_0, s_1 , t_1 , \dots , s_{k } , t_{k } , s_{k+1}) \in \BB R^{2k+2}$ with $t_0 = s$, $s_{k+1} = t$, and $d_{X_\infty}(t_{i-1} , s_i) = 0$ for each $i\in [1,k+1]_{\BB Z}$. In other words, $d_\infty^0$ is the largest pseudometric on $\BB R$ which is at most $d_{Z_\infty}$ and is zero whenever $d_{X_\infty}$ is $0$. 

The \emph{Brownian half-plane} is the quotient space $H_\infty = \BB R/\{d_\infty^0 = 0\}$ equipped with the quotient metric, which we call $d_\infty$.  
We write $\BB p_\infty : \BB R \rta H_\infty$ for the quotient map. 
It follows from~\cite[Theorem 2]{bet-disk-tight} (which says that the Brownian disk has the topology of the closed disk) and Proposition~\ref{prop-bhp-coupling} below that $H_\infty$ has the topology of the closed half-plane. 

The \emph{boundary} of $H_\infty$ is the set $\bdy H_\infty = \BB p \left(\{T_\infty(r) \,:\, r \in \BB R\} \right)$. 
It follows from the proof of Proposition~\ref{prop-bhp-coupling} below and the analogous property of the Brownian disk~\cite[Proposition 21]{bet-disk-tight} that $\bdy H_\infty$ is in fact the boundary of $H_\infty$ in the topological sense (i.e., the set of points which do not have a neighborhood which is homeomorphic to the disk). 

The \emph{area measure} of $H_\infty$ is the pushforward of Lebesgue measure on $\BB R$ under $\BB p_\infty$, and is denoted by $\mu_\infty$. 
The \emph{boundary measure} of $H_\infty$ is the pushforward of Lebesgue measure on $\BB R$ under the map $r\mapsto \BB p_\infty(T_\infty(r))$.
The \emph{boundary path} of $H_\infty$ is the path $\eta_\infty : \BB R\rta \BB R$ defined by $\eta_\infty(r) = \BB p_\infty(T_\infty(r))$. 
Note that $\eta_\infty$ travels one unit of boundary length in one unit of time. 

Although it is not needed for the statement or proof of our main results, we record for reference the $\sqrt{8/3}$-LQG description of the Brownian half-plane, which will be proven in this paper. 

\begin{prop}
\label{prop-bhp-wedge}
Let $(\BB H ,h, 0,\infty)$ be a $\sqrt{8/3}$-quantum gravity wedge (i.e., a quantum wedge of weight equal to $2$) with LQG parameter $\gamma = \sqrt{8/3}$~\cite{wedges}. Let $\mu_h$ and $\nu_h$, respectively, be the $\sqrt{8/3}$-LQG area and boundary length measures induced by $h$~\cite{shef-kpz}. Also let $\frk d_h$ be the $\sqrt{8/3}$-LQG metric induced by $h$~\cite{lqg-tbm1,lqg-tbm2,lqg-tbm3}.  Let $\eta_h : \BB R\rta \BB R$ be the curve which parameterizes $\BB R$ according to $\sqrt{8/3}$-LQG length and satisfies $\eta_h(0) = 0$.  Then $(\BB H , \frk d_h , \mu_h , \eta_h)$ and $(H_\infty ,d_\infty, \mu_\infty,\eta_\infty)$ (as defined just above) agree as elements of $\BB M_\infty^{\op{GHPU}}$, i.e.\ there exists an isometry $f : (\BB H ,\frk d_h) \rta (H_\infty ,d_\infty)$ satisfying $f_*\mu_h = \mu_\infty$ and $f\circ \eta_h = \eta_\infty$.
\end{prop}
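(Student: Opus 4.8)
The plan is to prove Proposition~\ref{prop-bhp-wedge} by matching the two objects through their common discrete approximation, rather than by a direct continuum computation. The point is that the UIHPQ$_{\op{S}}$ has two scaling limits which have already been (or will be) identified in this paper and in the Miller--Sheffield program: on one hand, the main theorem of the present paper states that the rescaled UIHPQ$_{\op{S}}$ (with its graph metric, degree measure, and boundary path) converges in the local GHPU topology to $(H_\infty, d_\infty, \mu_\infty, \eta_\infty)$; on the other hand, by \cite[Corollary~1.5]{lqg-tbm2} together with the identification of the scaling limit of the UIHPQ$_{\op{S}}$ (via the mating-of-trees encoding) with the weight-$2$ quantum wedge, the same rescaled sequence converges to $(\BB H, \frk d_h, \mu_h, \eta_h)$. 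Since limits in the local GHPU metric are unique as elements of $\ol{\BB M}_\infty^{\op{GHPU}}$ (Proposition~\ref{prop-local-ghpu-metric}), the two limiting $4$-tuples must agree, which is exactly the claim.

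Concretely, I would carry out the following steps. First, recall from Miller--Sheffield~\cite{wedges} (and the review in~\cite{lqg-tbm2}) that the weight-$2$ quantum wedge $(\BB H, h, 0, \infty)$ is encoded, via the continuum mating-of-trees theorem, by a pair of independent processes playing the roles of the ``contour function'' $X_\infty$ of the (dual) spanning tree and the ``label process'' $Z_\infty$; the key structural input is that the correlated/independent Brownian motion that encodes the weight-$2$ wedge decorated by its natural space-filling exploration has exactly the law built into the definition of $H_\infty$ in Section~\ref{sec-bhp}. That is, I would check that the Schaeffer-type construction of $H_\infty$ given above (the process $X_\infty$ with a Brownian motion on the positive side and a Bessel$(3)$ on the negative side, the conditionally Gaussian $Z_\infty^0$, the boundary-label Brownian motion $\frk b_\infty$, and the resulting metric quotient) is precisely the continuum Schaeffer bijection applied to the Brownian-motion pair that, by the mating-of-trees theorem for wedges, encodes $(\BB H, h)$. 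Second, I would invoke the metric identification of Miller--Sheffield: the $\sqrt{8/3}$-LQG metric $\frk d_h$ on the weight-$2$ wedge coincides with the metric produced from this Schaeffer-type construction, the LQG area measure $\mu_h$ coincides with the pushforward of Lebesgue measure, and the LQG boundary length measure $\nu_h$ coincides with the boundary-length measure of $H_\infty$; the curve $\eta_h$ that parameterizes $\bdy \BB H$ by quantum length, normalized by $\eta_h(0)=0$, then matches $\eta_\infty$ by construction. Assembling these gives a measure- and curve-preserving isometry $f : (\BB H, \frk d_h) \to (H_\infty, d_\infty)$.

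Alternatively — and this is the version I would actually write if the mating-of-trees encoding of the weight-$2$ wedge is not available in a directly citable form — I would go through the discrete approximation explicitly. The UIHPQ$_{\op{S}}$ is encoded via the discrete Schaeffer-type bijection (reviewed in Section~\ref{sec-uihpq}) by a random walk pair whose scaling limit is $(X_\infty, Z_\infty)$; the scaling limit theorem of this paper shows the rescaled UIHPQ$_{\op{S}}$ converges in local GHPU to $(H_\infty, d_\infty, \mu_\infty, \eta_\infty)$. Separately, \cite{lqg-tbm2} (via \cite[Corollary~1.5]{lqg-tbm2} and its proof, which identifies scaling limits of quadrangulations with LQG surfaces) identifies the scaling limit of the UIHPQ$_{\op{S}}$ — in the appropriate topology including the metric, measure, and boundary curve — with $(\BB H, \frk d_h, \mu_h, \eta_h)$. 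The final step is then purely soft: two local-GHPU-convergent sequences with the same terms have the same limit in $\ol{\BB M}_\infty^{\op{GHPU}}$, so the two $4$-tuples are equal.

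The main obstacle is the second step: cleanly extracting from the Miller--Sheffield papers a statement that the weight-$2$ quantum wedge, \emph{with its LQG metric, area measure, and quantum-length boundary parameterization jointly}, is the GHPU scaling limit of the UIHPQ$_{\op{S}}$, as opposed to a statement only about the metric (or only about finite quadrangulations with boundary converging to the Brownian disk). One must check that the quantum boundary length measure is compatible with the boundary-path normalization used here, that the non-compact/local nature of the wedge is handled (the Hopf--Rinow argument from Definition~\ref{def-ghpu-truncate} applies since both spaces are boundedly compact length spaces), and that the rooting/marked-point conventions (here $\eta(0)$, there $0\in\bdy\BB H$) match up. None of this should require new ideas beyond bookkeeping, but it is where the real content of the identification sits, and it is why the proposition is stated as a consequence rather than proved from scratch. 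I would therefore expect the written proof to be a short argument citing the scaling-limit theorem of this paper, the mating-of-trees description of the weight-$2$ wedge, and \cite[Corollary~1.5]{lqg-tbm2}, followed by the uniqueness-of-limits remark — roughly a page, with the bulk devoted to reconciling conventions.
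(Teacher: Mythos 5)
Your overall strategy --- exhibit both $4$-tuples as limits of a common approximating sequence and invoke uniqueness of local GHPU limits --- is structurally sound, but the approximating sequence you chose creates a genuine gap, and it is a different sequence from the one the paper uses. Your second limb requires that the rescaled UIHPQ$_{\op{S}}$ converges in the local GHPU topology to $(\BB H , \frk d_h , \mu_h , \eta_h)$, and there is no citable result of this form: \cite[Corollary~1.5]{lqg-tbm2} identifies the Brownian map, disk, and plane with the quantum sphere, disk, and cone, but says nothing about the UIHPQ$_{\op{S}}$ or the weight-$2$ wedge. The mating-of-trees encoding you propose to lean on gives only peanosphere convergence, and the introduction of this paper explicitly notes that peanosphere convergence and GHPU convergence do not imply one another. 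Your first ``concrete'' step is also not correct as stated: the pair $(X_\infty , Z_\infty)$ in Section~\ref{sec-bhp} is a Brownian-snake (Schaeffer-type) encoding, not the correlated Brownian motion of the mating-of-trees theorem for the weight-$2$ wedge decorated by space-filling SLE$_6$; identifying these two encodings is not bookkeeping but is essentially the content of the Miller--Sheffield equivalence. So the ``main obstacle'' you flag is not reconciliation of conventions --- it is the theorem itself.

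The paper's actual argument stays entirely in the continuum and routes through the \emph{disk} rather than the UIHPQ$_{\op{S}}$: Proposition~\ref{prop-bhp-coupling} couples the Brownian half-plane with a Brownian disk so that $\alpha$-neighborhoods of the root agree (as elements of $\ol{\BB M}^{\op{GHPU}}$) with probability at least $1-\ep$, i.e.\ the Brownian half-plane is the local limit of Brownian disks near a boundary-typical point; the weight-$2$ quantum wedge is by construction in~\cite{wedges} the analogous local limit of quantum disks near a boundary point; and \cite[Corollary~1.5]{lqg-tbm2} supplies the identification of Brownian disks with quantum disks. Taking local limits on both sides of that known identification yields the proposition. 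The advantage of this route is that the only compact-surface identification it needs is one that is already proven; your route would instead need a half-planar LQG scaling limit statement that does not exist independently of the proposition you are trying to prove. If you want to salvage your approach, you would have to first establish the wedge-as-local-limit-of-disks step anyway, at which point you are doing the paper's proof.
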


Proposition~\ref{prop-bhp-wedge} follows from the results of this paper together with the same argument to prove the analogous $\sqrt{8/3}$-LQG description of the Brownian plane in~\cite[Corollary~1.5]{lqg-tbm2}.  Indeed, Proposition~\ref{prop-bhp-coupling} below tells us that the Brownian half-plane is the local limit of Brownian disks when we zoom in near a boundary point sampled uniformly from the boundary measure.  The $\sqrt{8/3}$-quantum wedge is the local limit of quantum disks when we zoom in near a boundary point~\cite{wedges}.  We already know from~\cite[Corollary~1.5]{lqg-tbm2} that Brownian disks coincide with quantum disks in the sense of Proposition~\ref{prop-bhp-wedge}, so the proposition follows.

We remark that the weight-$2$ quantum wedge mentioned in Proposition~\ref{prop-bhp-wedge} comes with some additional structure, namely an embedding into $\BB H$ with the two marked points respectively sent to $0$ and $\infty$.  It follows from the main result of \cite{lqg-tbm3} that this embedding is a.s.\ determined by the quantum wedge, viewed as a random variable taking values in $\BB M_\infty^{\op{GHPU}}$.  This in particular implies that the Brownian half-plane a.s.\ determines its embedding into $\sqrt{8/3}$-LQG.

\subsection{Theorem statements: scaling limit of the UIHPQ and UIHPQ$_{\op{S}}$}
\label{sec-results}

In this subsection we state scaling limit results for the UIHPQ with general and simple boundary in the local GHPU topology. 

Let $(H_\infty , d_\infty)$ be an instance of the Brownian half-plane, as in Section~\ref{sec-bhp}.  
Let $\mu_\infty$ and $\eta_\infty : \BB R\rta \bdy H_\infty$, respectively, be its area measure and natural boundary path. 
Let
\eqb \label{eqn-bhp-ghpu}
\frk H_\infty := (H_\infty , d_\infty , \mu_\infty , \eta_\infty ) ,
\eqe
so that $\frk H_\infty$ is an element of $\BB M_\infty^{\op{GHPU}}$, defined as in Section~\ref{sec-ghpu-def-local}. 
 
Let $(Q_\infty , \BB e_\infty)$ be a UIHPQ (with general boundary). 
We view $Q_\infty$ as a connected metric space by replacing each edge with an isometric copy of the unit interval, as in Remark~\ref{remark-ghpu-graph}.
For $n\in\BB N$, let $d_\infty^n$ be the graph distance on $Q_\infty $, re-scaled by $(9/8)^{1/4} n^{-1/4}$.  
Let $\mu_\infty^n$ be the measure on $\mcl V(Q_\infty )$ which assigns a mass to each vertex equal to $(4n)^{-1}$ times its degree (in the scaling limit, this is equivalent to assigning each face mass $n^{-1}$). 
Let $\lambda_\infty : \BB R\rta \bdy Q_\infty $ be the boundary path of $Q_\infty$ started from $\BB e_\infty$ and extended by linear interpolation. 
Let $\eta_\infty^n(t) := \lambda_\infty\left(2^{3/2} n^{1/2} t \right)$ for $t\in \BB R$. 
For $n\in\BB N$, let
\eqb \label{eqn-uihpq-ghpu}
\frk Q_\infty^n := \left(Q_\infty , d_\infty^n , \mu_\infty^n, \eta_\infty^n \right)  
\eqe 
so that $\frk Q_\infty^n$ is an element of $\BB M_\infty^{\op{GHPU}}$.

\begin{thm} \label{thm-uihpq-ghpu}
In the setting described just above, we have $\frk Q_\infty^n \rta \frk H_\infty$ in law in the local GHPU topology,
i.e.\ the UIHPQ converges in law in the scaling limit to the Brownian half-plane in the local GHPU topology.
\end{thm}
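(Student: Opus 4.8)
The strategy is to reduce the infinite-volume statement to the finite-volume scaling limit of quadrangulations with boundary toward the Brownian disk, via a pair of coupling arguments — one discrete, one continuum — that realize both the UIHPQ$_{\op{S}}$ and the Brownian half-plane as local limits of the corresponding finite objects. I would prove the theorem first for the UIHPQ$_{\op{S}}$ and then transfer to the general-boundary UIHPQ using the pruning description of Section~\ref{sec-pruning}. Throughout, I would use Proposition~\ref{prop-ghpu-embed-local} to reformulate local GHPU convergence as local HPU convergence inside a fixed boundedly compact space, and the fact that $\BB d_\infty^{\op{GHPU}}$ is the exponentially weighted integral over $r>0$ of $1\wedge\BB d^{\op{GHPU}}(\frk B_r\cdot,\frk B_r\cdot)$, so that it suffices to show, for each fixed $r>0$, that $\frk B_r\frk Q_\infty^n\rta\frk B_r\frk H_\infty$ in law in the compact GHPU topology, and then apply dominated convergence — using that the set of $r$ with $\mu_\infty(\bdy B_r(\eta_\infty(0);d_\infty))>0$ is Lebesgue-null, so $r\mapsto\frk B_r\frk H_\infty$ is a.e.\ continuous.

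\textbf{Step 1: GHPU scaling limit of finite quadrangulations with boundary.} I would first upgrade the Gromov--Hausdorff convergence of uniform quadrangulations with (general or simple) boundary toward the Brownian disk from \cite{bet-mier-disk} to GHPU convergence. The Schaeffer-type bijection underlying \cite{bet-mier-disk} already yields a coupling of the discrete labeled forest with the Brownian-disk encoding in which graph distances converge to the disk metric; so, working inside the common space provided by Proposition~\ref{prop-ghpu-embed}, I only need to add the measure and the boundary curve. The rescaled degree measure on vertices differs negligibly from a rescaled counting measure on the coding times, whose pushforward converges weakly (hence in the Prokhorov metric) to the pushforward of Lebesgue measure, i.e.\ the Brownian-disk area measure; and the discrete boundary path is the image of the boundary-time parametrization, which converges uniformly on compacts to the Brownian-disk boundary path because the boundary-hitting times of the coding walk converge. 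This gives GHPU convergence of uniform quadrangulations with boundary of perimeter $\asymp\sqrt n$ and $n$ interior faces to the Brownian disk of the appropriate area and perimeter.

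\textbf{Steps 2--4: couplings, assembly, pruning.} Fix $r>0$. On the discrete side I would make quantitative the Curien--Miermont local-limit statement \cite{curien-miermont-uihpq,caraceni-curien-uihpq}: for every $\ep>0$ there exist a finite perimeter and interior-face count (which I may let grow with $n$) so that a uniform quadrangulation with simple boundary in the corresponding class can be coupled with the UIHPQ$_{\op{S}}$ to coincide on the graph-metric ball of radius $\asymp r n^{1/4}$ about the root with probability at least $1-\ep$. This follows from the Schaeffer-type encoding together with standard random-walk estimates — the forward part of the coding walk is a random walk and the backward part a walk conditioned to stay positive, and one compares their paths on a time interval of length $\asymp n^{1/2}$ with the finite-volume analogues via absolute continuity, then uses the usual overshoot/confluence estimates to turn agreement of the coding paths into agreement of the maps on a metric ball. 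On the continuum side, Proposition~\ref{prop-bhp-coupling} gives, for each $r>0$, that the $r$-truncation of a Brownian disk converges in GHPU to $\frk B_r\frk H_\infty$ as the disk's area and perimeter tend to infinity; this is proven by the parallel absolute-continuity comparison between the half-plane coding pair $(X_\infty,Z_\infty)$ and the Brownian-disk coding pair on the time interval $[\ul\tau_r^\eta,\ol\tau_r^\eta]$, whose laws are mutually absolutely continuous with Radon--Nikodym derivative tending to $1$. Chaining the discrete coupling, Step 1, and the continuum coupling, and letting the auxiliary parameters and then $\ep$ go to the limit along a diagonal, yields $\frk B_r\frk Q_\infty^n\rta\frk B_r\frk H_\infty$ in law in GHPU for each fixed $r$, hence the theorem for the UIHPQ$_{\op{S}}$. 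Finally, for the general-boundary UIHPQ I would use that it is obtained from the UIHPQ$_{\op{S}}$ by re-attaching independent dangling quadrangulations along the simple boundary (Section~\ref{sec-pruning}); known estimates on the law of the pruned pieces show that, inside any fixed rescaled ball about the root, their total rescaled contribution to the metric, area, and boundary curve is $o_n(1)$ with high probability, so the GHPU limit is unchanged.

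\textbf{Main obstacle.} I expect the crux to be the two coupling statements of Step 2 — on the discrete side, controlling the path of a walk conditioned to stay positive against its finite-volume counterpart and, crucially, converting agreement of the coding paths on a time interval into agreement of the maps on a metric ball, which requires the overshoot and confluence-of-geodesics estimates familiar from the Brownian-disk and Brownian-map literature; and on the continuum side, the absolute-continuity comparison underlying Proposition~\ref{prop-bhp-coupling}. By contrast, the Donsker-type inputs, the soft properties of the GHPU metric from Section~\ref{sec-ghpu-def}, the upgrade in Step 1, and the pruning estimate of Step 4 are comparatively routine given the results already quoted.
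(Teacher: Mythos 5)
Your overall architecture --- GHPU upgrade of the finite-volume Brownian-disk limit, a discrete coupling and a continuum coupling that localize the infinite objects inside finite ones, assembly via truncations and Lemma~\ref{prop-local-ghpu-subsequence}, and a pruning step linking the two boundary types --- matches the paper's. But you have reversed the order of the two boundary types, and this reversal breaks the argument rather than merely rearranging it. The paper proves Theorem~\ref{thm-uihpq-ghpu} (general boundary) \emph{first} and only then deduces Theorem~\ref{thm-uihpqS-ghpu} by pruning. Your plan instead routes the UIHPQ$_{\op{S}}$ through finite uniform quadrangulations with \emph{simple} boundary. That requires (i) a GHPU (indeed even Gromov--Hausdorff) scaling limit of finite uniform quadrangulations with simple boundary toward the Brownian disk, which is not available --- it is explicitly listed as an open problem in Section~\ref{sec-open-problems} of this paper and in \cite[Section~8.1]{bet-disk-tight}; and (ii) a Schaeffer-type encoding of the UIHPQ$_{\op{S}}$ and of finite simple-boundary quadrangulations supporting the absolute-continuity comparison of coding walks, which also does not exist in the form you need: the bijections of Sections~\ref{sec-quad-bdy} and~\ref{sec-uihpq}, the result of \cite{bet-mier-disk}, and hence both coupling propositions (Propositions~\ref{prop-bhp-coupling} and~\ref{prop-map-coupling}) all live in the general-boundary world. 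The correct order is: couple the general-boundary UIHPQ with a uniform element of $\mcl Q^\bullet(n,l)$ and the Brownian half-plane with a Brownian disk, conclude for the UIHPQ, and only afterwards prune to reach the UIHPQ$_{\op{S}}$.

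A second, related inaccuracy is your claim that, after re-attaching the dangling quadrangulations, their ``total rescaled contribution to the metric, area, and boundary curve is $o_n(1)$.'' This is true for the metric (their diameters are $o(n^{1/4})$) and the area (total mass $O(n^{-1/3})$ after normalization), but false for the boundary curve: the boundary path of the UIHPQ must traverse the perimeters of all dangling quadrangulations, and by the law of large numbers (each has expected perimeter-plus-one equal to $3$) this changes the boundary-length parametrization by the constant factor $3$, not by $o_n(1)$. This is precisely why $\eta_\infty^n$ and $\eta_{\op{S}}^n$ carry the different normalizations $2^{3/2} n^{1/2}$ and $\tfrac{2^{3/2}}{3} n^{1/2}$ in the theorem statements; the pruning step must track this deterministic time change (Lemma~\ref{prop-dangling-length}), not dismiss it. Your identification of the crux --- converting agreement of coding paths on a time interval into agreement of metric balls --- is on target; the paper handles it with the cactus-type lower bounds (Lemmas~\ref{prop-cactus} and~\ref{prop-bhp-cactus}) together with re-rooting invariance at an intermediate boundary point, rather than confluence of geodesics, but that part of your plan is sound once the general-boundary ordering is restored.
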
  

Next we state an analog of Theorem~\ref{thm-uihpq-ghpu} for the UIHPQ with simple boundary. 
Let $(Q_{\op{S}} , \BB e_{\op{S}})$ be a UIHPQ$_{\op{S}}$. 
We will define for each $n\in\BB N$ an element of $\BB M_\infty^{\op{GHPU}}$ associated with $(Q_{\op{S}} , \BB e_{\op{S}})$ in the same manner as in the case of the UIHPQ, except that the time scaling for the boundary path is different.

As above we view $Q_{\op{S}}$ as a connected metric space in the manner of Remark~\ref{remark-ghpu-graph}.
For $n\in\BB N$, let $d_{\op{S}}^n$ be the graph distance on $Q_{\op{S}}^n$, re-scaled by $(9/8)^{1/4} n^{-1/4}$ and let  
$\mu_{\op{S}}^n$ be the measure on $\mcl V(Q_{\op{S}} )$ which assigns a mass to each vertex equal to $(4n)^{-1}$ times its degree. 
Let $\lambda_{\op{S}} : \BB R\rta \bdy Q_{\op{S}} $ be the boundary path of $Q_{\op{S}}$, started from $\BB e_{\op{S}}$ and extended by linear interpolation. 
Let $\eta_{\op{S}}^n(t) := \lambda_\infty\left(\frac{2^{3/2}}{3} n^{1/2} t \right)$ for $t\in \BB R$ (note that $\frac{2^{3/2}}{3}$ is replaced by $2^{3/2}$ in the UIHPQ case). 
For $n\in\BB N$, let
\eqb \label{eqn-uihpqS-ghpu}
\frk Q_{\op{S}}^n := \left(Q_{\op{S}} , d_{\op{S}}^n , \mu_{\op{S}}^n, \eta_{\op{S}}^n \right) .
\eqe 

\begin{thm} \label{thm-uihpqS-ghpu}
With $\frk H_\infty$ as in~\eqref{eqn-bhp-ghpu}, we have $\frk Q_{\op{S}}^n \rta \frk H_\infty$ in law in the local GHPU topology,
i.e.\ the UIHPQ$_{\op{S}}$ converges in law in the scaling limit to the Brownian half-plane in the local GHPU topology.
\end{thm}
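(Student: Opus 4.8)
The plan is to reduce Theorem~\ref{thm-uihpqS-ghpu} to Theorem~\ref{thm-uihpq-ghpu} using the pruning correspondence between the UIHPQ and the UIHPQ$_{\op{S}}$ mentioned in Section~\ref{sec-quad-def} (and to be reviewed in Section~\ref{sec-pruning}). Recall that $Q_{\op{S}}$ is obtained from $Q_\infty$ by deleting all the ``dangling'' finite quadrangulations that are cut off from $\infty$ by a single vertex; equivalently, there is a natural map from $\mcl V(Q_{\op{S}})$ into $\mcl V(Q_\infty)$ which is an isometric embedding up to a bounded additive distortion on metric balls, and the complementary pieces are a collection of i.i.d.\ (along the boundary) finite Boltzmann-type quadrangulations with boundary whose sizes have a heavy-tailed but summable-enough law. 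The idea is that these dangling pieces are ``small'' at the scale $n^{-1/4}$, so pruning changes the metric, measure, and boundary path only negligibly in the local GHPU topology, while the factor of $3$ in the boundary time-scaling ($\frac{2^{3/2}}{3}$ versus $2^{3/2}$) is exactly the asymptotic ratio between the number of edges on $\bdy Q_{\op{S}}$ and on $\bdy Q_\infty$ spanned by a given stretch of the pruned boundary (each simple boundary edge corresponds on average to $3$ general boundary edges, accounting for the doubled edges traversed in entering and leaving the dangling pieces).

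The key steps, in order, would be as follows. First, set up the pruning decomposition quantitatively: realize $(Q_\infty,\BB e_\infty)$ and $(Q_{\op{S}},\BB e_{\op{S}})$ on a common probability space so that $Q_{\op{S}} \subset Q_\infty$ (as a metric graph, after the canonical identification of vertices) and the dangling quadrangulations are indexed by the boundary vertices of $Q_{\op{S}}$. Second, prove a \emph{diameter estimate} for the dangling pieces: for fixed $r>0$, with probability tending to $1$ as $n\to\infty$, every dangling quadrangulation attached to $\bdy Q_{\op{S}}$ within graph distance $O(n^{1/4})$ of $\BB e_{\op{S}}$ has graph diameter $o(n^{1/4})$, and moreover the total vertex-degree mass they carry, suitably localized, is $o(n)$. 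This follows from the known tail asymptotics for the perimeter and volume of a Boltzmann quadrangulation with boundary (e.g.\ from~\cite{curien-miermont-uihpq,caraceni-curien-uihpq} and the random-walk description of the boundary process), combined with a union bound over the $O(n^{1/2})$ relevant pieces. Third, deduce from this estimate that the inclusion $Q_{\op{S}} \hookrightarrow Q_\infty$ distorts graph distances within the relevant ball by at most $o(n^{1/4})$, distorts the degree measures by $o(n)$ in total variation on the localized region, and that the boundary paths $\lambda_{\op{S}}$ and $\lambda_\infty$ — reparametrized to remove the excursions of $\lambda_\infty$ into the dangling pieces — stay within $o(n^{1/4})$ of each other uniformly on compact time intervals, with the reparametrization being asymptotically linear with slope $3$ by the law of large numbers for the i.i.d.\ perimeters of the dangling pieces. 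Fourth, translate these three facts into the statement that $\BB d_\infty^{\op{GHPU}}(\frk Q_{\op{S}}^n , \frk Q_\infty^n) \to 0$ in probability, using the embedding characterization of local GHPU convergence (Proposition~\ref{prop-ghpu-embed-local}) with $(W,D)$ taken to be $Q_\infty$ with its rescaled graph metric, so that both $\frk Q_{\op{S}}^n$ and $\frk Q_\infty^n$ are already embedded in a common space and one only needs to check local HPU closeness. Finally, combine this with Theorem~\ref{thm-uihpq-ghpu} and the fact that $\frk Q_\infty^n \to \frk H_\infty$ in law, via Slutsky's theorem on the separable metric space $(\ol{\BB M}_\infty^{\op{GHPU}} , \BB d_\infty^{\op{GHPU}})$ (Proposition~\ref{prop-local-ghpu-metric}), to conclude $\frk Q_{\op{S}}^n \to \frk H_\infty$ in law.

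The main obstacle I expect is the second step — controlling the dangling quadrangulations uniformly over a macroscopic portion of the boundary. One has to rule out the possibility that somewhere within graph distance $\asymp n^{1/4}$ of the root there is an atypically large dangling piece whose diameter is comparable to $n^{1/4}$, which would genuinely alter the metric in the limit; this requires the tail bound for the diameter of a Boltzmann quadrangulation with boundary to beat the number of pieces being considered. Since the perimeters of the dangling pieces along the boundary form (essentially) an i.i.d.\ sequence with a polynomial tail, and conditionally on the perimeter the diameter has a stretched-exponential-type concentration around a power of the perimeter, a careful two-scale union bound should suffice, but getting the exponents to close — and in particular checking that the number of boundary vertices of $Q_{\op{S}}$ within the relevant ball is indeed only $O(n^{1/2})$ rather than something larger — is where the real work lies. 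A secondary, more bookkeeping-type difficulty is making precise the ``reparametrization with asymptotic slope $3$'' claim for the boundary path and verifying it is compatible with the uniform topology on $C_0(\BB R,\cdot)$; this is where the specific constant $\frac{2^{3/2}}{3}$ in the theorem statement gets pinned down, and it must be cross-checked against the boundary-length normalization built into the definition of $\eta_\infty$ in the Brownian half-plane.
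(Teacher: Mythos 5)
Your proposal follows essentially the same route as the paper: couple $Q_{\op{S}}$ with $Q_\infty$ via pruning so that metric balls of $Q_{\op{S}}$ are obtained by intersecting those of $Q_\infty$ with $Q_{\op{S}}$, control the diameters, areas, and perimeters of the dangling quadrangulations attached within a macroscopic ball of the root, identify the slope-$3$ time change for the boundary path via the law of large numbers for the perimeters (using $\BB E[\op{Perim}(q_v)+1]=3$), and then transfer Theorem~\ref{thm-uihpq-ghpu} through the embedding characterization of local GHPU convergence. The one genuine divergence is in the step you correctly single out as the main obstacle, the uniform diameter bound on the dangling pieces: you propose a quantitative two-scale union bound requiring tail estimates for the diameter of a free Boltzmann quadrangulation conditional on its perimeter, whereas the paper avoids any such quantitative input by a soft argument --- it transfers the question to finite uniform quadrangulations via the coupling of Proposition~\ref{prop-map-coupling} and then rules out a macroscopic dangling piece by observing that its persistence in a subsequential Gromov--Hausdorff limit would let a small metric ball disconnect the Brownian disk, contradicting the fact that the Brownian disk is homeomorphic to a closed disk. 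Your union-bound route is plausible (the perimeter tail and the number of relevant pieces, which the paper bounds by $O(n^{1/2})$ exactly as you anticipate, should let the exponents close), but it demands an extra input that the paper's compactness-plus-topology argument does not.
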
  

We will also prove in Section~\ref{sec-disk-ghpu} below a scaling limit result for finite quadrangulations with boundary toward the Brownian disk in the GHPU topology. We do not state this result here, however, as its proof is a straightforward extension of the proof of the analogous convergence statement in the Gromov-Hausdorff topology from~\cite{bet-mier-disk}.

\subsection{Outline}
\label{sec-outline}

The remainder of this article is structured as follows. 
In Section~\ref{sec-ghpu-metric}, we prove the statements about the Gromov-Hausdorff-Prokhorov-uniform metric described in Section~\ref{sec-ghpu-def} plus some additional properties, including compactness criteria and a measure-theoretic condition for GHPU convergence.

In Section~\ref{sec-quad-prelim}, we review some facts about random planar maps in preparation for our proofs of Theorems~\ref{thm-uihpq-ghpu} and~\ref{thm-uihpqS-ghpu}, including the Schaeffer-type constructions of uniform quadrangulations with simple boundary and the UIHPQ, the relationship between the UIHPQ and the UIHPQ$_{\op{S}}$ via the pruning procedure, and the definition of the Brownian disk.

In Section~\ref{sec-uihpq-conv}, we prove Theorems~\ref{thm-uihpq-ghpu} and~\ref{thm-uihpqS-ghpu}. The proof of Theorem~\ref{thm-uihpq-ghpu} is similar in spirit to the proof of the scaling limit result for the Brownian plane in~\cite{curien-legall-plane}. It proceeds by showing that the Brownian half-plane (resp.\ UIHPQ) can be closely approximated by a Brownian disk (resp.\ uniform quadrangulation with boundary) and applying a strengthened version of the scaling limit result for uniform quadrangulations with boundary from~\cite{bet-mier-disk}. Theorem~\ref{thm-uihpqS-ghpu} is deduced from Theorem~\ref{thm-uihpq-ghpu} and the pruning procedure.


\section{Properties of the Gromov-Hausdorff-Prokhorov-uniform metric}
\label{sec-ghpu-metric}

In this section we will establish the important properties of the GHPU and local GHPU metrics, defined in Section~\ref{sec-ghpu-def}, and in particular prove Propositions~\ref{prop-ghpu-metric},~\ref{prop-ghpu-embed},~\ref{prop-local-ghpu-metric}, and~\ref{prop-ghpu-embed-local}.  We start in Section~\ref{sec-isometry-limit} by proving some elementary topological lemmas which give conditions for a sequence of 1-Lipschitz maps or isometries defined on a sequence of metric spaces to have a subsequential limit. These lemmas will be used several times in this section and in~\cite{gwynne-miller-saw}.  In Section~\ref{sec-ghpu-properties}, we establish the basic properties of the GHPU metric on compact curve-decorated metric measure spaces. In Section~\ref{sec-ghpu-local}, we establish the basic properties of the local GHPU metric on non-compact curve-decorated metric measure spaces.  In Section~\ref{sec-ghpu-condition}, we introduce a generalization of Gromov-Prokhorov convergence and use it to give a criterion for GHPU convergence which will be used for the proof of our scaling limit results in \cite{gwynne-miller-saw}.

\subsection{Subsequential limits of isometries}
\label{sec-isometry-limit}

In this subsection we record two elementary topological lemmas which will be useful for our study of the GHPU metric. 
  
\begin{lem} \label{prop-lip-limit}
Let $(W, D  , w )$ be a separable pointed metric space and let $(\wh W , \wh D)$ be any metric space. Let $\{X^n\}_{n\in\BB N}$ and $X$ be closed subsets of $W$ and for $n\in\BB N$, let $f^n : X^n \rta \wh W$ be a 1-Lipschitz map. Suppose that the following are true.
\begin{enumerate}
\item\label{item-lip-limit-conv}  For each $r>0$, $B_r(w ; D)\cap X^n \rta B_r(w;D)\cap X$ in the $D$-Hausdorff metric. 
\item\label{item-lip-limit-image}   For each $r> 0$, there exists a compact set $\wh W_r \subset \wh W$ such that $f^n\left(B_r(w ; D) \cap X^n \right) \subset \wh W_r$ for each $n\in\BB N$.  
\end{enumerate}  
Then there is a sequence $\mcl N$ of positive integers tending to $\infty$ and a 1-Lipschitz map $f : X \rta \wh W$ such that $f^n \rta f$ as $\mcl N \ni n \rta\infty$ in the following sense. For any $x \in X$, any subsequence $\mcl N'$ of $\mcl N$, and any sequence of points $x^n \in X^n$ for $n\in\mcl N'$ with $x^n \rta x $, we have $f^n (x^n) \rta f(x)$ as $\mcl N'\ni n \rta\infty$.
Moreover, if each $f^n$ is an isometry onto its image, then $f$ is also an isometry onto its image. 
\end{lem}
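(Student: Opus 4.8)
The plan is to extract $f$ via a diagonal argument over a countable dense set and then check it has the required properties. First I would fix a countable dense subset $Q = \{q_k\}_{k \in \BB N}$ of $X$. By hypothesis~\eqref{item-lip-limit-conv}, for each $k$ there is a sequence of points $x_k^n \in X^n$ with $x_k^n \rta q_k$; moreover each $q_k$ lies in $B_{r_k}(w;D)$ for some $r_k$, so by hypothesis~\eqref{item-lip-limit-image} the sequence $\{f^n(x_k^n)\}_n$ eventually lies in the compact set $\wh W_{r_k + 1}$, hence has a convergent subsequence. A diagonal argument over $k$ produces a single sequence $\mcl N$ of integers tending to $\infty$ along which $f^n(x_k^n)$ converges for every $k$; call the limit $f(q_k)$. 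Since each $f^n$ is $1$-Lipschitz and $x_k^n \rta q_k$, $x_\ell^n \rta q_\ell$, we get $\wh D(f(q_k), f(q_\ell)) \leq \lim \wh D(f^n(x_k^n), f^n(x_\ell^n)) \leq \lim D(x_k^n, x_\ell^n) = D(q_k, q_\ell)$, so $f$ is $1$-Lipschitz on the dense set $Q$ and therefore extends uniquely to a $1$-Lipschitz map $f : X \rta \wh W$ (the extension lands in $\wh W$ since $\wh W$ is complete on the relevant compact pieces — or one argues directly that Cauchy images stay in some $\wh W_r$).

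Next I would verify the claimed mode of convergence: given $x \in X$, a subsequence $\mcl N'$ of $\mcl N$, and $x^n \in X^n$ with $x^n \rta x$, I must show $f^n(x^n) \rta f(x)$. Pick $q_k \in Q$ close to $x$. Then $\wh D(f^n(x^n), f(x))$ is bounded by $\wh D(f^n(x^n), f^n(x_k^n)) + \wh D(f^n(x_k^n), f(q_k)) + \wh D(f(q_k), f(x))$. The first term is $\leq D(x^n, x_k^n) \leq D(x^n, x) + D(x, q_k)$ by $1$-Lipschitzness, the second $\rta 0$ by construction of $f(q_k)$, and the third is $\leq D(q_k, x)$ by $1$-Lipschitzness of $f$. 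Letting $n \rta \infty$ and then $q_k \rta x$ gives the claim. This also shows $f(q_k)$ is consistent with the extension of $f$, so no conflict of notation.

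Finally, for the isometry statement: suppose each $f^n$ is an isometry onto its image. I need $\wh D(f(x), f(y)) = D(x,y)$ for all $x, y \in X$. Choose $x^n, y^n \in X^n$ with $x^n \rta x$, $y^n \rta y$ (possible by hypothesis~\eqref{item-lip-limit-conv}); then by the convergence just established $f^n(x^n) \rta f(x)$ and $f^n(y^n) \rta f(y)$, so $\wh D(f(x), f(y)) = \lim \wh D(f^n(x^n), f^n(y^n)) = \lim D(x^n, y^n) = D(x,y)$, using that $f^n$ is an isometry. Thus $f$ is distance-preserving, hence injective, hence an isometry onto its image. The main obstacle I anticipate is purely bookkeeping: making sure the diagonalization is set up so that the single sequence $\mcl N$ works simultaneously for all $q_k$ and that one can always find approximating sequences $x_k^n \in X^n$ — both of which follow cleanly from Hausdorff convergence of the truncated balls, but require care to state the quantifiers in the right order. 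There is also a minor point that the target of $f$ should be checked to be $\wh W$ itself (not a completion), which follows because each $f(q_k)$, being a limit of points in the compact set $\wh W_{r_k+1} \subset \wh W$, lies in $\wh W$, and the extension of a $1$-Lipschitz map to the closure stays in $\wh W$ provided $\wh W$ is complete — or, failing completeness of $\wh W$, one notes $f(X) \subset \bigcup_r \wh W_{2r}$ which consists of compact, hence complete, pieces.
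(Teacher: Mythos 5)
Your proposal is correct and follows essentially the same route as the paper: a diagonal argument over a countable dense subset using the Hausdorff convergence of truncated balls and the compactness hypothesis, extension by the 1-Lipschitz property, and a triangle-inequality verification of the convergence mode. The only cosmetic difference is that you derive the isometry property from the already-established convergence statement applied to arbitrary points, whereas the paper observes equality of distances directly on the dense subset; both are fine.
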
 
\begin{proof}
Let $\{x_j\}_{j\in\BB N}$ be a countable dense subset of $X$ (which exists since $X$ is separable). By assumption~\ref{item-lip-limit-conv}, $B_r(w ; D) \cap X^n \rta B_r(w;D)\cap X$ in the $D$-Hausdorff topology for each $r>0$, so for each $j\in\BB N$ we can choose points $x_j^n \in X^n$ such that $D(x_j^n , x_j) \rta 0$. 
By condition~\ref{item-lip-limit-image}, each of the sequences $\{x_j^n\}_{n\in\BB N}$ is contained in a compact subset of $\wh W$. 

By a diagonalization argument we can find a sequence $\mcl N$ of positive integers tending to $\infty$ and points $\{\wh x_j\}_{j\in\BB N}$ in $\wh W$ such that $f^{n}( x^n_j) \rta \wh x_j$ for each $j\in\BB N$ as $\mcl N\ni n \rta\infty$. Let $f(x_j) := \wh x_j$ for $j\in\BB N$. Then for $j_1,j_2\in\BB N$,  
\eqbn
\wh D(f(x_{j_1}) , f(x_{j_2}) ) = \lim_{\mcl N \ni n \rta\infty} \wh D(f^n(x_{j_1}^n) , f^n(x_{j_2}^n) ) \leq \lim_{\mcl N \ni n \rta\infty} D(x_{j_1}^n , x_{j_2}^n) = D(x_{j_1} , x_{j_2}) ,
\eqen
with equality throughout if in fact each $f^n$ is an isometry onto its image.
Since $\{x_j\}_{j\in\BB N}$ is dense in $X$, the map $f$ extends by continuity to a 1-Lipschitz map $X\rta \wh W$, which preserves distances in the case when each $f^n$ is an isometry. 

It remains to check that $f^n \rta f$ in the sense described in the lemma. Suppose that we are given a subsequence $\mcl N'$ of $\mcl N$ and a sequence of points $x^n \in X^n$ with $x^n \rta x \in X$. Fix $\ep > 0$ and choose $j\in \BB N$ with $x_j \in B_{\ep }(x ; D)$. Then for large enough $n \in \mcl N'$,
\eqbn
\wh D(f^{n}(x_j^{n}) , f^{n}(x^{n} ) ) 
\leq D(x_j^{n} , x^{n}) 
\leq D(x_j^{n} , x_j ) +  D(x_j , x )  + D(x ,x^{n}) 
\leq \ep + o_{n}(1) .
\eqen
Since $f^{n}(x_j^{n}) \rta f(x_j)$, $f^{n}(x^{n})$ lies within $\wh D$-distance $2 \ep$ of $f(x_j)$ for large enough $n \in \mcl N'$. On the other hand, $\wh D(f(x_j) , f(x)) \leq \ep$ (since $f$ is 1-Lipschitz). Therefore $f^{n}(x^{n}) \rta f(x)$ along the subsequence $\mcl N'$. 
\end{proof} 
 
In the case when the $f^n$'s are isometries onto their images and we assume convergence in the HPU sense, we obtain existence of an isometry $f$ satisfying additional properties.
 
\begin{lem} \label{prop-isometry-limit}
Let $\frk X^n = (X^n ,d^n, \mu^n , \eta^n)$ for $n\in\BB N$ and $\frk X = (X ,d , \mu , \eta)$ be elements of $\BB M^{\op{GHPU}}$ (resp.\ $\BB M_\infty^{\op{GHPU}}$) such that $X^n$ and $X$ are subsets of a common boundedly compact (i.e., closed bounded subsets are compact) metric space $(W,D)$ satisfying $d^n = D|_{X^n}$ and $d = D|_X$. 
Let $(\wh W, \wh D)$ be another boundedly compact metric space and let $\wh{\frk X} = (\wh X , \wh d , \wh\mu , \wh\eta)$ be an element of $\BB M^{\op{GHPU}}$ (resp.\ $\BB M_\infty^{\op{GHPU}}$) such that $\wh X\subset \wh D$ and $\wh D|_{\wh X} = \wh d$. 

Suppose that we are given distance-preserving maps $f^n : X^n \rta \wh W$ for each $n\in\BB N$ such that the following are true (using the terminology as in Definitions~\ref{def-hpu} and~\ref{def-hpu-local}).
\begin{enumerate}
\item $\frk X^n \rta \frk X$ in the $D$-(local) HPU sense. \label{item-iso-limit-conv} 
\item $ (f^n(X^n), \wh D|_{f^n(X^n)} ,  f^n_*\mu^n , f^n\circ\eta^n) \rta \wh{\frk X}$ in the $\wh D$-(local) HPU sense.  \label{item-iso-limit-image}  
\end{enumerate}  
There is a sequence $\mcl N$ of positive integers tending to $\infty$ and an isometry $f : X \rta \wh X$ with $f\circ\eta = \wh\eta$ and $f_*\mu = \wh\mu$ such that $f^n \rta f$ as $\mcl N \ni n \rta\infty$ in the following sense. For any $x \in X$, any subsequence $\mcl N'$ of $\mcl N$, and any sequence of points $x^n \in X^n$ for $n\in\mcl N'$ with $x^n \rta x $, we have $f^n (x^n) \rta f(x)$ as $\mcl N'\ni n \rta\infty$. 
\end{lem}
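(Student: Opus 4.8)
The plan is to apply Lemma~\ref{prop-lip-limit} to obtain a candidate isometry $f$, and then to upgrade it to an isometry \emph{onto} $\wh X$ which intertwines the curves and the measures. First I would set up the hypotheses of Lemma~\ref{prop-lip-limit} with the pointed space $(W,D,w)$ where $w := \eta(0)$ (in the compact case $w$ can be taken arbitrary, or $\eta(0)$ as well). Condition~\ref{item-lip-limit-conv} of that lemma is exactly the Hausdorff-convergence part of hypothesis~\ref{item-iso-limit-conv} here (in the compact case it is immediate since $X^n \rta X$; in the local case one uses $B_r(\eta^n(0);d^n)\rta B_r(\eta(0);d)$, together with the fact that $\eta^n(0)\rta\eta(0)$ from uniform convergence, to get $B_r(w;D)\cap X^n \rta B_r(w;D)\cap X$). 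Condition~\ref{item-lip-limit-image} holds because $\wh W$ is boundedly compact and, by hypothesis~\ref{item-iso-limit-image}, the sets $f^n(B_r(w;D)\cap X^n)$ are eventually contained in a bounded neighborhood of $\wh\eta(0)$ in $\wh W$ (here one uses that $f^n$ is distance-preserving and $f^n(\eta^n(0)) = f^n\circ\eta^n(0)\rta \wh\eta(0)$, so $f^n(B_r(w;D)\cap X^n)\subset B_{r+o_n(1)}(\wh\eta(0);\wh D)$, whose closure is compact). Lemma~\ref{prop-lip-limit} then yields a subsequence $\mcl N$ and an isometry $f: X\rta \wh W$ onto its image, with the stated convergence $f^n\rta f$.

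Next I would verify $f\circ\eta = \wh\eta$. Fix $t\in\BB R$; since $\eta^n\rta\eta$ uniformly (on compacts, in the local case), we have $\eta^n(t)\rta\eta(t)$ in $X\subset W$, so by the convergence mode $f^n(\eta^n(t))\rta f(\eta(t))$ in $\wh W$. But $f^n(\eta^n(t)) = (f^n\circ\eta^n)(t)\rta \wh\eta(t)$ by hypothesis~\ref{item-iso-limit-image}. Hence $f(\eta(t)) = \wh\eta(t)$ for all $t$. In particular $\wh\eta$ takes values in $f(X)$, which is closed (being isometric to the complete space $X$, or to a closed ball thereof in the local case, hence complete).

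Then I would show $f(X) = \wh X$ and $f_*\mu = \wh\mu$ together, since the measure statement essentially gives the surjectivity for free on the support and a density argument handles the rest. The inclusion $f(X)\subset \wh X$: if $x\in X$, pick $x^n\in X^n$ with $x^n\rta x$ (possible by Hausdorff convergence), then $f^n(x^n)\in f^n(X^n)\rta \wh X$ in $\wh D$-Hausdorff, and $f^n(x^n)\rta f(x)$, so $f(x)\in\wh X$. For the reverse inclusion and the measure identity: for the pushforward, given a bounded continuous $\varphi$ on $\wh W$, one writes $\int \varphi\, d(f^n_*\mu^n) = \int (\varphi\circ f^n)\, d\mu^n$; using $f^n\rta f$ and a uniform-continuity/compactness argument on a fixed large ball, together with $\mu^n\rta\mu$ in the Prokhorov metric, one gets $\int\varphi\,d(f^n_*\mu^n)\rta \int(\varphi\circ f)\,d\mu = \int\varphi\,d(f_*\mu)$. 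On the other hand $f^n_*\mu^n\rta\wh\mu$ by hypothesis. Hence $f_*\mu = \wh\mu$. (In the local case one runs this on each ball $B_r$ with $\mu(\bdy B_r) = 0$, using that $f$ maps $B_r(\eta(0);d)$ into $B_r(\wh\eta(0);\wh d)$ and conversely — the balls match up because $f$ is a distance-preserving map fixing the base curve value.) Finally, $\wh X\subset f(X)$: $\wh X$ is the support-closure of the data, and more directly, $\wh X$ equals the $\wh D$-Hausdorff limit of $f^n(X^n) = $ isometric copies of $X^n$; since $X^n\rta X$ and $f^n\rta f$ uniformly on balls, the limit set is exactly $f(X)$ — concretely, any $\wh x\in\wh X$ is a limit of points $f^n(x^n)$ with $x^n\in X^n$; passing to a further subsequence, boundedness of the $x^n$ (they lie in a fixed ball once $\wh x$ is fixed, using that $f^n$ is distance-preserving and $f^n(\eta^n(0))\rta\wh\eta(0)$) and bounded compactness of $W$ give $x^n\rta x\in X$ along a subsequence, whence $f^n(x^n)\rta f(x)$ and so $\wh x = f(x)$. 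Therefore $f$ is an isometry of $X$ onto $\wh X$ with $f\circ\eta = \wh\eta$ and $f_*\mu = \wh\mu$.

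The main obstacle I anticipate is the careful handling of the \emph{local} (non-compact) case throughout: one must consistently truncate to metric balls $B_r(\eta(0);d)$, ensure that $f$ maps balls of $X$ to balls of $\wh X$ (so that the ball-wise Hausdorff, Prokhorov, and uniform convergences transfer correctly), and deal with the fact that the base points $\eta^n(0)$ only converge rather than being equal — so the balls $B_r(\eta^n(0);d^n)$ are only \emph{approximately} nested/matched. The cleanest fix is to do everything on balls of radius $r$ where $\mu(\bdy B_r(\eta(0);d)) = 0$ and $r$ ranges over a dense set, exploiting that $f$ being distance-preserving with $f(\eta(0)) = \wh\eta(0)$ forces $f(B_r(\eta(0);d))\subset B_r(\wh\eta(0);\wh d)$ and, by applying the same reasoning to the convergence in the other direction, equality of these balls. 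Everything else is a routine diagonalization plus the standard fact that Prokhorov convergence against bounded continuous test functions is stable under uniformly convergent sequences of maps on compact sets.
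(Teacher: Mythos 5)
Your proposal is correct and follows essentially the same route as the paper: invoke Lemma~\ref{prop-lip-limit} to extract the subsequential limit isometry, then verify surjectivity via Hausdorff convergence of $f^n(X^n)$ and bounded compactness of $W$, and check $f\circ\eta=\wh\eta$ pointwise. The only cosmetic difference is in the step $f_*\mu=\wh\mu$, where the paper couples uniform random samples from the truncated measures via the Skorokhod representation theorem while you integrate bounded continuous test functions against $f^n_*\mu^n$; both rest on the same uniform-on-compacts upgrade of the convergence $f^n\rta f$ together with Prokhorov convergence on good radii.
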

\begin{proof}
By Lemma~\ref{prop-lip-limit} and since $(\wh W , \wh d)$ is boundedly compact, there is a sequence $\mcl N$ of positive integers tending to $\infty$ and a distance-preserving map $f : X\rta \wh X$ such $f^{n} \rta f$ in the sense described in the statement of Lemma~\ref{prop-lip-limit}. It remains to check that $f$ is surjective and satisfies $f\circ\eta =\wh\eta$ and $f_*\mu = \wh\mu$. 
 
We start with surjectivity. Suppose given $\wh x \in \wh X$. Since $f^{n}( X^{n}) \rta \wh X$ in the $\wh D$-local Hausdorff metric, we can find a sequence $\{x^{n}\}_{n\in\mcl N}$ of points in $X^{n}$ such that $f^{n}(x^{n}) \rta \wh x$.  There is an $r>0$ such that $\wh D(f^{n}(x) , f^{n}(\eta^{n}(0))) \leq r$ for each $n\in\mcl N$, so since each $f^{n}$ is an isometry we also have $d^{n}(x , \eta^{n}(0) ) \leq r$ for each $n\in\mcl N $. Since $W$ is boundedly compact, by possibly passing to a subsequence, we can arrange so that $x^{n} \rta x \in X$. Then the convergence of $f^{n}$ to $f$ implies that $f(x) = \wh x$. 

Next we check that $f\circ \eta = \wh\eta$. For each $t\geq 0$ we have $\eta^n(t) \rta \eta(t)$ and $f^n(\eta^n(t)) \rta \wh\eta(t)$. Therefore our convergence condition for $f^{n }$ toward $f$ implies that $f(\eta(t)) = \wh\eta(t)$. 
 
Finally, we show that $f_* \mu = \wh\mu$. 
We do this in the case of $\BB M_\infty^{\op{GHPU}}$; the case of $\BB M^{\op{GHPU}}$ is similar (but in fact simpler). 
Let $r>0$ be a radius for which both
\eqb \label{eqn-iso-limit-bdy-mass}
\mu\left( \bdy B_r(\eta(0) ; d) \right) = 0 \quad \op{and}\quad \wh\mu\left( \bdy B_r(\wh\eta(0) ; d) \right) =0.
\eqe  
For $n\in\mcl N$ let $x^{n }$ be sampled uniformly from $\mu^{n }|_{B_r(\eta^{n }(0) ; d^{n })}$. By assumption~\ref{item-iso-limit-conv}, $\mu^{n }|_{B_r(\eta^{n }(0) ; d^{n })} \rta \mu|_{B_r(\eta(0) ; d)}$ in the $D$-Prokhorov metric. Therefore, $x^{n} \rta x$ in law, where $x$ is sampled uniformly from $\mu|_{B_r(\eta(0) ; d)}$. By the Skorokhod representation theorem, we can couple $\{x^{n}\}_{n\in\mcl N}$ with $x$ in such a way that $x^{n } \rta x$ a.s. Then our convergence condition for $f^{n}$ implies that $f^{n} (x^{n}) \rta f(x)$ a.s. Since each $f^{n}$ is an isometry, the random variable $f^{n}(x^{n})$ is sampled uniformly from $f^{n}_*\mu^{n} |_{  B_r( f^{n}(\eta^{n}(0) ) ; \wh D) \cap f^n(X^n)  }$. By~\eqref{eqn-iso-limit-bdy-mass} and assumption~\ref{item-lip-limit-image}, we find that $f^{n}(x^{n})$ converges in law to $\wh x$, where $\wh x$ is sampled uniformly from $\mu|_{B_r(\wh\eta(0) ; \wh d)}$. Hence the law of $f(x)$ is that of a uniform sample from $\mu|_{B_r(\wh\eta(0) ; \wh d)}$. Since $\mu^{n}\left(B_r(\eta^{n}(0) ; d^{n}) \right)$ converges to both $\mu(B_r(\eta(0); d))$ and $\wh\mu(\wh\eta(0) ; \wh d))$, we find that 
\eqbn
f_*\left( \mu|_{B_r(\eta(0) ; d)} \right) = \wh\mu|_{B_r(\wh \eta(0) ; \wh d)}  
\eqen
for all but countably many $r>0$. Therefore $f_* \mu = \wh\mu$. 
\end{proof}

\subsection{Proofs for the GHPU metric}
\label{sec-ghpu-properties}

In this subsection we prove Propositions~\ref{prop-ghpu-metric} and~\ref{prop-ghpu-embed}. Most of the arguments in this subsection are straightforward adaptations of standard proofs for the Gromov-Hausdorff, Gromov-Prokhorov, and Gromov-Hausdorff-Prokhorov metrics; see~\cite{bbi-metric-geometry,gromov-metric-book,gpw-metric-measure,adh-ghp,miermont-tess}, but we give full proofs here for the sake of completeness. 
 
The following lemma tells us that the definition of $\BB d^{\op{GHPU}}$ in~\eqref{eqn-ghpu-def} is unaffected if when taking the infimum we require that $W= X_1 \sqcup X_2$ and $\iota_1$ and $\iota_2$ are the natural inclusions.
 
\begin{lem} \label{prop-ghpu-coprod} 
Let $\frk X_1 = (X_1 , d_1, \mu_1 , \eta_1) $ and $\frk X_2 =  (X_2, d_2,\mu_2,\eta_2) $ be in $ \BB M^{\op{GHPU}}$ and identify $X_1$ and $X_2$ with their inclusions into the disjoint union $X_1\sqcup X_2$. For a metric $d_\sqcup$ on $X_1\sqcup X_2$ with $d_\sqcup|_{X_1} = d_1$ and $d_\sqcup|_{X_2} = d_2$, we define
\eqb \label{eqn-ghpu-coprod-var}
\op{Dis}_{\frk X_1,\frk X_2}^{\op{GHPU},\sqcup}(d_\sqcup) = \BB d^{\op{H}}_{d_\sqcup} \left(X_1 , X_2 \right) +   
\BB d^{\op{P}}_{d_\sqcup} \left( \mu_1,\mu_2  \right) + 
\BB d_{d_\sqcup}^{\op{U}}\left( \eta_1 , \eta_2 \right) .
\eqe 
Then
\eqb \label{eqn-ghpu-coprod-def}
\BB d^{\op{GHPU}}\left( \frk X_1 , \frk X_2 \right)  = \inf \op{Dis}_{\frk X_1,\frk X_2}^{\op{GHPU}}(d_\sqcup) 
\eqe
where the infimum is over all metrics on $X_1\sqcup X_2$ with $d_\sqcup|_{X_1} = d_1$ and $d_\sqcup|_{X_2} = d_2$. 
\end{lem}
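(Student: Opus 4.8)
The plan is to prove the two inequalities in~\eqref{eqn-ghpu-coprod-def} separately. The inequality $\BB d^{\op{GHPU}}(\frk X_1,\frk X_2)\leq\inf\op{Dis}_{\frk X_1,\frk X_2}^{\op{GHPU},\sqcup}(d_\sqcup)$ is essentially a tautology: if $d_\sqcup$ is any metric on $X_1\sqcup X_2$ with $d_\sqcup|_{X_1}=d_1$ and $d_\sqcup|_{X_2}=d_2$, then $(X_1\sqcup X_2,d_\sqcup)$ is compact, being the union of the two (compact, by hypothesis) subspaces $X_1$ and $X_2$; taking $(W,D)=(X_1\sqcup X_2,d_\sqcup)$ with $\iota_1,\iota_2$ the canonical inclusions gives $\op{Dis}_{\frk X_1,\frk X_2}^{\op{GHPU}}(W,D,\iota_1,\iota_2)=\op{Dis}_{\frk X_1,\frk X_2}^{\op{GHPU},\sqcup}(d_\sqcup)$, so $\BB d^{\op{GHPU}}(\frk X_1,\frk X_2)\leq\op{Dis}_{\frk X_1,\frk X_2}^{\op{GHPU},\sqcup}(d_\sqcup)$ for every such $d_\sqcup$, and hence for the infimum.

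For the reverse inequality, fix a compact metric space $(W,D)$, isometric embeddings $\iota_1:X_1\rta W$ and $\iota_2:X_2\rta W$, and $\ep>0$; write $\iota:X_1\sqcup X_2\rta W$ for the map restricting to $\iota_i$ on $X_i$. Define $d_\sqcup$ on $(X_1\sqcup X_2)^2$ by $d_\sqcup|_{X_i\times X_i}=d_i$ and $d_\sqcup(x_1,x_2)=d_\sqcup(x_2,x_1)=D(\iota(x_1),\iota(x_2))+\ep$ for $x_1\in X_1$, $x_2\in X_2$. First I would check that $d_\sqcup$ is a metric extending $d_1$ and $d_2$: symmetry and the restrictions are immediate, positivity off the diagonal holds because of the added $\ep$, and the triangle inequality in the mixed cases follows from the triangle inequality for $D$ (using $D(\iota(x),\iota(y))=d_i(x,y)$ for $x,y\in X_i$) together with $\ep\geq0$ when two of the three points lie in the same $X_i$, and $2\ep\geq0$ when the middle point lies in the opposite piece.

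Next I would bound each of the three terms of $\op{Dis}_{\frk X_1,\frk X_2}^{\op{GHPU},\sqcup}(d_\sqcup)$ by the corresponding term of $\op{Dis}_{\frk X_1,\frk X_2}^{\op{GHPU}}(W,D,\iota_1,\iota_2)$ plus $\ep$. For the Hausdorff term, $d_\sqcup(x_1,X_2)=D(\iota_1(x_1),\iota_2(X_2))+\ep$ for $x_1\in X_1$ and symmetrically, so $\BB d^{\op{H}}_{d_\sqcup}(X_1,X_2)=\BB d^{\op{H}}_D(\iota_1(X_1),\iota_2(X_2))+\ep$. For the uniform term, $d_\sqcup(\eta_1(t),\eta_2(t))=D(\iota_1\circ\eta_1(t),\iota_2\circ\eta_2(t))+\ep$ for every $t$, so $\BB d^{\op{U}}_{d_\sqcup}(\eta_1,\eta_2)=\BB d^{\op{U}}_D(\iota_1\circ\eta_1,\iota_2\circ\eta_2)+\ep$. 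The Prokhorov term is the one requiring care, and is where I expect the main (if modest) obstacle. Fix $\delta>\BB d^{\op{P}}_D((\iota_1)_*\mu_1,(\iota_2)_*\mu_2)$. Since a closed $A\subseteq X_1\sqcup X_2$ splits as $(A\cap X_1)\sqcup(A\cap X_2)$ with each piece closed and $\mu_i$ supported on $X_i$, it suffices to verify the Prokhorov inequalities $\mu_1(A)\leq\mu_2\bigl(B_{\delta+\ep}(A;d_\sqcup)\bigr)+(\delta+\ep)$ for closed $A\subseteq X_1$ and the mirror image for closed $A\subseteq X_2$. For closed $A\subseteq X_1$: $\iota_1(A)$ is compact, hence closed in $W$; $\mu_1(A)=(\iota_1)_*\mu_1(\iota_1(A))$ because $\iota_1$ is injective; the choice of $\delta$ gives $(\iota_1)_*\mu_1(\iota_1(A))\leq(\iota_2)_*\mu_2\bigl(B_\delta(\iota_1(A);D)\bigr)+\delta$; and the definition of $d_\sqcup$ yields $\iota_2^{-1}\bigl(B_\delta(\iota_1(A);D)\bigr)=B_{\delta+\ep}(A;d_\sqcup)\cap X_2$, so $(\iota_2)_*\mu_2\bigl(B_\delta(\iota_1(A);D)\bigr)=\mu_2\bigl(B_{\delta+\ep}(A;d_\sqcup)\bigr)$. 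This gives $\BB d^{\op{P}}_{d_\sqcup}(\mu_1,\mu_2)\leq\delta+\ep$, and letting $\delta$ decrease to $\BB d^{\op{P}}_D((\iota_1)_*\mu_1,(\iota_2)_*\mu_2)$ gives the bound with $+\ep$.

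Combining the three estimates, $\op{Dis}_{\frk X_1,\frk X_2}^{\op{GHPU},\sqcup}(d_\sqcup)\leq\op{Dis}_{\frk X_1,\frk X_2}^{\op{GHPU}}(W,D,\iota_1,\iota_2)+3\ep$; taking the infimum over all compact $(W,D)$ and isometric embeddings $\iota_1,\iota_2$, and then sending $\ep\downarrow0$, yields $\inf\op{Dis}_{\frk X_1,\frk X_2}^{\op{GHPU},\sqcup}(d_\sqcup)\leq\BB d^{\op{GHPU}}(\frk X_1,\frk X_2)$. Together with the first paragraph this proves~\eqref{eqn-ghpu-coprod-def}. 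The only genuinely delicate point is the Prokhorov step — keeping track of the $\ep$-shift between $D$- and $d_\sqcup$-neighborhoods and reducing to closed subsets of a single $X_i$, which (by compactness, so that $\iota_i(A)$ stays closed in $W$) prevents any measurability subtleties from arising.
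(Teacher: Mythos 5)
Your proof is correct and follows essentially the same route as the paper's: the easy direction via the disjoint union as a special embedding, and the reverse direction via the metric $d_\sqcup^\ep$ obtained by adding $\ep$ to the cross-distances $D(\iota_1(x),\iota_2(y))$, yielding $\op{Dis}^{\op{GHPU},\sqcup}\leq \op{Dis}^{\op{GHPU}}+3\ep$. The only difference is that you spell out the Prokhorov comparison (the $\ep$-shift between $D$- and $d_\sqcup$-neighborhoods) that the paper compresses into the observation that $d_\sqcup^\ep$ and $D$ differ by at most $\ep$.
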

\begin{proof}
It is clear that the infimum in~\eqref{eqn-ghpu-coprod-def} is at most the infimum in~\eqref{eqn-ghpu-def}, so we only need to prove the reverse inequality.
Suppose given a compact metric space $(W,D)$ and isometric embeddings $\iota_1 : X_1 \rta W$ and $\iota_2 : X_2\rta W$. Given $\ep > 0$, we define a metric on $X_1\sqcup X_2$ by 
\eqbn
d_\sqcup^\ep(x,y) = 
\begin{cases}
d_1(x,y) ,\quad & x,y \in X_1 \\
d_2(x,y) ,\quad &x,y\in X_2 \\
D(\iota_1(x) , \iota_2(y) ) + \ep ,\quad &x \in X_1 , \, y\in X_2 \\
D(\iota_2(x) , \iota_1(y) ) + \ep ,\quad &x \in X_2 , \, y\in X_1 .
\end{cases}
\eqen
It is easy to see that $d_\sqcup^\ep$ defines a metric on $X_1\sqcup X_2$. Furthermore, since $\iota_1$ and $\iota_2$ are isometric embeddings, it follows that $d_\sqcup^\ep(x,y)$ differs from $D(x,y)$ by as most $\ep$. Hence
\eqbn
 \op{Dis}_{\frk X_1,\frk X_2}^{\op{GHPU},\sqcup}(d_\sqcup)  \leq  \op{Dis}_{\frk X_1,\frk X_2}^{\op{GHPU}}\left(W,D , \iota_1, \iota_2 \right)   + 3\ep  .
\eqen
 Since $\ep>0$ is arbitrary the statement of the lemma follows. 
\end{proof}

We now verify the triangle inequality for $\BB d^{\op{GHPU}}$, which in particular implies that $\BB d^{\op{GHPU}}$ is a pseudometric. 

\begin{lem} \label{prop-ghpu-triangle}
The function $\BB d^{\op{GHPU}}$ satisfies the triangle inequality, i.e.\ for $\frk X_1 , \frk X_2 , \frk X_3    \in \BB M^{\op{GHPU}}$ we have
\eqbn
\BB d^{\op{GHPU}}\left(  \frk X_1,\frk X_3  \right) \leq \BB d^{\op{GHPU}}\left(  \frk X_1,\frk X_2  \right) + \BB d^{\op{GHPU}}\left(  \frk X_2,\frk X_3 \right) .
\eqen
\end{lem}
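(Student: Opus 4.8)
The standard strategy for proving a triangle inequality for a Gromov-type metric is to ``glue'' optimal (or near-optimal) correspondences along the middle space. Concretely, I would use Lemma~\ref{prop-ghpu-coprod}, which lets me compute $\BB d^{\op{GHPU}}$ as an infimum over metrics on a disjoint union. Fix $\ep > 0$. By Lemma~\ref{prop-ghpu-coprod} there is a metric $d_{12}$ on $X_1 \sqcup X_2$ extending $d_1$ and $d_2$ with $\op{Dis}^{\op{GHPU},\sqcup}_{\frk X_1,\frk X_2}(d_{12}) \leq \BB d^{\op{GHPU}}(\frk X_1,\frk X_2) + \ep$, and a metric $d_{23}$ on $X_2 \sqcup X_3$ extending $d_2$ and $d_3$ with $\op{Dis}^{\op{GHPU},\sqcup}_{\frk X_2,\frk X_3}(d_{23}) \leq \BB d^{\op{GHPU}}(\frk X_2,\frk X_3) + \ep$. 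The key construction is then to define a metric $d_{13}$ on $X_1 \sqcup X_3$ by
\eqbn
d_{13}(x,z) := \inf_{y \in X_2}\left( d_{12}(x,y) + d_{23}(y,z) \right), \qquad x \in X_1,\ z \in X_3,
\eqen
and $d_{13}|_{X_1} = d_1$, $d_{13}|_{X_3} = d_3$ (one should check this infimum is positive and the result is genuinely a metric — this uses that $X_2$ is compact so the infimum is attained, and a short argument with the triangle inequalities for $d_{12}$ and $d_{23}$; this is the standard construction used for the Gromov-Hausdorff metric, e.g.\ in~\cite{bbi-metric-geometry}).

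\textbf{Bounding the three terms.} With $d_{13}$ in hand, I would bound each of the three contributions to $\op{Dis}^{\op{GHPU},\sqcup}_{\frk X_1,\frk X_3}(d_{13})$ separately. For the Hausdorff term: given $x \in X_1$, pick $y \in X_2$ with $d_{12}(x,y) \leq \BB d^{\op{H}}_{d_{12}}(X_1,X_2)$, then $z \in X_3$ with $d_{23}(y,z) \leq \BB d^{\op{H}}_{d_{23}}(X_2,X_3)$, so $d_{13}(x,z) \leq \BB d^{\op{H}}_{d_{12}}(X_1,X_2) + \BB d^{\op{H}}_{d_{23}}(X_2,X_3)$; symmetrically for $z \in X_3$, giving $\BB d^{\op{H}}_{d_{13}}(X_1,X_3) \leq \BB d^{\op{H}}_{d_{12}}(X_1,X_2) + \BB d^{\op{H}}_{d_{23}}(X_2,X_3)$. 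For the uniform term on the curves: for every $t \in \BB R$, $d_{13}(\eta_1(t),\eta_3(t)) \leq d_{12}(\eta_1(t),\eta_2(t)) + d_{23}(\eta_2(t),\eta_3(t)) \leq \BB d^{\op{U}}_{d_{12}}(\eta_1,\eta_2) + \BB d^{\op{U}}_{d_{23}}(\eta_2,\eta_3)$, so the same additivity holds for the $\BB d^{\op{U}}$ term (here the compatibility of the $C_0(\BB R,\cdot)$ identification with the embeddings is immediate since $d_{13}$ restricts to $d_1$ and $d_3$). For the Prokhorov term: the Prokhorov metric also satisfies this ``gluing'' triangle inequality — one way is to transport $\mu_1$ to a measure on $X_2$ within Prokhorov distance $\BB d^{\op{P}}_{d_{12}}(\mu_1,\mu_2)$ of $\mu_2$ and then to $X_3$, or more cleanly to recall the dual/coupling characterization of Prokhorov distance and compose couplings; in any case $\BB d^{\op{P}}_{d_{13}}(\mu_1,\mu_3) \leq \BB d^{\op{P}}_{d_{12}}(\mu_1,\mu_2) + \BB d^{\op{P}}_{d_{23}}(\mu_2,\mu_3)$. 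Summing the three bounds gives $\op{Dis}^{\op{GHPU},\sqcup}_{\frk X_1,\frk X_3}(d_{13}) \leq \op{Dis}^{\op{GHPU},\sqcup}_{\frk X_1,\frk X_2}(d_{12}) + \op{Dis}^{\op{GHPU},\sqcup}_{\frk X_2,\frk X_3}(d_{23}) \leq \BB d^{\op{GHPU}}(\frk X_1,\frk X_2) + \BB d^{\op{GHPU}}(\frk X_2,\frk X_3) + 2\ep$, and applying Lemma~\ref{prop-ghpu-coprod} to $\frk X_1,\frk X_3$ and letting $\ep \to 0$ finishes the proof.

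\textbf{Main obstacle.} The genuinely substantive point — everything else is bookkeeping that mirrors the Gromov-Hausdorff case — is verifying that $d_{13}$ as defined above is actually a metric, in particular that $d_{13}(x,z) > 0$ for $x \in X_1$, $z \in X_3$ (so that points of $X_1$ are not identified with points of $X_3$) and that the mixed triangle inequalities involving points in all three of $X_1, X_2, X_3$ hold. Positivity would fail if some $x$ and $z$ could be approximated by a common sequence in $X_2$; compactness of $X_2$ rules this out, since the infimum is attained at some $y_0 \in X_2$ and then $d_{13}(x,z) = d_{12}(x,y_0) + d_{23}(y_0,z)$, which is zero only if $x = y_0 = z$, impossible in the disjoint union. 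I would also take a little care with the Prokhorov-term inequality, since the ``obvious'' proof passes through an intermediate measure on $X_2$ that is not one of $\mu_1,\mu_2,\mu_3$ — but this is exactly the classical fact that the Prokhorov distance within a fixed space, viewed through a common embedding, behaves like a metric under this gluing, and it follows from the coupling characterization.
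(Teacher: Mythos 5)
Your proposal is correct and follows essentially the same route as the paper: take near-optimal metrics $d_{12}$, $d_{23}$ via Lemma~\ref{prop-ghpu-coprod}, glue them by setting $d_{13}(x_1,x_3) = \inf_{x_2\in X_2}(d_{12}(x_1,x_2)+d_{23}(x_2,x_3))$, and add the three distortion terms. The only cosmetic difference is that the paper defines the glued metric on all of $X_1\sqcup X_2\sqcup X_3$ and then restricts, so that the Hausdorff, Prokhorov, and uniform triangle inequalities within that single space can be invoked at once rather than verified term by term as you do.
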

\begin{proof}
Write $\frk X_i =  (X_i, d_i,\mu_i,\eta_i) $ for $i\in \{1,2,3\}$ and fix $\ep > 0$. By Lemma~\ref{prop-ghpu-coprod} we can find metrics $d_{12}$ and $d_{23}$ on $X_1\sqcup X_2$ and $X_2\sqcup X_3$, respectively, which restrict to the given metrics on each factor such that 
\eqbn
\op{Dis}_{\frk X_1,\frk X_2}^{\op{GHPU},\sqcup}(d_{12}) \leq \BB d^{\op{GHPU}}\left(  \frk X_1,\frk X_2  \right) + \ep \quad \op{and} \quad
\op{Dis}_{\frk X_2,\frk X_3}^{\op{GHPU},\sqcup}(d_{23}) \leq \BB d^{\op{GHPU}}\left(  \frk X_2,\frk X_3 \right) + \ep .
\eqen
We define a metric on $X_1\sqcup X_2\sqcup X_3$ as follows. If $x,y \in X_1\sqcup X_2\sqcup X_3$ and both $x$ and $y$ belong to $X_1\sqcup X_2$ or $X_2\sqcup X_3$ we set $d_{13}(x,y) = d_{12}(x,y)$ or $d_{23}(x,y)$, respectively. 
For $x_1 \in X_1$ and $x_3 \in X_3$, we set
\eqbn
d_{13}(x_1,x_3) = d_{13}(x_3,x_1) = \inf_{x_2\in X_2} \left( d_{12}(x_1 , x_2) + d_{23}(x_2,x_3) \right) .
\eqen
It is easily checked that $d_{13}$ is a metric on $X_1\sqcup X_2\sqcup X_3$, so restricts to a metric on $X_1\sqcup X_3$ which in turn restricts to $d_1$ on $X_1$ and $d_3$ on $X_3$. Furthermore, by the triangle inequalities for the $d_{13}$-Hausdorff, Prokhorov, and uniform metrics, we have
\eqbn
\op{Dis}_{\frk X_1,\frk X_3}^{\op{GHPU},\sqcup}(d_{13}) \leq \op{Dis}_{\frk X_1,\frk X_2}^{\op{GHPU},\sqcup}(d_{12})  + \op{Dis}_{\frk X_2,\frk X_3}^{\op{GHPU},\sqcup}(d_{23}) .
\eqen
The statement of the lemma follows.
\end{proof}

Now we can prove Proposition~\ref{prop-ghpu-embed}, using a similar argument to that used to prove~\cite[Lemma~A.1]{gpw-metric-measure}. 

\begin{proof}[Proof of Proposition~\ref{prop-ghpu-embed}]
By Lemma~\ref{prop-ghpu-coprod}, for each $n\in\BB N$ there exists a metric $d_\sqcup^n$ on $X\sqcup X^n$ such that $\op{Dis}_{\frk X,\frk X^n}^{\op{GHPU},\sqcup}(d_\sqcup^n) \rta 0$.  Let $W:= X\sqcup \bigsqcup_{n=1}^\infty X^n$ and identify $X$ and each $X^n$ with its natural inclusion into~$W$.  We define a metric $D$ on $W$ as follows.  If $x , y \in W$ such that $x,y\in X \sqcup X^n$ for some $n\in\BB N$, we set $D(x,y) = d_\sqcup^n(x,y)$.  If $x \in X^n$ and $y\in X^m$ for some $n,m\in\BB N$, we set
\eqbn
D(x,y) = \inf_{u \in X} \left( d_\sqcup^n(x, u) + d_\sqcup^m(u , y) \right) .
\eqen
As in the proof of Lemma~\ref{prop-ghpu-triangle}, $D$ is a metric on $W$ and by definition this metric restricts to $d^n$ on each $X^n$ and to $d$ on $X$. Furthermore we have $\op{Dis}_{\frk X,\frk X^n}^{\op{GHPU},\sqcup}(d_\sqcup^n) = \op{Dis}_{\frk X , \frk X^n}^{\op{GHPU},\sqcup}(D)  \rta 0$ as $n\rta\infty$, which implies that $X^n \rta X$ in the $D$-Hausdorff metric, $\mu^n \rta \mu$ in the $D$-Prokhorov metric, and $\eta^n\rta \eta$ in the $D$-uniform metric. 

By replacing $W$ with its metric completion, we can assume that $W$ is complete. Since $X$ is totally bounded, for each $\ep > 0$ we can find $N\in\BB N$ and $x_1 , \dots , x_N$ such that $X \subset \bigcup_{i=1}^N B_\ep(x_i ; d)$.
Since $X^n \rta X$ in the $D$-Prokhorov metric and $D|_X = d$, it follows that there exists $n_0\in\BB N$ such that $X^n \subset \bigcup_{i=1}^N  B_\ep(x_i ; D)$ for $n\geq n_0$. Since each $X^n$ for $n\leq n_0$ is totally bounded, we infer that $W$ is totally bounded, hence compact.
\end{proof}

\begin{lem}[Positive definiteness] \label{prop-ghpu-pos-def}
Let $\frk X_1 = (X_1 , d_1, \mu_1 , \eta_1) $ and $\frk X_2 =  (X_2, d_2,\mu_2,\eta_2) $ be in $ \BB M^{\op{GHPU}}$. If $\BB d^{\op{GHPU}}(\frk X_1, \frk X_2) = 0$, then there is an isometry $f: X_1\rta X_2$ with $f_*\mu_1 = \mu_2$ and $f\circ\eta_1 = \eta_2$. 
\end{lem}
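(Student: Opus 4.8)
The plan is to reduce the statement to the subsequential-compactness machinery already in place, namely Lemma~\ref{prop-ghpu-coprod} together with Lemma~\ref{prop-isometry-limit}; this is the standard ``distance zero $\Rightarrow$ isomorphism'' argument for Gromov-type metrics, parallel to the Gromov-Hausdorff and Gromov-Prokhorov cases.

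First I would unpack the hypothesis $\BB d^{\op{GHPU}}(\frk X_1,\frk X_2) = 0$. By Lemma~\ref{prop-ghpu-coprod}, for each $n\in\BB N$ there is a metric $d_\sqcup^n$ on $X_1\sqcup X_2$ restricting to $d_1$ on $X_1$ and $d_2$ on $X_2$ with $\op{Dis}^{\op{GHPU},\sqcup}_{\frk X_1,\frk X_2}(d_\sqcup^n)\rta 0$. I would then carry out exactly the gluing construction from the proof of Proposition~\ref{prop-ghpu-embed}: glue countably many isometric copies $X_2^{(n)}$ of $X_2$ to a single copy of $X_1$ along the metrics $d_\sqcup^n$, extending the metric between distinct copies of $X_2$ by passing through $X_1$. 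This produces a metric space $(W,D)$ containing $X_1$ and all the $X_2^{(n)}$ with $D|_{X_1}=d_1$ and $D|_{X_2^{(n)}}=d_2$, and the same total-boundedness argument shows $W$ is compact after completion. Since $D$ restricted to $X_1\sqcup X_2^{(n)}$ is exactly $d_\sqcup^n$, the vanishing of $\op{Dis}^{\op{GHPU},\sqcup}_{\frk X_1,\frk X_2}(d_\sqcup^n)$ says precisely that, writing $\frk X^n := (X_2^{(n)},d_2,\mu_2,\eta_2)\in\BB M^{\op{GHPU}}$, we have $\frk X^n\rta\frk X_1$ in the $D$-HPU sense. (Equivalently, one can skip Lemma~\ref{prop-ghpu-coprod} altogether: the constant sequence $\frk X_2$ trivially converges to $\frk X_1$ in $\BB d^{\op{GHPU}}$, so the already-proven Proposition~\ref{prop-ghpu-embed} hands us $(W,D)$ and the embeddings directly.)

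Next I would apply Lemma~\ref{prop-isometry-limit} with $\frk X := \frk X_1$ and the $\frk X^n$ just described, taking the auxiliary boundedly compact space to be $(\wh W,\wh D):=(X_2,d_2)$, taking $\wh{\frk X}:=\frk X_2$, and taking $f^n : X_2^{(n)}\rta X_2$ to be the canonical identification (distance-preserving, since both sides carry $d_2$). Hypothesis~(1) of that lemma is the $D$-HPU convergence established above, and hypothesis~(2) is immediate, because $(f^n(X_2^{(n)}),\wh D|_{f^n(X_2^{(n)})},f^n_*\mu_2,f^n\circ\eta_2)=\frk X_2$ for every $n$, a constant and hence convergent sequence. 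Lemma~\ref{prop-isometry-limit} then produces an isometry $f : X_1\rta X_2$ with $f_*\mu_1=\mu_2$ and $f\circ\eta_1=\eta_2$, which is exactly the claim.

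I do not expect a real obstacle: all the substance — extracting a limiting distance-preserving map by a diagonalization/compactness argument and checking that it intertwines the measures (via a Skorokhod-coupling argument at radii of $\mu$-null spheres) and the curves — is already encapsulated in Lemmas~\ref{prop-lip-limit} and~\ref{prop-isometry-limit}. The only points demanding a little care are (i) confirming that the ambient spaces $W$ and $X_2$ are boundedly compact so that those lemmas apply, which is handled exactly as in the proof of Proposition~\ref{prop-ghpu-embed}, and (ii) observing that the convergence of $\mu^n=\mu_2$ and $\eta^n=\eta_2$, viewed through the varying embeddings $X_2^{(n)}\subset W$, to $\mu_1$ and $\eta_1$ genuinely follows from the distortion bound, which is immediate from the definition of $D$.
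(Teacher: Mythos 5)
Your proposal is correct and follows essentially the same route as the paper: Lemma~\ref{prop-ghpu-coprod} (or directly Proposition~\ref{prop-ghpu-embed}) to embed $X_1$ and countably many copies of $X_2$ into a common compact space with vanishing distortion, followed by Lemma~\ref{prop-isometry-limit}. The only cosmetic difference is the direction of application — the paper takes the constant sequence $\frk X_2$ as the domain data and the embeddings $\iota^n$ as the maps $f^n$, obtaining $g : X_2 \rta \iota(X_1)$ and then setting $f = g^{-1}\circ\iota$, whereas you take the embedded copies $X_2^{(n)}\subset W$ as the $\frk X^n$ and the identifications onto $X_2$ as the $f^n$, landing on $f : X_1\rta X_2$ directly.
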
 
\begin{proof}
By Lemma~\ref{prop-ghpu-coprod}, we can find a sequence of metrics $d_\sqcup^n$ on $X_1\sqcup X_2$ whose GHPU distortion tends to~$0$. By Proposition~\ref{prop-ghpu-embed} there is a compact metric space $(W,D)$ and isometric embeddings $\iota : X_1 \rta W$ and $\iota^n : X_2  \rta W$ for $n\in\BB N$ such that
\eqbn
\left( \iota^n(X_2) , D|_{\iota^n(X_2)} , \iota^n_*\mu_2  ,\iota^n\circ\eta \right) \rta (X_1,d_1,\mu_1,\eta_1)
\eqen
in the $D$-HPU topology. By Lemma~\ref{prop-isometry-limit} (applied with $\frk X^n =\frk X_2$ for each $n\in\BB N$) we can find a subsequence along which the embeddings $\iota^n$ converge to an isometry $g : X_2 \rta \iota(X_1)$ with $g_* \mu_2 = \iota_* \mu_2$ and $g\circ\eta_2 = \iota\circ \eta_1$. The statement of the lemma follows by taking $f = g^{-1} \circ \iota$. 
\end{proof}

For our proof of completeness, we will use the following compactness criterion for $\BB M^{\op{GHPU}}$, which is also of independent interest.

\begin{lem}[Compactness criterion] \label{prop-ghpu-compact-cond}
Let $\mcl K$ be a subset of $ \BB M^{\op{GHPU}}$ and suppose the following conditions are satisfied.
\begin{enumerate}
\item $\mcl K$ is uniformly totally bounded, i.e.\ for each $\ep > 0$, there exists $N\in\BB N$ such that for each $(X,d,\mu,\eta) \in \mcl K$, the set $X$ can be covered by at most $N$ $d$-balls of radius at most $\ep$. \label{item-ghpu-cmpt-h}
\item There is a constant $C>0$ such that for each $(X,d,\mu,\eta) \in \mcl K$, we have $\mu(X)\leq C$. \label{item-ghpu-cmpt-p}
\item $\mcl K$ is equicontinuous, i.e.\ for each $\ep > 0$, there exists $\delta>0$ such that for each $(X,d,\mu,\eta)$ in $\mcl K$ and each $s,t\in \BB R$ with $|s-t| \leq \delta$, we have $d(\eta(s) ,\eta(t)) \leq \ep$; and for each $t\in \BB R$ with $|t| \geq \delta^{-1}$, we have $d(\eta(s) , \eta(\pm \delta^{-1})) \leq \ep$, where $\pm$ is the sign of $t$. \label{item-ghpu-cmpt-u}
\end{enumerate}
Then every sequence in $\mcl K$ has a subsequence which converges with respect to the GHPU topology.
\end{lem}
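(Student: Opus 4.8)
The plan is to adapt the classical proof of Gromov's precompactness theorem: from the uniformly bounded finite nets one extracts a limiting compact metric space, then extracts subsequential limits of the measures and of the curves \emph{inside a single common compact metric space}, and finally invokes Proposition~\ref{prop-ghpu-embed} to upgrade this to GHPU convergence. So fix a sequence $\frk X^j = (X^j , d^j , \mu^j , \eta^j)$ in $\mcl K$; it suffices to produce a GHPU-convergent subsequence.

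First I would build a common compact embedding. For each $k\in\BB N$, condition~\ref{item-ghpu-cmpt-h} lets us pick a $1/k$-net $S_k^j\subseteq X^j$ with $\#S_k^j\leq N_k$, chosen so that $S_1^j\subseteq S_2^j\subseteq\cdots$ and $\bigcup_k S_k^j$ is dense in $X^j$. By the pigeonhole principle together with a Cantor diagonalization over the countably many pairs of net points, pass to a subsequence along which $\#S_k^j=m_k$ is independent of $j$, the points of $S_k^j$ carry a fixed enumeration $x_{k,1}^j,\dots,x_{k,m_k}^j$ compatible with the nesting, and every sequence $\bigl(d^j(x_{k,i}^j,x_{k',i'}^j)\bigr)_j$ converges (these are bounded sequences, since the uniform total boundedness in condition~\ref{item-ghpu-cmpt-h} also bounds $\op{diam}(X^j)$ uniformly in $j$). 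The limiting distances define a metric on a countable set whose completion is a compact metric space $(X,d)$. Then, exactly as in the proofs of Lemma~\ref{prop-ghpu-triangle} and Proposition~\ref{prop-ghpu-embed}, glue $X$ and all the $X^j$ into one space $W=X\sqcup\bigsqcup_j X^j$ by declaring each $x_{k,i}^j$ to lie within a small, diagonally chosen distance (tending to $0$ as $j\rta\infty$) of its counterpart in $X$ and extending $D$ via the infimum-of-connecting-path-lengths formula; $W$ is totally bounded, so its completion (still $W$) is compact, the inclusions $X,X^j\hookrightarrow W$ are isometric, and $X^j\rta X$ in the $D$-Hausdorff metric.

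Next I would extract the measure and curve limits inside $W$. Pushing each $\mu^j$ forward to $W$ along the inclusion, so that it is supported on $X^j$, condition~\ref{item-ghpu-cmpt-p} gives total mass $\leq C$, hence by compactness of $W$ the family is tight and Prokhorov's theorem yields a further subsequence with $\mu^j\rta\mu$ weakly; since $X^j\rta X$ in Hausdorff distance and $\mu^j$ is supported on $X^j$, the limit $\mu$ gives no mass to $W\setminus B_\ep(X;D)$ for any $\ep>0$, hence is supported on the closed set $X$, and weak convergence of finite measures on a separable space coincides with $D$-Prokhorov convergence. Viewing each $\eta^j$ as an element of $C_0(\BB R,W)$, condition~\ref{item-ghpu-cmpt-u} gives equicontinuity on $\BB R$ together with uniform stabilization near $\pm\infty$, and the range lies in the compact space $W$; a diagonal Arzel\`a--Ascoli argument over the intervals $[-R,R]$, followed by the tail bound in condition~\ref{item-ghpu-cmpt-u}, produces a further subsequence with $\eta^j\rta\eta$ in the $D$-uniform metric for some $\eta\in C_0(\BB R,W)$, which again takes values in $X$ because $X^j\rta X$ in Hausdorff distance. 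Setting $\frk X:=(X,d,\mu,\eta)\in\BB M^{\op{GHPU}}$, the constructions above show $\frk X^j\rta\frk X$ in the $D$-HPU sense of Definition~\ref{def-hpu}, so Proposition~\ref{prop-ghpu-embed} gives $\frk X^j\rta\frk X$ in the GHPU metric along this subsequence, as desired.

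The main obstacle is the first step: simultaneously producing the limit space $X$ and a \emph{single} compact space $W$ isometrically containing $X$ and all the $X^j$ with $D$-Hausdorff convergence. The delicate points there are the bookkeeping of the net enumerations across scales $k$, checking that the glued function $D$ is an honest metric, and verifying that $W$ is totally bounded (so that its completion is compact). By contrast, the measure and curve steps are routine applications of Prokhorov's theorem and Arzel\`a--Ascoli; the only mild subtleties are the identification of the supports of the limiting measure and curve (both handled by $X^j\rta X$ in Hausdorff distance) and the use of the $C_0$-tail hypothesis in condition~\ref{item-ghpu-cmpt-u} to run Arzel\`a--Ascoli over the non-compact domain $\BB R$.
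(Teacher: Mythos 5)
Your proposal is correct and takes essentially the same route as the paper: the paper extracts a Gromov--Hausdorff subsequential limit by citing Gromov's precompactness theorem, realizes all the spaces inside a single compact $(W,D)$ by citing the embedding lemma of Greven--Pfaffelhuber--Winter (equivalently, Proposition~\ref{prop-ghpu-embed} with trivial measure and curve), and then runs Prokhorov and Arzel\`a--Ascoli inside $W$, exactly as you do; the only difference is that you unpack those two cited results into explicit net/diagonalization and gluing constructions. One caveat, which you share with the paper's appeal to Gromov's theorem: condition~\ref{item-ghpu-cmpt-h} as literally stated does not force $\op{diam}(X^j)$ to be bounded uniformly in $j$ (a two-point space $\{0,M\}$ is covered by two balls of any radius), so the uniform diameter bound you assert must be read into the hypothesis of uniform total boundedness, as in the standard definition.
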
 
\begin{proof}
Let $\frk X^n = (X^n,d^n,\mu^n,\eta^n)$ for $n\in\BB N$ be elements of $\mcl K$. 
By condition~\ref{item-ghpu-cmpt-h} and the Gromov compactness criterion~\cite[Theorem~7.4.15]{bbi-metric-geometry}, we can find a sequence $n_k \rta\infty$ and a compact metric space $(X,d)$ such that $(X^{n_k} ,d^{n_k} ) \rta (X,d)$ in the Gromov-Hausdorff topology. 
By~\cite[Lemma~A.1]{gpw-metric-measure} (or Proposition~\ref{prop-ghpu-embed} applied with $\mu  = 0$ and $\eta$ a constant curve) we can find a compact metric space $(W ,D)$ and isometric embeddings $X^{n_k} \rta W$ for $k\in\BB N$ and $X\rta W$ such that if we identify $X^{n_k}$ and $X$ with their embeddings, we have $X^{n_k} \rta X$ in the $D$-Hausdorff metric. 

By conditions~\ref{item-ghpu-cmpt-p} and~\ref{item-ghpu-cmpt-u}, the Prokhorov theorem, and the Arz\'ela-Ascoli theorem,
after possibly passing to a further subsequence we can find a finite Borel measure $\mu$ on $X$ and a curve $\eta$ in $X$ such that $\mu^{n_k} \rta \mu$ in the $D$-Prokhorov metric and $\eta^{n_k} \rta \eta$ in the $D$-uniform metric as $k \rta \infty$. 
Therefore $\frk X^{n_k} \rta \frk X$ in the GHPU metric.
\end{proof}

\begin{lem}[Completeness] \label{prop-ghpu-complete}
The pseudometric space $(\BB M^{\op{GHPU}} , \BB d^{\op{GHPU}})$ is complete. 
\end{lem}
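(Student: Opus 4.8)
The plan is to show that a $\BB d^{\op{GHPU}}$-Cauchy sequence has a convergent subsequence, which together with the triangle inequality (Lemma~\ref{prop-ghpu-triangle}) forces convergence of the whole sequence. So let $\frk X^n = (X^n,d^n,\mu^n,\eta^n)$ be a Cauchy sequence in $\BB M^{\op{GHPU}}$. First I would pass to a rapidly converging subsequence, say one with $\BB d^{\op{GHPU}}(\frk X^{n_k},\frk X^{n_{k+1}}) \leq 2^{-k}$, and — just as in the proof of Proposition~\ref{prop-ghpu-embed} — use Lemma~\ref{prop-ghpu-coprod} to choose metrics on $X^{n_k} \sqcup X^{n_{k+1}}$ with small GHPU distortion, then glue them all together into a single metric space $(W,D)$ in which $X^{n_k} \to X^{n_k+1}$ in $D$-Hausdorff, $\mu^{n_k} \to \mu^{n_{k+1}}$ in $D$-Prokhorov, and $\eta^{n_k} \to \eta^{n_{k+1}}$ in $D$-uniform; after passing to the metric completion we may take $W$ complete.

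The main work is then to produce a limiting $4$-tuple inside $W$ and check it is compact. The idea is to verify that the images $\{X^{n_k}\}$ form a Cauchy sequence of closed subsets of $W$ with respect to the $D$-Hausdorff distance — this follows because consecutive Hausdorff distances are summable — and similarly that $\{\mu^{n_k}\}$ is $D$-Prokhorov-Cauchy among finite measures on $W$ and $\{\eta^{n_k}\}$ is $D$-uniform-Cauchy in $C_0(\BB R,W)$. Since $W$ is complete, the space of nonempty closed bounded subsets of $W$ under Hausdorff distance is complete, so $X^{n_k} \to X$ for some closed set $X \subset W$; likewise $\mu^{n_k} \to \mu$ for a finite Borel measure $\mu$ on $W$ (necessarily supported on $X$), and $\eta^{n_k} \to \eta$ uniformly for some $\eta \in C_0(\BB R,W)$ whose image lies in $X$. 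The point where I would be most careful is \emph{compactness of $X$}: this is exactly where I would invoke Lemma~\ref{prop-ghpu-compact-cond}. A Cauchy sequence is bounded, so $\sup_k \mu^{n_k}(X^{n_k}) < \infty$ gives condition~\ref{item-ghpu-cmpt-p}; the $D$-uniform convergence of the $\eta^{n_k}$ gives a single modulus of continuity and a single tail-stabilization bound, which is condition~\ref{item-ghpu-cmpt-u}; and the fact that the $X^{n_k}$ are mutually close in Hausdorff distance while each individual $X^{n_k}$ is compact (hence totally bounded) gives uniform total boundedness, condition~\ref{item-ghpu-cmpt-h}. Hence $\{\frk X^{n_k}\}$ has a GHPU-convergent subsequence; combined with the concrete $D$-HPU convergence just established, its limit is $\frk X := (X, D|_X, \mu, \eta) \in \BB M^{\op{GHPU}}$, and $\frk X^{n_k} \to \frk X$ in $\BB d^{\op{GHPU}}$ by Proposition~\ref{prop-ghpu-embed} (the $D$-HPU convergence bounds $\BB d^{\op{GHPU}}(\frk X^{n_k},\frk X)$ by the distortion of the inclusions).

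Alternatively, and perhaps more cleanly, once the glued space $(W,D)$ is in hand one can argue purely via the compactness criterion: the sequence $\{\frk X^{n_k}\}$, embedded in $W$, satisfies the three hypotheses of Lemma~\ref{prop-ghpu-compact-cond} by the boundedness considerations above, so it has a GHPU-convergent subsequence $\frk X^{n_{k_j}} \to \frk X$. Finally, for the full sequence: given $\ep>0$, choose $j$ with $\BB d^{\op{GHPU}}(\frk X^{n_{k_j}},\frk X) < \ep$ and (by Cauchyness) $N$ with $\BB d^{\op{GHPU}}(\frk X^m,\frk X^{n_{k_j}}) < \ep$ for all $m \geq N$; then $\BB d^{\op{GHPU}}(\frk X^m,\frk X) < 2\ep$ for $m \geq N$ by Lemma~\ref{prop-ghpu-triangle}. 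Thus every Cauchy sequence converges, proving completeness. The one genuinely delicate point to get right in writing this up is ensuring the limiting measure and curve really are supported on the limiting set $X$ and that $\eta \in C_0(\BB R,X)$ — but this is automatic from $D$-uniform and $D$-Prokhorov convergence together with $X^{n_k}\to X$ in $D$-Hausdorff.
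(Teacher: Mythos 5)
Your proposal is correct and, in its ``alternatively'' form, is exactly the paper's proof: the paper simply observes that a Cauchy sequence satisfies the three hypotheses of Lemma~\ref{prop-ghpu-compact-cond} (boundedness of masses, uniform total boundedness via mutual Hausdorff closeness, equicontinuity), extracts a GHPU-convergent subsequence, and concludes by the Cauchy condition and the triangle inequality. Your first, more constructive route through an explicit glued space $(W,D)$ is also fine but is more machinery than the paper uses.
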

\begin{proof}
Let $\frk X^n = (X^n , d^n, \mu^n,\eta^n)$ for $n\in\BB N$ be a Cauchy sequence with respect to $\BB d^{\op{GHPU}}$. It is clear that $\mcl K =\{\frk X^n\}_{n\in\BB N}$ satisfies the conditions of Lemma~\ref{prop-ghpu-compact-cond}, so has a convergent subsequence. The Cauchy condition implies that in fact the whole sequence converges. 
\end{proof}

Next we check separability.

\begin{lem}[Separability] \label{prop-ghpu-separable}
The space $\BB M^{\op{GHPU}}$ with the metric $\BB d^{\op{GHPU}}$ is separable.
\end{lem}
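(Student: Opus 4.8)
The plan is to exhibit an explicit countable dense subset of $\BB M^{\op{GHPU}}$ built out of finite combinatorial data. First I would reduce to finite metric spaces: given an arbitrary $\frk X = (X,d,\mu,\eta)\in\BB M^{\op{GHPU}}$ and $\ep>0$, use compactness of $X$ to choose a finite $\ep$-net $\{x_1,\dots,x_N\}\subset X$, and let $X^\ep = \{x_1,\dots,x_N\}$ with $d^\ep = d|_{X^\ep}$. Then I would approximate the three pieces of data on this finite space: replace $\mu$ by a finite atomic measure $\mu^\ep = \sum_{i} \mu(A_i)\,\delta_{x_i}$, where $\{A_i\}$ is a Borel partition of $X$ with $A_i \subset B_\ep(x_i;d)$ (so that $\mu^\ep$ is within Prokhorov distance $\ep$ of $\mu$ inside $X$); and replace $\eta$ by a curve $\eta^\ep$ in $X^\ep$ that is piecewise constant on a fine partition of a large compact time interval, taking at each time the nearest net point to $\eta$ (using uniform continuity of $\eta$, i.e. the $C_0$ condition, so that $\eta^\ep$ is within $d$-uniform distance $\ep$ of $\eta$; for $|t|$ large both curves are essentially constant). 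Identifying $X^\ep$ as a subset of $X$ and applying Proposition~\ref{prop-ghpu-embed} (with the inclusion $X^\ep \hookrightarrow X$, or directly from Lemma~\ref{prop-ghpu-coprod}) gives $\BB d^{\op{GHPU}}(\frk X , \frk X^\ep) \preceq \ep$, where $\frk X^\ep = (X^\ep , d^\ep , \mu^\ep , \eta^\ep)$.

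Next I would make the finite data rational. A finite metric space on $N$ points is determined by the $\binom N2$ pairwise distances; perturbing each distance by at most $\delta$ changes the quotient by at most $O(\delta)$ in the Hausdorff (and hence GHPU) sense, and for $\delta$ small relative to the distances the perturbed numbers still satisfy the triangle inequalities, so we may take all pairwise distances rational. Similarly, replacing each mass $\mu(A_i)$ by a nearby rational changes $\mu^\ep$ by a small amount in total variation, hence in Prokhorov distance; and the curve $\eta^\ep$ is already described by finitely many data, namely the rational endpoints of the time partition together with the label in $\{1,\dots,N\}$ assigned on each piece — and we may take the partition times rational. (One should also arrange $\eta^\ep$ to be in $C_0(\BB R ; X^\ep)$, which is automatic since it is eventually constant.) After these replacements we land in a countable set $\mcl D \subset \BB M^{\op{GHPU}}$: for each $N\in\BB N$, each symmetric $N\times N$ array of positive rationals satisfying the triangle inequality (with zero diagonal), each tuple of $N$ nonnegative rationals, and each finite sequence of (rational time, index) pairs, we get one element of $\mcl D$. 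This $\mcl D$ is a countable union of countable sets, hence countable, and by the two approximation steps and the triangle inequality (Lemma~\ref{prop-ghpu-triangle}) it is dense.

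The main obstacle I anticipate is bookkeeping around the curve: one must check that the piecewise-constant nearest-point approximation $\eta^\ep$ genuinely lies in $C_0(\BB R ; X^\ep)$ and is $d$-uniformly close to $\eta$ \emph{simultaneously for all $t\in\BB R$}, which is exactly where the $C_0$ tail condition ($d(\eta(\pm t),\eta(\pm T))\le\ep$ for $t\ge T$) does the work — outside a compact time window we simply declare $\eta^\ep$ constant equal to the net point nearest $\eta(\pm T)$. A minor secondary point is verifying that small rational perturbations of a finite metric preserve the triangle inequality, which is immediate once $\delta$ is taken smaller than a third of the minimal nonzero distance. Everything else — that Prokhorov distance between a measure and its atomic coarsening is controlled by the mesh, that total-variation perturbations dominate Prokhorov perturbations, and that Proposition~\ref{prop-ghpu-embed} converts ``close within a common space'' into ``close in $\BB d^{\op{GHPU}}$'' — is routine.
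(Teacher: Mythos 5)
There is a genuine gap, and it is exactly the one the paper flags before its own proof: your approximating objects $\frk X^\ep = (X^\ep , d^\ep , \mu^\ep , \eta^\ep)$ are not elements of $\BB M^{\op{GHPU}}$. By definition the curve in a GHPU space must lie in $C_0(\BB R , X)$, i.e.\ it must be a \emph{continuous} map $\BB R \rta X$. A finite metric space carries the discrete topology, so its only continuous curves are constants ($\BB R$ is connected, hence the image must be a connected subset of $X^\ep$, hence a single point). Your piecewise-constant nearest-net-point curve $\eta^\ep$ jumps between distinct net points and is therefore discontinuous at the partition times; no choice of partition fixes this. You do raise a concern about $\eta^\ep$ belonging to $C_0(\BB R ; X^\ep)$, but you address only the tail condition at $t = \pm\infty$ (``automatic since it is eventually constant''), which is not the issue — the issue is continuity at finite times. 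Consequently the countable family $\mcl D$ you construct consists (apart from the constant-curve members) of objects outside the space, and the density argument does not go through. Approximation by finite metric spaces simply cannot work for any $\frk X$ whose curve $\eta$ is non-constant.

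The paper's proof repairs precisely this step. After choosing the finite set $A$ (an $\ep$-net, the support of an atomic approximation of $\mu$, and finitely many sample points $y_k = \eta(k/K)$ of the curve), it isometrically embeds $(A , d|_A)$ into $(\BB R^N , L^\infty)$ with $N = \#A$ (the Fr\'echet--Kuratowski embedding), and then \emph{adds to the space} the piecewise linear segments joining consecutive images $\iota(y_{k-1}) , \iota(y_k)$. The approximating space is $\wh X = \iota(A) \cup \wh\eta$, which is compact and supports a genuinely continuous curve $\wh\eta$ that stays within distance $\ep$ of the sampled points; rationalizing the coordinates, masses, and speeds then yields a countable dense family. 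If you want to keep the spirit of your argument, the minimal fix is to replace ``finite metric space with a piecewise constant curve'' by ``finite set embedded in $(\BB Q^N , L^\infty)$ together with finitely many linear segments carrying a piecewise linear curve,'' and to compare $\frk X$ with this enlarged object via a metric on the disjoint union that glues along $A \cong \iota(A)$, as in the paper. Your remaining steps (Prokhorov control of the atomic coarsening, rational perturbations, the triangle inequality via Lemma~\ref{prop-ghpu-triangle}) are fine.
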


The proof of Lemma~\ref{prop-ghpu-separable} is slightly more difficult than one might expect. The reason for this is that we cannot simply approximate elements of $\BB M^{\op{GHPU}}$ by finite metric spaces, since such spaces do not admit non-trivial continuous paths. We get around this as follows. Given $\frk X = (X, d , \mu , \eta) \in  \BB M^{\op{GHPU}}$, we first find a finite subset $A$ of $X$ equipped with a measure $ \mu_0$ such that $(A, d|_{A} , \mu_0)$ closely approximates $(X,d,\mu)$ it in the Hausdorff and Prokhorov metrics. We then isometrically embed $(A,d|_A)$ into (a very high dimensional) Euclidean space  equipped with the $L^\infty$ distance and draw in a piecewise linear path which approximates $\eta$. 
 
\begin{proof}[Proof of Lemma~\ref{prop-ghpu-separable}]
Let $\BB F$ be the set of $(X , d , \mu , \eta) \in \BB M^{\op{GHPU}}$ for which the following is true.
\begin{itemize}
\item $X$ is a subset of $\BB R^N$ for some $N\in\BB N$ and $d$ is the restriction of the $L^\infty$ metric $d^\infty$ on $\BB R^N$ (i.e.\ $d^\infty(z,w) = \max_{i\in [1,N]_{\BB Z}} |z_i - w_i|$).
\item $X$ is the union of finitely many points in $\BB Q^N$ with and finitely many linear segments with endpoints in~$\BB Q^N$. 
\item The measure $\mu$ is supported on $X\cap \BB Q^N$, and $\mu(x) \in \BB Q \cap [0,\infty) $ for each $x\in X\cap \BB Q^N$. 
\item The curve $\eta$ is the concatenation of finitely many (possibly degenerate) linear segments with endpoints in $\BB Q^N$, each traversed at a constant $d^\infty$-speed which belongs to $\BB Q\cap [0,\infty)$. 
\end{itemize} 
It is clear that $\BB F$ is countable. We claim that $\BB F$ is dense in $\BB M^{\op{GHPU}}$. 
It is clear that $\BB F$ is dense in the set $\wh{\BB F} \subset \BB M^{\op{GHPU}}$ which is defined in the same manner as $\BB F$ but with every instance of $\BB Q$ replaced by $\BB R $. Hence it suffices to show that $\wh{\BB F}$ is dense in $\BB M^{\op{GHPU}}$. 
 
Suppose that we are given $\frk X = (X , d,\mu,\eta) \in \BB M^{\op{GHPU}}$ and $\ep > 0$. We will construct $\wh{\frk X}  = (\wh X , \wh d , \wh\mu ,\wh\eta) \in \wh{\BB F}$ which approximates $X$ in the GHPU sense.  
We first define a finite subset $A$ of $X$ as follows.
\begin{itemize}
\item Let $A_1$ be a finite $\ep$-dense subset of $X$.
\item Let $\mu_0$ be a finitely supported measure on $X$ with $\BB d^{\op{P}}_d(\mu, \mu_0) \leq \ep$ (which can be obtained, e.g., by sampling $M$ i.i.d.\ points from $\mu$ for $M$ large and defining the mass of each to be $\mu(X)/M$). Let $A_2$ be the support of $\mu$.
\item Let $K \in \BB N$ be chosen so that $d(\eta(s) , \eta(t)) \leq \ep$ whenever $s,t\in \BB R$ with $|t-s| \leq 1/K$ and $d(\eta(\pm t) , \eta(\pm\delta^{-1})) \leq \ep$ whenever $t \geq K$. For $k \in [-K^2 , K^2 ]_{\BB Z}$ let $y_k := \eta(k/K)$. Let
\eqbn
A_3 := \{y_{-K^2} ,\dots , y_{K^2} \} .
\eqen
\item Let $A := A_1\cup A_2\cup A_3$.
\end{itemize}

It is not hard to see (and is proven, e.g., in~\cite[Theorem~15.7.1]{mat-discrete-geo}) that there is an isometric embedding $\iota$ of the metric space $(A , d|_{A})$ into $(\BB R^N ,d^\infty)$ for $N = \# A$ (here we recall that $d^\infty$ is the $L^\infty$ metric on $\BB R^N$).   
For $k\in [-K^2,K^2]_{\BB Z}$, let $ \wh\eta_k$ be the straight line path in $\BB R^N$ from $\iota(y_{k-1})$ to $\iota(y_k)$ with constant $d^\infty$-speed which is traversed in $1/K$ units of time. By our choice of the $y_k$'s, $\wh\eta_k \in B_\ep(\iota(y_k) ; d^\infty)$. 
Let $\wh\eta$ be the concatenation of the paths $\wh \eta_k$.

Let $\wh X := \iota(A) \cup \wh \eta$. Let $\wh d := d^\infty|_{\wh X}$. Let $\wh\mu$ be the measure on $\wh X$ which is the pushforward of $\mu_0$ under $\iota$. Then $\wh{\frk X} := (\wh X , \wh d, \wh\mu,\wh\eta) \in \wh{\BB F}$. 
 
It remains only to compare $\frk X$ with $\wh{\frk X}$. Let $W$ be the set obtained from $X\sqcup \wh X$ by identifying $A \subset X$ with $\iota(A) \subset \wh X$. Let $D$ be the metric on $W$ which restricts to $d$ (resp.\ $\wh d$) on $X$ (resp.\ $\wh X$) and which is defined for $x\in X$ and $\wh x \in \wh X$ by
\eqbn
d_Z(x ,\wh x) = d_Z(\wh x, x) = \inf_{a \in A} \left( d(x,a) + \wh d(\iota(a) , \wh x) \right) .
\eqen
Note that this is well defined and satisfies the triangle inequality since $\iota$ is an isometry. 
By our choices of $A$, $\mu_0$, $K$, and the path $\wh\eta$, it follows that the GHPU distortion of $(W,D)$ and the natural inclusions of $X$ and $\wh X$ into $W$ is at most $4\ep$, so since $\ep > 0$ is arbitrary we obtain the desired separability.
\end{proof}

\begin{proof}[Proof of Proposition~\ref{prop-ghpu-metric}]
This follows by combining Lemmas~\ref{prop-ghpu-triangle},~\ref{prop-ghpu-pos-def},~\ref{prop-ghpu-complete}, and~\ref{prop-ghpu-separable}. 
\end{proof}

\subsection{Proofs for the local GHPU metric}
\label{sec-ghpu-local}

In this subsection we will prove Propositions~\ref{prop-local-ghpu-metric} and~\ref{prop-ghpu-embed-local}. 
These statements will, for the most part, be deduced from the analogous statements in the compact case proven in Section~\ref{sec-ghpu-properties}.
We first state a lemma which will enable us to relate GHPU convergence and local GHPU convergence. 
For the statement of this lemma and in what follows, it will be convenient to have the following definition.

\begin{defn}[Good radius] \label{def-good-radius}
Let $\frk X = (X,d,\mu,\eta) \in \BB M_\infty^{\op{GHPU}}$ and $r>0$. We say that~$r$ is a \emph{good radius} for~$\frk X$ if
\eqb  \label{eqn-no-bdy-mass}
\mu \left( \bdy B_r(\eta (0) ;d  )  \right) = 0  \quad \op{and} \quad |\eta^{-1}\left( \bdy B_r(\eta(0) ; d) \right)| = 0,
\eqe 
where $|\eta^{-1}\left( \bdy B_r(\eta(0) ; d) \right)|$ is the Lebesgue measure of $\eta^{-1}\left( \bdy B_r(\eta(0) ; d) \right)$.
\end{defn}
Since the sets $ \bdy B_r(\eta(0) ; d)$ for $r>0$ are disjoint, it follows that all but countably many radii are good.

\begin{lem} \label{prop-ball-ghpu-conv}
Let $\frk X   = (X   , d , \mu   , \eta  )  \in \BB M^{\op{GHPU}}$ and suppose that for each $\zeta>0$, each point $x\in X$ can be joined to $\eta(0)$ by a path of $d$-length at most $d(\eta(0) , x) + \zeta$. 
Let $R > r > 0$ and suppose that the radius $r$ is good, in the sense of~\eqref{eqn-no-bdy-mass}. 
For each $\ep> 0$, there exists $\delta> 0$ depending only on $\ep$ and the $R$-truncation $\frk B_R\frk X $ (Definition~\ref{def-ghpu-truncate}) such that the following is true.  
Let $\wt{\frk X}  = (\wt X   , \wt d , \wt\mu   , \wt\eta  ) \in \BB M^{\op{GHPU}} $ and suppose that
$d_\sqcup$ is a metric on $X\sqcup \wt X$ which restricts to $d$ on $X$ and $\wt d$ on $\wt X$ and whose GHPU distortion (recall~\eqref{eqn-ghpu-coprod-var}) satisfies $\op{Dis}_{\frk X , \wt{\frk X}}^{\op{GHPU},\sqcup}(d_\sqcup) \leq \delta$. 
Then 
\eqb \label{eqn-ball-ghpu-conv}
\op{Dis}_{\frk B_r\frk X , \frk B_r\wt{\frk X}}^{\op{GHPU},\sqcup}(d_\sqcup) \leq \ep .
\eqe 
\end{lem}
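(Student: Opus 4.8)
The plan is to show that the $r$-truncated space $\frk B_r\frk X$ is a ``continuous function'' of $\frk X$ with respect to the GHPU distortion, uniformly over the target $\wt{\frk X}$, provided $r$ is a good radius. The starting observation is that since $\op{Dis}_{\frk X,\wt{\frk X}}^{\op{GHPU},\sqcup}(d_\sqcup)\leq\delta$, within $(X\sqcup\wt X,d_\sqcup)$ the set $\wt X$ is within $d_\sqcup$-Hausdorff distance $\delta$ of $X$, the measures $\mu,\wt\mu$ are within $d_\sqcup$-Prokhorov distance $\delta$, and $\eta,\wt\eta$ are within $d_\sqcup$-uniform distance $\delta$. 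I want to conclude from this that the three truncated objects — the ball $B_r(\eta(0);d)$ versus $B_r(\wt\eta(0);\wt d)$, the restricted measures, and the truncated curves $\frk B_r\eta$ versus $\frk B_r\wt\eta$ — are all $\ep$-close in the same embedding $d_\sqcup$. Note $d_\sqcup(\eta(0),\wt\eta(0))\leq\delta$ already.

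The key quantitative input is a modulus-of-continuity statement: because $r$ is a good radius for $\frk X$, we have $\mu(\bdy B_r(\eta(0);d))=0$ and $|\eta^{-1}(\bdy B_r(\eta(0);d))|=0$. By monotone convergence (applied to the nested sets $B_{r+s}(\eta(0);d)\setminus B_{r-s}(\eta(0);d)$ as $s\downarrow 0$) there exists $s_0=s_0(\ep,\frk B_R\frk X)>0$ with $s_0 < R-r$ such that both $\mu\big(B_{r+s_0}(\eta(0);d)\setminus B_{r-s_0}(\eta(0);d)\big)\leq\ep$ and $\big|\eta^{-1}\big(B_{r+s_0}(\eta(0);d)\setminus B_{r-s_0}(\eta(0);d)\big)\big|\leq\ep$. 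This $s_0$ depends only on $\ep$ and the truncation $\frk B_R\frk X$ since everything relevant happens inside $B_R(\eta(0);d)$. Now set $\delta$ much smaller than $s_0$ and $\ep$; the length-space-type hypothesis on $\frk X$ (each point joined to $\eta(0)$ by a near-geodesic) guarantees that balls are ``thick'' — a point at $d$-distance slightly more than $r$ from $\eta(0)$ still has nearby points at distance less than $r$ — which is what lets Hausdorff-closeness of $X$ and $\wt X$ upgrade to Hausdorff-closeness of the two balls.

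The three estimates then go as follows. \textbf{Hausdorff:} Any point of $B_r(\wt\eta(0);\wt d)$ is within $\delta$ of a point of $X$, which (using $d_\sqcup(\eta(0),\wt\eta(0))\le\delta$ and the triangle inequality) lies in $B_{r+2\delta}(\eta(0);d)$, hence by the near-geodesic property within $3\delta$ of a point of $B_r(\eta(0);d)$; the reverse direction is symmetric using that $\wt X$ is also near-geodesic in the relevant sense, or one routes through a point of $X$ close to the center — here the good-radius thickness via $s_0$ controls the boundary layer. \textbf{Prokhorov:} $\BB d^{\op P}_{d_\sqcup}(\mu|_{B_r(\eta(0);d)},\wt\mu|_{B_r(\wt\eta(0);\wt d)})\leq\ep$ because the unrestricted measures are $\delta$-Prokhorov-close, and restricting to the two balls only moves mass lying in the $2\delta$-neighborhood of $\bdy B_r(\eta(0);d)$, which has $\mu$-mass $\leq\ep$ by choice of $s_0$ (and $\wt\mu$-mass $\leq\ep+(\text{small})$ by Prokhorov-closeness). \textbf{Uniform:} The truncation times satisfy $\ul\tau_r^{\wt\eta},\ol\tau_r^{\wt\eta}$ are within a controlled amount of $\ul\tau_r^\eta,\ol\tau_r^\eta$ — this is where a little care is needed because the hitting times $\inf\{t>0:d(\eta(0),\eta(t))=r\}$ need not be continuous in general, but the good-radius condition $|\eta^{-1}(\bdy B_r(\eta(0);d))|=0$ combined with $s_0$ forces $\eta$ to cross the $r$-sphere ``cleanly,'' so $|\ol\tau_r^{\wt\eta}-\ol\tau_r^\eta|$ is small, and then $\BB d^{\op U}_{d_\sqcup}(\frk B_r\eta,\frk B_r\wt\eta)\leq\ep$ follows from $\delta$-uniform-closeness of $\eta,\wt\eta$ plus equicontinuity of $\eta$ on the compact truncation interval. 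Summing the three bounds gives \eqref{eqn-ball-ghpu-conv} after renaming constants.

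The main obstacle I anticipate is the \textbf{uniform/curve part}: controlling the truncation times $\ol\tau_r$ and $\ul\tau_r$ under perturbation. Unlike the measure, where "zero boundary mass" straightforwardly gives a modulus via monotone convergence, the truncation time is defined by a first-hitting condition, and a curve can linger near the $r$-sphere or oscillate across it. The good-radius hypothesis $|\eta^{-1}(\bdy B_r(\eta(0);d))|=0$ is exactly the right tool — it rules out the curve spending positive time on the sphere — but turning it into the needed explicit $\delta$ (depending only on $\ep$ and $\frk B_R\frk X$) requires invoking that $B_R(\eta(0);d)$ is compact so $\eta|_{[\ul\tau_R^\eta,\ol\tau_R^\eta]}$ is uniformly continuous, and then a compactness/monotone-convergence argument to produce a single $s_0$ that works. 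The Hausdorff direction "$B_r(\eta(0);d)\subset$ neighborhood of $B_r(\wt\eta(0);\wt d)$" also needs the near-geodesic hypothesis essentially, which is why the lemma assumes it; without it a ball could have an isolated outer shell that the perturbed ball misses entirely.
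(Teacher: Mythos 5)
Your overall strategy --- reduce everything to the common embedding $d_\sqcup$, use the near-geodesic hypothesis on $X$ to stabilize the Hausdorff part, and use a modulus $s_0$ coming from the good-radius condition for the Prokhorov part --- is the natural one, and it is evidently what the authors intend (they omit the proof, calling it ``a straightforward but tedious application of the definitions together with the triangle inequality''), so there is no written proof to compare against. The Hausdorff and Prokhorov estimates are essentially right as you sketch them, with one caveat: $\wt X$ carries no length-space hypothesis, so of your two suggested routes for the inclusion of $B_r(\eta(0);d)$ in a small neighborhood of $B_r(\wt\eta(0);\wt d)$ only the second is available --- retract $x$ along a near-geodesic in $X$ to a point $x'$ with $d(\eta(0),x')\le r-3\delta$, then jump across to $\wt X$.

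The genuine gap is in the uniform part, exactly where you anticipated trouble. The condition $|\eta^{-1}(\bdy B_r(\eta(0);d))|=0$ does \emph{not} force $\eta$ to ``cross the $r$-sphere cleanly,'' and it does not control the truncation times. Concretely: let $\eta$ touch $\{x\,:\,d(\eta(0),x)=r\}$ at a single instant $t_0$, retreat back toward $\eta(0)$, and only cross the sphere definitively at a much later time $t_1$. The preimage of the sphere near $t_0$ is a single point, so $r$ is still a good radius in the sense of Definition~\ref{def-good-radius}; yet $\ol\tau_r^\eta=t_0$, while an arbitrarily small uniform perturbation $\wt\eta$ that stays at distance strictly less than $r$ near $t_0$ has $\ol\tau_r^{\wt\eta}\geq t_1$. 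For $t\in(t_0,t_1)$ one then has $\frk B_r\eta(t)=\eta(t_0)$ on the sphere while $\frk B_r\wt\eta(t)=\wt\eta(t)$ is back near the center, so $\BB d^{\op{U}}_{d_\sqcup}\left(\frk B_r\eta,\frk B_r\wt\eta\right)\approx r$ no matter how small $\delta$ is. What actually controls matters is the sandwich $\tau^\eta_{r-2\delta}\leq\ol\tau_r^{\wt\eta}\leq\tau^\eta_{r+2\delta}$, where $\tau^\eta_s$ denotes the first time $\eta$ reaches $d$-distance $s$ from $\eta(0)$ (this follows from $|d(\eta(0),\eta(t))-\wt d(\wt\eta(0),\wt\eta(t))|\leq 2\delta$ and the intermediate value theorem); so the property you need is continuity of $s\mapsto\tau^\eta_s$ at $s=r$, equivalently that the running maximum of $t\mapsto d(\eta(0),\eta(t))$ has no interval of constancy at level $r$, on either side of $0$. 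This holds for all but countably many $r$ (it is the jump set of a monotone function), so the downstream use of the lemma is unaffected, but it is a different countable-exceptional-set condition from the two in Definition~\ref{def-good-radius}, and your proof must invoke it (or restrict to such $r$) rather than derive it from $|\eta^{-1}(\bdy B_r(\eta(0);d))|=0$.
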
 

The proof of Lemma~\ref{prop-ball-ghpu-conv} is a straightforward but tedious application of the definitions together with the triangle inequality, so we omit it. 
In light of Lemma~\ref{prop-ghpu-coprod}, Lemma~\ref{prop-ball-ghpu-conv} implies that if $\BB d^{\op{GHPU}}(\frk X ,\wt{\frk X}) < \delta$, then  $\BB d^{\op{GHPU}}\left(\frk B_r\frk X ,\frk B_r\wt{\frk X} \right) \leq \ep$.

As a consequence of Lemma~\ref{prop-ball-ghpu-conv}, we see that local GHPU convergence is really just GHPU convergence of curve-decorated metric measure spaces truncated at an appropriate sequence of metric balls. 

\begin{lem} \label{prop-local-ghpu-subsequence}
Let $\frk X $ and $\{\frk X^n\}_{n\in\BB N}$ be elements of $ \BB M_\infty^{\op{GHPU}}$. 
If $\{r_k\}_{k\in\BB N}$ is a sequence of positive real numbers tending to $\infty$ and the $r_k$-truncations (Definition~\ref{def-ghpu-truncate}) satisfy $\frk B_{r_k} \frk X^n \rta \frk B_{r_k} \frk X$ in the GHPU topology for each $k\in\BB N$, then $\frk X^n \rta \frk X$ in the local GHPU topology. 
Conversely, if $\frk X^n \rta \frk X$ in the local GHPU topology, then for each good radius $r$ (Definition~\ref{def-good-radius}), we have $\frk B_r \frk X^n \rta \frk B_r \frk X$ in the GHPU topology. 
\end{lem}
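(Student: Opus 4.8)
The plan is to read the definition of $\BB d_\infty^{\op{GHPU}}$ in~\eqref{eqn-ghpu-local-def} and reduce each direction to the behavior of $r \mapsto \BB d^{\op{GHPU}}(\frk B_r \frk X^n, \frk B_r \frk X)$. For the forward direction, suppose $r_k \to \infty$ and $\frk B_{r_k}\frk X^n \to \frk B_{r_k}\frk X$ in the GHPU topology for each $k$. Fix $\ep > 0$. First I would choose $k$ large enough that $\int_0^\infty e^{-r}(1 \wedge \BB d^{\op{GHPU}}(\cdot)) \,dr$ restricted to $r \geq r_k$ contributes at most $\ep/2$ (crudely, $\int_{r_k}^\infty e^{-r}\,dr \leq \ep/2$, using that the integrand is bounded by $e^{-r}$). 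Then for $r \leq r_k$ I would use the consistency relation $\frk B_r \frk B_{r_k}\frk X = \frk B_r \frk X$ (noted just before~\eqref{eqn-ghpu-local-def}) together with Lemma~\ref{prop-ball-ghpu-conv}: since $\frk B_{r_k}\frk X^n \to \frk B_{r_k}\frk X$ in GHPU, for all but countably many $r < r_k$ (the good radii for $\frk B_{r_k}\frk X$) we get $\frk B_r \frk X^n \to \frk B_r \frk X$ in GHPU, hence $1 \wedge \BB d^{\op{GHPU}}(\frk B_r\frk X^n, \frk B_r\frk X) \to 0$ pointwise a.e.\ on $[0, r_k]$. Since this quantity is bounded by $1$, dominated convergence gives $\int_0^{r_k} e^{-r}(1 \wedge \BB d^{\op{GHPU}}(\frk B_r\frk X^n, \frk B_r\frk X))\,dr \to 0$, which is $\leq \ep/2$ for $n$ large. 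Adding the two pieces yields $\BB d_\infty^{\op{GHPU}}(\frk X^n, \frk X) \leq \ep$ eventually. One subtlety: to invoke Lemma~\ref{prop-ball-ghpu-conv} I need the length-space hypothesis (each point joinable to $\eta(0)$ by a near-geodesic), which holds because $\frk X \in \BB M_\infty^{\op{GHPU}}$ is a length space and $\frk B_{r_k}\frk X$ is then an intrinsic ball, so its restricted metric is still (essentially) a length metric up to $\zeta$; I would spell this out or, more safely, observe that the hypotheses of Lemma~\ref{prop-ball-ghpu-conv} are stated exactly so as to cover truncations of elements of $\BB M_\infty^{\op{GHPU}}$.

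For the converse, suppose $\frk X^n \to \frk X$ in the local GHPU topology and let $r$ be a good radius for $\frk X$. I want $\frk B_r\frk X^n \to \frk B_r\frk X$ in GHPU. The idea is: pick $R > r$ that is also good for $\frk X$; since $\BB d_\infty^{\op{GHPU}}(\frk X^n,\frk X) \to 0$ and the integrand is nonnegative, for a.e.\ $r' \in (r, R)$ we have $\BB d^{\op{GHPU}}(\frk B_{r'}\frk X^n, \frk B_{r'}\frk X) \to 0$ along a subsequence — but this gives only a subsequence and only for a.e.\ radius, not for the specific $r$. To upgrade, I would argue by contradiction: if $\frk B_r\frk X^n \not\to \frk B_r\frk X$, pass to a subsequence along which $\BB d^{\op{GHPU}}(\frk B_r\frk X^n,\frk B_r\frk X) \geq c > 0$; along a further subsequence extract a GHPU-convergent limit of $\frk B_R\frk X^n$ (this requires a compactness input — either Lemma~\ref{prop-ghpu-compact-cond}, after checking the sequence $\{\frk B_R\frk X^n\}$ is uniformly totally bounded / has bounded mass / is equicontinuous, which follows from local HPU convergence via Proposition~\ref{prop-ghpu-embed-local}, or more directly by embedding everything in a common space $W$ via Proposition~\ref{prop-ghpu-embed-local} and reading off Hausdorff/Prokhorov/uniform convergence of the $R$-balls); call the limit $\frk Y$. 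By the forward direction applied with $r_k \to \infty$ chosen through good radii, and by uniqueness of limits in the local topology, $\frk Y = \frk B_R\frk X$. Then apply Lemma~\ref{prop-ball-ghpu-conv} with $R$ in the role of the large radius and $r$ the good radius: GHPU-closeness of $\frk B_R\frk X^n$ to $\frk B_R\frk X$ forces GHPU-closeness of $\frk B_r\frk X^n$ to $\frk B_r\frk X$, contradicting $\geq c$.

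Cleaner still for the converse: work inside the common boundedly compact space $(W,D)$ provided by Proposition~\ref{prop-ghpu-embed-local}, in which $\frk X^n \to \frk X$ in the $D$-local HPU sense. For a good radius $r$, $D$-local HPU convergence gives $B_r(\eta^n(0);d^n) \to B_r(\eta(0);d)$ in $D$-Hausdorff, $\mu^n|_{B_r} \to \mu|_{B_r}$ in $D$-Prokhorov (using $\mu(\bdy B_r(\eta(0);d)) = 0$), and $\eta^n|_{[a,b]} \to \eta|_{[a,b]}$ uniformly for every $[a,b]$; one then checks that the truncation times $\ul\tau_r^{\eta^n}, \ol\tau_r^{\eta^n}$ converge to $\ul\tau_r^\eta, \ol\tau_r^\eta$ (here the second good-radius condition $|\eta^{-1}(\bdy B_r(\eta(0);d))| = 0$, or rather the fact that $\eta$ does not linger on $\bdy B_r$, is what makes the hitting times stable), so $\frk B_r\eta^n \to \frk B_r\eta$ uniformly. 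Hence $\frk B_r\frk X^n \to \frk B_r\frk X$ in the $D$-HPU sense, and Proposition~\ref{prop-ghpu-embed} converts this to GHPU convergence. The main obstacle, and the step I would be most careful about, is precisely this stability of the truncation times and of the truncated curve under the limit — the good-radius condition is tailored to handle it, but making sure the curve doesn't re-enter $\bdy B_r$ in a way that destroys convergence of $\ol\tau_r^{\eta^n}$ needs a short argument (e.g.\ using that $\eta$ hits distance exactly $r$ at an isolated-from-the-left first time on each relevant excursion, or falling back on Lemma~\ref{prop-ball-ghpu-conv} which packages this).
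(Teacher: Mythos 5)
Your forward direction is essentially the paper's argument: apply Lemma~\ref{prop-ball-ghpu-conv} with $\frk B_{r_k}\frk X$ (for $r_k>r$) in place of $\frk X$ to get $\BB d^{\op{GHPU}}(\frk B_r\frk X^n,\frk B_r\frk X)\rta 0$ for every good radius $r$, then use dominated convergence in~\eqref{eqn-ghpu-local-def}; your explicit splitting of the integral at $r_k$ is just a hands-on version of this, and the length-space worry you raise is exactly why the lemma is applied to truncations $\frk B_{r_k}\frk X$ rather than to $\frk X$ itself.

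The converse is where you have a genuine problem. In your contradiction argument, the step ``by the forward direction applied with $r_k\rta\infty$ chosen through good radii, and by uniqueness of limits in the local topology, $\frk Y=\frk B_R\frk X$'' is not justified: uniqueness of local limits identifies the local limit of $\frk X^n$ as $\frk X$, but concluding that a GHPU-subsequential limit of the $R$-truncations equals $\frk B_R\frk X$ is precisely an instance of the converse you are trying to prove. Your ``cleaner'' alternative is circular within the paper's development: the hard direction of Proposition~\ref{prop-ghpu-embed-local} (local GHPU convergence implies a common space with $D$-local HPU convergence) is itself proven using the converse half of this lemma, so it cannot be invoked here; the same objection applies to using it as the compactness input in your first variant. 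The correct argument is much shorter and avoids both issues. Apply Lemma~\ref{prop-ball-ghpu-conv} with $R=r+1$ and with $\frk B_{R'}\frk X$, for arbitrary $R'>r+1$, playing the role of $\frk X$ in that lemma; the resulting $\delta$ depends only on $\ep$ and $\frk B_{r+1}\frk X$, hence is uniform in $R'$. Now observe that $\BB d_\infty^{\op{GHPU}}(\frk X^n,\frk X)\rta 0$ forces, for all large $n$, the existence of some $R'\geq r+1$ with $\BB d^{\op{GHPU}}(\frk B_{R'}\frk X^n,\frk B_{R'}\frk X)\leq\delta$ --- otherwise the integral in~\eqref{eqn-ghpu-local-def} would be bounded below by $e^{-(r+1)}(1\wedge\delta)>0$. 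Lemma~\ref{prop-ball-ghpu-conv} then yields $\BB d^{\op{GHPU}}(\frk B_r\frk X^n,\frk B_r\frk X)\leq\ep$. Your instinct that the good-radius condition is what stabilizes the truncation times is correct, but that bookkeeping is already packaged inside Lemma~\ref{prop-ball-ghpu-conv}; no separate HPU argument, compactness extraction, or appeal to Proposition~\ref{prop-ghpu-embed-local} is needed.
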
 
\begin{proof}
Let $\mcl R$ be the set of good radii $r>0$ for $\frk X$. Recall that $(0,\infty)\setminus \mcl R$ is countable. 

Suppose first that we are given a sequence $r_k\rta\infty$ such that $\frk B_{r_k} \frk X^n \rta \frk B_{r_k} \frk X$ in the GHPU topology for each $k\in\BB N$. By Lemma~\ref{prop-ball-ghpu-conv} applied with $\frk B_{r_k} \frk X$ for $r_k > r$ in place of $\frk X$, we find that for each $r\in \mcl R$, $\BB d^{\op{GHPU}}\left(\frk B_r \frk X^n ,\frk B_r \frk X \right) \rta 0$. By the dominated convergence theorem applied to the formula~\eqref{eqn-ghpu-local-def}, it follows that $\frk X^n\rta \frk X$ in the local GHPU topology. 

Conversely, suppose $\frk X^n\rta \frk X$ in the local GHPU topology and let $r \in \mcl R$. By Lemma~\ref{prop-ball-ghpu-conv} applied with $R = r +1$, and $  \frk B_R \frk X$ for $R >r  + 1$ in place of $\frk X$, for each $k\in\BB N$ and each $\ep >0$, there exists $\delta = \delta(k,\delta) > 0$ such that whenever $R \geq r +1$ and $\BB d^{\op{GHPU}}\left(\frk B_R \frk X^n ,\frk B_R \frk X \right)  \leq \delta$, we have $\BB d^{\op{GHPU}}\left(\frk B_r \frk X^n ,\frk B_r \frk X \right) \leq \ep$. 
Local GHPU convergence implies that for large enough $n\in\BB N$, there exists $R \geq r +1$ with $\BB d^{\op{GHPU}}\left(\frk B_R \frk X^n ,\frk B_R \frk X \right)  \leq \delta$. Hence $\frk B_r \frk X^n \rta \frk B_r \frk X$.
\end{proof}

\begin{proof}[Proof of Proposition~\ref{prop-ghpu-embed-local}]
It is clear from Lemma~\ref{prop-local-ghpu-subsequence} that the existence of $(W, D)$ and isometric embeddings as in the statement of the lemma implies that $\frk X^n\rta \frk X$ in the local GHPU topology.

Conversely, suppose $\frk X^n\rta \frk X$ in the local GHPU topology.
The proof of this direction is a generalization of that of Proposition~\ref{prop-ghpu-embed}. 
To lighten notation, for $r>0$ and $n\in\BB N$ we write
\eqbn
B_r^n  := B_r(\eta^n(0) ; d^n) \quad \op{and} \quad B_r := B_r(\eta (0) ; d ) .
\eqen 

Choose a sequence of good radii $r_k\rta\infty$ for $\frk X$. 
By Lemma~\ref{prop-local-ghpu-subsequence}, we have, in the notation of Definition~\ref{def-ghpu-truncate}, $\frk B_{r_k} \frk X^n \rta \frk B_{r_k} \frk X$ for each $n\in\BB N$.  
For each $k \in \BB N$, choose $N_k \in \BB N$ such that for $n\geq N_k$, we have $\BB d^{\op{GHPU}}_\infty\left(\frk B_{r_k} \frk X^n , \frk B_{r_k} \frk X \right) \leq 1/k$. For $n\in\BB N$, let $k_n$ be the largest $k \in \BB N$ such that $N_k \leq n$, and note that $k_n \rta \infty$ as $n\rta\infty$. 
 
By Lemma~\ref{prop-ghpu-coprod}, for each $n\in\BB N$ there exists a metric $\wt d_\sqcup^n$ on $B_{r_{k_n} } \sqcup B_{r_{k_n} }^n  $ which restricts to $d^n$ on $B_{r_{k_n}}^n$ and $d$ on $B_{r_{k_n}}$ and whose GHPU distortion is at most $1/k_n$. 
We now extend $\wt d_\sqcup^n$ to a metric $d_\sqcup^n$ on $X\sqcup X$ by the formula
\eqbn
d_\sqcup^n(x,y) := 
\begin{dcases}
d(x,y) \quad &x,y\in X \\
d^n(x,y) \quad &x,y \in X^n \\
\inf_{(u,v) \in B_{r_{k_n} \times B_{r_{k_n} }^n} } \left( d (x, u) + d^n( y,v) +  \wt d_\sqcup^n(u,v) \right) \quad &x \in X ,\, y \in X^n  
\end{dcases}
\eqen
together with the requirement that $d_\sqcup^n(y,x) = d_\sqcup^n(x,y)$ when $y\in X$ and $x\in X^n$. It is easily verified that $d_\sqcup^n$ is a metric on $X\sqcup X^n$ and that $d_\sqcup^n|_{ B_{r_{k_n}} \sqcup B_{r_{k_n}}^n} = \wt d_\sqcup^n$. 

As in the proof of Proposition~\ref{prop-ghpu-embed}, let $W:= X\sqcup \bigsqcup_{n=1}^\infty X^n$ and identify $X$ and each $X^n$ with its natural inclusion into $W$. 
We define a metric $D$ on $W$ as follows. 
If $x , y \in W$ such that $x,y\in X \sqcup X^n$ for some $n\in\BB N$, we set $D(x,y) = d_\sqcup^n(x,y)$. 
If $x \in X^n$ and $y\in X^m$ for some $n,m\in\BB N$, we set
\eqbn
D(x,y) = \inf_{u \in X} \left( d_\sqcup^n(x, u) + d_\sqcup^m(u , y) \right) .
\eqen
Then $D$ is a metric on $W$ which restricts to $d^n$ on each $X^n$ and to $d$ on $X$.

For each $n\in\BB N$, the restriction of $D$ to $B_{r_{k_n}} \sqcup B_{r_{k_n}}^n$ agrees with the corresponding restriction of $d_\sqcup^n$, which agrees with $\wt d_\sqcup^n$. Since the GHPU distortion of $\wt d_\sqcup^n$ is at most $1/k_n \rta 0$ and $r_{k_n}\rta\infty$ as $n\rta\infty$, it follows from Lemma~\ref{prop-ball-ghpu-conv} that $\frk X^n\rta \frk X$ in the $D$-local HPU topology. 
 
By possibly replacing $W$ with its metric completion, we can take $W$ to be complete. Since $(X,d)$ and each $(X^n,d^n)$ is boundedly compact (since they are locally compact length spaces and by the Hopf-Rinow Theorem), it follows easily from $D$-Hausdorff metric convergence of the metric balls $B_r(\eta^n(0) ; d^n)$ that each metric ball in $X$ with finite radius is totally bounded, hence compact. 
\end{proof}

Next we record a compactness criterion for the local GHPU metric which will be used to prove completeness.
 
\begin{lem}[Compactness criterion] \label{prop-ghpu-compact-cond-local}
Let $\mcl K$ be a subset of $ \BB M_\infty^{\op{GHPU}}$ and suppose that there is a sequence $r_k \rta\infty$ such that for each $k\in\BB N$, the set of $r_k  $-truncations $\frk B_{r_k}\mcl K = \{\frk B_{r_k} \frk X \,:\, \frk X \in \mcl K\}$ (Definition~\ref{def-ghpu-truncate}) satisfies the conditions of Lemma~\ref{prop-ghpu-compact-cond} with $\frk B_{r_k} \frk X$ in place of $\frk X$, i.e.\ $\frk B_{r_k} \mcl K$ is totally bounded, has bounded total mass, and is equicontinuous. Then every sequence in $\mcl K$ has a subsequence which converges with respect to the local GHPU metric.
\end{lem}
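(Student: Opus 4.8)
## Proof plan for Lemma \ref{prop-ghpu-compact-cond-local}

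The plan is to reduce the statement to the compact-case compactness criterion (Lemma \ref{prop-ghpu-compact-cond}) applied along the sequence of truncation radii $r_k$, followed by a diagonal extraction and an application of Lemma \ref{prop-local-ghpu-subsequence} to upgrade truncated GHPU convergence to local GHPU convergence. More precisely, let $\{\frk X^m\}_{m\in\BB N}$ be a sequence in $\mcl K$. For each fixed $k$, the hypothesis says that $\frk B_{r_k}\mcl K$ satisfies the three conditions of Lemma \ref{prop-ghpu-compact-cond}, so the sequence $\{\frk B_{r_k}\frk X^m\}_{m\in\BB N}$ in $\BB M^{\op{GHPU}}$ has a subsequence converging in the (compact) GHPU topology. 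First I would do this for $k=1$ to extract a subsequence $\mcl M_1 \subset \BB N$ along which $\frk B_{r_1}\frk X^m$ converges; then, working inside $\mcl M_1$, extract $\mcl M_2\subset \mcl M_1$ along which $\frk B_{r_2}\frk X^m$ also converges; and so on. Taking the diagonal subsequence $\mcl M$ (the $j$-th element of $\mcl M_j$), we obtain a single subsequence along which $\frk B_{r_k}\frk X^m$ converges in the GHPU topology for \emph{every} $k\in\BB N$ simultaneously.

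It then remains to produce a single limiting object $\frk X \in \BB M_\infty^{\op{GHPU}}$ whose $r_k$-truncations are the GHPU limits just obtained, and to invoke Lemma \ref{prop-local-ghpu-subsequence}. For the first part I would use the embedding result: by Proposition \ref{prop-ghpu-embed}, each of the countably many GHPU-convergent sequences $\{\frk B_{r_k}\frk X^m\}_{m\in\mcl M}$ can be realized inside a fixed compact metric space, and by a further diagonalization (or directly, by applying the coproduct construction of Lemma \ref{prop-ghpu-coprod} and Proposition \ref{prop-ghpu-embed} as in the proof of Proposition \ref{prop-ghpu-embed-local}) one can realize all of them, together with all the $\frk X^m$, inside a single boundedly compact metric space $(W,D)$ in a compatible way, so that $\frk B_{r_k}\frk X^m \to (\text{some }\frk Y_k)$ in the $D$-HPU sense for each $k$. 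The $\frk Y_k$ are then automatically consistent under truncation — $\frk B_{r_j}\frk Y_k = \frk Y_j$ for $j<k$ — because truncation is continuous on the relevant balls (this is exactly Lemma \ref{prop-ball-ghpu-conv}), so they glue to a single locally compact length space $\frk X=(X,d,\mu,\eta)\in\BB M_\infty^{\op{GHPU}}$ with $\frk B_{r_k}\frk X = \frk Y_k$; here one checks that $X=\bigcup_k B_{r_k}(\eta(0);d)$ is a length space and that $(X,d)$ is locally compact (hence boundedly compact by Hopf-Rinow), using that each $B_{r_k}$ is compact and the inclusions are isometric.

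Finally, since $\frk B_{r_k}\frk X^m \to \frk B_{r_k}\frk X$ in the GHPU topology for every $k\in\BB N$ along $\mcl M$ and $r_k\to\infty$, the first assertion of Lemma \ref{prop-local-ghpu-subsequence} gives $\frk X^m\to\frk X$ in the local GHPU topology along $\mcl M$, which is exactly the desired subsequential convergence.

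I expect the main obstacle to be the bookkeeping in the gluing step: verifying that the countably many compatible GHPU limits can be embedded in one common boundedly compact space and that the resulting glued object genuinely lies in $\BB M_\infty^{\op{GHPU}}$ (i.e.\ is a locally compact length space with the correct measure and curve, and with truncations matching the $\frk Y_k$). This is where Lemma \ref{prop-ball-ghpu-conv} — continuity of truncation — does the real work in showing the limits are consistent, and where one must be slightly careful that the radii $r_k$ can be taken to be good radii for the limit (which is automatic, since all but countably many radii are good, and one only needs the $r_k$ to be a sequence tending to $\infty$ for the final application of Lemma \ref{prop-local-ghpu-subsequence}). Everything else is a routine diagonal argument together with citations of Lemmas \ref{prop-ghpu-compact-cond}, \ref{prop-ghpu-coprod}, \ref{prop-ball-ghpu-conv}, \ref{prop-local-ghpu-subsequence}, and Proposition \ref{prop-ghpu-embed}.
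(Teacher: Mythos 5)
Your plan is correct, but it takes a genuinely different route from the paper's. The paper does not truncate-and-diagonalize: it first invokes the compactness criterion for the \emph{local Gromov--Hausdorff--Prokhorov} topology of Abraham--Delmas--Hoscheit~\cite[Theorem~2.9]{adh-ghp} to extract, along a single subsequence, a limiting locally compact pointed length space equipped with a measure; it then realizes this convergence inside one boundedly compact space $(W,D)$ via the local analog of Proposition~\ref{prop-ghpu-embed}, produces the limiting curve by Arz\'ela--Ascoli plus a diagonalization, and concludes with Lemma~\ref{prop-ghpu-embed-local}. Your argument instead stays entirely inside the GHPU framework: the compact-case criterion (Lemma~\ref{prop-ghpu-compact-cond}) applied to each $\frk B_{r_k}\mcl K$, a diagonal subsequence, a gluing of the truncated limits $\frk Y_k$, and then Lemma~\ref{prop-local-ghpu-subsequence}. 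What the external citation buys the paper is exactly the part you flag as the main obstacle: the limit arrives pre-packaged as a locally compact length space, so there is no need to verify that the direct limit of the $\frk Y_k$ is a length space and locally compact, nor that the $\frk Y_k$ are consistent under truncation --- which, as you note, requires perturbing the given $r_k$ to good radii of the limits and checking the hypotheses of Lemma~\ref{prop-ball-ghpu-conv} for the truncated limits (these need not themselves be length spaces). If you carry out your route, the length-space verification should be made explicit --- e.g.\ near-geodesics between points of $B_{r_j}$ in the approximating length spaces $X^m$ have length at most $2r_j+\ep$, hence stay in $B_{3r_j+\ep}$ and so in $B_{r_k}$ for large $k$, and subsequential limits of these paths give near-geodesics in the union. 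With those details filled in, your proof is complete and is arguably more self-contained than the paper's, at the cost of a heavier gluing step.
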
 
\begin{proof}
By the compactness criterion for the local Gromov-Hausdorff-Prokhorov topology~\cite[Theorem~2.9]{adh-ghp}, the set of metric measure spaces $\{(X,d,\mu , \eta )\,:\, (X,d,\mu,\eta) \in \mcl K\}$ is pre-compact with respect to the pointed local Gromov-Hausdorff-Prokhorov topology. 
Hence for any sequence $\{\frk X^n = (X^n,d^n,\mu^n,\eta^n)\}_{n\in\BB N}$ of elements of $\mcl K$, there exists a sequence $n_k\rta\infty$ and a locally compact pointed length space equipped with a finite measure $(X ,d ,\mu, x)$ such that $(X^{n_k} ,d^{n_k},\mu^{n_k},\eta^{n_k}(0)) \rta (X,d,\mu,x)$ in the pointed local Gromov-Hausdorff-Prokhorov topology.  

By the analog of Proposition~\ref{prop-ghpu-embed-local} for local Gromov-Hausdorff-Prokhorov convergence (which follows from Proposition~\ref{prop-ghpu-embed} by taking $\eta$ to be a constant curve) we can find a boundedly compact metric space $(W,D)$ and isometric embeddings of $(X^{n_k} ,d^{n_k})$ for $k\in\BB N$ and $(X,d)$ into $(W,D)$ such that if we identify these spaces with their embeddings, then $\eta^{n_k}(0)\rta x$, $B_r(\eta^{n_k}(0) ; d^{n_k}) \rta B_r(x ; d)$ in the $D$-Hausdorff metric for each $r > 0$, and $\mu^{n_k}|_{B_r(\eta^n(0) ;d^n)} \rta \mu|_{B_r(\eta(0) ;d)}$ in the $D$-Prokhorov metric for each $r>0$ such that $\mu\left(\bdy B_r(\eta(0) ;d)\right) = 0$.
By the Arz\'ela-Ascoli theorem and a diagonalization argument, after possibly passing to a further subsequence we can find a curve $\eta$ in $X$ such that $(X^{n_k} ,d^{n_k},\mu^{n_k},\eta^{n_k}) \rta (X,d,\mu,\eta)$ in the $D$-HPU sense (Definition~\ref{def-hpu-local}), so by Proposition~\ref{prop-ghpu-embed-local} $\frk X^{n_k} \rta (X,d,\mu,\eta)$ in the local GHPU metric.
\end{proof}

\begin{proof}[Proof of Proposition~\ref{prop-local-ghpu-metric}]
Symmetry and the triangle inequality are immediate from the formula~\eqref{eqn-ghpu-local-def} and the analogous properties for the GHPU metric, so $\BB d_\infty^{\op{GHPU}}$ is a pseudometric.

The fact that $\BB d_\infty^{\op{GHPU}}(\frk X_1,\frk X_2) = 0$ implies that $\frk X_1$ and $\frk X_2$ agree as elements of $\ol{\BB M}_\infty^{\op{GHPU}}$ follows from Proposition~\ref{prop-ghpu-embed-local}, Lemma~\ref{prop-isometry-limit}, and the same argument used to prove Lemma~\ref{prop-ghpu-pos-def}. 

Completeness follows from Lemma~\ref{prop-ghpu-compact-cond-local} (c.f.\ the proof of Lemma~\ref{prop-ghpu-complete}). 

Finally, we check separability. 
We know that $\BB M^{\op{GHPU}}$ is separable, so the set $\BB M^{\op{GHPU}}\cap \BB M_\infty^{\op{GHPU}}$ consisting of length spaces in $\BB M^{\op{GHPU}}$ is separable with respect to the GHPU metric. 
By Lemma~\ref{prop-local-ghpu-subsequence}, $\BB M^{\op{GHPU}}\cap \BB M_\infty^{\op{GHPU}}$ is also separable with respect to the local GHPU metric. 
So, it suffices to show that $\BB M^{\op{GHPU}}\cap \BB M_\infty^{\op{GHPU}}$ is dense in $\BB M_\infty^{\op{GHPU}}$. 

We cannot approximate an element of $\frk X = (X,d,\mu,\eta) \in \BB M_\infty^{\op{GHPU}}$ by the $r$-truncation $\frk B_r\frk X$ since $ B_r(\eta(0) ;d)  $ may not be a length space with the restricted metric. We instead construct a modified version of $\frk B_r\frk X$ which is a length space. 
Given $r > 0$, let $X_r$ be the quotient space of $B_r(\eta(0) ; d)$ under the equivalence relation which identifies $\bdy B_r(\eta(0) ; d)$ to a point. 
Let $\wt d_r$ be the quotient metric on $X_r$ and let $p_r : B_r(\eta(0); d) \rta X_r$ be the quotient map. 

We note that $\wt d_r$ is a metric, not a pseudometric since every point of $B_r(\eta(0) ;d) \setminus \bdy B_r(\eta(0) ; d)$ lies at positive distance from $\bdy B_r(\eta(0) ; d)$. Furthermore, the triangle inequality implies that the restriction of $\wt d_r$ to $p_r( B_{r/3}(\eta(0) ; d) )$ coincides with the corresponding restriction of $d$, pushed forward under $p_r$.  

Let $d_r$ be the smallest length metric on $X_r$ which is greater than or equal to $\wt d_r$. Equivalently, for $x,y\in X_r$, $d_r(x,y)$ is the infimum of the $\wt d_r$-lengths of paths from $x$ to $y$ contained in $X_r$. Then $X_r$ is complete and totally bounded with respect to $d_r$, so is compact with respect to $d_r$.

 Since $d$ is a length metric, we find that the restriction of~$d_r$ to $p_r( B_{r/3}(\eta(0) ; d) )$ coincides with the corresponding restriction of~$d$, pushed forward under~$p_r$. 

Let 
\eqbn
\frk X_r := \left( X_r , d_r , (p_r)_*\mu , p_r\circ \frk B_r \eta \right) ,
\eqen 
with $\frk B_r\eta$ as in Definition~\ref{def-ghpu-truncate}. 
Then $\frk X_r \in \BB M^{\op{GHPU}} \cap \BB M_\infty^{\op{GHPU}}$ and $\frk B_{r/3} \frk X_r$ agrees with $\frk B_{r/3} \frk X$ as elements of $\ol{\BB M}^{\op{GHPU}}$. 
Hence $\BB d_\infty^{\op{GHPU}}(\frk X , \frk X_r) \leq e^{-r/3}$. Since $r>0$ is arbitrary, we obtain the desired density. 
\end{proof}

\subsection{Conditions for GHPU convergence using the $m$-fold Gromov-Prokhorov topology}
\label{sec-ghpu-condition}

In this subsection we prove a lemma giving conditions for GHPU convergence which will be used in subsequent sections to prove convergence of uniform quadrangulations with boundary to the Brownian disk in the GHPU topology. To state the lemma we first need to consider a variant of the Gromov-Prokhorov topology~\cite{gpw-metric-measure}, which we now define. 

For $m\in\BB N$ let $\BB M^{\op{GP}}_m$ be the space of $(m+2)$-tuples $(X,d , \mu_1 , \dots, \mu_m)$ where $(X,d)$ is a separable metric space and $\{\mu_r\}_{  r\in [1,m]_{\BB Z}}$ are finite Borel measures on $X $. 
Given $k\in \BB N$ and $r \in [1,m]_{\BB Z}$ let $\{x^{j +  (r-1)k  } \}_{j\in [1, k]_{\BB Z}}$ be i.i.d.\  samples from $\mu_r$ and let $M_k(X,d, \mu_1,\dots ,\mu_m)$ be the $k m\times km$ matrix whose $i,j$th entry is $d(x^i , x^j)$ for $i , j \in [1,km]_{\BB Z}$.  
We define the \emph{$m$-fold Gromov-Prokhorov (GP) topology} on $\BB M^{\op{GP}}$ to be the weakest one for which the functional
\eqb \label{eqn-m-gp-functional}
 (X,d, \mu_1,\dots ,\mu_m) \mapsto \BB E\left[ \phi \left( M_k(X,d, \mu_1,\dots ,\mu_m)  , \mu_1(X) ,\dots  , \mu_m(X) \right) \right]
\eqe 
is continuous for each bounded continuous function $\phi : \BB R^{(km)^2} \times \BB R^m \rta \BB R$.  
Note that convergence in the $m$-fold GP topology is equivalent to convergence of each of these functionals. 

If $x \in X$ is a marked point, we set $x^0 = x$ and define $M_k^\bullet(X,d, \mu_1,\dots ,\mu_m , x)$ to be the be the random $(km+1) \times (km+1)$ matrix whose $(i,j)$th entry is $d(x^i , x^j)$ for $i,j \in [0,km]_{\BB Z}$, where $x^i$ for $i\in [1,km]_{\BB Z}$ is defined as above. We define the \emph{$m$-fold Gromov-Prokhorov (GP) topology} on pointed metric spaces with $m$ finite Borel measures to be the weakest one for which the functional
\eqb \label{eqn-m-gp-functional-pt}
 (X,d, \mu_1,\dots ,\mu_m ,x) \mapsto \BB E\left[ \phi \left( M_k^\bullet(X,d, \mu_1,\dots ,\mu_m , x)  , \mu_1(X) ,\dots  , \mu_m(X) \right) \right] 
\eqe 
is continuous for each bounded continuous function $\phi : \BB R^{(km+1)^2} \times \BB R^m \rta \BB R$.  

Like the Gromov-Prokhorov topology, the $m$-fold Gromov-Prokhorov topology also separates points. 
 
\begin{lem} \label{prop-2gp-determined}
Let $(X , d , \mu_1 , \dots , \mu_m)$ and $(\wt X , \wt d , \wt\mu_1 , \dots  , \wt\mu_m)$ be elements of $\BB M^{\op{GP}}_m$.  Suppose that the functionals~\eqref{eqn-m-gp-functional} agree on $(X , d , \mu_1 , \dots , \mu_m)$ and $(\wt X , \wt d , \wt\mu_1 , \dots  , \wt\mu_m)$ for each $k\in \BB N$.  Suppose further that the union of the closed supports of $\mu_r$ (resp.\ $\wt\mu_r$) for $r\in [1,m]_{\BB Z}$ is all of $X$ (resp.\ $\wt X$). Then there is an isometry $f : X\rta \wt X$ with $f_* \mu_r = \wt \mu_r$ for each $r\in [1,m]_{\BB Z}$. If $X$ and $\wt X$ are endowed with marked points $x$ and $\wt x$, respectively, for which the functionals~\eqref{eqn-m-gp-functional-pt} agree for each $k\in\BB N$, we can also take $f$ to satisfy $f(x) = \wt x$. 
\end{lem}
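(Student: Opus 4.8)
The plan is to reduce this to the classical fact that the (one-measure) Gromov--Prokhorov topology separates points, as proven in~\cite{gpw-metric-measure}, applied to a single auxiliary measure which ``sees'' all $m$ of the $\mu_r$ simultaneously. Set $\mu := \sum_{r=1}^m \mu_r$ and $\wt\mu := \sum_{r=1}^m \wt\mu_r$. The hypothesis that the union of the closed supports of the $\mu_r$ is all of $X$ means exactly that $\op{supp}\mu = X$, and similarly $\op{supp}\wt\mu = \wt X$. So $(X,d,\mu)$ and $(\wt X,\wt d,\wt\mu)$ are metric measure spaces in the sense of~\cite{gpw-metric-measure} (after normalizing the total mass, which is determined by the functionals since $\phi$ may depend on $\mu_1(X),\dots,\mu_m(X)$, hence on their sum).

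First I would observe that the agreement of the functionals~\eqref{eqn-m-gp-functional} for all $k$ determines the joint law, under $\mu_1^{\otimes k}\otimes\cdots\otimes\mu_m^{\otimes k}$, of the full $km\times km$ matrix of pairwise distances together with the total masses $\mu_r(X)$. Passing to the limit $k\to\infty$ and using the law of large numbers (as in the reconstruction theorem of~\cite{gpw-metric-measure}), one recovers the ``distance matrix distribution'' of the space $(X,d)$ sampled according to the normalized measure $\mu/\mu(X)$, and moreover one recovers which of the $m$ measures each sampled point came from — because one may choose test functions $\phi$ that only look at sub-blocks of the matrix indexed by $[(r-1)k+1, rk]_{\BB Z}$, thereby isolating the distance structure of samples from $\mu_r$ alone and the cross-distances between samples from $\mu_r$ and $\mu_s$. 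By the Gromov reconstruction theorem applied to $(X,d,\mu)$ versus $(\wt X,\wt d,\wt\mu)$ there is a measure-preserving isometry $f : X\to\wt X$ with $f_*\mu = \wt\mu$. The remaining point is to upgrade this to $f_*\mu_r = \wt\mu_r$ for each individual $r$.

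To get the refinement, fix $r$ and sample a point $x^1$ from $\mu_r$, then sample many further points $x^2,\dots,x^{k}$ from $\mu$; the matrix of distances from $x^1$ to all the $x^j$, together with knowledge of $f$, determines $x^1$ as a point of $\wt X$ up to the null set where $\mu$-distances fail to separate, which is empty since $\op{supp}\mu = X$. More precisely: the functionals~\eqref{eqn-m-gp-functional} determine, for each $r$, the pushforward of $\mu_r$ under the ``Gromov embedding'' of $X$ into the space of distance-sequences, and $f$ is compatible with these Gromov embeddings; since a measure-preserving isometry between mm-spaces with full support is essentially unique (again~\cite{gpw-metric-measure}), the single map $f$ simultaneously transports each $\mu_r$ to a measure on $\wt X$ with the same Gromov embedding pushforward as $\wt\mu_r$, hence equal to $\wt\mu_r$. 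For the pointed statement, one runs the identical argument with the marked point $x$ playing the role of a deterministic $0$th sample point: agreement of~\eqref{eqn-m-gp-functional-pt} fixes the distances from $x$ to the random samples, hence pins down $f(x)$ via the same full-support separation argument, giving $f(x) = \wt x$.

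The main obstacle I anticipate is the bookkeeping in the refinement step: one must argue cleanly that agreement of the matrix-of-distances functionals for all $k$ — which a priori only controls the \emph{unordered mixture} $\mu = \sum_r \mu_r$ together with labels — actually pins down each $\mu_r$, and not merely the sum, given that $f$ is only obtained as an abstract measure-preserving isometry. The resolution is that the labels \emph{are} recorded (the block structure of the index set $[1,km]_{\BB Z}$ into the $m$ groups is part of the data the functionals see), so the joint distribution of (sampled point, its label) is determined; combined with the uniqueness of the measure-preserving isometry between full-support mm-spaces, this forces $f_*\mu_r = \wt\mu_r$ for each $r$ individually. I would cite the reconstruction and uniqueness theorems from~\cite{gpw-metric-measure} rather than reprove them, so the argument stays short.
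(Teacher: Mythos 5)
Your reduction to the single mixture measure $\mu=\sum_r\mu_r$ and the Gromov reconstruction theorem is fine as far as producing \emph{some} isometry $f:X\to\wt X$ with $f_*\mu=\wt\mu$, but the refinement step has a genuine gap: a measure-preserving isometry between full-support mm-spaces is \emph{not} essentially unique, and that is exactly the fact your argument leans on to conclude $f_*\mu_r=\wt\mu_r$ for each individual $r$. Concretely, take $X=\wt X=\{a,b\}$ with $d(a,b)=1$, $\mu_1=\delta_a$, $\mu_2=\delta_b$, $\wt\mu_1=\delta_b$, $\wt\mu_2=\delta_a$. The functionals~\eqref{eqn-m-gp-functional} agree, the mixtures coincide, and the identity map is a measure-preserving isometry for the mixtures, yet it does not transport $\mu_1$ to $\wt\mu_1$; only the swap does. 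So the $f$ handed to you by the single-measure reconstruction theorem may simply be the wrong one, and no uniqueness principle lets you correct it after the fact. Your fallback sketch (sample $x^1$ from $\mu_r$ and pin it down via distances to $\mu$-samples ``together with knowledge of $f$'') runs into the same problem: there is no guarantee that your abstractly obtained $f$ matches the coupling of sample sequences under which the distance arrays agree.

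The fix is to reverse the order of construction, which is what the paper does: sample i.i.d. sequences $\{x_r^j\}$ from each $\mu_r$ and $\{\wt x_r^j\}$ from each $\wt\mu_r$, note that the full labelled distance arrays agree in law (this is precisely what agreement of~\eqref{eqn-m-gp-functional} for all $k$ gives), couple so that they agree a.s., and \emph{define} $f(x_r^j):=\wt x_r^j$. This map is distance-preserving by the coupling, extends by density (using the support hypothesis) to an isometry, and respects the labels by construction, so the identity $\mu_r(A)=\mu_r(X)\,\BB P[x_r^1\in A]=\wt\mu_r(\wt X)\,\BB P[\wt x_r^1\in f(A)]=\wt\mu_r(f(A))$ goes through; a Kolmogorov zero-one law argument then makes $f$ deterministic. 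If you want to keep your two-stage structure, you would instead have to prove a selection statement — that among all measure-preserving isometries of the mixtures there exists one compatible with the block structure — which is essentially the lemma itself.
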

\begin{proof}
We treat the unpointed case; the pointed case is treated in an identical manner. 
For $r\in [1,m]_{\BB Z}$, let $\{  x^j_r  \}_{j\in \BB N}$ and $\{  \wt x^j_r  \}_{j\in\BB N}$ be i.i.d.\  samples from $\mu_r$ and $\wt\mu_r$, respectively. By our assumption about the supports of the measures $\mu_r$ and $\wt\mu_r$, the sets $ \{x^j_r : (r,j) \in [1,m]_{\BB Z} \times \BB N \} $ and $  \{\wt x^j_r : (r,j) \in [1,m]_{\BB Z} \times \BB N \}  $ are a.s.\ dense in $X$ and $\wt X$, respectively. The agreement of the functionals~\eqref{eqn-m-gp-functional} implies that $\mu_r(X) = \wt\mu_r(\wt X)$ for each $r\in [1,m]_{\BB Z}$. Furthermore, the collections of distances $\{d(x^j_r , x_{r'}^{j'}) :  (r,j) , (r',j') \in [1,m]_{\BB Z} \times \BB N \}$ and $\{\wt d(\wt x_r^j , \wt x_{r'}^{j'} ) : (r,j) , (r',j') \in [1,m]_{\BB Z} \times \BB N \}$ agree in law, so we can couple everything in such a way that these collections of distances agree a.s. Let $f : \{x^j_r : (r,j) \in [1,m]_{\BB Z} \times \BB N \} \rta \wt X$ be the function which sends each $x_j^r$ to $\wt x_j^r$. By our choice of coupling $f$ is distance preserving, and since the domain and image of $f$ are a.s.\ dense in $X$ and $\wt X$, respectively, $f$ extends by continuity to an isometry $X\rta \wt X$. 

For each $N\in\BB N$, the set $\{x_r^j : (r,j) \in [1,m]_{\BB Z} \times [N,\infty)_{\BB Z} \} $ is still a.s.\ dense in $X$, so the restriction of~$f$ to this set a.s.\ determines~$f$. By the Kolmogorov zero-one law, $f$ is a.s.\ equal to a certain deterministic map $X\rta \wt X$. 
In particular, for each $r\in [1,m]_{\BB Z}$ and each $A\subset X$, we have
\eqbn
\mu_r(A) = \mu_r(X) \BB P\left[ x_r^1 \in A \right] = \wt\mu_r(X) \BB P\left[  \wt x_1^r \in f(A) \right] = \wt\mu_r(f(A)), 
\eqen 
so $f_* \mu_r = \wt\mu_r$.
\end{proof}

We now state our condition for GHPU convergence.

\begin{lem} \label{prop-ghpu-condition}
Let $\frk X^n = (X^n ,d^n , \mu^n , \eta^n)$ for $n\in\BB N$ and $\frk X = (X , d , \mu , \eta)$ be elements of $\BB M^{\op{GHPU}}$. 
Suppose that the curves $\eta^n$ for $n\in\BB N$ (resp.\ $\eta$) are each constant outside some bounded interval $[0,T^n]$ (resp.\ $[0,T]$). 
Let $\nu^n$ (resp.\ $\nu$) be the pushforward of Lebesgue measure on $[0,T^n]$ (resp.\ $[0,T]$) under $\eta^n$ (resp.\ $\eta$). 
Suppose the following conditions are satisfied.
\begin{enumerate}
\item $(X^n ,d^n , \eta(0) ) \rta (X,d , \eta(0) )$ in the pointed Gromov-Hausdorff metric. \label{item-ghpu-gh}
\item $(X^n ,d^n , \mu^n , \nu^n , \eta^n(0) ) \rta (X , d, \mu , \nu , \eta(0))$ in the 2-fold GP topology. \label{item-ghpu-gp} 
\item The curves $\{\eta^n\}_{n\in\BB N}$ are equicontinuous, i.e.\ for each $\ep >0$ there exists $\delta>0$ such that for each $n\in\BB N$, we have $d^n(\eta^n(s), \eta^n(t)) \leq \ep$ whenever $s,t\in [0,T^n]$ with $|s-t| \leq \delta$.  \label{item-ghpu-equi}
\item The closed support of $\mu$ is all of $X$. \label{item-ghpu-support}
\item $\eta$ is a simple curve. \label{item-ghpu-lebesgue}
\end{enumerate}
Then $\frk X^n\rta \frk X$ in the GHPU topology. 
\end{lem}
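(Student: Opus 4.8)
The plan is to use Proposition~\ref{prop-ghpu-embed}, so it suffices to construct a single compact metric space $(W,D)$ and isometric embeddings of $(X,d)$ and the $(X^n,d^n)$ into $W$ such that $\frk X^n \to \frk X$ in the $D$-HPU sense, i.e. Hausdorff convergence of the spaces, Prokhorov convergence of $\mu^n$, and uniform convergence of $\eta^n$. The starting point is condition~\ref{item-ghpu-gh}: by the pointed Gromov-Hausdorff convergence and the compact analog of Proposition~\ref{prop-ghpu-embed} (i.e.~\cite[Lemma~5.8]{gpw-metric-measure}, or Proposition~\ref{prop-ghpu-embed} with zero measure and constant curve), after passing to a subsequence we may realize all of the $X^n$ and $X$ inside a common compact $(W,D)$ with $X^n \to X$ in the $D$-Hausdorff metric and $\eta^n(0) \to \eta(0)$. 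Since GHPU convergence is a metric statement, working along subsequences and showing every subsequence has a further subsequence converging to $\frk X$ suffices.

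Next I would upgrade the measure convergence. Inside $(W,D)$, condition~\ref{item-ghpu-gp} says the $2$-fold Gromov-Prokhorov functionals of $(X^n,d^n,\mu^n,\nu^n,\eta^n(0))$ converge to those of $(X,d,\mu,\nu,\eta(0))$. Using condition~\ref{item-ghpu-support} (full support of $\mu$) together with Lemma~\ref{prop-2gp-determined}, the limit of $(\mu^n,\nu^n)$ — which exists subsequentially in the weak topology on $W$ by tightness, since $\mu^n(X^n) \to \mu(X) < \infty$ and likewise for $\nu^n$ (the total masses $T^n$ converge, which I would extract from the GP functionals applied with $k=0$, or simply from equicontinuity plus Hausdorff convergence) — must agree, after applying an isometry, with $(\mu,\nu)$. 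Combined with the Hausdorff convergence $X^n \to X$ this pins down the embedding so that $\mu^n \to \mu$ and $\nu^n \to \nu$ in the $D$-Prokhorov metric. Here I am using that Gromov-Prokhorov convergence together with a common compact embedding in which the underlying sets converge in Hausdorff distance forces Prokhorov convergence of the measures; this is the standard argument behind~\cite[Lemma~A.1]{gpw-metric-measure}, applied to the pair $(\mu^n,\nu^n)$.

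It remains to get uniform convergence of the curves $\eta^n \to \eta$, and this is the step I expect to be the main obstacle. By condition~\ref{item-ghpu-equi} the family $\{\eta^n\}$ is equicontinuous (on $[0,T^n]$, extended by the constant convention), the domains $[0,T^n]$ converge to $[0,T]$, and the images lie in the compact $W$; so by Arzel\`a-Ascoli, along a further subsequence $\eta^n$ converges uniformly to some curve $\tilde\eta$ in $W$ with $\tilde\eta(0) = \eta(0)$. The issue is to identify $\tilde\eta$ with $\eta$. The key is that $\nu$ is the pushforward of Lebesgue measure on $[0,T]$ under $\eta$, and also (by the previous paragraph, after passing to the limit) under $\tilde\eta$: indeed $\nu^n = (\eta^n)_*(\mathrm{Leb}|_{[0,T^n]})$ converges weakly to both $(\eta)_*(\mathrm{Leb}|_{[0,T]})$ and $(\tilde\eta)_*(\mathrm{Leb}|_{[0,T]})$, the latter because $\eta^n \to \tilde\eta$ uniformly and $T^n \to T$. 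So $\eta_*(\mathrm{Leb}|_{[0,T]}) = \tilde\eta_*(\mathrm{Leb}|_{[0,T]})$ as measures on $W$. Now condition~\ref{item-ghpu-lebesgue} — $\eta$ is simple — is exactly what lets me conclude $\eta = \tilde\eta$: for a simple curve $\eta$, the occupation measure determines the curve up to time-reversal, and the time-reversal is excluded by matching the endpoint data (each $\eta^n$ starts at $\eta^n(0) \to \eta(0)$, forcing $\tilde\eta(0) = \eta(0) = \eta(0)$; if one worries about reversal one also tracks $\eta^n(T^n) \to \eta(T)$). More concretely, for a simple curve the map $t \mapsto \eta(t)$ is a homeomorphism onto its image, and for each $s < t$ the set $\eta([s,t])$ is the unique connected subset of the image with $\nu$-measure $t - s$ containing $\eta(s)$; running this over a dense set of $s$ recovers the parametrization, hence $\eta = \tilde\eta$. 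Therefore $\eta^n \to \eta$ in the $D$-uniform metric, and combining with the Hausdorff and Prokhorov convergence above, $\frk X^n \to \frk X$ in the $D$-HPU sense. By Proposition~\ref{prop-ghpu-embed}, $\frk X^n \to \frk X$ in the GHPU topology, and since every subsequence admits a further subsequence with this property, the full sequence converges.
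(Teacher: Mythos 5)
Your proposal follows essentially the same route as the paper's proof: a subsequential limit extracted via the compactness criterion and Arzel\`a--Ascoli in a common compact embedding, identification of the limiting measures via Lemma~\ref{prop-2gp-determined} together with the full-support hypothesis, and identification of the limiting curve from its occupation measure using simplicity and the marked starting point. One step is stated too strongly, however: Hausdorff convergence $X^n\rta X$ inside $W$ does \emph{not} ``pin down the embedding'' so that $\mu^n\rta\mu$ and $\nu^n\rta\nu$ in the $D$-Prokhorov metric — Lemma~\ref{prop-2gp-determined} only produces an isometry $f$ of the limit space carrying the subsequential weak limits $(\mu',\nu')$ and the limit curve to $(\mu,\nu,\eta)$, and $f$ need not be the identity when $(X,d,\mu,\nu,\eta(0))$ admits a nontrivial automorphism. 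This is harmless for the conclusion: HPU convergence to $(X,d,\mu',\eta')$ together with the existence of such an $f$ already yields $\BB d^{\op{GHPU}}(\frk X^n,\frk X)\rta 0$, since the GHPU distance is invariant under measure- and curve-preserving isometries; this is precisely how the paper phrases the argument, identifying the subsequential GHPU limit with $\frk X$ as an element of $\ol{\BB M}^{\op{GHPU}}$ rather than within the fixed embedding.
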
 
\begin{proof}
By Lemma~\ref{prop-ghpu-compact-cond} and condition~\ref{item-ghpu-gh}, we can find a sequence $\mcl N\subset \BB N$ and $\wt{\frk X} = ( \wt X,  \wt d, \wt \mu,\wt\eta) \in \BB M^{\op{GHPU}}$ such that $(X,d,\eta(0))$ and $(\wt X,\wt d , \wt\eta(0))$ are isometric as pointed metric spaces and $\frk X^n\rta \frk X$ in the GHPU topology. 
By Proposition~\ref{prop-ghpu-embed}, there is a compact metric space $(W ,D)$, and isometric embeddings of $X^n$ for $n\in\mcl N$ and $X$ into $(W,D)$ such that if we identify $X^n$ and $X$ with their images under these embeddings, we have $\frk X^n\rta \frk X$ in the $D$-HPU sense as $\mcl N\ni n\rta\infty$.

From the convergences $\nu^n \rta \wt\nu$, $\mu^n \rta \wt\mu$, and $\eta^n(0) \rta \wt\eta(0)$, we infer that $(X^n ,d^n , \mu^n , \nu^n , \eta^n(0) ) \rta (\wt X , \wt d, \wt\mu , \wt\nu , \wt\eta(0) )$ in the 2-fold GP topology. Hence condition~\ref{item-ghpu-support} and Lemma~\ref{prop-2gp-determined} imply that we can find a distance-preserving map $f :  X \rta \wt X$ (whose range is the union of the closed supports of $\wt\nu$ and $\wt\mu$) such that $f_* \wt\mu =  \mu$, $f_* \wt\nu =\nu$, and $f(\wt\eta(0)) = \eta(0)$.  Since $(X,d)$ and $(\wt X ,\wt d)$ are isometric as metric spaces and a compact metric space cannot be isometric to a proper subset of itself, we find that $f$ is in fact surjective, so is an isometry from $X$ to $\wt X$. 

We claim that also $f\circ \wt\eta = \eta$, which will imply that $(X , d , \wt\mu , \wt\eta) = (X , d  , \mu,\eta)$ as elements of $\ol{\BB M}^{\op{GHPU}}$.

Since each $\eta^n$ is constant outside of $[0,T^n]$ and $T^n\rta T$, we find that $\wt\eta$ is constant outside of $[0,T]$. 
It is clear that $\wt\nu$ is supported on $ \wt\eta$. 
For $0\leq a\leq b \leq T$ we have $\eta^n([a,b]) \rta \wt\eta([a,b])$ in the $D$-Hausdorff distance, so since $\nu^n \rta \wt \nu$ it follows that $\wt\nu\left( \wt\eta([a,b]) \right) \geq b-a$.
By condition~\ref{item-ghpu-lebesgue} and the existence of the isometry $f$ above, the measure $\wt\nu$ has no point masses so for each $\ep >0$ we can find $\delta>0$ such that $\wt\nu (B_\delta(\wt\eta([a,b]) ) ) \leq b-a + \ep$. Hence in fact $\wt\nu\left( \wt\eta([a,b]) \right) = b-a$. 
Therefore $\wt\nu$ is the pushforward under $\wt\eta$ of Lebesgue measure on $[0,T]$. 

The map $f$ takes the closed support of $\wt\nu$ to the closed support of $\nu$, so takes the range of $\wt\eta$ to the range of $\eta$. Since $f_* \wt\nu = \nu$ and $f(\wt\eta(0)) = \eta(0)$, for each $t\in [0,T]$ the set $f(\wt\eta([0,t]))$ is a connected subset of $\wt\eta$ containing $0$, with $\wt\nu$-mass equal to $t$. Therefore $\wt\eta^{-1}(f(\eta([0,t]))) = [0,t]$. In other words, the map $\wt\eta^{-1} \circ f \circ  \eta$ is the identity, so $f\circ  \eta = \wt\eta$. 
\end{proof}

\section{Schaeffer-type constructions}
\label{sec-quad-prelim}

In this section we will review constructions from the planar map literature which are needed for the proofs of our scaling limit results.
In Sections~\ref{sec-quad-bdy} and~\ref{sec-uihpq}, respectively, we review the Schaeffer-type constructions of uniform quadrangulations with boundary and the UIHPQ$_{\op{S}}$. 
In Section~\ref{sec-uihpq-dist},  we record some distance estimates in terms of the encoding functions in these constructions.
In Section~\ref{sec-pruning}, we recall how to ``prune" the UIHPQ to get an instance of the UIHPQ$_{\op{S}}$. 
In Section~\ref{sec-brownian-disk}, we recall the definition of the Brownian disk from~\cite{bet-mier-disk}.

\subsection{Encoding quadrangulations with boundary} 
\label{sec-quad-bdy}

Recall from Section~\ref{sec-quad-def} the set $\mcl Q^{ \bullet}(n,l)$ of boundary-rooted, pointed quadrangulations with $n$ internal faces and $2l$ boundary edges. In this subsection we review a variant of the Schaeffer bijection for elements of $\mcl Q^{ \bullet}(n,l)$ which is really a special case of the Bouttier-Di Francesco-Guitter bijection~\cite{bdg-bijection}. Our presentation is similar to that in~\cite[Section~3.3]{curien-miermont-uihpq} and~\cite[Section~3.3]{bet-mier-disk}. 
See Figure~\ref{fig-schaeffer-disk} for an illustration.

\begin{figure}[ht!]
 \begin{center}
\includegraphics{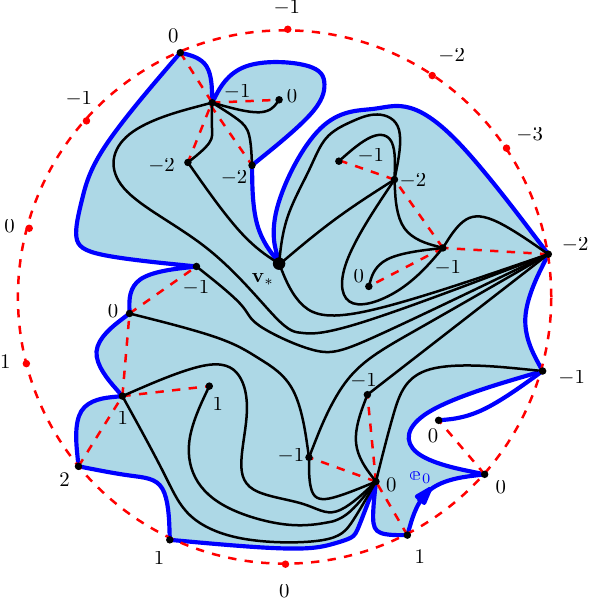} 
\caption{\label{fig-schaeffer-disk}Illustration of the Schaeffer-type encoding of a boundary-rooted, pointed quadrangulation with boundary $(Q, \BB e_0 , \BB v_*)$. The region enclosed by~$Q$ is shown in light blue. The graph~$F$ containing the trees~$\frk t_{k}$ is the union of the dashed red lines and the black vertices. The red vertices correspond to upward steps of~$b^0$, and do not belong to~$F$ or~$Q$. Each vertex is labeled with either its label $L (v)$ or the corresponding value of~$b^0$. The black edges are part of the interior of~$Q$ and the blue edges are part of~$\bdy Q$. The oriented boundary root edge $\BB e_0$ is indicated with an arrow. }
\end{center}
\end{figure}

For $l\in\BB N$, a \emph{bridge} of length $2l$ is a function $b^0 : [0,2l]_{\BB Z} \rta \BB Z$ such that $b^0(j+1) - b^0(j ) \in \{-1,1\}$ for each $j\in [0,2l-1]_{\BB Z}$ and $b^0(0) = b^0(2l) = 0$.  For a bridge $b^0$, we associate a function $b : [0, l]_{\BB Z} \rta \BB Z$ as follows. We set $j_0 = 0$ and for $k \in [1,l]_{\BB Z}$ we let $j_k$ be the $k$th smallest $j\in [0,2l-1]_{\BB Z}$ for which $b^0(j+1) - b^0(j) = - 1$. We then let $b(k) := b^0(j_k)$. 

For $n,l\in\BB N$, a \emph{treed bridge} of area $n$ and boundary length $2l$ is an $(l+1)$-tuple $(b^0 ; (\frk t_0 , \frk v_0, L_0) , \ldots , (\frk t_{l-1} , \frk v_{l-1} , L_{l-1}) )$ where $b^0$ is a bridge of length $2 l$ and $(\frk t_k , \frk v_k, L_k)$ for $k\in [0,l-1]_{\BB Z}$ is a rooted plane tree with a label function $L_k : \mcl V(\frk t_k) \rta \BB Z$ satisfying $ L_k(v) - L_k(v')  \in \{-1,0,1\}$ whenever $v$ and $v'$ are joined by an edge and $L_k(\frk v_k) = b(k)$, where~$b$ is constructed from~$b^0$ as above, such that the total number of edges in the trees $\frk t_k$ is $n$. Let $\mcl T^{ \bullet}(n,l)$ be the set of treed bridges of area~$n$ and boundary length~$2l$, together with a sign $\xi \in \{-,+\}$ (which will be used to determine the orientation of the root edge). 

We associate a treed bridge with a rooted, labeled planar map $(F ,e_0 , L)$ with two faces as follows.  Draw an edge from $\frk v_k$ to $\frk v_{k+1}$ for each $k\in [0,l-2]_{\BB Z}$ and an edge from $\frk v_{l-1}$ to $\frk v_0$. This gives us a cycle which we embed into $\BB C$ in such a way that the vertices $\frk v_k$ all lie on the unit circle. We extend this embedding to the trees $\frk t_k$ in such a way that each is mapped into the unit disk. This gives us a planar map $F$ with an inner face of degree $2n +  l$ (containing all of the trees $\frk t_k$) and an outer face of degree $ l$. Let $ e_0$ be the oriented edge of $F$ from $\frk v_{l-1}$ to $\frk v_0$. Let $L$ be the label function on vertices of $F$ inherited from the label functions $L_k$ for $k\in[0,l-1]_{\BB Z}$. 

We now associate a rooted, pointed quadrangulation with boundary to $(F , e_0 , L)$ and the sign $\xi$ via a variant of the Schaeffer bijection. Let $\BB p : [0,2n+l]_{\BB Z} \rta \mcl V(F)$ be the contour exploration of the inner face of $F$ started from $\frk v_1$, i.e.\ the concatenation of the contour explorations of the trees $\frk t_0, \ldots , \frk t_{l-1}$. Also define (by a slight abuse of notation) $L(i) = L(\BB p(i))$.  Note that each $\BB p(i)$ for $i\in [0,2n+l]_{\BB Z}$ is associated with a unique corner of the inner face of $F$ (i.e.\ a connected component of $B_\ep(\BB p(i) ) \setminus F$ for small $\ep > 0$).  Let $\BB v_*$ be an extra vertex not connected to any vertex of $F$, lying in the interior face of $F$.  For $i\in [0,2n+l]_{\BB Z}$, define the successor $s(i)$ of $i$ to be the smallest $i' \geq i$ (with elements of $[0,2n+l]_{\BB Z}$ viewed modulo $2n+l$) such that $L(i') = L(i)-1$, or let $s(i) = \infty$ if no such $i'$ exists. For $i\in [0,2n+l]_{\BB Z}$, draw an edge from (the corner associated with) $\BB p(i)$ to (the corner associated with) $\BB p(s(i))$, or an edge from $\BB p(i)$ to $\BB v_*$ if $s(i) = \infty$. Then, delete all of the edges of $F$ to obtain a map $Q$. We take $Q$ to be rooted at the oriented edge $\BB e_0 \in\mcl E(\bdy Q)$ from $\frk v_0$ to $\BB p(s (0))$ (if $\xi = -$) or from $\BB p(s (0))$ to $\frk v_0$ (if $\xi = +$), viewed as a half-edge on the boundary of the external face.

As explained in, e.g.,~\cite[Section~3.2]{curien-miermont-uihpq} and~\cite[Section~3.3]{bet-mier-disk}, this construction defines a bijection from $\mcl T^{\bullet}(n,l)$ to $\mcl Q^{ \bullet}(n,l)$.

\begin{remark} \label{remark-schaeffer-bdy}
It is explained in~\cite[Section~3.3.1]{curien-miermont-uihpq} that there is a canonical boundary path $\lambda : [1,2l]_{\BB Z} \rta \mcl E(\bdy Q)$ starting and ending at the terminal vertex $\BB v_0$ of the root edge $\BB e_0$ which traces all of the edges in $\bdy Q$ in cyclic order, defined as follows. Recall the definition of the times $j_k$ for $k\in\BB Z$ at which the walk $b^0$ has a downward step.  For each $k\in [1,l]_{\BB Z}$, there is a unique connected component of the complement of $Q$ in the inner face of the map $F$ which contains the edge from $\frk v_k$ to $\frk v_{k+1}$ (or from $\frk v_l$ to $\frk v_1$ if $k=l$) on its boundary.  It is easy to see from the Schaeffer bijection that there are precisely $j_{k+1} - j_k +1$ vertices and $j_{k+1}-j_k$ edges of $\bdy Q $ on the boundary of this component, counted with multiplicity. The ordered sequence of labels of the vertices coincides with the ordered sequence of values of $b^0(j)$ for $j\in [j_k  , j_{k+1}]_{\BB Z}$ (which by definition of $j_k$ is the same as $b(j_k) , b(j_k)-1 , b(j_k) , b(j_k+1), \ldots , b(j_k) + j_{k+1}-j_k-2 =  b(j_{k+1})$).  For $j\in [j_k , j_{k+1}-1]_{\BB Z}$, we let $\lambda(j)$ be the $j$th edge along the boundary of this component, in order started from $\frk v_k$ and counted with multiplicity.  Then $\lambda$ is a bijection from $[1,2l]_{\BB Z}$ to the set of edges of $\bdy Q$ if we count the latter according to their multiplicity in the external face.  
\end{remark}

As in the case of the ordinary Schaeffer bijection, the above construction can also be phrased in terms of walks.
For $i\in [0,2n+l]_{\BB Z}$, let $k_i\in [0,l-1]_{\BB Z}$ be chosen so that the vertex $\BB p(i)$ belongs to the tree $\frk t_{k_i}$ and let
\eqb \label{eqn-quad-bdy-path}
C(i) := \op{dist}\left( \BB p(i) , \frk v_{k_i} ; \frk t_{k_i} \right) - k_i  , \quad \forall i \in [0,2n+l-1]_{\BB Z}  \quad\text{and}\quad C(2n+l)  = - l.
\eqe  
Then $C$ is the concatenation of the contour functions of the trees $\frk t_{k}$, but with an extra downward step whenever it moves to a new tree.  Let
\eqb \label{eqn-quad-bdy-inf}
I(k) := \min\left\{ i \in [0,2n+l]_{\BB Z} : C(i)= -k \right\}  ,\quad \forall  k\in [0 ,l ]_{\BB Z}.
\eqe  
Then for $k\in [0,l-1]_{\BB Z}$, $I(k)$ is the first time $i$ for which $\BB p(i) \in \frk t_{k}$ and the range of $I$ is precisely the set of vertices lying on the outer boundary of the graph $F$.
Also let $L^0(i) := L(i) - b(k_i)$.
To describe the law of the pair $(C  , L^0)$ we need the following definition.
  
\begin{defn} \label{def-discrete-snake}
Let $[a,b]_{\BB Z}$ be a (possibly infinite) discrete interval and let $S : [a,b]_{\BB Z} \rta \BB Z$ be a (deterministic or random) path with $S(i) - S(i-1) \in \{-1,0,1\}$ for each $i\in [a+1,b]_{\BB Z}$. The \emph{head of the discrete snake} driven by $S$ is the function $H : [a,b]_{\BB Z} \rta \BB Z$ whose conditional law given $S$ is described as follows. We set $H(a) = 0$. Inductively, suppose $ i \in [a+1,b]_{\BB Z}$ and $H(i)$ has been defined for $j\in [a,i-1]_{\BB Z}$. If $S(i) - S(i-1) \in \{-1,0\}$, let $i'$ be the largest $j \in [a,i-1]_{\BB Z}$ for which $H(i) = H(i')$; or $i' = -\infty$. If $i' \not=-\infty$, we set $H(i) = H(i')$. Otherwise, we sample $H(i)- H(i-1)$ uniformly from $\{-1,0,1\}$.  
\end{defn}

The following lemma is immediate from the definitions and the fact that the above construction is a bijection.

\begin{lem} \label{prop-quad-bdy-law}
If we sample $(Q , \BB e_0, \BB v_*)$ uniformly from $\mcl Q^{ \bullet}(n,l)$, then the law of $C$ is that of a simple random walk started from 0 and conditioned to reach $-l$ for the first time at time $2n+l$. The process $L^0$ is the head of the discrete snake driven by $i \mapsto C(i)- \min_{j\in [1,i]_{\BB Z}} C(j)$. The pair $(C , L^0)$ is independent from $b^0$.  
\end{lem}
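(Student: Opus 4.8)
The plan is to derive all three assertions from the bijection $\mcl T^{\bullet}(n,l)\to\mcl Q^{\bullet}(n,l)$ recalled above. Since it is a bijection, sampling $(Q,\BB e_0,\BB v_*)$ uniformly from $\mcl Q^{\bullet}(n,l)$ is the same as sampling a uniform treed bridge (with sign) from $\mcl T^{\bullet}(n,l)$, and $b^0$, $C$, and $L^0$ are explicit functions of this treed bridge, so it suffices to understand the uniform law on $\mcl T^{\bullet}(n,l)$. I would first record that this law has a product structure. A treed bridge with a sign consists of a bridge $b^0$ of length $2l$, a sign $\xi\in\{-,+\}$, an ordered forest $(\frk t_0,\dots,\frk t_{l-1})$ of rooted plane trees with $\sum_k \#\mcl E(\frk t_k)=n$, and for each $k$ a label function $L_k$ on $\mcl V(\frk t_k)$ with increments in $\{-1,0,1\}$ along edges and $L_k(\frk v_k)=b(k)$. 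Setting $L_k^0:=L_k-b(k)$, the label condition becomes ``$L_k^0(\frk v_k)=0$ and increments in $\{-1,0,1\}$'', which no longer refers to $b^0$, and a rooted plane tree with $e$ edges carries exactly $3^e$ such label functions, namely those obtained by assigning an i.i.d.\ uniform element of $\{-1,0,1\}$ to each edge and summing along the path from the root. Hence the number of treed bridges with a prescribed triple (bridge, sign, forest) equals $3^n$, independent of the bridge and the sign; consequently, under the uniform law, $b^0$ (uniform among bridges), $\xi$ (a uniform sign), the forest (uniform among ordered forests of $l$ rooted plane trees with $n$ edges total), and the family of edge-increments (i.i.d.\ uniform in $\{-1,0,1\}$ conditionally on the forest) are mutually independent. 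Now $C$ in~\eqref{eqn-quad-bdy-path} is a deterministic function of the tree shapes alone, while $L^0(i)=L(\BB p(i))-b(k_i)=L^0_{k_i}(\BB p(i))$ is a deterministic function of the forest together with its edge-increments; since neither involves $b^0$, the pair $(C,L^0)$ is independent of $b^0$, which is the third assertion.

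Next I would identify the marginal law of $C$. By~\eqref{eqn-quad-bdy-path}, $C$ is the concatenation of the contour functions of $\frk t_0,\dots,\frk t_{l-1}$, each shifted downward by its index, with an extra $-1$ step at the end of each tree; thus $C$ has $\pm1$ increments, $C(0)=0$, $C(2n+l)=-l$, and $C(i)>-l$ for $i<2n+l$, and conversely every lattice path with these properties arises from a unique ordered forest of $l$ rooted plane trees with $n$ edges total, the trees being the excursions of the path between consecutive values of its running minimum. Under this standard forest--path encoding a uniform forest corresponds to a uniform such path, which is precisely the law of a simple random walk started from $0$ and conditioned to reach $-l$ for the first time at time $2n+l$.

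Finally I would identify the conditional law of $L^0$ given $C$, equivalently given the forest, as the head of the discrete snake driven by $S(i):=C(i)-\min_{j\in[1,i]_{\BB Z}}C(j)$. During the exploration of $\frk t_k$ one has $\min_{j\le i}C(j)=-k$, so $S(i)$ is the height of $\BB p(i)$ in $\frk t_k$ and $S$ returns to $0$ at the root of each tree; hence driving by $S$ rather than by $C$ itself builds in the reset $L_k^0(\frk v_k)=0$ between trees. Reading the edge-increment description of the labels $L_k^0$ in contour order then matches the recursion of Definition~\ref{def-discrete-snake}: an up-step of $S$ records the traversal of a new tree edge, at which $L^0$ takes a fresh i.i.d.\ uniform $\{-1,0,1\}$ increment, while a flat or down step of $S$ corresponds to returning to an already-visited vertex (or to the root of the next tree), at which $L^0$ resumes the value it had there. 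Assembling the three identifications proves the lemma. All steps are elementary; the points requiring genuine care are the bookkeeping that makes the label count equal to $3^n$ for \emph{every} choice of $b^0$ (this is exactly what decouples $b^0$ from $(C,L^0)$) and the verification that reading the forest labels in contour order reproduces the recursion of Definition~\ref{def-discrete-snake} at the tree boundaries, and I expect the latter to be the main, though routine, obstacle.
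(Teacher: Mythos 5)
Your proof is correct and follows exactly the route the paper intends: the paper gives no argument beyond the one-line remark that the lemma ``is immediate from the definitions and the fact that the above construction is a bijection,'' and your write-up (the $3^n$ label count giving the product structure and independence from $b^0$, the forest--path encoding for the law of $C$, and the contour-order reading of the labels matching the discrete-snake recursion) is precisely the standard expansion of that remark.
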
 
  
\subsection{Encoding the UIHPQ} 
\label{sec-uihpq}
  
In this subsection we describe an infinite-volume analog of the bijection of Section~\ref{sec-quad-bdy} which encodes a UIHPQ which is alluded to but not described explicitly in~\cite[Section~6.1]{curien-miermont-uihpq} (see also~\cite{caraceni-curien-uihpq} for a different encoding).  See Figure~\ref{fig-schaeffer-uihpq} for an illustration.

\begin{figure}[ht!]
 \begin{center}
\includegraphics{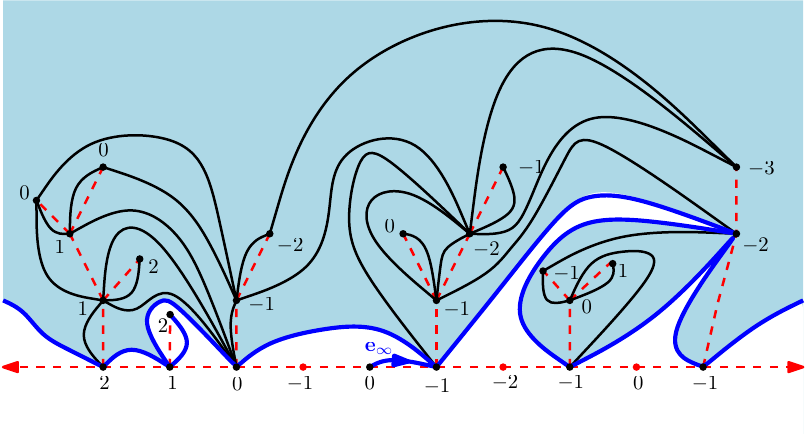} 
\caption{Illustration of the Schaeffer-type encoding of the UIHPQ. The graph $F_\infty$ containing the trees~$\frk t_{\infty,k}$ is the union of the dashed red lines and the black vertices. The red vertices correspond to upward steps of~$b_\infty^0$, and do not belong to $F_\infty$ or $Q_\infty$. Each vertex is labelled with either its label $L_\infty(v)$ or the corresponding value of $b_\infty^0$. The black edges are part of the interior of the UIHPQ, and the blue edges are part of its boundary. Note that $\bdy Q_\infty$ is not a simple path. The oriented boundary root edge $\BB e_\infty$ is indicated with an arrow. }\label{fig-schaeffer-uihpq}
\end{center}
\end{figure}

Let $b_\infty^0 : \BB Z\rta \BB N_0$ be a two-sided simple random walk reflected at $0$ (with increments uniform in $\{-1,1\}$). 
Let $\{j_k\}_{k\in\BB Z}$ be the ordered set of times for which $b_\infty^0(j+1) - b_\infty^0(j) = -1$, shifted so that $j_1$ is the smallest $j\geq 0$ for which $b_\infty^0(j+1) -  b_\infty^0(j) = -1$. Let $b_\infty(k) := b_\infty^0(j_k)$. 

Conditional on $b_\infty$, let $\{(\frk t_{ \infty , k} , \frk v_{\infty , k} , L_{\infty , k} )\}_{k\in\BB Z}$ be a bi-infinite sequence of independent triples where each $(\frk t_{\infty,k} , \frk v_{\infty,k})$ is a rooted Galton-Watson tree whose offspring distribution is geometric with parameter $1/2$ and, conditional on $\frk t_{\infty,k}$, each $L_{\infty,k}$ is uniformly distributed on the set of label functions $\mcl V(\frk t_{\infty,k}) \rta \BB Z$ which satisfy $L_{\infty,k}(\frk v_{\infty,k}) = b_\infty(k)$ and $|L_{\infty,k}(u)-L_{\infty,k}(v)| \leq 1$ whenever $u, v\in\mcl V(\frk t_{\infty,k})$ are connected by an edge.

We define a graph $F_\infty$ as follows. 
Equip $\BB Z$ with the standard nearest-neighbor graph structure and embed it as the real line in $\BB C$. For $k \in \BB Z$, embed the tree $\frk t_{\infty,k}$ into the upper half-plane in such a way that the vertex $\frk v_{\infty, k }$ is identified with $k \in \BB Z$ and none of the trees $\frk t_{\infty,k}$ intersect each other or intersect $\BB R$ except at their root vertices. The graph $F_\infty^0$ is the union of $\BB Z$ and the trees $\frk t_{\infty,k}$ for $k\in\BB Z$ with this graph structure, with each integer $j_k$ identified with the corresponding root vertex $\frk v_{\infty,k}$. 
 
 We define a label function $L_\infty$ on the vertices of $F_\infty$ by setting $L_\infty|_{\mcl V(\frk t_{\infty,k})} = L_{\infty,k}$ for each $k\in\BB Z$. We let $\BB p_\infty : \BB Z\rta \mcl V(F_\infty)$ be the contour exploration of the upper face of $F_\infty$, shifted so that $\BB p_\infty$ starts exploring the tree $\frk t_{\infty,1}$ at time 0. We then define the successor $s_\infty(i)$ of each time $i\in\BB Z$ exactly as in the Schaeffer bijection, except that there is no need to add an extra vertex since a.s.\ $\liminf_{i\rta\infty} L_\infty(i) = -\infty$. We then draw an edge connecting each vertex $\BB p_\infty(i)$ to $\BB p_\infty(s_\infty(i))$ for each $i \in \BB Z$ to obtain an infinite quadrangulation with boundary $Q_\infty$, which we take to be rooted at the oriented edge $\BB e_{\infty}$ which goes from $\frk v_{\infty,0}$ to $\BB p_\infty(s_\infty(0))$. Then $(Q_\infty, \BB e_{\infty })$ is an instance of the UIHPQ. Furthermore, our construction of $F_\infty$ gives rise to an embedding of $Q_\infty$ into $\BB H$. 
 
We note that the obvious analog of Remark~\ref{remark-schaeffer-bdy} holds in this setting. 
  
\begin{remark} \label{remark-schaeffer-bdy-uihpq}
There is a canonical choice of boundary path $\lambda_\infty : \BB Z \rta \mcl E(\bdy Q)$ which hits the terminal vertex $\BB v_\infty$ of the root edge $\BB e_\infty$ at time $0$ and which traces all of the edges in $\bdy Q$ in cyclic order, defined as follows. Recall the definition of the times $j_k$ for $k\in\BB Z$ at which the walk $b_\infty^0$ has a downward step.  
For each $k\in \BB Z$, there is a unique connected component of the complement of $Q_\infty$ in the upper face of the map $F_\infty$ which contains the edge from $\frk v_{\infty,k}$ to $\frk v_{\infty,k+1}$ on its boundary. 
There are precisely $j_{k+1} - j_k +1$ vertices and $j_{k+1}-j_k$ edges of $\bdy Q_\infty $ on the boundary of this component, counted with multiplicity.  
For $j\in [j_k , j_{k+1}-1]_{\BB Z}$, we let $\lambda_\infty(j)$ be the $j$th edge along the boundary of this component, in order started from $\frk v_{\infty,k}$ and counted with multiplicity. 
Then $\lambda_\infty$ is a bijection from $\BB Z$ to the set of edges of $\bdy Q_\infty$ if we count the latter according to their multiplicity in the external face.  
\end{remark}

As in Section~\ref{sec-quad-bdy}, we now define random paths which encode $(Q_\infty , \BB e_\infty)$. 
For $i\in \BB Z$, let $k_i$ be chosen so that the vertex $\BB p_\infty(i)$ belongs to the tree $\frk t_{\infty,k_i}$ and let
\eqb \label{eqn-quad-bdy-path-infty}
C_\infty(i) := \op{dist}\left( \BB p_\infty(i) , \frk v_{\infty,k_i} ; \frk t_{\infty,k_i} \right) - k_i    , \quad \forall i \in \BB Z.
\eqe   
Let
\eqb \label{eqn-quad-bdy-inf-infty}
I_\infty(k) := \min\left\{ i \in \BB Z : C_\infty(i)= -k \right\}  ,\quad \forall  k\in \BB Z.
\eqe  
Then $I_\infty(k)$ is the first time $i$ for which $\BB p_\infty(i) \in \frk t_{\infty, k }$ and the range of $I_\infty$ is the set of vertices lying on the outer boundary of the graph $F_\infty$. 
We let $L_\infty(i) := L_\infty(\BB p_\infty(i))  $ and $L_\infty^0(i) := L_\infty(i) - b_\infty(k_i)$.

\begin{lem} \label{prop-uihpq-encode-law}
The pair $(C_\infty ,L_\infty^0)$ is independent from $b_\infty^0$ and its law can be described as follows. 
The law of $C_\infty|_{\BB N_0}$ is that of a simple random walk started from 0 and the law of $C_\infty(-\cdot)|_{\BB N_0}  $ is that of a simple random walk started from 0 and conditioned to stay positive for all time (see, e.g.,~\cite{bertoin-doney-nonnegative} for a definition of this conditioning for a large class of random walks).
Furthermore, $L_\infty^0$ is the head of the discrete snake driven by $i \mapsto C_\infty(i) - \min_{j\in (-\infty,i]_{\BB Z}} C_\infty(j)$ (Definition~\ref{def-discrete-snake}). 
\end{lem}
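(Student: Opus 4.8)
The plan is to read the joint law of $(C_\infty,L_\infty^0)$ directly off the construction of Section~\ref{sec-uihpq}, via the classical encoding of a forest of critical geometric Galton--Watson trees by a simple random walk, which already underlies Lemma~\ref{prop-quad-bdy-law}. Three things must be checked: independence from $b_\infty^0$; the two-sided law of $C_\infty$; and, conditionally on $C_\infty$, that $L_\infty^0$ is the claimed discrete snake. For the first, note that conditionally on $b_\infty^0$ the decorated trees $\{(\frk t_{\infty,k},\frk v_{\infty,k},L_{\infty,k})\}_{k\in\BB Z}$ are independent, their shapes $\frk t_{\infty,k}$ are i.i.d.\ geometric$(1/2)$ Galton--Watson trees independent of $b_\infty^0$, and, given the shapes, the label increments of $L_{\infty,k}$ along the edges of $\frk t_{\infty,k}$ are i.i.d.\ uniform on $\{-1,0,1\}$ and again independent of $b_\infty^0$ --- the only dependence on $b_\infty^0$ being through the root values $L_{\infty,k}(\frk v_{\infty,k})=b_\infty(k)$. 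Now $C_\infty$ in \eqref{eqn-quad-bdy-path-infty} is a deterministic function of the tree shapes alone, while $L_\infty^0(i)=L_{\infty,k_i}(\BB p_\infty(i))-b_\infty(k_i)$ is a deterministic function of the shapes together with the \emph{root-shifted} labelings $L_{\infty,k}-b_\infty(k)$; the conditional law given $b_\infty^0$ of these ingredients does not depend on $b_\infty^0$, so neither does that of $(C_\infty,L_\infty^0)$, which is exactly independence. In particular the tree shapes are, unconditionally, i.i.d.\ geometric$(1/2)$ GW trees.

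To identify the law of $C_\infty$ I would use a ladder decomposition. By the construction, $\BB p_\infty$ visits the successive roots $\frk v_{\infty,k}$ at the times $I_\infty(k)$ of \eqref{eqn-quad-bdy-inf-infty}, and between two consecutive such times $C_\infty$ traces the contour (height) function of one tree shifted down by the tree's index, with an extra unit step between consecutive trees; thus (after the normalization of Section~\ref{sec-uihpq}, which places $\BB p_\infty(0)$ at a tree root with $C_\infty(0)=0$) $(C_\infty(i))_{i\ge 0}$ is the concatenation over $k\ge k_0$ of (contour of $\frk t_{\infty,k}$, then a $-1$ step), and reading leftward from $0$ one sees, over $k<k_0$, the time-reversed contour functions (still Dyck paths) each with an extra $+1$ step. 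On the walk side, if $S$ is a simple random walk from $0$ with first-passage times $\sigma_k=\inf\{i:S_i=-k\}$, the strong Markov property makes the blocks $(S_{\sigma_k+\cdot}+k)$, $k\ge 0$, i.i.d., each a nonnegative excursion from $0$ to $0$ followed by a step to $-1$; the probability of such a block with excursion of length $2n$ is $2^{-(2n+1)}$, which is precisely $\BB P[\frk t$ is a geometric$(1/2)$ GW tree with a given contour of length $2n]$, since $\BB P[\frk t]=2^{-(\#\mcl E(\frk t)+\#\mcl V(\frk t))}=2^{-(2n+1)}$ when $\#\mcl E(\frk t)=n$ and the contour map is a bijection onto Dyck paths. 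Hence $(C_\infty(i))_{i\ge0}$ is a simple random walk from $0$. The dual ``last-passage'' decomposition of a simple random walk conditioned to stay positive for all time (in the sense of \cite{bertoin-doney-nonnegative}; see also \cite[Section~6.1]{curien-miermont-uihpq}) produces exactly i.i.d.\ blocks of the form (Dyck path, then $+1$ step) with the Dyck paths the contours of i.i.d.\ geometric$(1/2)$ GW trees, which identifies $(C_\infty(-i))_{i\ge0}$ with that conditioned walk. The two tree families (indices $\ge k_0$ and $<k_0$) are disjoint, hence independent, and $C_\infty(0)=0$, so the two halves glue as asserted; and since $C_\infty(j)\to+\infty$ as $j\to-\infty$, the running minimum $\min_{j\in(-\infty,i]_{\BB Z}}C_\infty(j)$ is well defined.

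For the law of $L_\infty^0$ given $C_\infty$, set $\bar C_\infty(i):=C_\infty(i)-\min_{j\in(-\infty,i]_{\BB Z}}C_\infty(j)$. Since the running infimum of $C_\infty$ is constant equal to $-k$ throughout the exploration of $\frk t_{\infty,k}$ (cf.\ \eqref{eqn-quad-bdy-inf-infty}), we get $\bar C_\infty(i)=\op{dist}(\BB p_\infty(i),\frk v_{\infty,k_i};\frk t_{\infty,k_i})$, so $\bar C_\infty$ is, tree by tree, the height function of the tree being explored, and returns to $0$ exactly at the roots $\frk v_{\infty,k}$ (where $L_\infty^0=0$). An up-step of $\bar C_\infty$ crosses a not-yet-explored edge, along which (by the independence just established) $L_\infty^0$ makes a fresh uniform $\{-1,0,1\}$ increment; a down- or flat step backtracks to an already-visited vertex, at which $L_\infty^0$ equals the value it had the last time $\bar C_\infty$ was at that level. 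Together with $L_\infty^0\equiv 0$ on $\{\bar C_\infty=0\}$, this is exactly the prescription of Definition~\ref{def-discrete-snake} for the head of the discrete snake driven by $\bar C_\infty$ (with the natural two-sided convention, obtained as a limit of the one-sided snakes), which yields the stated joint law.

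I expect the only genuinely delicate points to be (a) making the ``conditioned to stay positive'' statement on the negative axis precise, i.e.\ verifying that the concatenation of i.i.d.\ (Dyck-then-$+1$) blocks is the Bertoin--Doney $h$-transform --- which one can do via its last-passage decomposition or by time-reversal against the free-walk decomposition --- and (b) the bookkeeping at the origin and for the two-sided discrete snake, namely why all the roots $\frk v_{\infty,k}$ simultaneously carry $L_\infty^0=0$ and why the various two-sided normalization conventions are mutually consistent. Everything else is the same combinatorics of the Schaeffer/Bouttier--Di Francesco--Guitter encoding already used in the finite case of Lemma~\ref{prop-quad-bdy-law}.
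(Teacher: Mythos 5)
Your proposal is correct and follows essentially the same route as the paper's (much terser) proof: decompose $C_\infty$ into the contour functions of the i.i.d.\ geometric$(1/2)$ Galton--Watson trees separated by unit down-steps, identify each block with a simple-random-walk first-passage block, observe that $L_\infty^0$ restricted to each tree's exploration interval is the corresponding discrete snake head, and note that the pair depends on the data only through the tree shapes and root-shifted labels. If anything, you are more careful than the paper on the independence step, since you correctly isolate the root-shifted labelings $L_{\infty,k}-b_\infty(k)$ as the $b_\infty^0$-independent ingredient, whereas the paper loosely attributes independence to the labeled trees themselves.
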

\begin{proof}
The process $C_\infty$ is the concatenation of the contour functions of countably many i.i.d.\  Galton-Watson trees with offspring distribution given by a geometric random variable with parameter $1/2$, separated by downward steps. Each of these contour functions has the law of a simple random walk run until the first time it hits $0$. Therefore, $ C_\infty$ has the law described in the statement of the lemma. Since each $L_\infty^0|_{[I(k) , I(k+1)-1]_{\BB Z}}$ is the head of the discrete snake driven by $C_\infty|_{[I(k) , I(k+1)-1]_{\BB Z}}$, we see that $L_\infty^0$ is the  head of the discrete snake driven by $C_\infty$. The pair $(C_\infty, L_\infty^0)$ is determined by the labeled trees $\{(\frk t_{ \infty , k} , \frk v_{\infty , k} , L_{\infty , k} )\}_{k\in\BB Z}$, so is independent from $b_\infty^0$.
\end{proof}

\subsection{Distance bounds for quadrangulations with boundary} 
\label{sec-uihpq-dist}

In this subsection we record elementary upper and lower bounds for distances in quadrangulations with boundary in terms of the encoding processes in the Schaeffer bijection.  
We start with an upper bound. 
  
\begin{lem} \label{prop-schaeffer-dist}
Suppose we are in the setting of Section~\ref{sec-quad-bdy}.
In particular, let $(Q , \BB e_0 , \BB v_*) \in \mcl Q^\bullet(n,l)$, let $(C , L , b)$ be its Schaeffer encoding process, and let $\BB p : [0,2n+l]_{\BB Z} \rta \mcl V(Q)$ be the projection map.  
For $i_1,i_2\in \BB Z$, we have
\eqbn
\op{dist}\left(\BB p (i_1) ,\BB p (i_2) ; Q \right) \leq   L (i_1)  + L (i_2) - 2\min_{j \in [i_1\wedge i_2 , i_1\vee i_2]_{\BB Z} } L (j)  +2
\eqen
\end{lem}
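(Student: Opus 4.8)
The plan is to adapt the proof of \cite[Lemma~3.1]{legall-topological}, i.e.\ the successor-chain argument used in the quadrangulation-of-the-sphere case, exploiting the two basic features of the Schaeffer-type construction of Section~\ref{sec-quad-bdy}: (i) for each time $i$ the vertices $\BB p(i)$ and $\BB p(s(i))$ are joined by an edge of $Q$ and $L(s(i)) = L(i)-1$, with the convention that $s(i) = \infty$ and $\BB p(\infty) = \BB v_*$ precisely when $L(i)$ equals the global minimum of $L$; and (ii) since the label increments of $L$ along the contour $\BB p$ are always $\geq -1$, the function $L$ cannot drop by more than $1$ in one step, so that $L \geq L(i)$ on the cyclic time interval strictly between $i$ and $s(i)$. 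A short consequence of (i) and (ii) that I would record first is that the $r$-fold iterated successor $s^{(r)}(i)$ is exactly the first time after $i$ (going forward cyclically) whose label is $\leq L(i)-r$, and that its label equals $L(i)-r$.

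Assume without loss of generality that $i_1 \leq i_2$ and write $m := \min_{j \in [i_1,i_2]_{\BB Z}} L(j)$. Let $i_*$ be the first time in $[i_1,i_2]_{\BB Z}$ at which $L$ attains the value $m$; since $L > m$ strictly on $[i_1,i_*)_{\BB Z}$, the sub-fact above gives $s^{(L(i_1)-m)}(i_1) = i_*$, so following the chain of successors from $\BB p(i_1)$ for $L(i_1)-m$ steps reaches $\BB p(i_*)$ along a path in $Q$ of that length; hence $\op{dist}(\BB p(i_1),\BB p(i_*);Q) \leq L(i_1)-m$. Symmetrically, let $i_{**}$ be the first time after $i_2$ (going forward cyclically) with label $\leq m$; then $L(i_{**}) = m$, and following successors from $\BB p(i_2)$ for $L(i_2)-m$ steps reaches $\BB p(i_{**})$, so $\op{dist}(\BB p(i_2),\BB p(i_{**});Q) \leq L(i_2)-m$.

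The crux is to show $\op{dist}(\BB p(i_*),\BB p(i_{**});Q) \leq 2$, which I would do by checking that $i_*$ and $i_{**}$ have a \emph{common successor}. On the forward cyclic arc from $i_*$ to $i_{**}$ (which in general passes through $i_2$ and may wrap past $2n+l$) one has $L \geq m$: indeed $L \geq m$ on $[i_*,i_2]_{\BB Z}$ because this is a sub-interval of $[i_1,i_2]_{\BB Z}$, and $L > m$ on the portion strictly between $i_2$ and $i_{**}$ by the choice of $i_{**}$, with equality $L = m$ at the two endpoints. Consequently $s(i_*)$ lies strictly beyond $i_{**}$, and since $L \geq m$ also on the arc from $i_{**}$ to $s(i_*)$, we get $s(i_{**}) = s(i_*)$. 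Thus $\BB p(i_*)$ and $\BB p(i_{**})$ are both adjacent to $\BB p(s(i_*))$ in $Q$ (this common successor being $\BB v_*$ exactly in the degenerate case $m = \min_j L(j)$, where $s(i_*) = s(i_{**}) = \infty$), which gives the bound $\leq 2$. Combining the three estimates via the triangle inequality in the graph metric yields $\op{dist}(\BB p(i_1),\BB p(i_2);Q) \leq (L(i_1)-m) + 2 + (L(i_2)-m) = L(i_1)+L(i_2)-2m+2$, as desired.

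The only genuinely delicate point — and the step I would be most careful about — is the cyclic bookkeeping in the common-successor argument: making sure that the forward arc from $i_*$ to $i_{**}$ really does carry labels $\geq m$ throughout (including when it wraps around the endpoint of the contour), and handling the case $m = \min_j L(j)$ uniformly by letting $\BB v_*$ serve as the common successor. Everything else reduces to bookkeeping once properties (i) and (ii) and the iterated-successor sub-fact are in place.
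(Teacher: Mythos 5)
Your proposal is correct and is essentially the paper's own argument: both follow the successor chains from $\BB p(i_1)$ and $\BB p(i_2)$ down to the level $q=\min_{[i_1,i_2]}L$ and use the fact that the two chains coalesce at level $q-1$ (your ``common successor'' step is exactly the paper's assertion that $i_{1,q-1}=i_{2,q-1}$), yielding the bound $L(i_1)+L(i_2)-2q+2$. Your write-up merely spells out the cyclic-arc justification of the coalescence in more detail than the paper does.
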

\begin{proof}
In the finite-volume case, this follows from~\cite[Lemma 3]{miermont-tess} (which gives the analogous estimate in a more general setting).
The infinite-volume version follows from exactly the same argument.
See also~\cite[Lemma 3.1]{legall-topological} (which is the analogous estimate for quadrangulations without boundary, and is proven in the same manner)
and/or~\cite[Equation (3)]{bet-disk-tight} (which states the precise estimate given in the present lemma in the finite-volume case).
\end{proof}

We also have a lower bound for distances, which is a variant of the so-called cactus bound for the Brownian map (see, e.g.,~\cite[Proposition 5.9]{legall-miermont-notes}). 

\begin{lem}  \label{prop-cactus}
Suppose we are in the setting of Section~\ref{sec-quad-bdy}.
In particular, let $(Q , \BB e_0 , \BB v_*) \in \mcl Q^\bullet(n,l)$, let $(C , L  )$ be its Schaeffer encoding process, let $I(m)$ for $m\in [0,l]_{\BB Z}$ be as in~\eqref{eqn-quad-bdy-inf}, and let $\BB p : [0,2n+l]_{\BB Z} \rta \mcl V(Q)\setminus \{\BB v_*\} $ be the contour exploration.   
For $i_1,i_2\in [1,2n+l^n]_{\BB Z}$ with $i_1< i_2$, let 
\eqb
J(i_1,i_2) := \left( I\left([0,l]_{\BB Z} \right) \cap [i_1,i_2]_{\BB Z} \right) \cup \{i_1,i_2\} \quad \op{and} \quad 
J'(i_1,i_2) :=   \left( I\left([0,l]_{\BB Z} \right) \setminus [i_1,i_2]_{\BB Z} \right) \cup \{i_1,i_2\} ,
\eqe 
so that $\BB p(J(i_1,i_2))$ (resp.\ $\BB p(J'(i_1,i_2))$) consists of $\BB p(i_1)$, $\BB p(i_2)$, and the set of vertices of $\bdy Q$ which are contained in the image of $\BB p\circ I$ and which are (resp.\ are not) contained in $\BB p([i_1,i_2]_{\BB Z})  $
Then  
\eqb \label{eqn-cactus}
\op{dist}\left(\BB p (i_1) ,\BB p (i_2) ; Q \right) \geq L (i_1)  + L (i_2) - 2 \max\left\{ \min_{j \in  J (i_1,i_2)  } L(j) ,   \min_{j  \in J '(i_1,i_2)} L(j) \right\}   .
\eqe 
The analogous estimate also holds in the setting of the UIHPQ (but in this case the minimum over $J'(i_1,i_2)$ in~\eqref{eqn-cactus} is a.s.\ equal to $-\infty$, so only the first term in the maximum is present). 
\end{lem}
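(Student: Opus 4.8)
The plan is to run a cactus-type argument of the kind used for the Brownian map, adapted to the fact that the map $F$ underlying the Schaeffer construction of Section~\ref{sec-quad-bdy} is not a tree but carries a single cycle, namely its outer boundary $\mcl C$, along which the trees $\frk t_k$ are attached.

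First I would reduce~\eqref{eqn-cactus} to a statement about labels. Recall that every edge of $Q$ joins a vertex $\BB p(i)$ to its successor $\BB p(s(i))$ with $L(s(i)) = L(i) - 1$, or joins $\BB v_*$ --- to which we assign label $L(\BB v_*) := \min_i L(i) - 1$ --- to a vertex of label $L(\BB v_*)+1$; in either case the two endpoints of an edge of $Q$ have labels differing by exactly $1$. Hence if $\gamma$ is a path in $Q$ from $\BB p(i_1)$ to $\BB p(i_2)$ and $w_\gamma$ is a vertex of minimal label along $\gamma$, then $\op{len}(\gamma) \geq (L(i_1) - L(w_\gamma)) + (L(i_2) - L(w_\gamma))$. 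Writing $M := \max\{\min_{j\in J(i_1,i_2)} L(j),\ \min_{j\in J'(i_1,i_2)} L(j)\}$, it therefore suffices to show that every path $\gamma$ in $Q$ from $\BB p(i_1)$ to $\BB p(i_2)$ passes through some vertex of label at most $M$, since then $L(w_\gamma)\leq M$ and minimizing over $\gamma$ gives~\eqref{eqn-cactus}.

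Next I would set up the separation. Let $v_*$ and $v_*'$ realize $\min_{J(i_1,i_2)} L$ and $\min_{J'(i_1,i_2)} L$, so $L(v_*),L(v_*')\leq M$, and fix a path $\gamma$. If $\gamma$ visits $\BB v_*$, $v_*$ or $v_*'$ we are done (note $L(\BB v_*) = \min_i L(i)-1 < \min_{J}L \leq M$ since the vertices indexed by $J$ lie in $\mcl V(F)$). Otherwise $v_*,v_*'\notin\{\BB p(i_1),\BB p(i_2)\}$, so by construction they lie in the range of $\BB p\circ I$, i.e.\ on the cycle $\mcl C$. Using the planar embedding of $F$ (the cycle $\mcl C$ with the trees $\frk t_k$ drawn inside the inner face) together with the fact that the contour exploration discovers the vertices of $\mcl C$ in cyclic order, the defining properties $v_*\in\BB p(J(i_1,i_2))$ and $v_*'\in\BB p(J'(i_1,i_2))$ force $v_*$ and $v_*'$ to lie on the two opposite arcs of $\mcl C$ cut out by the roots of the trees containing $\BB p(i_1)$ and $\BB p(i_2)$; since the only paths in $F$ between distinct trees run along $\mcl C$, this shows $\{v_*,v_*'\}$ disconnects $\BB p(i_1)$ from $\BB p(i_2)$ in $F$. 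Let $A$ be the connected component of $\BB p(i_1)$ in $F\setminus\{v_*,v_*'\}$; then $\BB p(i_2)\notin A$, so some edge $\{u,u'\}$ of $\gamma$ has $u\in A$, $u'\notin A$. This edge has the form $\{\BB p(j),\BB p(s(j))\}$, so the stretch $\BB p(j),\BB p(j+1),\dots,\BB p(s(j))$ of the contour is an $F$-walk crossing the separation and hence visits $v_*$ or $v_*'$ at some time $j'$ strictly between $j$ and $s(j)$. By definition of the successor, $L$ does not take the value $L(j)-1$ strictly between $j$ and $s(j)$, and since $L$ changes by at most $1$ along the contour this forces $L(j')\geq L(j)$; as $L(j')\in\{L(v_*),L(v_*')\}$ we conclude $L(j)\leq L(j')\leq M$, and since $\max(L(u),L(u')) = L(j)$ the path $\gamma$ contains a vertex of label at most $M$, as required. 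The UIHPQ case follows by the same argument, the only change being that the outer boundary of $F_\infty$ is now a bi-infinite path rather than a cycle, so a single separating boundary vertex (lying in $\BB p(J(i_1,i_2))$) already disconnects $\BB p_\infty(i_1)$ from $\BB p_\infty(i_2)$ and the $J'$-term plays no role.

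The main obstacle is the topological step: unlike in the classical cactus bound for labeled trees, one separating vertex $v_*$ is not enough because $\mcl C$ offers two arcs between any two of its vertices, so one must locate a second vertex $v_*'$ on the complementary arc and verify --- from the cyclic order in which the contour discovers the boundary vertices of $F$, and the embedding of the trees inside $\mcl C$ --- that $\{v_*,v_*'\}$ genuinely disconnects $\BB p(i_1)$ from $\BB p(i_2)$ in $F$. Everything else is a routine bookkeeping of labels along the contour.
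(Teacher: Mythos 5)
Your proposal is correct and follows essentially the same route as the paper's proof: reduce to showing every path hits a vertex of label at most $M$, dispose of the case where the path meets $v_*$, $v_*'$, or $\BB v_*$ directly, and otherwise use that $\{v_*,v_*'\}$ (one on each arc of the boundary cycle of $F$) separates $\BB p(i_1)$ from $\BB p(i_2)$, so some edge $\{\BB p(j),\BB p(s(j))\}$ of the path crosses the separation and the minimality in the definition of the successor bounds its labels by $M$. The only cosmetic difference is that you bound $L(j)$ from above by the label at the crossing time $j'$ whereas the paper bounds $L(s(j))$, which is the same successor argument.
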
 
\begin{proof}
This follows from essentially the same proof as the ordinary cactus bound for quadrangulations without boundary (see~\cite[Proposition 5.9(ii)]{legall-miermont-notes}) but in the finite-volume case one has to consider two paths in the graph $F$ associated with the treed bridge from Section~\ref{sec-quad-bdy} since $F$ has a single cycle. 
See also the proof of~\cite[Theorem 5]{bet-disk-tight} for a lower bound which immediately implies the one in the statement of the lemma in the finite-volume case. 
\end{proof}

\subsection{Pruning the UIHPQ to get the UIHPQ$_{\op{S}}$}
\label{sec-pruning}

\begin{figure}[ht!]
 \begin{center}
\includegraphics{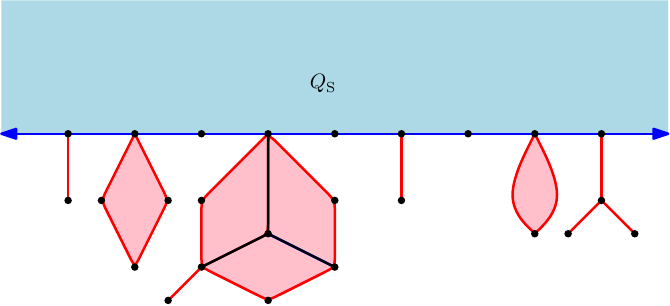} 
\caption{Pruning the red dangling quadrangulations from the UIHPQ $Q_\infty$ (light blue and pink regions with blue and red boundary) produces a UIHPQ$_{\op{S}}$ $Q_{\op{S}}$ (light blue region with blue boundary). }\label{fig-pruning}
\end{center}
\end{figure}

Recall from Section~\ref{sec-quad-def} that the UIHPQ$_{\op{S}}$ is the Benjamini-Schramm limit of uniformly random quadrangulations with simple boundary, as viewed from a uniformly random vertex on the boundary, as the area and then the perimeter tend to $\infty$. In this subsection we explain how to \emph{prune} an instance of the UIHPQ to obtain an instance of the UIHPQ$_{\op{S}}$. 

Suppose $(Q_\infty , \BB e_\infty)$ is a UIHPQ. There are infinitely many vertices $v \in \mcl V(\bdy Q_\infty)$ which have multiplicity at least $2$ in the external face, so are hit twice by the boundary path $\lambda_\infty : \BB Z\rta \mcl E(\bdy Q_\infty)$ of Remark~\ref{remark-schaeffer-bdy-uihpq}.  Attached to each of these vertices is a finite dangling quadrangulation which is disconnected from $\infty$ in $Q_\infty$ by removing a single boundary vertex.

Let $Q_{\op{S}}$ be the largest subgraph of $Q_\infty$ with the property that none of its vertices or edges can be disconnected from $\infty$ in $Q_\infty$ by removing a single boundary vertex. In other words, $Q_{\op{S}}$ is obtained by removing all of the ``dangling quadrangulations" of $Q_\infty$ which are joined to $\infty$ by a single vertex. Also let $ \BB e_{\op{S}}$ be the edge immediately to the left of the vertex which can be removed to disconnect $\BB e_\infty$ from $\infty$ (if such a vertex exists) or let $\BB e_{\op{S}} = \BB e_\infty$ if $\BB e_\infty$ belongs to $\bdy Q_{\op{S}}$. Then $(Q_{\op{S}} , \BB e_{\op{S}})$ is an instance of the UIHPQ$_{\op{S}}$. 

One can recover a canonical boundary path $\lambda_{\op{S}} : \BB Z\rta\mcl E(\bdy Q_{\op{S}})$ which traces the edges of $\bdy Q_\infty$ and hits the terminal endpoint of $\BB e_{\op{S}}$ at time $0$ from the analogous boundary path $\lambda_\infty$ of the UIHPQ by skipping all of the intervals of time during which $\lambda_{\op{S}}$ is tracing a dangling quadrangulation. 

It is shown in~\cite[Section~6.1.2]{curien-miermont-uihpq} and explained more explicitly in~\cite[Section~6]{caraceni-curien-uihpq} that if we start with an instance $(Q_{\op{S}} , \BB e_{\op{S}})$ of the UIHPQ$_{\op{S}}$, we can construct a UIHPQ $(Q_\infty , \BB e_\infty)$ which can be pruned as above to recover $(Q_{\op{S}} , \BB e_{\op{S}})$ via an explicit sampling procedure. 
Conditional on $Q_{\op{S}}$, let $\{(q_v , e_v)\}_{v \in \mcl V(\bdy Q_{\op{S}})}$ be an independent sequence of random finite quadrangulations with general boundary with an oriented boundary root edge, with distributions described as follows.
Let $v_0$ be the right endpoint of the root edge $\BB e_{\op{S}}$. 
Each $q_v$ for $v\not=v_0$ is distributed according to the so-called \emph{free Boltzmann distribution on quadrangulations with general boundary}, which is given by
\eqb \label{eqn-free-boltzman}
\BB P\left[ (q_v,e_v) = (\frk q, \frk e) \right] = C^{-1} \left( \frac{1}{12} \right)^n \left( \frac18 \right)^l
\eqe 
for any quadrangulation $\frk q$ with $n$ interior faces and $2l$ boundary edges (counted with multiplicity) with a distinguished oriented root edge $\frk e\in \bdy \frk q$, where here $C >0$ is a normalizing constant. 
The quadrangulation $q_{v_0}$ is instead distributed according to
\eqb \label{eqn-free-boltzman-root}
\BB P\left[ (q_{v_0} , e_{v_0}) = (\frk q , \frk e) \right] = \wt C^{-1} (2l+1) \left( \frac{1}{12} \right)^n \left( \frac18 \right)^l  
\eqe 
for a different normalizing constant $\wt C$. (Intuitively, the reason for the extra factor $2l+1$ in~\eqref{eqn-free-boltzman-root} is that a dangling quadrilateral with longer boundary length is more likely to contain the root edge.)  

For each $v \in \mcl V(\bdy Q_{\op{S}})$, identify the terminal endpoint of $e_v$ with $v$. This gives us a new quadrangulation $Q_\infty$ with general boundary. We choose an oriented root edge $\BB e_\infty$ for $Q_\infty$ by uniformly sampling one of the oriented edges of $\mcl E(\bdy q_{v_0}) \cup \{\BB e_{\op{S}} \}$. Then $(Q_\infty , \BB e_\infty)$ is a UIHPQ with general boundary which can be pruned to recover $(Q_{\op{S}} , \BB e_{\op{S}})$.

\subsection{The Brownian disk}
\label{sec-brownian-disk}

In this subsection we will review the definition of the Brownian disk from~\cite{bet-mier-disk}, which is the scaling limit of finite uniform  quadrangulations with boundary~\cite{bet-mier-disk}.  
The construction is a finite-volume analog of the construction of the Brownian half-plane in Section~\ref{sec-bhp} and a continuum analog of the Schaeffer-type construction in Section~\ref{sec-quad-bdy}. 
We follow closely the exposition given in~\cite[Section~3.1]{gwynne-miller-gluing}. 
 
Fix an area parameter $a>0$ and a boundary length parameter $\ell>0$. Let $X : [0,a] \rta [0,\infty)$ be a standard Brownian motion started from $\ell$ and conditioned to hit $0$ for the first time at time $a$ (such a Brownian motion is defined rigorously in, e.g.,~\cite[Section~2.1]{bet-mier-disk}). 
For $s,t\in [0,1]$, set
\eqb \label{eqn-X-dist}
d_X(s,t) := X(s) + X(t) - 2\inf_{u \in [s\wedge t , s\vee t]} X(u) .
\eqe  
Conditioned on $X$, let $Z^0$ be the centered Gaussian process with
\eqb \label{eqn-Z^0-def}
\op{Cov}(Z^0(s) ,Z^0(t) ) = \inf_{u\in [s\wedge t , s\vee t]} \left( X(u) - \inf_{v \in [0,u]} X(v) \right) , \quad s,t\in [0,a] .
\eqe
Using the Kolmogorov continuity criterion, one can check that $Z^0$ a.s.\ admits a continuous modification which is $\alpha$-H\"older continuous for each $\alpha <1/4$. For this modification we have $Z_s^0 = Z_t^0$ whenever $d_X(s,t) = 0$.
 
Let $\frk b$ be $\sqrt 3$ times a Brownian bridge from $0$ to $0$ independent from $(X,Z)$ with time duration $\ell$. 
For $r \in [0,\ell]$, let 
\eqb \label{eqn-disk-inf}
T(r) := \inf\left\{t \geq 0 \,:\, X_t = \ell-r \right\}
\eqe
and for $t \in [0,a]$, let $T^{-1}(t) := \sup\left\{r \in [0,\ell] \,:\, T(r) \leq t \right\}$. Set
\eqbn
Z(t) := Z^0(t) +  \frk b(T^{-1}(t)) .
\eqen
We view $[0,a]$ as a circle by identifying~$0$ with~$a$ and for $s,t\in [0,a]$ we define $\ul Z(s,t)$ to be the minimal value of~$Z$ on the counterclockwise arc of $[0,a]$ from $s$ to $t$. 
For $s,t \in [0,a]$, define
\eqb \label{eqn-d_Z}
d_Z\!\left(s,t \right) = Z(s) + Z(t) - 2\left( \ul Z(s,t) \vee \ul Z(t,s) \right)  
\eqe 
and
\eqb \label{eqn-dist0-def}
d^0(s,t) = \inf \sum_{i=1}^k d_Z(s_i , t_i)
\eqe 
where the infimum is over all $k\in\BB N$ and all $2k+2$-tuples $( t_0, s_1 , t_1 , \ldots , s_{k } , t_{k } , s_{k+1}) \in [0,a]^{2k+2}$ with $t_0 = s$, $s_{k+1} = t$, and $d_X(t_{i-1} , s_i) = 0$ for each $i\in [1,k+1]_{\BB Z}$. Equivalently, $d^0$ is the largest pseudometric on $[0,a]$ which is at most $d_Z$ and is zero whenever $d_X$ is $0$. 

The \emph{Brownian disk} with area $a$ and perimeter $\ell$ is the quotient space $ H  = [0,a]/\{d^0 = 0\}$ equipped with the quotient metric, which we call $d $. It is shown in~\cite{bet-mier-disk} that $(H,d) $ is a.s.\ homeomorphic to the closed disk. 

Let $\BB p : [0,a] \rta H$ for the quotient map.
The \emph{area measure} of $H$ is the pushforward $\mu$ of Lebesgue measure on $[0,a]$ under $\BB p$. 
The \emph{boundary} of $H$ is the set $\bdy H = \BB p\!\left(\{T_r \,:\, r \in [0,\ell] \} \right)$ (this set is the topological boundary of $H$ by~\cite[Proposition 21]{bet-disk-tight}, and is homeomorphic to the circle). 
We note that $\bdy H$ has a natural orientation, obtained by declaring that the path $t\mapsto \BB p(t)$ traces $\bdy H$ in the counterclockwise direction. 
The \emph{boundary measure} of $H$ is the pushforward $\nu$ of Lebesgue measure on $[0,\ell]$ under $r\mapsto \BB p(T(r))$.
The \emph{boundary path} of $H$ is the curve $\eta : [0,\ell] \rta \bdy H$ defined by $\eta(r) = \BB p(T(r))$. 

By~\cite[Corollary~1.5]{lqg-tbm2}, the law of the metric measure space $(H,d,\mu,\nu)$ is the same as that of the $\sqrt{8/3}$-LQG disk with area $a$ and boundary length $\ell$, equipped with its $\sqrt{8/3}$-LQG area measure and boundary length measure.

\section{Scaling limit of the UIHPQ and UIHPQ$_{\op{S}}$}
\label{sec-uihpq-conv}

In this section we will prove Theorems~\ref{thm-uihpq-ghpu} and~\ref{thm-uihpqS-ghpu}.  We will extract Theorem~\ref{thm-uihpq-ghpu} from \cite{bet-mier-disk} in a manner which is similar to that in which the scaling limit result for the uniform infinite planar quadrangulation without boundary (UIPQ) in~\cite{curien-legall-plane} was extracted from \cite{miermont-brownian-map,legall-uniqueness}.  We start in Section~\ref{sec-disk-ghpu} by showing that one can improve the Gromov-Hausdorff scaling limit result for finite uniformly random quadrangulations with boundary toward the Brownian disk~\cite{bet-mier-disk} to get convergence in the GHPU topology.  In Section~\ref{sec-bhp-coupling}, we will show that one can couple an instance of the Brownian half-plane with a Brownian disk in such a way that metric balls of a certain radius centered at the root point coincide with high probability.  In Section~\ref{sec-map-coupling}, we prove an analogous coupling result for the UIHPQ with a finite uniformly random quadrangulation with boundary.  In Section~\ref{sec-uihpq-proof}, we will deduce Theorem~\ref{thm-uihpq-ghpu} from these coupling results and the scaling limit result of Section~\ref{sec-disk-ghpu}.

In Section~\ref{sec-uihpqS-proof}, we will deduce Theorem~\ref{thm-uihpqS-ghpu} from Theorem~\ref{thm-uihpq-ghpu} using the pruning procedure discussed in Section~\ref{sec-pruning}.

\subsection{Convergence to the Brownian disk in the GHPU topology}
\label{sec-disk-ghpu}

It is proven in~\cite{bet-mier-disk} that uniformly random quadrangulations with boundary converge in the scaling limit to the Brownian disk in the Gromov-Hausdorff topology, and it is not hard to see from the proof in~\cite{bet-mier-disk} that one has convergence in the stronger GHPU topology as well. 
We will explain why this is the case just below. 

Fix $\ell > 0$ and let $(H , d)$ be a unit area Brownian disk with boundary length $\ell$.  Let $\mu$ (resp.\ $\eta$) be the natural area measure (resp.\ boundary path) on $H$, as in Section~\ref{sec-brownian-disk}. 
Let
\eqb \label{eqn-disk-ghpu}
\frk H := \left(H,d,\mu,\eta   \right) 
\eqe
so that $\frk H$ is an element of $ \BB M^{\op{GHPU}}$.

Let $\{l^n\}_{n\in\BB N}$ be a sequence of positive integers with $(2n)^{-1/2} l^n \rta \ell$. For $n\in\BB N$, let $(Q^n , \BB e_0^n , \BB v_*^n )$ be sampled uniformly from $\mcl Q^\bullet (n, l^n)$ (Section~\ref{sec-quad-def}).  We view $Q^n$ as a connected metric space in the manner of Remark~\ref{remark-ghpu-graph}.  Let $d^n$ be the graph distance on $Q^n$, re-scaled by $(9/8)^{1/4} n^{-1/4}$.  Let $\mu^n$ be the measure on $\mcl V(Q^n)$ which assigns a mass to each vertex equal to $(4n)^{-1}$ times its degree.  Let $\lambda^n : [0, 2l^n] \rta \bdy Q^n$ be the boundary path of $Q^n$ started from $\BB e_0^n$, extended by linear interpolation.  Let $\eta^n(t) := \lambda^n\left(2^{3/2}  n^{ 1/2} t \right)$ for $t\in [0, 2^{-1/2} n^{-1/2} l^n ]$. 
Let
\eqb \label{eqn-uniform-quad-ghpu}
\frk Q^n := \left( Q^n , d^n , \mu^n ,\eta^n \right) .
\eqe
 
\begin{thm} \label{thm-disk-ghpu}
In the setting described just above, we have $\frk Q^n \rta \frk H$ in law in the GHPU topology. 
\end{thm}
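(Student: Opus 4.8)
The plan is to show that $\frk Q^n \rta \frk H$ in law by verifying the hypotheses of the GHPU convergence criterion, Lemma~\ref{prop-ghpu-condition}, using the Gromov-Hausdorff scaling limit of~\cite{bet-mier-disk} together with convergence of the encoding processes. First I would recall from~\cite{bet-mier-disk} the statement that, in the notation of Section~\ref{sec-quad-bdy}, if $(C^n , L^n , b^n)$ is the Schaeffer encoding triple of $(Q^n,\BB e_0^n,\BB v_*^n)$, then after the appropriate rescaling (time by $(2n+l^n)^{-1}$ and space by $(9/8)^{1/4} n^{-1/4}$ for the label process, together with the diffusive rescaling of $b^n$) the triple of encoding functions converges in law, uniformly on compacts, to the triple $(X,Z,\frk b)$ used in the construction of the Brownian disk in Section~\ref{sec-brownian-disk}. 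By the Skorokhod representation theorem I would couple everything so that this convergence holds almost surely, and work on this probability space for the rest of the argument. The main output of~\cite{bet-mier-disk} is then that $(Q^n,d^n,\BB v_*^n) \rta (H,d)$ in the (pointed) Gromov-Hausdorff sense, and in fact the proof there gives convergence as subsets of a common space via the projection maps $\BB p^n$ and $\BB p$; this gives hypothesis~\ref{item-ghpu-gh} of Lemma~\ref{prop-ghpu-condition} with the marked point being $\eta(0)$ rather than $\BB v_*^n$, which follows since $\eta^n(0) \rta \eta(0)$ in the common space (as $\eta^n(0)$ is the image under $\BB p^n$ of a fixed boundary time and the encoding functions converge).

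Next I would verify hypothesis~\ref{item-ghpu-gp}, the convergence in the $2$-fold Gromov-Prokhorov topology of $(Q^n, d^n, \mu^n, \nu^n, \eta^n(0))$ to $(H,d,\mu,\nu,\eta(0))$, where $\nu^n$ (resp.\ $\nu$) is the pushforward of Lebesgue measure on the boundary time interval under $\eta^n$ (resp.\ $\eta$). This amounts to showing joint convergence in law of finitely many i.i.d.\ $\mu^n$-samples, finitely many i.i.d.\ $\nu^n$-samples, and the marked point, together with the matrix of their mutual $d^n$-distances and the total masses $\mu^n(Q^n) , \nu^n(Q^n)$. A $\mu^n$-sample corresponds, up to negligible error from the degree-weighting, to $\BB p^n$ of a uniform time in $[0,2n+l^n]_{\BB Z}$, and a $\nu^n$-sample corresponds to $\BB p^n$ of a point on the outer boundary of $F^n$ chosen via the canonical boundary path of Remark~\ref{remark-schaeffer-bdy} (i.e.\ essentially $\BB p^n\circ I^n$ evaluated at a suitably distributed boundary index). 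Under the Skorokhod coupling these sample times converge to the corresponding uniform/boundary times in $[0,a]$, and then the distance estimates of Section~\ref{sec-uihpq-dist}, namely the upper bound Lemma~\ref{prop-schaeffer-dist} and the cactus lower bound Lemma~\ref{prop-cactus}, combined with the uniform convergence of the encoding functions, give convergence of all the pairwise rescaled distances to $d(\cdot,\cdot)$ on the Brownian disk; the total masses converge since $\mu^n(Q^n) \rta 1 = a$ and $\nu^n(Q^n) = 2^{-1/2} n^{-1/2} l^n \rta \ell$. Hypothesis~\ref{item-ghpu-equi}, equicontinuity of the curves $\eta^n$, follows from the fact that the $d^n$-modulus of continuity of $t\mapsto \BB p^n(\lambda^n(2^{3/2}n^{1/2}t))$ is controlled, via Lemma~\ref{prop-schaeffer-dist}, by the modulus of continuity of the rescaled label process along the boundary, which converges to $\sqrt{3}$ times a Brownian bridge and hence is uniformly Hölder; alternatively equicontinuity is already implicit in the a.s.\ uniform convergence $\eta^n \rta \eta$ on compacts. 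Finally hypotheses~\ref{item-ghpu-support} (full support of $\mu$) and~\ref{item-ghpu-lebesgue} ($\eta$ simple) are known properties of the Brownian disk: $\mu$ has full support since it is the pushforward of Lebesgue measure under a surjective continuous map, and $\eta$ is a simple loop tracing $\bdy H$ since $(H,d)$ is a.s.\ homeomorphic to the closed disk with $\bdy H$ its topological boundary, as established in~\cite{bet-mier-disk}.

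With all five hypotheses in hand, Lemma~\ref{prop-ghpu-condition} yields $\frk Q^n \rta \frk H$ in the GHPU topology on the Skorokhod coupling space, hence in law, completing the proof. The main obstacle I anticipate is hypothesis~\ref{item-ghpu-gp}: one must argue carefully that the degree-weighted vertex measure $\mu^n$ and the boundary-length measure $\nu^n$ push forward (under the embedding into the common space provided by Gromov-Hausdorff convergence) to the correct continuum measures, and in particular that i.i.d.\ samples from $\mu^n$ can be replaced by $\BB p^n$ of i.i.d.\ uniform discrete times with asymptotically vanishing error, and likewise for $\nu^n$ and the boundary path — this requires combining the bijective description of the boundary path in Remark~\ref{remark-schaeffer-bdy} with the distance bounds of Section~\ref{sec-uihpq-dist} and a little care about the $o(1)$ discrepancies coming from degree weights and from the extra downward steps in $C^n$. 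Everything else is a routine transcription of the Gromov-Hausdorff argument of~\cite{bet-mier-disk} together with bookkeeping of the measure and the curve, which is precisely what Lemma~\ref{prop-ghpu-condition} was designed to automate.
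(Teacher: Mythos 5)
Your overall strategy is the same as the paper's (verify the hypotheses of Lemma~\ref{prop-ghpu-condition} using the encoding-function and Gromov--Hausdorff convergence from~\cite{bet-mier-disk}), but there is one genuine gap: hypothesis~\ref{item-ghpu-lebesgue} of Lemma~\ref{prop-ghpu-condition} requires $\eta$ to be a \emph{simple curve}, and the boundary path of the Brownian disk is not one --- it is a closed loop with $\eta(0)=\eta(\ell)$, injective only on $[0,\ell)$. This is not a cosmetic point: the proof of Lemma~\ref{prop-ghpu-condition} identifies $\eta$ from the measure $\nu$ by arguing that a connected subset of the range of $\eta$ containing $\eta(0)$ and having $\nu$-mass $t$ must be $\eta([0,t])$, and on a circle this fails (arcs in either direction, or straddling $\eta(0)$, give other candidates). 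The paper handles this with a truncation: it applies the lemma to $\eta^n|_{[0,r]}$ and $\eta|_{[0,r]}$ for $r<\ell$ (with the correspondingly restricted boundary measures $\nu^n_r,\nu_r$), and then uses equicontinuity of the $\eta^n$ to upgrade to convergence of the full loops. Your write-up asserts hypothesis~\ref{item-ghpu-lebesgue} outright, so as written the key lemma does not apply.

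A secondary issue is your equicontinuity argument. Lemma~\ref{prop-schaeffer-dist} bounds $\op{dist}(\BB p^n(i_1),\BB p^n(i_2);Q^n)$ by $L^n(i_1)+L^n(i_2)-2\min_{j\in[i_1,i_2]}L^n(j)+2$, where the minimum runs over the \emph{contour-time} interval, which for two nearby boundary times $I^n(k_1),I^n(k_2)$ includes all labels in the trees rooted on that boundary arc --- not just the bridge values $b^n(k)$. So the modulus of continuity of the (rescaled) bridge alone does not control this bound; one needs control of the full snake over the corresponding contour interval. The paper instead imports the continuum H\"older estimate $\wt d(T(r_1),T(r_2))\preceq |r_2-r_1|^{1/2}(|\log|r_2-r_1||+1)^2$ from~\cite{gwynne-miller-gluing} and transfers it to the discrete side via the Skorokhod convergence $T^n\rta T$ together with the uniform convergence $\wt d^n\rta\wt d$. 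Relatedly, for hypothesis~\ref{item-ghpu-gp} the sandwich between Lemma~\ref{prop-schaeffer-dist} and the cactus bound of Lemma~\ref{prop-cactus} does not pin down distances (the two bounds do not match in general); the correct input is the uniform convergence of the distance functionals $\wt d^n\rta\wt d$ proved in~\cite{bet-mier-disk}, which you do invoke elsewhere, so this part is repairable by simply citing the right result.
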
 
\begin{proof}
We will deduce the theorem statement from the scaling limit result in~\cite{bet-mier-disk} together with Lemma~\ref{prop-ghpu-condition}.  The key point is that the encoding processes for $Q^n$ from Section~\ref{sec-quad-bdy} converge jointly with the metric spaces $(Q^n , d^n)$ to the encoding processes for $(H,d)$ in Section~\ref{sec-brownian-disk} and the metric space $(H,d)$, in the uniform topology and the Gromov-Hausdorff topology, respectively; and the measures and curves defined above are determined by the encoding processes in a relatively simple way.  This allows us to check the conditions of Lemma~\ref{prop-ghpu-condition}. 
 
Let $(X,Z , \frk b)$ be the encoding process for $(H,d )$ and for $r \in [0,\ell]$ let $T(r)$ be the first time that $X(t) = \ell-r$, as in~\eqref{eqn-disk-inf}.  
Let $\BB p : [0,1]\rta H$ be the quotient map and for $s,t\in [0,1]$ write 
\eqbn
\wt d(s,t) := d (\BB p(s) , \BB p(t)) 
\eqen
so that $\wt d$ is a pseudometric on $[0,1]$. 
Recall that $\mu$ is the pushforward of Lebesgue measure on $[0,a]$ under $\BB p$ and $\eta(r) = \BB p(T(r))$ for $r\in [0,\ell]$. 
Let $\nu$ be the pushforward of Lebesgue measure on $[0,\ell]$ under $\eta$, i.e.\ the boundary length measure of $H$. 
 
For $n\in\BB N$, let $( C^n , L^n , b^{0,n})$ be the Schaeffer encoding triple of $(Q^n , \BB e_0^n , \BB v_*^n )$ as in Section~\ref{sec-quad-bdy}. For $m\in [1, l^n]_{\BB Z}$, define $I^n(m) := \min\left\{ i \in [1,2n+l^n]_{\BB Z} \,:\,  C^n(i)  = -m \right\} $,
as in~\eqref{eqn-quad-bdy-inf}. 
Extend $C^n$ and $L^n$ to $[0,2n]$ by linear interpolation and for $t\in [0,1]$, define
\eqbn
X^n(t) := (2n)^{-1/2} \left(  C^n((2n + l^n) t) +  l^n \right)   \quad \op{and} \quad Z^n(t) := \left(\frac{9}{8n}\right)^{1/4} L^n((2n + l^n) t) .
\eqen
For $r\in [0, (2n)^{-1/2} l^n]$, also define (in analogy with~\eqref{eqn-disk-inf})
\eqb \label{eqn-quad-hit}
T^n(r) := (2n)^{-1} I^n(\lfloor (2n)^{1/2} r \rfloor) . 
\eqe 
Let $\BB p^n : [0,2n+l^n]_{\BB Z} \rta \mcl V(Q^n)$ be the contour exploration as in Section~\ref{sec-quad-bdy}. For $s,t\in [0,1]$ with $(2n+l^n)s , (2n+l^n) t \in \BB N_0$, let 
\eqbn
\wt d^n(s,t) := d^n\left( \BB p^n((2n+l^n)s ) , \BB p^n( (2n+l^n) t )   \right) 
\eqen
and extend $\wt d^n$ to $[0,1]^2$ by linear interpolation. 

In the variant of the Schaeffer bijection described in Section~\ref{sec-quad-bdy}, we add one edge from the vertex $\BB p^n(i)$ to the successor vertex (which has a smaller label) for each $i\in [0,2n+l^n]_{\BB Z}$. Consequently, if we let $\wh\mu^n$ be the pushforward under $\BB p^n$ of $(2n)^{-1}$ times counting measure on $[0,2n+l^n]_{\BB Z}$, then for $v\in \mcl V(Q^n)$ it holds that $\wh\mu^n(v)$ equals $(2n)^{-1}$ times the number of edges of $Q^n$ which connect $v$ to a vertex with a smaller label. Since $\mu^n$ assigns mass to each vertex equal to $(4n)^{-1}$ times its degree, the $d^n$-Prokhorov distance between $\wh\mu^n$ and $\mu^n$ is at most a universal constant times $n^{-1/4}$ (note that the scaling factors for $\wh\mu^n$ and $\mu^n$ differ by a factor of 2 since each edge is counted twice---once for each of its endpoints---when considering the measure $\mu^n$). 
 
Let $\nu^n$ be the pushforward of $2^{-3/2} n^{-1/2}$ times counting measure on $[1,2l^n]_{\BB Z}$ under the (linearly interpolated) boundary path $\lambda^n$. Equivalently $\nu^n$ is the counting measure on $\mcl V(\bdy Q^n)$ (with vertices counted with multiplicity), rescaled by $2^{-3/2} n^{-1/2}$.  The measure $\nu^n$ does not admit a simple description in terms of $(C^n , L^n , b^{0,n})$ but we can describe a closely related measure as follows. 

Let $\wh\nu^n$ be the pushforward under $\BB p^n\circ I^n$ of $2^{-1/2} n^{-1/2}$ times the counting measure on $[1,l^n]_{\BB Z}$ (with $I^n$ as above). Note that the scaling factor here is off by a factor of 2 as compared to the scaling factor of $\nu^n$ so that $\wh\nu^n$ has the same total mass as $\nu^n$.
The measure $\wh\nu^n$ can equivalently be described as follows.  Let $j_0^n = 0$ and for $k\in [1,l^n]_{\BB Z}$, let $j_k^n$ be the $k$th smallest downward step of the random walk bridge $b^{0,n}$.  Then $\wh\nu^n$ is the measure on $\mcl V(\bdy Q^n)$ which assigns mass $2^{-1/2} n^{-1/2}$ to the $j_k^n$th vertex of $\bdy Q^n$ in counterclockwise cyclic order started from the root vertex.  Since $b^{0,n}$ is a simple random walk bridge, we can use Hoeffding's concentration inequality for binomial random variables to find that except on an event of probability decaying faster than any power of $n$, 
\eqb \label{eqn-max-bridge-excursion}
\max_{k\in [1,l^n]_{\BB Z}}| j_k^n - 2k| = n^{o_n(1)}  
\eqe  
which implies that $\lambda^n(2k)$ differs from $\BB p^n( I^n(k))$ for $k\in\BB N$ by at most $n^{o_n(1)}$ units of boundary length and 
\eqbn
|\nu^n(A) - \wh\nu^n(A)| \leq n^{-1/2 + o_n(1)} ,\quad \forall A\subset \bdy Q^n. 
\eqen
 
It is shown in~\cite[Section~5]{bet-mier-disk} that, in the notation above,  
\eqb \label{eqn-disk-ghpu-gp}
(X^n , Z^n , \wt d^n) \rta (X,Z, \wt d) \quad \op{and}\quad \left(Q^n , d_{Q^n}  , \eta^n(0) \right) \rta \left( H , d  , \eta(0) \right) 
\eqe 
in law in the uniform topology and the pointed GH topology, respectively. With $T$ and $T^n$ the hitting time processes as in~\eqref{eqn-disk-inf} and~\eqref{eqn-quad-hit}, respectively, we have $(X^n , Z^n ,  T^n) \rta (X, Z , T)$ in law in the uniform topology in the first two coordinates and the Skorokhod topology in the third coordinate.  Since $(X , Z)$ a.s.\ determines $T$, $\wt d$, and $(H , d , \eta(0) )$, the convergence~\eqref{eqn-disk-ghpu-gp} in law occurs jointly with the convergence $T^n \rta T$ in law.  

By the Skorokhod representation theorem, we can couple $\{(Q^n , \BB e_0^n , \BB v_*^n)\}_{n\in\BB N}$ with $(\op{BD} , d , x )$ in such a way that the convergence~\eqref{eqn-disk-ghpu-gp} occurs a.s.\  and also $T^n \rta T$ a.s.\ in the Skorokhod topology. 
Henceforth fix a coupling for which this is the case. 
Note that the Borel-Cantelli lemma implies that in any such coupling,~\eqref{eqn-max-bridge-excursion} holds for large enough $n$. 
 
By~\cite[Lemma~3.2]{gwynne-miller-gluing}, re-phrased in terms of the pseudometric $\wt d$ on $[0,1]$, there a.s.\ exists $C>0$ such that for each $r_1 , r_2 \in [0,\ell]$ with $r_1 < r_2$,
\eqb \label{eqn-use-disk-holder}
\wt d\left(T(r_1) , T(r_2) \right) \leq C (r_2-r_1)^{1/2} \left( |\log (r_2-r_1)|+1 \right)^2 .
\eqe 
The Skorokhod convergence $T^n \rta T$ together with the uniform convergence $\wt d^n \rta \wt d$ therefore implies that for each $\ep > 0$, there a.s.\ exists $\delta>0$ such that for each $n\in\BB N$ and each $r_1 ,r_2\in [0,\ell]$ with $|r_1 -r_2| \leq \delta$, we have $\wt d^n(T^n(r_1) , T^n(r_2) ) \leq  \ep$. From this together with~\eqref{eqn-max-bridge-excursion} and the discussion just after, we infer that the re-scaled boundary paths $\{\eta^n\}_{n\in \BB N}$ are equicontinuous. 

We will now apply Lemma~\ref{prop-ghpu-condition} to deduce our desired GHPU convergence. 
The lemma does not apply directly in our setting since the curve $\eta$ is not simple (it satisfies $\eta(0) = \eta(\ell)$), so a straightforward truncation argument is needed. 
For $r \in [0, (2n)^{-1/2} l^n]$, let $\wh\nu^n_r$ be the restriction of $\wh\nu^n$ to the counterclockwise arc of $\bdy Q^n$ started from $\eta^n(0)$ with $\wh\nu^n$-length $r$.
Also let $\nu_r^n$ be the pushforward of Lebesgue measure on $[0, r ]$ under $\eta^n$. 
For $r\in [0,\ell]_{\BB Z}$, let $\nu_r$ be the pushforward of Lebesgue measure on $[0,r]$ under $\eta$.  

Our choice of coupling and our above description of the relationship between the measures $\mu^n$ and $\wh\mu^n$, 
\eqbn
\left( Q^n , d^n , \mu^n , \wh\nu^n_r , \eta^n(0) \right) \rta  \left( H , d   , \mu , \nu_r   , \eta(0) \right) 
\eqen
in the 2-fold pointed Gromov-Prokhorov topology for each $r \in (0,\ell)$. By~\eqref{eqn-max-bridge-excursion}, we also have this convergence with $\nu^n_r$ in place of $\wh\nu^n_r$. 
By Lemma~\ref{prop-ghpu-condition}, we infer that 
\eqbn
\left( Q^n , d^n , \mu^n , \eta^n|_{[0,r]} \right) \rta  \left( H , d   , \mu , \eta|_{[0,r]} \right) 
\eqen
in the GHPU topology. Since $r$ can be made arbitrarily close to $\ell$, by combining this with equicontinuity of the curves $\eta^n$ we see that in fact $\frk Q^n \rta \frk H$ in the GHPU topology.
\end{proof}

\subsection{Coupling the Brownian half-plane with the Brownian disk}
\label{sec-bhp-coupling}

In this section, we will prove the following coupling statement, which is an analog of~\cite[Proposition~4]{curien-legall-plane} for Brownian surfaces with boundary.  This coupling statement is needed for the proof of Theorem~\ref{thm-uihpq-ghpu} and also implies Proposition~\ref{prop-bhp-wedge}, as explained just below the proposition statement. 

\begin{prop} \label{prop-bhp-coupling}
Fix $a,\ell > 0$ and let $(H,d)$ be a Brownian disk with area $a$ and boundary length $\ell$.  Also let $(H_\infty , d_\infty)$ be a Brownian half-plane.  For each $\ep \in (0,1)$ there exists $\alpha >0$ and a coupling of $(H,d)$ with $(H_\infty , d_\infty)$ such that the following is true.  Let $\mu$ (resp.\ $\mu_\infty$) and $\eta$ (resp.\ $\eta_\infty$) be the area measure and boundary path of $H$ (resp.\ $H_\infty$).  In the notation of Definition~\ref{def-ghpu-truncate}, it holds with probability at least $1-\ep$ that
\eqbn
\frk B_\alpha\left( H , d , \mu ,\eta  \right) \quad \op{and} \quad \frk B_\alpha\left( H_\infty , d_\infty , \mu_\infty,\eta_\infty   \right) .
\eqen
agree as elements of $\ol{\BB M}^{\op{GHPU}}$. 
\end{prop}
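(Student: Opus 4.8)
The plan is to follow the strategy of~\cite[Proposition~4]{curien-legall-plane}, adapted to the boundary setting. The key structural point is that $\frk B_\alpha(H,d,\mu,\eta)$ and $\frk B_\alpha(H_\infty,d_\infty,\mu_\infty,\eta_\infty)$ are produced by the \emph{same} recipe --- Definition~\ref{def-ghpu-truncate} applied to the curve-decorated metric measure space built from an encoding triple exactly as in Sections~\ref{sec-brownian-disk} and~\ref{sec-bhp} --- the only difference being the law of the encoding triple; so it suffices to compare the two encoding triples in a small neighborhood of the root time and then invoke a locality property of metric balls. \emph{Step 1 (re-rooting the disk).} Combining Theorem~\ref{thm-disk-ghpu} with the fact that a uniform sample from $\mcl Q^\bullet(n,l)$ is rooted at a uniformly random oriented boundary edge shows that the Brownian disk is invariant under re-rooting its boundary path at a point sampled from $\nu/\ell$, where $\nu$ is its boundary measure. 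Hence it is enough to produce, for each $\ep\in(0,1)$, an $\alpha>0$ and a coupling in which $\frk B_\alpha$ of the Section~\ref{sec-brownian-disk} construction \emph{re-rooted at the first-passage time $T(U\ell)$ of $X$}, with $U$ uniform on $[0,1]$ independent of $(X,Z^0,\frk b)$, agrees in $\ol{\BB M}^{\op{GHPU}}$ with $\frk B_\alpha(H_\infty,d_\infty,\mu_\infty,\eta_\infty)$ with probability at least $1-\ep$.

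\emph{Step 2 (local comparison of the encodings).} Shift time by $-T(U\ell)$ and the values of $X$ by $-X(T(U\ell))$; neither shift affects $d_X$, the area measure, the running-infimum parametrization $T^{-1}$ of the boundary, or the conditional law of $Z^0$ given $X$, which in both settings is the centered Gaussian process whose covariance is the infimum of $u\mapsto X(u)-\inf_{v\le u}X(v)$ over the relevant arc (compare~\eqref{eqn-Z^0-def} and~\eqref{eqn-Z^0-def-infty}). Because $T(U\ell)$ is a first-passage time, the forward part of the shifted process $X$ is a Brownian motion and the backward part is a Brownian motion conditioned to stay positive, up to Radon--Nikodym factors coming from the conditioning ``$X$ hits $0$ at time $a$'' and the bridge conditioning on $\frk b$; these forward and backward halves are exactly the two halves of $X_\infty$, and $\frk b$ re-rooted at $U\ell$ is locally $\sqrt3$ times a two-sided Brownian motion. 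The Radon--Nikodym factors tend to $1$ in $L^1$ as the size of the window is taken to zero (first restricting to the event, of probability close to $1$, that $U$ lies in a fixed compact subinterval of $(0,1)$, so that the old root $0\sim\ell$ of $\frk b$ stays out of the window). It follows that for every $\zeta>0$ there is $\delta>0$ such that the restriction to $[-\delta,\delta]$ of the shifted, re-rooted encoding triple of $H$ and the restriction to $[-\delta,\delta]$ of $(X_\infty,Z_\infty,\frk b_\infty)$ can be coupled to be equal off an event of probability $\zeta$. (Some care is needed here because the covariance of $Z^0$ on $[-\delta,\delta]$, and likewise that of $Z_\infty^0$, depends on the running infimum of the driving process outside any fixed window; this is handled by coupling the driving processes on a window large compared with $a$, which is harmless since the truncation below only sees a short window.)

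\emph{Step 3 (locality of the truncations and conclusion).} The cactus-type lower bound of Lemma~\ref{prop-cactus}, in its continuum form for the Brownian disk and half-plane (obtained as a scaling limit, or directly from~\eqref{eqn-d_Z-infty}--\eqref{eqn-dist0-def-infty}), forces the set of times whose projection lies in $B_\alpha(\eta(0);d)$ to be contained in an arbitrarily short time-window around $0$ with probability tending to $1$ as $\alpha\downarrow0$: if $d(\BB p(s),\BB p(0))\le\alpha$ then $Z$ stays within $O(\alpha)$ of its running infimum on the arc between $0$ and $s$, which typically fails once $s$ leaves a short window. As in~\cite[proof of Proposition~4]{curien-legall-plane}, this is upgraded to the statement that, with probability tending to $1$ as $\alpha\downarrow0$, $\frk B_\alpha$ of the Brownian disk --- the restricted metric, the restricted area measure, and the truncated boundary path --- is a fixed measurable functional of the restriction of the encoding triple to $[-\delta,\delta]$, and it is the \emph{same} functional that produces $\frk B_\alpha(H_\infty,d_\infty,\mu_\infty,\eta_\infty)$ from $(X_\infty,Z_\infty,\frk b_\infty)|_{[-\delta,\delta]}$. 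Given $\ep$, one then chooses $\delta$ so the local laws of Step~2 are within total variation $\ep/3$, chooses $\alpha$ so the two locality events (for $H$ and for $H_\infty$) each have probability at least $1-\ep/3$, and couples the two encoding restrictions to agree off an event of probability $\ep/3$; on the intersection of these events the two $\alpha$-truncations agree in $\ol{\BB M}^{\op{GHPU}}$, and the complement has probability at most $\ep$. The main obstacle is precisely the locality statement in Step~3: one must show that the $\alpha$-truncation of a Brownian surface around its root is determined by the encoding process on a short time-interval, which requires controlling the chains realizing the pseudometric $d^0$ of~\eqref{eqn-dist0-def-infty}/\eqref{eqn-dist0-def} so that they neither exit the relevant window nor move points by more than $O(\alpha)$ --- a confluence/cactus argument of the type carried out in~\cite{curien-legall-plane}. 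By comparison, the local absolute continuity of Step~2 is routine once the correct re-rooting at a first-passage time of $X$ has been identified.
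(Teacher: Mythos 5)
Your proposal follows essentially the same route as the paper: re-root on the boundary, couple the encoding triples near the root via explicit Radon--Nikodym derivatives (the paper's Lemma~\ref{prop-bhp-path-coupling}, which quotes the first-passage density from \cite{bet-mier-disk}), and then use a continuum cactus bound (the paper's Lemma~\ref{prop-bhp-cactus}) to show the $\alpha$-truncation is a local functional of the encodings. Two remarks. First, the covariance subtlety you flag in Step~2 is avoided in the paper by anchoring the coupling window at the \emph{original} root time $0$ (coupling $X-\ell$ with $X_\infty$ on $[0,T(\delta_1)]$, where the running infimum entering \eqref{eqn-Z^0-def} is determined by the window itself since $T$ is a first-passage time) and only re-rooting at the deterministic midpoint $\delta_*=\tfrac12(\delta_0+\delta_1)$ at the very end; this is cleaner than a two-sided window around $T(U\ell)$.

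Second, and this is the one genuine gap in your sketch: your Step~3 locality claim, ``if $d(\BB p(s),\BB p(0))\le\alpha$ then $Z$ stays within $O(\alpha)$ of its running infimum on the arc between $0$ and $s$,'' is not valid for the disk as stated. The disk cactus bound necessarily involves a \emph{maximum} over the two boundary arcs ($S(s,t)$ and $S'(s,t)$ in Lemma~\ref{prop-bhp-cactus}), because a point can a priori be close to the root via a geodesic passing near the far side of the boundary; controlling only the arc containing the root does not confine $\BB p^{-1}(B_\alpha(\eta(0);d))$ to a short time window. The paper rules this out with the additional high-probability event \eqref{eqn-bhp-path-max}, that the infimum of the bridge $\frk b$ over $[0,\delta_0]\cup[\delta_1,\ell]$ is strictly below the infimum of $Z$ over $[T(\delta_0),T(\delta_1)]$, which makes the $S'$ term in the maximum harmless; this event also reconciles the circular definition \eqref{eqn-d_Z} with the linear one \eqref{eqn-d_Z-infty} when matching the two functionals. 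Your argument needs this (or an equivalent) event added; with it, the rest of your plan goes through as in the paper.
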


Note that scaling distances in a Brownian surface by $r > 0$ corresponds to scaling boundary lengths by $r^2$ and areas by $r^4$.  Indeed, this follows by the scaling properties of Brownian motion. Hence Proposition~\ref{prop-bhp-coupling} (applied with $a=\ell = 1$) implies that for any $r > 0$ and any $\ep \in (0,1)$, there exists $R > 0$ such that if $\left( H  , d, \mu ,\eta  \right)$ is as in Proposition~\ref{prop-bhp-coupling} for $a = R$ and $\ell = R^{1/2}$, then we can couple $(H   , d )$ with $(H_\infty, d_\infty)$ in such a way that $\frk B_r\left( H  , d  , \mu  ,\eta   \right)$ and $\frk B_r\left( H_\infty , d_\infty , \mu_\infty,\eta_\infty   \right)$ coincide as elements of $\ol{\BB M}^{\op{GHPU}}$ with probability at least $1-\ep$. 
 
The proof of Proposition~\ref{prop-bhp-coupling} requires two lemmas. The first is a continuum analog of Lemma~\ref{prop-cactus}.

\begin{lem} \label{prop-bhp-cactus}
Suppose we are in the setting of Section~\ref{sec-brownian-disk}. 
In particular, fix $a , \ell > 0$, let $(H , d)$ be a Brownian disk with area $a$ and boundary length $\ell$, let $\BB p : [0,a] \rta H$ be the quotient map, let $(X , Z )$ be the encoding process, and let $T(r)$ for $r\in [0,\ell]$ be as in~\eqref{eqn-disk-inf}. 
Almost surely, the following is true. 
For $s,t\in [0,a]$ with $s \leq t$, let 
\eqbn
S (s,t) :=  (T([0,\ell]) \cap [s,t]) \cup  \{s,t\}  \quad \op{and} \quad
S'(s,t) :=  (T([0,\ell]) \setminus [s,t]) \cup  \{s,t\}  
\eqen 
so that $\BB p(S(s,t))$ (resp.\ $\BB p(S'(s,t))$) consists of $\BB p(s)$, $\BB p(t)$, and the set of points in $\bdy H$ which are (resp.\ are not) contained in $\BB p([s,t])$.
Then 
\eqb \label{eqn-bhp-cactus}
d \left( \BB p(s) , \BB p (t) \right) \geq Z (s) + Z (t) - 2 \max\left\{ \inf_{u \in S(s,t)} Z(u) , \inf_{u \in S'(s,t)} Z(u) \right\} . 
\eqe 
The analogous estimate also holds in the setting of the Brownian half-plane (note that in this case the infimum over $S'(s,t)$ in~\eqref{eqn-bhp-cactus} is a.s.\ equal to $-\infty$). 
\end{lem}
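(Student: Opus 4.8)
The plan is to prove~\eqref{eqn-bhp-cactus} directly in the continuum, imitating the proof of the discrete cactus bound (Lemma~\ref{prop-cactus}) but with the combinatorics of the auxiliary map $F$ replaced by the circle (resp.\ line) structure of the parameter interval. I will treat the Brownian disk; the half-plane case is identical with the obvious notational changes. Write $c := \max\{\inf_{u \in S(s,t)} Z(u),\, \inf_{u\in S'(s,t)} Z(u)\}$ for the quantity on the right of~\eqref{eqn-bhp-cactus}. Since $Z$ is continuous and $S(s,t), S'(s,t)$ are compact, fix $u_* \in S(s,t)$ and $u_*' \in S'(s,t)$ with $Z(u_*) \vee Z(u_*') = c$ (so $u_*, u_*' \in T([0,\ell])$ unless one of them equals $s$ or $t$). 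Recall that $d(\BB p(s),\BB p(t)) = d^0(s,t)$ is the infimum of $\sum_{i=1}^k d_Z(s_i,t_i)$ over chains $(s = t_0, s_1, t_1, \dots, s_k, t_k, s_{k+1} = t)$ with $d_X(t_{i-1},s_i) = 0$ for all $i$. Fixing such a chain, the first step is a purely one-dimensional ``total variation'' estimate: since $Z$ is constant on the fibres of $\{d_X = 0\}$, the numbers $b_0 := Z(s) = Z(s_1)$, $b_i := Z(t_i) = Z(s_{i+1})$ for $1\le i \le k-1$, and $b_k := Z(t_k) = Z(t)$ are well defined, and writing $\beta_i := \ul Z(s_i,t_i) \vee \ul Z(t_i, s_i) \le b_{i-1}\wedge b_i$ one has $d_Z(s_i,t_i) = (b_{i-1} - \beta_i) + (b_i - \beta_i) \ge |b_{i-1}-b_i|$, whence $\sum_i d_Z(s_i,t_i) \ge |Z(s) - Z(t)|$ and, if some $\beta_{i_0} \le c$, even $\sum_i d_Z(s_i,t_i) \ge Z(s) + Z(t) - 2\beta_{i_0} \ge Z(s) + Z(t) - 2c$ by a short telescoping argument. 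So it suffices to show that either $Z(s)\wedge Z(t) \le c$, or every such chain satisfies $\min_i \beta_i \le c$.

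The core of the proof is this last structural claim, and it is where the topology enters. Suppose $Z(s), Z(t) > c$ but $\beta_i > c$ for every $i$. Then all of the $s_i,t_i$ lie in the open set $\{Z > c\}$, and since $\beta_i$ is the larger of the minima of $Z$ over the two arcs of $[0,a]$ joining $s_i$ to $t_i$, one of those arcs is contained in $\{Z > c\}$; hence $s_i$ and $t_i$ lie in the same connected component of $\{Z > c\}$. Next, for each transition $t_{i-1} \to s_i$ (where $d_X(t_{i-1},s_i) = 0$), neither $u_*$ nor $u_*'$ can lie in the interval of $[0,a]$ spanned by $t_{i-1}$ and $s_i$: indeed, if $u_*$ did, then because $u_*$ is a running-infimum time of $X$ and $X \ge X(t_{i-1})$ on that interval, we would get $X(u_*) = X(t_{i-1})$ equal to the infimum over it, so $d_X(t_{i-1},u_*) = 0$ and therefore $Z(u_*) = Z(t_{i-1}) > c$, contradicting $Z(u_*) \le c$. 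Thus $t_{i-1}$ and $s_i$ lie in a common arc of $[0,a]$ missing $\{u_*,u_*'\}$. Since the $d_Z$-steps stay within a single component of $\{Z>c\}$, the $d_X$-transitions stay within a single component of $[0,a]\setminus\{u_*,u_*'\}$, and no component of $\{Z>c\}$ can contain $u_*$ or $u_*'$, the whole chain remains in one component of $[0,a]\setminus\{u_*,u_*'\}$. But $u_* \in S(s,t)$ while $u_*'\notin [s,t]$, so in cyclic order the points appear as $s, u_*, t, u_*'$, and $s$ and $t$ lie in different components of $[0,a]\setminus\{u_*,u_*'\}$ --- a contradiction. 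This establishes~\eqref{eqn-bhp-cactus} when $Z(s)\wedge Z(t) > c$, and the remaining case follows from $\sum_i d_Z(s_i,t_i) \ge |Z(s)-Z(t)|$. For the Brownian half-plane one argues identically with $[0,a]$ replaced by $\BB R$, with $S'(s,t)$ contributing an infimum of $-\infty$ (so only $u_*$ appears), with $d_{Z_\infty}(s,t) = Z_\infty(s)+Z_\infty(t) - 2\inf_{[s\wedge t,s\vee t]} Z_\infty$ involving a single interval, and with ``component of $[0,a]\setminus\{u_*,u_*'\}$'' replaced by ``component of $\BB R\setminus\{u_*\}$''.

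I expect the main obstacle to be the careful handling of the degenerate situations --- when $s$, $t$, $u_*$ or $u_*'$ is itself a running-infimum time of $X$, when a minimizing $u_*$ or $u_*'$ coincides with $s$ or $t$, and the measure-zero label coincidences --- together with making the ``running-infimum time'' step of the previous paragraph fully rigorous, since this is exactly the point where the one-dimensional picture must be reconciled with the non-injectivity of $\BB p$, and it plays the role of the clean graph-theoretic disconnection of $F$ in the discrete proof. As a fallback, one could instead deduce~\eqref{eqn-bhp-cactus} from the discrete cactus bound Lemma~\ref{prop-cactus} by passing to the limit along the coupling of uniform quadrangulations with boundary with the Brownian disk used in the proof of Theorem~\ref{thm-disk-ghpu}; the only delicate point there is to check that the minima of the rescaled label function over the relevant discrete sets of boundary vertices converge to $\inf_{S(s,t)}Z$ and $\inf_{S'(s,t)}Z$, which uses the Skorokhod convergence of the hitting-time processes and the fact that the roots of the trees in the contour forest converge to running-infimum times of the limiting encoding process.
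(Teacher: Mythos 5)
Your proof is correct and takes essentially the same route as the paper's: both arguments combine the elementary bound $d_Z(s_i,t_i)\geq |Z(s_i)-Z(t_i)|$ with telescoping, and both rest on the observation that the two (near-)minimizers $u_*\in S(s,t)$ and $u_*'\in S'(s,t)$ separate $s$ from $t$ while no link of an admissible chain --- neither a $d_X$-identification (which cannot straddle a hitting time $T(r)$ without pinning the label there, exactly your running-infimum computation) nor a $d_Z$-step with both arc-minima above the critical level --- can cross either of them. The only differences are organizational: the paper runs the disconnection argument in the quotient forest $\mcl T=[0,a]/\{d_X=0\}$ and exhibits a specific index $i_*$ whose increment absorbs the full deficit, whereas you argue by contradiction directly on the circle via connected components of $\{Z>c\}$ and of $[0,a]\setminus\{u_*,u_*'\}$; note only that $S(s,t)$ and $S'(s,t)$ need not be closed (the image $T([0,\ell])$ omits the right endpoints of the excursion intervals of $X$ above its running infimum), so you should use $\ep$-approximate minimizers and let $\ep\rta 0$, as the paper does.
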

\begin{proof}
This follows from essentially the same argument used to prove the cactus bound in the case of the Brownian map (see, e.g.,~\cite[Equation (4)]{curien-legall-plane}). 
See also~\cite[Lemma 22]{bet-disk-tight} for an analogous estimate in the case when $\BB p(s) , \BB p(t) \in \bdy H$. 
\end{proof}

Next we prove a coupling statement for the encoding processes.

\begin{lem} \label{prop-bhp-path-coupling}
Fix $a,\ell >0$ and let $(X , Z  )$ be the encoding process for the Brownian disk with area $a$ and boundary length $\ell$ and let $T$ be as in~\eqref{eqn-disk-inf}. Also let $(X_\infty , Z_\infty  )$ be the encoding process for the Brownian half-plane and for $r\in\BB R$ let $T_\infty(r) = \inf\{t\in [0,\ell] \,:\, X_\infty(t) = -r \}$, as in Section~\ref{sec-bhp}. 
For each $\ep , \delta \in (0,1)$, there exists $0 < \delta_0 < \delta_1 \leq \delta$ and a coupling of $(X , Z )$ with $(X_\infty , Z_\infty  )$ such that with probability at least $1-\ep$, the following is true. We have $T(r) = T_\infty(r) $ for each $r\in [0 , \delta_1]$, $X(t) - \ell = X_\infty(t)$ for each $t\in [0,T(\delta_1)]$, and for each $t\in [T_\infty(\delta_0) , T_\infty(\delta_1)]$, 
\eqbn \label{eqn-bhp-path-couple}
  Z(t) - Z ( T(\delta_0)  )  = Z_\infty(t) - Z_\infty ( T_\infty(\delta_0)  )  .
\eqen 
\end{lem}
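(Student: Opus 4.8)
\textbf{Proof proposal for Lemma~\ref{prop-bhp-path-coupling}.}

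The plan is to couple the two encoding processes locally near the root by exploiting their explicit descriptions as functionals of Brownian-type processes, and then to read off the coupling of the labels $Z,Z_\infty$ from the coupling of $X,X_\infty$. Recall that $\{X(t)\}_{t\in[0,a]}$ is a Brownian motion from $\ell$ conditioned to hit $0$ first at time $a$, while $\{X_\infty(t)\}_{t\ge 0}$ is a standard Brownian motion from $0$; both $T$ and $T_\infty$ are hitting-time processes of the respective running-infimum-type levels. First I would observe that, by the absolute continuity between the law of $\{X(t)-\ell\}_{t\in[0,\sigma]}$ (the conditioned Brownian motion, shifted to start at $0$) and the law of $\{X_\infty(t)\}_{t\in[0,\sigma]}$ up to a stopping time $\sigma$ which is bounded away from $a$, one can couple the two processes to agree exactly on $[0,\sigma]$ with probability close to $1$, for $\sigma$ a small deterministic time (or a small hitting time). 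Concretely: the Radon--Nikodym derivative of the conditioned law with respect to the unconditioned law, restricted to $\mcl F_\sigma$, is bounded above and below on the event $\{\inf_{[0,\sigma]} X > \ell/2\}$ (say), with constants tending to $1$ as $\sigma\to 0$; hence a maximal coupling makes $X-\ell$ and $X_\infty$ agree on $[0,\sigma]$ with probability $1-o_\sigma(1)$.

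Having coupled $X-\ell = X_\infty$ on $[0,\sigma]$, I would choose $\delta_1$ small enough (and then $\sigma$ small accordingly, or vice versa) that $T(\delta_1) = T_\infty(\delta_1) \le \sigma$ with high probability: since $T(r)$ is the first hitting time of $\ell-r$ by $X$, equivalently the first hitting time of $-r$ by $X-\ell$, and likewise $T_\infty(r)$ is the first hitting time of $-r$ by $X_\infty$, the identity $X-\ell=X_\infty$ on $[0,\sigma]$ forces $T(r)=T_\infty(r)$ for all $r$ with $T_\infty(r)\le\sigma$, and one picks $\delta_1$ so that $\BB P[T_\infty(\delta_1) > \sigma]$ is small (using that $T_\infty(\delta_1)\to 0$ in probability as $\delta_1\to 0$). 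This gives the first two assertions: $T(r)=T_\infty(r)$ for $r\in[0,\delta_1]$ and $X(t)-\ell=X_\infty(t)$ for $t\in[0,T(\delta_1)]$.

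Next, for the labels: conditionally on $X$, the process $Z^0$ (resp.\ $Z_\infty^0$) is a centered Gaussian process whose covariance structure~\eqref{eqn-Z^0-def} (resp.~\eqref{eqn-Z^0-def-infty}) depends only on the values of $X$ (resp.\ $X_\infty$) on the relevant time interval, and moreover only through $u\mapsto X(u)-\inf_{v\le u}X(v)$. On the event that $X-\ell=X_\infty$ on $[0,\sigma]$, for times $s,t$ in a sub-interval $[T_\infty(\delta_0),T_\infty(\delta_1)]$ the conditional covariances of the \emph{increments} $Z^0(t)-Z^0(T(\delta_0))$ and $Z_\infty^0(t)-Z_\infty^0(T_\infty(\delta_0))$ agree — here $\delta_0\in(0,\delta_1)$ is introduced precisely so that the range of $X$ over $[T(\delta_0),T(\delta_1)]$ is shifted to lie above the global infimum relevant to the excursion decomposition, making the two covariance formulas coincide on this window. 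Thus one can couple $Z^0$ and $Z_\infty^0$ so that these increments are equal a.s.\ on the good event. Finally, $Z = Z^0 + \frk b\circ T^{-1}$ and $Z_\infty = Z_\infty^0 + \frk b_\infty\circ T_\infty^{-1}$; since $T^{-1}=T_\infty^{-1}$ on the relevant range (as $T=T_\infty$ there) and since $\frk b$ is $\sqrt 3$ times a Brownian bridge while $\frk b_\infty$ is $\sqrt 3$ times a two-sided Brownian motion, one couples $\frk b$ and $\frk b_\infty$ so that $\frk b(u)-\frk b(\delta_0)=\frk b_\infty(u)-\frk b_\infty(\delta_0)$ for $u$ in a small interval near $0$ (again using local absolute continuity of the bridge with respect to Brownian motion on a small time interval away from the bridge's endpoint). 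Combining the $Z^0$ and $\frk b$ couplings, the displayed identity $Z(t)-Z(T(\delta_0))=Z_\infty(t)-Z_\infty(T_\infty(\delta_0))$ holds on an event of probability $\ge 1-\ep$, provided $\delta_0<\delta_1\le\delta$ are chosen small enough; shrinking $\delta$ only helps.

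The main obstacle I expect is handling the root-offset term $\frk b\circ T^{-1}$ correctly, because $\frk b$ is a bridge (so not exactly locally absolutely continuous with respect to two-sided Brownian motion on all of its domain) and because one must track carefully that all three ingredients — the hitting times, the Gaussian label process, and the boundary perturbation — can be coupled \emph{simultaneously} on a single high-probability event, with the same pair $(\delta_0,\delta_1)$. The role of $\delta_0$ as a strictly positive lower cutoff is the key technical device that makes the covariance formulas~\eqref{eqn-Z^0-def} and~\eqref{eqn-Z^0-def-infty} literally agree on the window $[T_\infty(\delta_0),T_\infty(\delta_1)]$ rather than merely agree up to a boundary correction, so getting that comparison exactly right is where the care is needed.
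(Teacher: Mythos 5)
Your proposal is correct and follows essentially the same route as the paper: an absolute-continuity coupling of $X-\ell$ with $X_\infty$ up to the hitting time $T(\delta_1)=T_\infty(\delta_1)$, the observation that the conditional laws of the label processes given the driving paths then coincide, and a separate absolute-continuity coupling of the increments of the bridge $\frk b$ with those of $\frk b_\infty$ on $[\delta_0,\delta_1]$, combined via the independence of $\frk b$ from $(X,Z^0)$. One small correction: the cutoff $\delta_0$ is not needed to make the covariance formulas~\eqref{eqn-Z^0-def} and~\eqref{eqn-Z^0-def-infty} agree (on the event $X-\ell=X_\infty$ they already coincide on all of $[0,T(\delta_1)]$, since $\inf_{v\le u}X_\infty(v)=\inf_{v\in[0,u]}X_\infty(v)$ for $u\ge0$, so $Z^0$ and $Z_\infty^0$ can be coupled to agree on the whole interval); its only role is in the bridge step, where one can only force the increments $\frk b(u)-\frk b(\delta_0)$ and $\frk b_\infty(u)-\frk b_\infty(\delta_0)$ to agree.
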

\begin{proof}By~\cite[Proposition~9]{bet-mier-disk}, for each $r \in [0,\ell]$ the Radon-Nikodym derivative of the law of $(X-\ell)|_{[0,T(r)]}$ with respect to the law of $X_\infty|_{[0,T_\infty(r)]}$ is given by $f_r(T_\infty(r)  )$, where
\eqbn
 f_r(t) := \BB 1_{(t < a)} \frac{(\ell-r) a^{3/2} }{\ell (a - t)^{3/2}} \exp\left( \frac{\ell^2}{2r } - \frac{(\ell-r)^2}{2(a- t)} \right) .
\eqen
Since $f_r(t)$ is continuous in both variables and $f_0(0)  = 1$, if we are given $\ep \in (0,1)$, then we can find $\zeta \in [0,\ell]$ such that $|f_r(t) - 1| \leq \ep/4$ for $t,r\in [0,\zeta]$. Choose $\delta_1 \in (0, \delta \wedge \zeta]$ such that $\BB P[ T_\infty(\delta_1) \leq \zeta ] \geq 1-\ep/4$. Then except on an event of probability at most $\ep/4$, the Radon-Nikodym derivative of the law of $(X-\ell)|_{[0,T(\delta_1)]}$ with respect to the law of $X_\infty|_{[0,T_\infty(\delta_1)]}$ is between $1-\ep/4$ and $1+\ep/4$, so we can couple these two restricted processes so that they agree with probability at least $1- \ep/2$. Henceforth assume we have chosen such a coupling. 

Recalling~\eqref{eqn-Z^0-def} and~\eqref{eqn-Z^0-def-infty}, we see that for any fixed realization $x$ of $(X-\ell)|_{[0,T(\delta_1)]}$, the regular conditional law of $Z^0|_{[0,T(\delta_1)]}$ given $\{(X-\ell)|_{[0,T(\delta_1)]} = x\}$
coincides with the regular conditional law of $Z_\infty|_{[0,T_\infty(\delta_1)]}$ given $\{ X_\infty|_{[0,T_\infty(\delta_1)]} = x\}$. 
Hence we can extend our coupling so that in fact
\eqb \label{eqn-snake-coupling-bhp}
T(\delta_1) = T_\infty(\delta_1)\quad \op{and}\quad (X-\ell , Z^0 )|_{[0,T(\delta_1)]} = (X_\infty , Z_\infty)|_{[0,T_\infty(\delta_1)]} 
\eqe 
with probability at least $1- \ep/2$. 

Recall that $Z$ (resp.\ $Z_\infty$) is obtained from $Z^0$ (resp.\ $Z_\infty^0$) by adding the composition of $\frk b$ (resp.\ $\frk b_\infty$) with its running infimum process, where $\frk b$ is $\sqrt3$ times a Brownian bridge and $\frk b_\infty$ is $\sqrt3$ times a standard linear Brownian motion. 

For $0<\delta_0 < \delta_1$, the law of $(\frk b - \frk b(\delta_0))|_{[\delta_0,\delta_1]}$ is absolutely continuous with respect to the law of $(\frk b_\infty - \frk b_\infty(\delta_0))|_{[\delta_0,\delta_1]}$, with Radon-Nikodym derivative tending to $1$ in probability as $\delta_0$ increases to $\delta_1$. We can find $\delta_0 \in (0,\delta_1)$ for which this Radon-Nikodym derivative lies in $[1-\ep/4 , 1+\ep/4]$ with probability at least $1-\ep/4$, so we can couple $(\frk b - \frk b(\delta_0))|_{[\delta_0,\delta_1]}$ with $(\frk b_\infty - \frk b_\infty(\delta_0))|_{[\delta_0,\delta_1]}$ in such a way that 
\eqb \label{eqn-bridge-coupling-bhp}
 (\frk b - \frk b(\delta_0))|_{[\delta_0,\delta_1]} = (\frk b_\infty - \frk b_\infty(\delta_0))|_{[\delta_0,\delta_1]} 
\eqe
with probability at least $1- \ep/2$. 

Since $\frk b$ (resp.\ $\frk b_\infty$) is independent from $(X , Z^0)$ (resp.\ $(X_\infty,Z^0_\infty)$), we can couple $(X,Z,\frk b)$ and $(X_\infty,Z_\infty,\frk b_\infty)$ so that~\eqref{eqn-snake-coupling-bhp} and~\eqref{eqn-bridge-coupling-bhp} hold simultaneously with probability at least $1-4\ep$. For such a coupling, the conditions in the statement of the lemma are satisfied. 
\end{proof}

\begin{proof}[Proof of Proposition~\ref{prop-bhp-coupling}]
Let $(X,Z)$ and $(X_\infty,Z_\infty)$ be the encoding processes, as in Proposition~\ref{prop-bhp-coupling}.  By Proposition~\ref{prop-bhp-coupling}, for each $\ep ,\delta \in (0,1)$ there exists $0 < \delta_0 < \delta_1 \leq \delta$ and a coupling of $(X , Z )$ with $(X_\infty , Z_\infty  )$ such that with probability at least $1-\ep/4$, we have $T(r) = T_\infty(r) $ for each $r\in [0 , \delta_1]$, $X(t) - \ell = X_\infty(t)$ for each $t\in [0,T(\delta_1)]$, and for each $t\in [T_\infty(\delta_0) , T_\infty(\delta_1)]$, 
\eqbn \label{eqn-bhp-path-agree}
  Z(t) - Z ( T(\delta_0)  )  = Z_\infty(t) - Z_\infty ( T_\infty(\delta_0)  )  .
\eqen 
Note that with $\frk b_\infty $ (resp.\ $\frk b$) the Brownian motion (resp.\ Brownian bridge) from Section~\ref{sec-bhp} (resp.\ Section~\ref{sec-brownian-disk}), we have $Z_\infty\circ T = \frk b_\infty $ (resp.\ $Z \circ T = \frk b$), so~\eqref{eqn-bhp-path-agree} implies that also $\frk b(r) - \frk b (\delta_0) = \frk b_\infty(r) - \frk b_\infty(\delta_0)$ for each $r\in [\delta_0,\delta_1]$. 

If we choose $\delta_0$ and $\delta_1$ sufficiently small, then it is unlikely that the infimum of $\frk b$ over $[0,\delta_0]$ is smaller than the infimum of $\frk b$ over $[\delta_1,\ell]$.
Since $\frk b$ is a constant times a Brownian bridge, $\frk b(\delta_0)$ is a.s.\ strictly larger than the minimum value of $\frk b$ on each of $[0,\delta_0]$ and $[\delta_1 , \ell]$. Furthermore, if we choose $\delta_1$ sufficiently close to $\delta_0$ (leaving $\delta_0$ fixed), then by continuity it is likely that $|Z(s) - \frk b(\delta_0)|$ is close to 0 for each $s \in [T(\delta_0) , T(\delta_1)]$. Hence by choosing $\delta_0$ sufficiently small and then choosing $\delta_1$ sufficiently close to $\delta_0$, we can arrange that except on an event of probability at most $ \ep/4$, 
\eqb \label{eqn-bhp-path-max}
\max\left\{  \inf_{r \in [0 , \delta_0] } \frk b(r) ,  \inf_{r \in [\delta_1 , \ell] } \frk b(r)  \right\}   < \inf_{s \in [T(\delta_0) , T( \delta_1 )]} Z(s)  .
\eqe  
 
Let $\delta_* :=  \frac12(\delta_0 + \delta_1)$. 
Also define
\eqbn
\rho := - \left(   \inf_{r \in [ \delta_0 , \delta_* ]_{\BB Z}}  \left(   \frk b_\infty(r)  - \frk b_\infty(\delta_0)   \right)    \vee    \inf_{r \in [\delta_* , \delta_1 ]_{\BB Z}} \left(   \frk b_\infty(r)  - \frk b_\infty(\delta_0)  \right) \right) \geq 0  
\eqen
and 
\eqbn
\rho' := \frk b_\infty(\delta_*) - \frk b_\infty(\delta_0) +\rho.
\eqen 
Since $\frk b_\infty$ is a constant times a standard Brownian motion, there exists $\alpha >0$ such that with probability at least $1-\ep/2$ we have $\frac14 \rho' \geq   \alpha$. 

By definition, the quotient map $\BB p_\infty : \BB R \rta H$ satisfies $\BB p_\infty(T_\infty(\delta_*)) = \eta_\infty(\delta_*)$. By invariance of the law of the Brownian half-plane under re-rooting along the boundary, which follows from~\eqref{eqn-bhp-reroot}, 
\eqbn
\left( H_\infty , d_\infty , \mu_\infty ,\eta_\infty(\delta_* + \cdot)  \right) \eqD \left( H_\infty , d_\infty , \mu_\infty ,\eta_\infty   \right) .
\eqen
It is immediate from Theorem~\ref{thm-disk-ghpu} that the analogous property also holds for the Brownian disk. 
Hence it suffices to show that whenever~\eqref{eqn-bhp-path-agree} and~\eqref{eqn-bhp-path-max} occur,
\eqb \label{eqn-bhp-coupling-show}
\frk B_{\frac14 \rho'} \left( H , d , \mu ,\eta(\delta_* + \cdot)  \right) \quad \op{and} \quad \frk B_{\frac14 \rho'}\left( H_\infty , d_\infty , \mu_\infty,\eta_\infty(\delta_* + \cdot)   \right) 
\eqe 
agree as elements of $\ol{\BB M}^{\op{GHPU}}$.  

Henceforth assume that~\eqref{eqn-bhp-path-agree} and~\eqref{eqn-bhp-path-max} occur (which happens with probability at least $1-\ep/2$).  
Let $\BB p  : [0, a] \rta H$ and $\BB p_\infty : \BB R\rta H_\infty$ be the quotient maps. 
Let $r_0 \in [\delta_0 , \delta_*]_{\BB Z}$ and $r_1 \in [\delta_* , \delta_1  ]_{\BB Z}$ be chosen so that 
\eqbn
\frk b_\infty(r_0) -  \frk b_\infty(r_1) \leq -\rho \quad \op{and} \quad \frk b_\infty(r_1)  -  \frk b_\infty(\delta_0)   \leq -\rho . 
\eqen
By Lemma~\ref{prop-bhp-cactus} together with~\eqref{eqn-bhp-path-agree} and~\eqref{eqn-bhp-path-max} (the latter equation is used to deal with the second term in the maximum in~\eqref{eqn-bhp-cactus}), if $t\geq T(r_1)$ then 
\alb
d \left( \BB p(t) , \eta (\delta_*)  \right) 
&\geq Z (t) + \frk b(\delta_*) - 2 \max \left\{ \inf_{u \in S (  T (\delta_*) , t )} Z (u) , \inf_{u \in S' (  T (\delta_*) , t )} Z (u)  \right\} \\
&\geq Z (t) + \frk b(\delta_*) - Z(t)   +  \rho -   \frk b(\delta_*)   
 \geq \rho' .
\ale
We have a similar estimate if $t\leq T (r_0)$. 
Again using Lemma~\ref{prop-bhp-cactus}, we similarly obtain that if $t \in [0,a] \setminus [T(r_0) , T(r_1)]$ then $d_\infty \left( \BB p_\infty(t), \eta_\infty (\delta_*)  \right) \geq \rho'$.

Hence
\eqbn
B_{\rho'}\left(\eta (\delta_*) ; d  \right) \subset \BB p \left( [T (r_0) , T (r_1)  ] \right) \quad\op{and}\quad 
B_{\rho'}\left(\eta_\infty(\delta_*) ; d_\infty \right) \subset \BB p_\infty\left( [T_\infty(r_0) , T_\infty(r_1)  ] \right)  . 
\eqen
From the definitions~\eqref{eqn-dist0-def-infty} and~\eqref{eqn-dist0-def} of the pseudometrics $d^0$ and $d_\infty^0$, respectively, and the triangle inequality, we find that the sets $\BB p^{-1}\left( B_{\frac14 \rho'}\left(\eta (\delta_*) ; d  \right) \right)$ and $\BB p_\infty^{-1}\left( B_{\frac14 \rho'}\left(\eta_\infty (\delta_*) ; d_\infty  \right) \right)$ (resp.\ the distances $d^0(s,t)$ and $d_\infty^0(s,t)$ for $s$ and $t$ in these sets) are given by the same deterministic functionals of $X|_{[0 , T(\delta_1)]}$ and $(Z-Z(T(\delta_0))) |_{[T(\delta_0) , T(\delta_1)]}$ and $(Z-Z(T(\delta_0))) |_{[T(\delta_0) , T(\delta_1)]}$. Note that we use~\eqref{eqn-bhp-path-max} to resolve the discrepancy between the definitions~\eqref{eqn-d_Z-infty} and~\eqref{eqn-d_Z}. The measures $\mu$ and $\mu_\infty$ and the paths $\eta$ and $\eta_\infty$ are determined by the same local functionals of the Schaeffer encoding functions. From these considerations, we see that the map $\BB p(t) \mapsto \BB p_\infty(t)$ for $t\in [T_\infty(r_0) , T_\infty(r_1)]$ is well defined and restricts to a measure- and curve-preserving isometry from $B_{\frac14\rho'}\left(\eta (\delta_*) ; d  \right)$ to $B_{\frac14\rho }\left(\eta_\infty (\delta_*) ; d_\infty  \right)$. Hence~\eqref{eqn-bhp-coupling-show} holds.
\end{proof}

\subsection{Coupling the UIHPQ with a finite uniform quadrangulation with boundary}
\label{sec-map-coupling}

In this section we prove a discrete analog of Proposition~\ref{prop-bhp-coupling}, which is also an analog of~\cite[Proposition~9]{curien-legall-plane} for maps with boundary. Throughout this section, we let $(Q_\infty , \BB e_\infty)$ be a UIHPQ. We also fix $\ell > 0$, a sequence of positive integers $\{l^n\}_{n\in\BB N}$ with $(2n)^{-1/2} l^n \rta \ell$, and for $n\in\BB N$ we let $(Q^n , \BB e_0^n , \BB v_*^n)$ be sampled uniformly from $\mcl Q^\bullet(n,l^n)$ (defined in Section~\ref{sec-quad-def}). 
Also let $\BB v_\infty$ (resp.\ $\BB v_0^n$) be the initial endpoint of $\BB e_\infty$ (resp.\ $\BB e_0^n$), so that $\BB v_\infty = \lambda_\infty(0)$, with $\lambda_\infty$ the linearly interpolated boundary path of $Q_\infty$.

\begin{prop} \label{prop-map-coupling}
For each $\ep \in (0,1)$ there exists $\alpha >0$ and $n_*\in\BB N$ such that for $n\geq n_*$, there is a coupling of $(Q_\infty , \BB e_{\infty})$ with $(Q^n , \BB e_0^n , \BB v_*^n)$ with the following property. With probability at least $1-\ep$, the graph metric balls $B_{\alpha n^{1/4}}(\BB v_0^n ; Q^n)$ and $B_{\alpha n^{1/4}}(\BB v_{\infty} ; Q_\infty)$ equipped with the graph structures they inherit from $Q^n$ and $Q_\infty$, respectively, are isomorphic (as graphs) via an isomorphism which takes $\BB e_0^n$ to $\BB e_\infty$ and $ \bdy Q^n  \cap B_{\alpha n^{1/4}}(\BB v_0^n ; Q^n)$ to $ \bdy Q_\infty  \cap B_{\alpha n^{1/4}}(\BB v_{\infty} ; Q_\infty)$. Furthermore, we can arrange that this isomorphism is an isometry for the metrics on $B_{\alpha n^{1/4}}(\BB v_0^n ; Q^n)$ and $B_{\alpha n^{1/4}}(\BB v_{\infty} ; Q_\infty)$ which they inherit from $Q^n$ and $Q_\infty$, respectively. 
\end{prop}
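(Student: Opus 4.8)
The strategy mirrors the proof of the analogous coupling for the Brownian surfaces (Proposition~\ref{prop-bhp-coupling}): we want to find a radius $\alpha n^{1/4}$ such that the graph metric ball of this radius around the root vertex is determined by an explicit piece of the Schaeffer encoding data, and then couple the encoding data for $(Q^n,\BB e_0^n,\BB v_*^n)$ with that for $(Q_\infty,\BB e_\infty)$ so that the relevant pieces agree with high probability. First I would recall the Schaeffer encodings from Sections~\ref{sec-quad-bdy} and~\ref{sec-uihpq}: $(Q^n,\BB e_0^n,\BB v_*^n)$ is encoded by $(C^n,L^n,b^{0,n})$ with $C^n$ a conditioned random walk and $L^n$ the head of the associated discrete snake, while $(Q_\infty,\BB e_\infty)$ is encoded by $(C_\infty,L_\infty^0,b_\infty^0)$ with $C_\infty$ the concatenation of contour functions of i.i.d.\ geometric Galton-Watson trees (two-sided, conditioned to stay positive on the negative side). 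The heart of the matter is a discrete absolute-continuity statement analogous to~\cite[Proposition~9]{bet-mier-disk} and~\cite[Proposition~9]{curien-legall-plane}: on the event that the contour process $C^n$ (resp.\ $C_\infty$) stays above $-l^n+m$ (resp.\ stays above $-m$) up to the last time it is $\geq -m$, i.e.\ restricted to the excursions above running infimum that touch down by level $m \asymp \delta (2n)^{1/2}$, the laws of $(C^n+l^n, L^n)$ and $(C_\infty, L_\infty^0)$, appropriately shifted and restricted, are mutually absolutely continuous with Radon-Nikodym derivative tending to $1$ as $\delta\to0$. I would derive this from the corresponding statement for the random walk $C^n$ versus $C_\infty$ (the forward part is literally a simple random walk in both cases; the negative/bridge part contributes the Radon-Nikodym factor, exactly as in Lemma~\ref{prop-bhp-path-coupling}), together with the fact that conditionally on the contour process the snake heads have the same law in both models (Lemmas~\ref{prop-quad-bdy-law} and~\ref{prop-uihpq-encode-law}).

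Next I would couple the two models so that, with probability at least $1-\ep/2$, the encoding data agree on the relevant time intervals: there is a deterministic level $m = m(\delta,n) \asymp \delta n^{1/2}$ and an interval around the root such that $C^n + l^n$ equals $C_\infty$ there, and $L^n - L^n(\text{base})$ equals $L_\infty^0 - L_\infty^0(\text{base})$ there, after the appropriate shift and re-rooting. The re-rooting is handled as in the proof of Proposition~\ref{prop-bhp-coupling}: one uses that the UIHPQ and the finite uniform quadrangulation with boundary are both invariant (in the appropriate sense) under re-rooting the boundary path, so it suffices to produce the isomorphism for balls centered at a well-chosen vertex $\BB p(I(k_*))$ on the boundary rather than at the exact root, where $k_*$ is a midpoint of the range of boundary-label indices covered. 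Then, exactly as in Lemma~\ref{prop-cactus} (the discrete cactus bound) combined with the Hoeffding estimate~\eqref{eqn-max-bridge-excursion} for the random-walk bridge $b^{0,n}$ (which controls the boundary structure), I would choose $\alpha$ small enough that with probability $1-\ep/2$ the graph metric ball $B_{\alpha n^{1/4}}(\BB v_0^n;Q^n)$ is contained in the image under $\BB p^n$ of the time interval on which the encoding data agree, and likewise $B_{\alpha n^{1/4}}(\BB v_\infty; Q_\infty) \subset \BB p_\infty(\text{agreeing interval})$. The cactus bound is what converts ``the encoding data agree near the root'' into ``the metric ball only sees the agreeing data'': a path leaving the agreeing region would have to pass through a vertex whose label is bounded in terms of the minima of $L^n$ (resp.\ $L_\infty^0$) over boundary indices, and those minima are of order $\delta^{1/2} n^{1/4}$, so taking $\alpha \ll \delta^{1/2}$ forces the ball to stay inside.

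Once the balls are both contained in regions governed by identical encoding data, the Schaeffer construction is local: the set of edges of $Q^n$ (resp.\ $Q_\infty$) incident to vertices $\BB p^n(i)$ (resp.\ $\BB p_\infty(i)$) with $i$ in the agreeing interval, together with their incidence pattern, the identification of boundary edges, the orientation of the root edge, and the graph distances among these vertices, are all determined by the restriction of $(C,L,b^0)$ to that interval via the same deterministic rule. This yields the desired graph isomorphism carrying $\BB e_0^n$ to $\BB e_\infty$ and $\bdy Q^n \cap B_{\alpha n^{1/4}}(\BB v_0^n;Q^n)$ to $\bdy Q_\infty \cap B_{\alpha n^{1/4}}(\BB v_\infty;Q_\infty)$, and since graph distances within the ball are determined by the same data on both sides, the isomorphism is an isometry. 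Taking $\delta$ (hence $m$) small enough at the outset that all the ``with probability $1-\ep/2$'' events above can be arranged simultaneously, and $n_*$ large enough that the asymptotic statements ($l^n \sim (2n)^{1/2}\ell$, the Radon-Nikodym derivative near $1$, the Hoeffding bound) are in force, completes the proof.

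\textbf{Main obstacle.} The delicate step is the discrete absolute-continuity statement for the encoding processes and, closely tied to it, the bookkeeping relating the ``level'' $m$ at which one truncates the contour process to both (a) the boundary-length scale $\delta$ controlling the Radon-Nikodym factor and (b) the metric radius $\alpha n^{1/4}$ via the cactus bound. One must be careful that truncating at level $m$ in the contour process corresponds to keeping a portion of the boundary of macroscopic (order $\delta$) boundary length but only microscopic area, that the snake heads over this portion are genuinely identically distributed given the contour data in the two models (this is where the precise definitions of the discrete snake in Definition~\ref{def-discrete-snake} and of the Galton-Watson trees must be matched against the conditioned-walk description), and that the re-rooting invariance is available in a form strong enough to move from the combinatorial root to the chosen boundary vertex $\BB p(I(k_*))$. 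Compared to the plane case in~\cite{curien-legall-plane}, the extra complication is purely the presence of the boundary: the bridge $b^{0,n}$ versus the reflected walk $b_\infty^0$, and the need to track which edges lie on $\bdy Q$, which is handled by Remarks~\ref{remark-schaeffer-bdy} and~\ref{remark-schaeffer-bdy-uihpq} together with~\eqref{eqn-max-bridge-excursion}.
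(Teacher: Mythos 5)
Your proposal follows essentially the same route as the paper's proof: an absolute-continuity estimate for the contour walks (via a hitting-time local limit theorem) and for the bridge versus the reflected walk yields a coupling of the encoding triples on an interval of boundary length $\asymp\delta n^{1/2}$, re-rooting invariance moves the center to a midpoint boundary vertex of that interval, the discrete cactus bound (Lemma~\ref{prop-cactus}) localizes the graph metric ball of radius $\asymp\delta^{1/2}n^{1/4}$ inside the agreeing interval, and locality of the Schaeffer bijection produces the isomorphism. The only point you pass over lightly is that the finite-volume cactus bound has a second term in the maximum (coming from the boundary arc outside $[i_1,i_2]$), which the paper handles by arranging that the minima of $b^n$ on both complementary boundary arcs lie below the minimum of $L^n$ on the agreeing interval; this fits within the bookkeeping you flag as the main obstacle.
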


The proof of Proposition~\ref{prop-map-coupling} is similar to that of Proposition~\ref{prop-bhp-coupling}. 
We will first construct a coupling of the encoding functions, then transfer to a coupling of the maps using Lemma~\ref{prop-cactus}.
The following lemma is needed to bound the Radon-Nikodym derivative of the encoding functions.

 \begin{lem} \label{prop-srw-hit}
Let $\{S(i)\}_{i\in\BB N_0}$ be a simple random walk on $\BB Z$ started from 0, with steps distributed uniformly in $\{-1,1\}$. Let $\{\mcl F_i\}_{i\in\BB N_0}$ be the filtration generated by $S$. For $m \in \BB N_0$, let $I(m) := \inf\{i\in \BB N_0 \,:\, S(i) = -m\}$. Let $N$ be a stopping time for $\{\mcl F_i\}_{i\in\BB N_0}$. Then for $m, n \in \BB N $, it holds on the event $\{ N < I(m) \wedge n \}$ that
\eqbn
\BB P\left[ I(m) = n \,|\, \mcl F_{N} \right]  =   \frac{1}{\sqrt{2\pi}} (S(N) +m) (n-N)^{-3/2} \exp\left(- \frac{ (S(N) +m)^2 }{2(n-N)}  \right)     + o \left(  (S(N)+m)^{-2} \right) 
\eqen
with the rate of the $o \left(  (S_{N}+m)^{-2} \right)$ error universal and deterministic.
\end{lem}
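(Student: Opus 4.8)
The statement is a local central limit theorem for the first passage time $I(m)$ of the simple random walk, conditioned on a stopping-time snapshot, so the plan is to reduce it to the classical hitting-time asymptotics and then apply the strong Markov property. First I would condition on $\mcl F_N$ and work on the event $\{N < I(m) \wedge n\}$, where the walk has not yet reached $-m$ and $n$ has not yet occurred. On this event, by the strong Markov property the post-$N$ increments $\{S(N+j) - S(N)\}_{j \geq 0}$ form a fresh simple random walk, and $\{I(m) = n\}$ is the event that this fresh walk first hits $-(S(N)+m)$ at time exactly $n - N$. Writing $k := S(N) + m$ (a positive integer on the relevant event) and $j := n - N$, the quantity to estimate is $\BB P[\tau_{-k} = j]$, where $\tau_{-k}$ is the first passage time to level $-k$ of a standard simple random walk started from $0$.

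Second, I would invoke the exact formula for the first-passage distribution of simple random walk, namely the hitting-time theorem (Kemperman's formula): $\BB P[\tau_{-k} = j] = \frac{k}{j}\BB P[S(j) = -k]$, which is nonzero only when $j \geq k$ and $j \equiv k \pmod 2$. Then I would apply the local central limit theorem for $S(j)$, which gives $\BB P[S(j) = -k] = (1 + o(1))\frac{2}{\sqrt{2\pi j}} \exp(-k^2/(2j))$ for $j$ of the same parity as $k$, with an error term uniform in $k$ (the standard LCLT with error $O(j^{-3/2})$ relative to the Gaussian, or more carefully the Edgeworth-type bound). Combining these gives
\eqbn
\BB P[\tau_{-k} = j] = \frac{k}{j}\cdot \frac{2}{\sqrt{2\pi j}}\exp\left(-\frac{k^2}{2j}\right)(1 + o(1)) = \frac{2}{\sqrt{2\pi}}\, k\, j^{-3/2} \exp\left(-\frac{k^2}{2j}\right)(1+o(1)) .
\eqen
Here I should be careful about the factor of $2$: since $S(j) = -k$ requires $j \equiv k \pmod 2$, and in the paper's statement the formula is quoted with $\frac{1}{\sqrt{2\pi}}$ rather than $\frac{2}{\sqrt{2\pi}}$, I would need to check whether the intended convention absorbs the parity constraint (e.g.\ by interpreting the estimate as the value when $n - N$ has the correct parity, with the mass zero otherwise, and the ``$1/\sqrt{2\pi}$'' reflecting an averaging over the two parities, or a different normalization of the walk). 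This parity/constant bookkeeping is the one genuinely fiddly point and I would resolve it by matching against the continuum statement in Lemma~\ref{prop-bhp-path-coupling} where the analogous density appears.

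Third, I would convert the multiplicative $(1+o(1))$ error into the additive error $o(k^{-2})$ claimed in the statement. The main Gaussian term $\frac{2}{\sqrt{2\pi}} k j^{-3/2} e^{-k^2/(2j)}$ is, as a function of $j \geq k$, maximized at $j \asymp k^2$ where it has size $\asymp k^{-2}$; more precisely it is bounded above by $C k^{-2}$ uniformly. Hence a relative error of $o(1)$ — uniform in $k$ and $j$, which is exactly what the uniform LCLT provides — translates to an absolute error of $o(1) \cdot O(k^{-2}) = o(k^{-2})$, with rate depending only on the LCLT rate and hence universal and deterministic. Substituting back $k = S(N)+m$, $j = n - N$ recovers the displayed formula, with the error $o((S(N)+m)^{-2})$ having a deterministic rate since it came from a deterministic uniform estimate applied pointwise.

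\textbf{Main obstacle.} The only real subtlety is the constant and parity bookkeeping in the second step — making sure the power of $2$, the $\sqrt{2\pi}$, and the $j \equiv k \pmod 2$ constraint are all handled consistently with whatever normalization makes the subsequent comparison with the Brownian-disk encoding functions work (this is presumably why a uniform additive error, rather than just a multiplicative one, is demanded: it is what is needed to bound a Radon--Nikodym derivative of the discrete encoding path against the continuum one). Everything else is a routine invocation of the hitting-time theorem plus the uniform local CLT for simple random walk.
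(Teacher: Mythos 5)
Your proof is correct in outline but takes a genuinely different route from the paper's. The paper first notes (via Donsker) that $m^{-2}I(m)$ converges in law to the Brownian first passage time $T$ with density $g(t)=\frac{1}{\sqrt{2\pi}t^{3/2}}e^{-1/(2t)}$, and then invokes the local limit theorem for stable laws from Gnedenko--Kolmogorov to get, in one step, the uniform \emph{additive} estimate $\sup_n|m^2\,\BB P[I(m)=n]-g(n/m^2)|\rta 0$; the strong Markov property then transfers this to the conditional statement exactly as in your first step. You instead go through Kemperman's hitting-time identity $\BB P[\tau_{-k}=j]=\frac{k}{j}\BB P[S(j)=-k]$ plus the LCLT for the walk itself. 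This is more elementary and more explicit, but it buys you a small extra obligation that your third step slightly underestimates: the relative-error form of the LCLT is \emph{not} uniform over all $j\geq k$ (it fails in the large-deviation regime $j\ll k^2/\log k$), so to get the uniform $o(k^{-2})$ additive error you must split into two regimes --- for $j\gtrsim k^2/\log k$ use the relative-error LCLT as you do, and for $k\leq j\lesssim k^2/\log k$ observe that both the true probability (by a Chernoff bound on $\BB P[S(j)\leq -k]$) and the Gaussian term are superpolynomially small in $k$, hence $o(k^{-2})$. The stable local limit theorem the paper cites hands you the uniform additive bound directly, which is why the paper's proof is three lines. Finally, your worry about the constant and parity is well founded and not a defect of your argument: both routes actually produce $\frac{2}{\sqrt{2\pi}}kj^{-3/2}e^{-k^2/(2j)}$ on the parity class $j\equiv k\pmod 2$ and exactly $0$ off it (the hitting time lives on a lattice of span $2$), so the lemma's stated constant $\frac{1}{\sqrt{2\pi}}$ should be read as the parity-averaged version; this discrepancy is harmless in the only place the lemma is used (the Radon--Nikodym computation in the coupling lemma), where the factor of $2$ and the parity indicator cancel in the ratio.
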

\begin{proof}
By Donsker's theorem, we have $m^2 I(m) \rta T$ in law, where $T$ is the first time a standard linear Brownian motion hits $-1$. By the local limit theorem for stable laws (see, e.g.,~\cite[Section~50]{gk-limit-theorems}), we find that for $n\in\BB N$, 
\eqbn
\lim_{n\rta\infty} \sup_{n\in\BB N} \left| m^2 \BB P\left[ I(m) = n \right] - g\left( \frac{n}{m^2} \right) \right| = 0,
\eqen
where
\eqbn
g(t) = \frac{1}{\sqrt{2\pi} t^{3/2}} e^{-\frac{1}{2t}} 
\eqen
is the density of $T$. By the strong Markov property, on the event $\{N < I(m) \wedge n\}$ we have 
\eqbn
\BB P\left[ I(m) = n \,|\, \mcl F_{N} \right]   = \BB P\left[ I(-s+m) = n-N \right]  |_{s = S_{N}} .
\eqen
The statement of the lemma follows.
\end{proof} 

In what follows, we define the encoding paths $ C_\infty , L_\infty^0 , L_\infty$, $b_\infty$, and $b_\infty^0$ as in Section~\ref{sec-uihpq} and the analogous finite-volume objects $C^n ,   L^{0,n} , L^n , b^n$, and $b^{0,n}$ as in Section~\ref{sec-quad-bdy} for the quadrangulation $Q^n$, but with an additional superscript $n$.  For $m \in [0,l^n] $ (resp.\ $m\in\BB Z$) we let $I^n(m)$ (resp.\ $I_\infty(m)$) be the smallest $i\in [0,2n+l^n]_{\BB Z}$ (resp.\ $i\in\BB Z$) for which $C^n(i) = -m$ (resp.\ $C_\infty(i) = -m$), as in~\eqref{eqn-quad-bdy-inf} (resp.~\eqref{eqn-quad-bdy-inf-infty}). We extend these functions to $[0,2n+l]$ (resp.\ $\BB R$) by setting $I^n(s) = I^n(\lfloor s \rfloor)$ (resp.\ $I_\infty(s) = I_\infty(\lfloor s \rfloor)$).  The following lemma gives us a coupling of the Schaeffer encoding functions of the maps in Proposition~\ref{prop-map-coupling}.

\begin{lem} \label{prop-path-coupling}
For each $\ep , \delta \in (0,1)$, there exists $0<\delta_0 < \delta_1 \leq \delta$ and $n_*\in\BB N$ such that for $n\geq n_*$, there exists a coupling of the encoding pairs $(C^n , L^n  )$ and $(C_\infty , L_\infty )$ such that with probability at least $1-\ep$, the following is true. 
We have $I^n(m) = I_\infty(m)$ for each $ m \in [0 , \delta_1 n^{1/2} ] $. Furthermore, for each $i\in [0 , I^n(\delta_1 n^{1/2})]_{\BB Z}$ we have $C^n(i) = C_\infty(i)$ and for each $i\in [I^n(\delta_0 n^{1/2}) , I^n(\delta_1 n^{1/2})]_{\BB Z}$ we have 
\eqbn 
 L^n(i) - L^n( I^n(\delta_0 n^{1/2}) ) = L_\infty(i) - L_\infty(I_\infty(\delta_0 n^{1/2}) ) .
\eqen 
\end{lem}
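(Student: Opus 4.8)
\textbf{Proof plan for Lemma~\ref{prop-path-coupling}.}

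The plan is to mirror the structure of the proof of Lemma~\ref{prop-bhp-path-coupling}, working with the discrete encoding pairs in place of the continuum ones. First I would recall from Lemma~\ref{prop-quad-bdy-law} that $C^n$ has the law of a simple random walk started from $0$ and conditioned to hit $-l^n$ for the first time at time $2n+l^n$, that $L^{0,n}$ is the head of the discrete snake driven by $i\mapsto C^n(i)-\min_{j\in[1,i]_{\BB Z}}C^n(j)$, and that $(C^n,L^{0,n})$ is independent of $b^{0,n}$; and from Lemma~\ref{prop-uihpq-encode-law} that $C_\infty|_{\BB N_0}$ is an unconditioned simple random walk while $C_\infty(-\cdot)|_{\BB N_0}$ is a simple random walk conditioned to stay positive, with $L_\infty^0$ the corresponding discrete snake head. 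The main step is a Radon-Nikodym computation: for fixed $m\in\BB N$, I want to compare the law of $(C^n)|_{[0,I^n(m)]}$ with that of $(C_\infty)|_{[0,I_\infty(m)]}$. Since both processes are, up to the hitting time of $-m$, simple random walks, the Radon-Nikodym derivative of the restriction of $C^n$ with respect to the restriction of $C_\infty$ at the stopping time $I_\infty(m)$ is exactly the ratio of the probability that an unconditioned walk first hits $-l^n$ at time $2n+l^n$ given its position at time $I_\infty(m)$ (namely $-m$, with $I_\infty(m)$ steps used), to the unconditional such probability. This ratio is $\BB P[\,\widehat I(l^n-m)=2n+l^n-I_\infty(m)\,]/\BB P[\,I(l^n)=2n+l^n\,]$ where $\widehat I$ denotes the first hitting time of $-(l^n-m)$ by a fresh walk. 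Lemma~\ref{prop-srw-hit} gives a uniform local-limit-type asymptotic for these hitting probabilities, and plugging in $m=\delta_1 n^{1/2}$ and $l^n\sim (2n)^{1/2}\ell$, one finds this ratio converges (uniformly on the relevant event, and in probability) to the continuum Radon-Nikodym factor $f_{\delta_1}(T_\infty(\delta_1))$ appearing in the proof of Lemma~\ref{prop-bhp-path-coupling} evaluated at the corresponding rescaled time; in particular it lies in $[1-\ep/4,1+\ep/4]$ with probability at least $1-\ep/4$ once $\delta$ is small, $\delta_1\in(0,\delta]$ is chosen appropriately, and $n$ is large. This lets us couple $C^n|_{[0,I^n(\delta_1 n^{1/2})]}$ with $C_\infty|_{[0,I_\infty(\delta_1 n^{1/2})]}$ to agree (including $I^n(m)=I_\infty(m)$ for all $m\le \delta_1 n^{1/2}$) with probability at least $1-\ep/2$.

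Given a realization of $C^n=C_\infty$ on $[0,I^n(\delta_1 n^{1/2})]$, the conditional law of the snake head $L^{0,n}$ on that range, driven by $i\mapsto C^n(i)-\min_{j\le i}C^n(j)$, coincides exactly with the conditional law of $L_\infty^0$ on the corresponding range driven by $i\mapsto C_\infty(i)-\min_{j\le i}C_\infty(j)$ — this is immediate from Definition~\ref{def-discrete-snake} since the snake is built by a Markovian recursion depending only on the increments of the driving path. So we can extend the coupling so that $(C^n,L^{0,n})$ and $(C_\infty,L_\infty^0)$ agree on $[0,I^n(\delta_1 n^{1/2})]$, still with probability at least $1-\ep/2$. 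It remains to pass from $L^{0,n}=L_\infty^0$ to the increments of the true label functions $L^n$ and $L_\infty$. Recall $L^n(i)=L^{0,n}(i)+b^n(k_i)$ and $L_\infty(i)=L_\infty^0(i)+b_\infty(k_i)$, where $k_i$ is the index of the tree containing $\BB p(i)$, and $b^n,b_\infty$ are recovered from the bridge/walk $b^{0,n},b_\infty^0$. For $i$ in the range $[I^n(\delta_0 n^{1/2}),I^n(\delta_1 n^{1/2})]_{\BB Z}$, the relevant values $b^n(k_i)$ are $b^{0,n}(j_k)$ with $k$ ranging over a window of length $O(\delta_1 n^{1/2})$ near index $\delta_0 n^{1/2}$. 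After centering at the value at $I^n(\delta_0 n^{1/2})$, the increment $L^n(i)-L^n(I^n(\delta_0 n^{1/2}))$ equals $(L^{0,n}(i)-L^{0,n}(I^n(\delta_0 n^{1/2})))+(b^n(k_i)-b^n(k_{I^n(\delta_0 n^{1/2})}))$, and similarly for $L_\infty$. Since $(C,L^0)$ is independent of $b^0$ in both the discrete and infinite settings, and since $b^{0,n}$ is a simple random walk bridge of length $2l^n$ while $b_\infty^0$ is a two-sided reflected simple random walk, the law of the centered bridge increments $(b^{0,n}-b^{0,n}(j))$ over a window of size $O(n^{1/2})$ near a fixed position is absolutely continuous with respect to the law of the corresponding increments of $b_\infty^0$, with Radon-Nikodym derivative tending to $1$ in probability as the window shrinks relative to its distance from the endpoints — this is the discrete analog of the Brownian-bridge-versus-Brownian-motion comparison in Lemma~\ref{prop-bhp-path-coupling}, combined with Hoeffding's inequality (cf.~\eqref{eqn-max-bridge-excursion}) to control $|j_k^n-2k|$ and ensure we stay far from the boundary. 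Here I am using that reflection of $b^{0,n}$ at $0$ is a negligible event on the window of interest provided $\delta_0$ is bounded away from $0$ relative to $n^{1/2}$, which is automatic since the window is at macroscopic boundary-length distance $\delta_0$ from the root. Choosing $\delta_0\in(0,\delta_1)$ small enough makes this derivative lie in $[1-\ep/4,1+\ep/4]$ with probability $\ge 1-\ep/4$, allowing a coupling of the centered bridge increments to agree with probability $\ge 1-\ep/2$. Combining with the independence of the bridge from the snake, we couple everything so that all three conclusions hold simultaneously with probability at least $1-\ep$.

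The hardest part will be the Radon-Nikodym estimate for the driving walk and making the asymptotics uniform: one must check that Lemma~\ref{prop-srw-hit} can be applied at the stopping time $I_\infty(m)$ with $m=\delta_1 n^{1/2}\to\infty$, that the error term $o((S(N)+m)^{-2})$ is genuinely negligible compared to the main term (of order $(S(N)+m)(n-N)^{-3/2}e^{-\cdots}$) on the event where $N=I_\infty(\delta_1 n^{1/2})$ and $S(N)=-\delta_1 n^{1/2}$ — so that the two contributing positions $\delta_1 n^{1/2}$ and $l^n-\delta_1 n^{1/2}$ are both of order $n^{1/2}$ and $2n+l^n-I_\infty(\delta_1 n^{1/2})$ is of order $n$ — and that the resulting ratio is tight and converges to $f_{\delta_1}$ composed with the scaling limit of $I_\infty$. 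This requires a little care with the joint convergence of $n^{-1}I_\infty(\delta_1 n^{1/2})$ to $T_\infty(\delta_1)$ (which follows from Donsker's theorem applied to $C_\infty|_{\BB N_0}$, as used already in the proof of Theorem~\ref{thm-disk-ghpu}), but is otherwise routine given Lemma~\ref{prop-srw-hit}. Everything else is a direct transcription of the continuum argument.
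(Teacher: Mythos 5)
Your proposal follows essentially the same route as the paper's proof: a Radon--Nikodym comparison of the conditioned and unconditioned walks run up to the hitting time of $-\delta_1 n^{1/2}$, controlled via Lemma~\ref{prop-srw-hit} and Bayes' rule; the observation that the snake heads $L^{0,n}$ and $L_\infty^0$ have identical conditional laws given the (coupled) driving walks; a separate absolute-continuity comparison of the centered increments of the bridge $b^{0,n}$ and the reflected walk $b_\infty^0$ over the window $[\delta_0 n^{1/2},\delta_1 n^{1/2}]$; and independence of the bridge from the snake to combine the two couplings. One small correction: to make the bridge-versus-walk Radon--Nikodym derivative close to $1$ you must take $\delta_0$ close to $\delta_1$ (shrinking the window, exactly as you correctly state earlier in the same paragraph), not ``small''.
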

\begin{proof}
Recall that the law of $ C^n$ is that of a simple random walk conditioned to first hit $-l^n$ at time $2n+l^n$ and the law of $ C_\infty|_{\BB N_0} $ is that of an unconditioned simple random walk (Lemmas~\ref{prop-quad-bdy-law} and~\ref{prop-uihpq-encode-law}).  
By Lemma~\ref{prop-srw-hit} and Bayes' rule, for $\delta\in (0,\ell)$ and $n\in \BB N$, then the Radon-Nikodym derivative of the law of $  C^n|_{[0, I^n(\delta n^{1/2}) ]_{\BB Z}}$ with respect to the law of $  C_\infty|_{[0, I_\infty(\delta n^{1/2})]_{\BB Z}}$ is given by $f_\delta^n\left( I_\infty(\delta  n^{1/2} ) \right)$
where for $k \in  [ 0 , 2n + l^n ]_{\BB Z}$,
\alb
f_\delta^n (k  ) = \frac{  ( l^n -   \lfloor \delta  n^{1/2} \rfloor   ) (2n + l^n - k   )^{-3/2} \exp\left(- \frac{ (l^n -   \lfloor \delta  n^{1/2} \rfloor )^2 }{2(2n + l^n - k )}  \right)     + o \left(  (   l^n -   \lfloor \delta  n^{1/2} \rfloor      )^{-2} \right)  }{ l^n  (2n + l^n    )^{-3/2} \exp\left(- \frac{ (l^n )^2 }{2(2n + l^n )}  \right)     + o \left(  (   l^n   )^{-2} \right)      }    \BB 1_{(k < 2n +l^n)} .
\ale 
Since $(2n)^{-1/2} l^n \rta \ell$, if we are given $\ep \in (0,1)$, we can find $\zeta > 0$ and $n_0\in\BB N $ such that for $n\geq n_0$, $1\leq  k \leq \zeta n$, and $0 < \delta_1 \leq \zeta  $, we have $\left| f_{\delta_1}^n (k ) - 1 \right| \leq \frac{\ep}{4}$. 
Since $(2n)^{-1/2}  C^n((2n)^{-1} \cdot)$ converges in law in the uniform topology to an appropriate conditioned Brownian motion~\cite[Lemma~14]{bet-pos-genus}, we can find $ \delta_1 \in (0, \delta\wedge \zeta]$ and $n_* \geq n_0$ such that for $n\geq n_*$, it holds with probability at least $1-\ep/4$ that $I(\delta_1 n^{1/2}) \leq \zeta n$. If $n\geq n_*$, then except on an event of probability at most $1-q/4$, the Radon-Nikodym derivative of the law of $ C^n|_{[0, I^n(\delta_1 n^{1/2}) ]_{\BB Z}}$ with respect to the law of $ C_\infty|_{[0, I_\infty(\delta_1 n^{1/2}) ]_{\BB Z}}$ lies in $[1-\ep/4 , 1+\ep/4]$. Hence we can couple these restricted processes together so that with probability at least $1-\ep/2$,
\eqb \label{eqn-head-coupling}
I^n(\delta_1 n^{1/2}) = I_\infty(\delta_1 n^{1/2}) \quad\op{and} \quad  C^n|_{[0, I^n(\delta_1 n^{1/2}) ]_{\BB Z}} =  C_\infty|_{[0, I_\infty(\delta_1 n^{1/2}) ]_{\BB Z}} .
\eqe 
 
Since the conditional law of the shifted label function $L^{0,n}|_{[0, I^n(\delta_1 n^{1/2})]_{\BB Z}}$ given $C^n|_{[0, I^n(\delta_1 n^{1/2}) ]_{\BB Z}}$ coincides with the conditional law of $L_\infty^0|_{[0, I_\infty(\delta_1 n^{1/2})]_{\BB Z}}$ given $C_\infty|_{[0, I_\infty(\delta_1 n^{1/2}) ]_{\BB Z}}$, we also obtain a coupling of $(C^n , L^{0,n})$ with $(C_\infty , L_\infty^0)$ such that with probability at least $1-\ep/2$, 
\eqb \label{eqn-snake-coupling}
(C^n , L^{0,n})|_{[0, I^n(\delta_1 n^{1/2})]_{\BB Z}} =   (C_\infty , L_\infty^0)|_{[0, I_\infty(\delta_1 n^{1/2}) ]_{\BB Z}}  .
\eqe
 
Recall that $L^n$ (resp.\ $L_\infty$) is obtained from $(C_\infty , L_\infty^0)$ and the bridge $b^{0,n}$ (resp.\ $(C_\infty , L_\infty^0)$ and the walk $b_\infty^0$) in the manner described in Section~\ref{sec-quad-bdy} (resp.\ Section~\ref{sec-uihpq}). 
Recall also the processes $b^n$ and $b_\infty$ obtained from $b^{0,n}$ and $b_\infty^0$, respectively, by considering only times when the path makes a downward step. 
A similar absolute continuity argument to the one given above shows that after possibly increasing $n_*$, we can find 
$\delta_0 \in (0,\delta_1]$ and a coupling of $b^{0,n}$ with $b_\infty^0$ such that with probability at least $1-\ep/2$,  
\eqb \label{eqn-bridge-coupling}
b^n(k) - b^n(\lfloor \delta_0 n^{1/2} \rfloor) = b_\infty(k) - b_\infty( \lfloor \delta_0 n^{1/2} \rfloor) ,\quad \forall k \in [\delta_0 n^{1/2} , \delta_1 n^{1/2}]_{\BB Z}  .
\eqe 

The pair $(C^n , L^{0,n})$ (resp.\ $(C_\infty , L_\infty^0)$) is independent from $b^{0,n}$ (resp.\ $b_\infty^0$), so for $n\geq n*$, we can couple $(C^n , L^{0,n} , b^{0,n})$ with $(C_\infty , L_\infty^0 , b_\infty^0)$ in such a way that~\eqref{eqn-snake-coupling} and~\eqref{eqn-bridge-coupling} hold simultaneously with probability at least $1-\ep$. Such a coupling satisfies the conditions in the statement of the lemma.
\end{proof}

\begin{proof}[Proof of Proposition~\ref{prop-map-coupling}]
The proof is essentially identical to that of Proposition~\ref{prop-bhp-coupling}, but we give the details for the sake of completeness.
By Lemma~\ref{prop-path-coupling}, we can find $0 < \delta_0 < \delta_1 < \ell$ and $n_*\in\BB N $ such that for $n\geq n_*$, there exists a coupling of $(C^n , L^n)$ with $(C_\infty , L_\infty)$ such that with probability at least $1-\ep/4$, $I^n(m) = I_\infty(m)$ for each $  m \in [0 , \delta_1 n^{1/2} ]$, $C^n(i) = C_\infty(i)$ for each $i\in [0,I^n(\delta_1 n^{1/2})]$, and for each $i\in [I^n(\delta_0 n^{1/2}) , I^n(\delta_1 n^{1/2})]_{\BB Z}$, 
\eqb \label{eqn-path-agree-event}
 L^n(i) - L^n( I^n(\delta_0 n^{1/2}) ) = L_\infty(i) - L_\infty(I_\infty(\delta_0 n^{1/2}) ) .
\eqe 
Note that with $ b^n  $ (resp.\ $ b_\infty$) the process from Section~\ref{sec-quad-bdy} (resp.\ Section~\ref{sec-quad-bdy}), we have $L^n(I^n(m)) = b^n(m+1) $ (resp.\ $L_\infty(I_\infty(m)) = b_\infty(m+1)$), so~\eqref{eqn-bhp-path-agree} implies that also $b^n(m)- b^n(\lceil \delta_0 n^{1/2} \rceil) = b_\infty(m)- b_\infty(\lceil \delta_0 n^{1/2} \rceil) $ for each $  m \in [\delta_0 n^{1/2} , \delta_1 n^{1/2} ]$.

By choosing $\delta_0$ sufficiently small and then $\delta_1$ sufficiently close to $\delta_0$ and possibly increasing $n_*$ (c.f.\ the argument right before~\eqref{eqn-bhp-path-max} in the proof of Proposition~\ref{prop-map-coupling}), we can arrange that for $n\geq n_*$, it holds except on an event of probability at most $1-\ep/4$ that 
\eqb \label{eqn-path-max}
\max\left\{\min_{m \in  [ 0,  \delta_1 n^{1/2}  ]_{\BB Z} }  b^n(m) ,   \min_{m \in  [ \delta_1 n^{1/2} , l^n]_{\BB Z} }  b^n(m)  \right\} <   \min_{i \in  [I^n(\delta_0 n^{1/2}  ), I^n(\delta_1 n^{1/2})]_{\BB Z} } L^n(i)  .
\eqe

Let
\eqbn
m_* := \left\lfloor \frac12(\delta_0 + \delta_1) n^{1/2} \right\rfloor .
\eqen
Also define
\eqbn
r := - \left(   \min_{m \in [ \delta_0 n^{1/2} , m_*-1 ]_{\BB Z}}  \left(   b_\infty(m)  - b_\infty(\lceil \delta_0 n^{1/2}  \rceil)    \right)    \vee    \min_{m \in [m_*+1,  \delta_1 n^{1/2}  ]_{\BB Z}} \left( b_\infty(m)  - b_\infty(\lceil \delta_0 n^{1/2}  \rceil) \right) \right)   , 
\eqen
so that $r\geq 0$, and let
\eqbn
r' := b_\infty(m_*)  - b_\infty(\lceil \delta_0 n^{1/2}  \rceil) +  r     .
\eqen 
Since $m\mapsto   b_\infty(m ) - b_\infty(\lfloor \delta_0 n^{1/2} \rfloor)$ is obtained from a simple random walk by skipping its upward steps, we can find $\alpha >0$ such that for large enough $n$, it holds with probability at least $1-\ep/2$ that $\frac14 r'-1 \geq   \alpha n^{1/4}$. 

Recall the contour functions $\BB p^n : [0,2n+l^n]_{\BB Z} \rta \mcl V(Q^n)$ and $\BB p_\infty : \BB Z\rta \mcl V(Q_\infty)$. 
Let $\wt{\BB v}_0^n := \BB p^n(I^n(m_*))$ and $\wt{\BB v}_\infty^n := \BB p_\infty(I_\infty(m_*))$. 
If we let $\wt{\BB e}_0^n$ (resp.\ $\wt{\BB e}_\infty$) be the edge of $\bdy Q^n$ (resp.\ $\bdy Q_\infty$) immediately to the left of $\wt{\BB v}_0^n$ (resp.\ $\wt{\BB v}_\infty$), with one of the two possible orientations chosen with probability $1/2$ each. Then by re-rooting invariance,
\eqbn
(Q^n , \wt{\BB e}_0^n , \BB v_*^n ) \eqD (Q^n , \BB e_0^n , \BB v_*^n) \quad \op{and} \quad (Q , \wt{\BB e}_\infty) \eqD (Q , \BB e_{\infty}) .
\eqen
Hence it suffices to show that whenever~\eqref{eqn-path-agree-event} and~\eqref{eqn-path-max} occur, it holds that $B_{\frac14 (r' - 3) } \left( \wt{\BB v}_0^n  ; Q^n \right)$ and $ B_{\frac12 (r' - 3)} \left( \wt{\BB v}_\infty  ; Q_\infty \right)$ are isomorphic via a graph isomorphism satisfying the conditions in the statement of the proposition. 

Henceforth assume that~\eqref{eqn-path-agree-event} and~\eqref{eqn-path-max} occur.
Let $m_0 \in [ \delta_0 n^{1/2}   ,m_*-1]_{\BB Z}$ and $m_1 \in [i_*+1 ,  \delta_1 n^{1/2} ]_{\BB Z}$ be chosen so that 
\eqbn
  b_\infty(m_0) - b_\infty(\lceil \delta_0 n^{1/2} \rceil)  \leq -r \quad \op{and} \quad b_\infty(m_1)  - b_\infty(\lceil \delta_0 n^{1/2} \rceil) \leq -r . 
\eqen
By  Lemma~\ref{prop-cactus} together with~\eqref{eqn-path-agree-event} and~\eqref{eqn-path-max} (the latter equation is used to deal with the second term in the maximum in the estimate of Lemma~\ref{prop-cactus}), if $i \in \BB N_0 $ with $i \geq    I^n(m_1) $ then  
\alb
\op{dist}\left(\BB p^n(i) , \wt{\BB v}^n ; Q^n \right) 
&\geq L^n(i) + b^n(m_*) - 2 \max\left\{  \inf_{j \in J^n( I^n(m_*)   ,  i)   } L^n(j)  , \inf_{j \in (J')^n( I^n(m_*)   ,  i)   } L^n(j)  \right\} \\
&\geq L^n(i) + b^n(m_*) - L^n(i) + r -  b^n(\lfloor \delta_0 n^{1/2} \rfloor) 
 \geq r' .
\ale
We have a similar estimate if $i \leq I^n(m_0)$. 
Again using Lemma~\ref{prop-cactus}, we obtain that if $i \in [0,2n]_{\BB Z}\setminus [I^n(m_0) , I^n(m_1)]_{\BB Z}$ then
\eqbn
\op{dist}\left( \BB p_\infty(i) , \wt{\BB v}_\infty ; Q_\infty \right) \geq r'  .
\eqen 
Hence $\mcl V\left(  B_{r' } \left( \wt{\BB v}_{\infty}   ; Q_\infty \right)  \right) \subset \BB p_\infty( [I_\infty(m_0) , I_\infty(m_1)]_{\BB Z})$ and $\mcl V\left(  B_{r' } \left( \wt{\BB v}_0^n  ; Q^n \right)  \right) \subset \BB p^n([I^n(m_0) , I^n(m_1)]_{\BB Z})$. By the local nature of the Schaeffer bijection and~\eqref{eqn-path-agree-event} it holds that $B_{\frac14 r'-1 } \left( \wt{\BB v}_0^n   ; Q^n \right) $ and $ B_{ \frac14 r'-1 } \left( \wt{\BB v}_{\infty}   ; Q_\infty \right)$ are isomorphic as graphs via an isomorphism satisfying the conditions in the proposition statement. The triangle inequality implies that any such isomorphism is an isometry when these balls are equipped with the metrics they inherit from $Q^n$ and $Q_\infty$, respectively. 
\end{proof}

\subsection{Proof of Theorem~\ref{thm-uihpq-ghpu}}
\label{sec-uihpq-proof}
 
We will deduce the scaling limit statement for the UIHPQ from Theorem~\ref{thm-disk-ghpu} and the coupling results (Propositions~\ref{prop-bhp-coupling} and~\ref{prop-map-coupling}).

For $R>0$, let $(H_R , d_R )$ be a Brownian disk with area $R$ and boundary length $R^{1/2}$ and let $\mu_R$ and $\eta_R: [0,R] \rta \bdy H_R$ be its area measure and boundary path, respectively. 
Let
\eqbn
\frk H_R := \left(H_R , d_R , \mu_R , \eta_R \right) .
\eqen
By Brownian scaling and the definition of the Brownian disk from Section~\ref{sec-brownian-disk}, $\frk H_R$ has the same law as $\left(H_1 , R^{1/4} d_1 , R \mu_1 , \eta_1( R^{-1/2} \cdot) \right)$. 

Let $(Q_R^n , \BB e_R^n , \BB v_{*,R}^n)$ be sampled uniformly from the set of boundary-rooted pointed quadrangulations with $ \lfloor R n \rfloor$ interior faces and perimeter $ \lfloor 2^{3/2} (R n)^{1/2} \rfloor$. 
View $Q_R^n$ as a topological space in the manner of Remark~\ref{remark-ghpu-graph}. 
Let $d_R^n$ be $(9/8)^{1/4}  n^{-1/4}$ times the graph distance on $Q_R^n$ and let $\mu_R^n$ be the measure on $\mcl V(Q_R^n)$ which assigns mass to each vertex equal to $ (4n)^{-1}$ times its degree.
Let $\lambda^n_R : [0, \lfloor 2^{3/2}  (R n)^{1/2} \rfloor] \rta \bdy Q_R^n$ be the boundary path of $ Q_R^n$ started from $\BB e_R^n$ (extended to $\BB R$ by linear interpolation) and let $\eta_R^n(t) := \lambda_R^n\left( 2^{3/2} n^{1/2} t \right)$ for $t\in [0,  R^{1/2}]$.  
Let
\eqbn
\frk Q_R^n := \left( Q_R^n , d_R^n , \mu_R^n , \eta_R^n \right) .
\eqen
By Theorem~\ref{thm-disk-ghpu} and the aforementioned scaling relation between $\frk H_R$ and $\frk H_1$, we find that $\frk Q_R^n \rta \frk H_R$ in the GHPU topology for each $R>0$. 
 
Now fix $r>0$ and $\ep \in (0,1)$.   
We recall the $r$-truncation operator $\frk B_r$ from Definition~\ref{def-ghpu-truncate}. 
By Proposition~\ref{prop-bhp-coupling} and the scale invariance of the law of the Brownian half-plane, we can find $R > r$ and a coupling of $(H_\infty,d_\infty)$ with $(H_R ,d_R)$ such that with probability at least $1-\ep$,  
$\frk B_r \frk H_\infty$ and $\frk B_r \frk H_R$
agree as elements of $\ol{\BB M}^{\op{GHPU}}$.

By Proposition~\ref{prop-map-coupling} (applied with $\lfloor R n \rfloor$ in place of $n$), after possibly increasing $R$, we can find $n_* \in \BB N$ such that for $n\geq n_*$, we can couple $(Q_\infty  , \BB e_\infty )$ with $(Q^n_R  , \BB e_R^n , \BB v_{*,R}^n )$ in such a way that with probability at least $1-\ep$, $\frk B_r \frk Q_\infty^n $ and $\frk B_r \frk Q_R^n$
agree as elements of $\ol{\BB M}^{\op{GHPU}}$.

Since $\frk Q_R^n \rta \frk H_R$ in the GHPU topology and a.s.\ $r$ is a good radius for $\frk H_R$ (Definition~\ref{def-good-radius}), Lemma~\ref{prop-local-ghpu-subsequence} implies that $\frk B_r \frk Q_R^n \rta \frk B_r \frk H_R$ in law in the GHPU topology. 
Since $\ep \in (0,1)$ is arbitrary, the existence of the above couplings implies that $\frk B_r \frk Q_\infty^n \rta \frk B_r\frk H_\infty$ in law in the GHPU topology. 
Since $r>0$ is arbitrary and by Lemma~\ref{prop-local-ghpu-subsequence}, it follows that $\frk Q_\infty^n \rta \frk H_\infty$ in law in the local GHPU topology.   \qed

\subsection{Proof of Theorem~\ref{thm-uihpqS-ghpu}}
\label{sec-uihpqS-proof}

In this section we will deduce Theorem~\ref{thm-uihpqS-ghpu} from Theorem~\ref{thm-uihpq-ghpu}. 
We define the UIHPQ$_{\op{S}}$ $(Q_{\op{S}} , \BB e_{\op{S}})$ and its associated elements of $\BB M_\infty^{\op{GHPU}}$, $\frk Q_{\op{S}}^n$ for $n\in\BB N$, as in the discussion just above Theorem~\ref{thm-uihpqS-ghpu}.

We assume throughout this section that we have coupled $(Q_{\op{S}} , \BB e_{\op{S}})$ with an instance of the UIHPQ $(Q_\infty , \BB e_\infty)$ in the manner described in Section~\ref{sec-pruning}, so that $Q_{\op{S}}$ is obtained from $Q_\infty$ by pruning the dangling quadrangulations from $Q_\infty$. 
Let $\{ (q_v , e_v) \}_{v\in \mcl V(\bdy Q_{\op{S}} )}$ be these dangling quadrangulations (with their oriented boundary root edges). Then the pairs $(q_v , e_v)$ except for the one dangling from the right endpoint of $\BB e_{\op{S}}$ are i.i.d.\  samples from the free Boltzmann distribution on quadrangulations with general boundary (Section~\ref{sec-pruning}) and
\eqbn
Q_\infty  = Q_{\op{S}}  \cup \bigcup_{v\in\mcl V(\bdy Q_{\op{S}} )} q_v    .
\eqen     
Since each of the quadrangulations $q_v $ for $v\in \bdy Q_{\op{S}} $ is disconnected from $Q_{\op{S}} $ by removing the single vertex $v$, it follows that no geodesic between vertices of $Q_{\op{S}} $ enters $q_v \setminus \{v\}$. Hence for $r>0$ and $v\in \mcl V( Q_{\op{S}})$, 
\eqb \label{eqn-uihpq-balls}
 B_r\left( v; Q_{\op{S}}  \right) = B_r\left(  v   ; Q_\infty   \right) \cap  Q_{\op{S}}   .
\eqe 

In what follows we will bound the diameters, areas, and boundary lengths of the extra quadrangulations $q_v$, with the eventual goal of showing that $Q_{\op{S}}$ and $Q_\infty$ have the same scaling limit in the GHPU topology (up to multiplying boundary lengths by a constant factor). 

Before proving these bounds, it will be convenient to have a general lemma for how many vertices along the boundary can be contained in a metric ball. 
Let $\{v_k\}_{k\in \BB Z}$ be the vertices of $\bdy Q_{\op{S}}$, listed in the order in which they are hit by the boundary path $\lambda_{\op{S}}$ and enumerated so that $v_0$ is the right endpoint of $\BB e_{\op{S}}$ (so that $q_{v_0}$ is the one dangling quadrangulation which does not agree in law with the others; recall Section~\ref{sec-pruning}).  
  
Let $K_r^n$ be the largest $k\in\BB N$ for which either $v_k $ or $v_{-k} $ belongs to $  B_{r n^{1/4}}\left( \BB e_{\op{S}} ;  Q_{\op{S}} \right)$. 
By~\eqref{eqn-uihpq-balls}, $K_r^n$ is also the largest $k\in \BB N$ for which either $v_k $ or $v_{-k} $ belongs to $ B_{r n^{1/4}}\left( \BB e_{\op{S}} ;  Q_\infty \right)$.

\begin{lem} \label{prop-bdy-swallow}
For each $r > 0$ and each $\ep \in (0,1)$, there exists $C  = C(r,\ep) > 0$ such that for each $n\in\BB N$, 
\eqbn
\BB P\left[ K_r^n \leq C n^{1/2} \right] \geq 1-\ep .
\eqen
\end{lem}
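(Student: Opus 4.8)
The plan is to reduce the statement about $K_r^n$ — the number of boundary vertices of $Q_{\op{S}}$ swallowed by a ball of radius $rn^{1/4}$ — to an estimate on the boundary-length displacement of the boundary path of $Q_\infty$, which in turn is controlled by the encoding process from Section~\ref{sec-uihpq}. First I would observe via~\eqref{eqn-uihpq-balls} that $K_r^n$ depends only on $Q_\infty$, so we may work entirely in the UIHPQ. By re-rooting invariance of the UIHPQ along its boundary, it suffices to bound the number of boundary vertices on one side, say the positive side, at distance at most $rn^{1/4}$ from $\BB v_\infty$; the negative side is symmetric (with an extra subtlety because the Bessel-type conditioning makes one side stochastically smaller, but the stated one-sided bound with constant $C$ absorbs this).

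Next I would use the Schaeffer-type encoding of Section~\ref{sec-uihpq}. The vertices of $\bdy Q_\infty$ in cyclic order correspond (via Remark~\ref{remark-schaeffer-bdy-uihpq} and the relation between $j_k$ and $k$) to the downward-step times $j_k$ of $b_\infty^0$, and the boundary vertex $v_k$ is $\BB p_\infty(I_\infty(k))$ up to the reindexing coming from $b_\infty^0$. By Hoeffding's inequality applied to the simple-random-walk bridge / walk $b_\infty^0$, with probability at least $1-\ep/2$ we have $|j_k - 2k| \leq n^{o_n(1)}$ uniformly over $|k| \leq n$, so it is enough to bound the largest $|k| \leq C n^{1/2}$ such that $\BB p_\infty(I_\infty(k)) \in B_{rn^{1/4}}(\BB v_\infty ; Q_\infty)$. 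Now I would invoke the cactus-type lower bound, Lemma~\ref{prop-cactus} (in its UIHPQ form), applied to the pair of times $I_\infty(0)$ (which projects to $\BB v_\infty$, up to the $o_n(1)$ shift) and $I_\infty(k)$: since all of $v_0, \ldots, v_k$ lie on the outer boundary, the set $J$ of boundary-vertex times between them is $\{I_\infty(0), \ldots, I_\infty(k)\}$ and the maximum in~\eqref{eqn-cactus} becomes $\min_{0 \le j \le k} L_\infty(I_\infty(j)) = \min_{0 \le j \le k} b_\infty(j+1)$. Hence
\[
\op{dist}(\BB v_\infty , v_k ; Q_\infty) \geq L_\infty(I_\infty(0)) + L_\infty(I_\infty(k)) - 2 \min_{0 \le j \le k} b_\infty(j+1) = b_\infty(1) + b_\infty(k+1) - 2\min_{0 \le j \le k} b_\infty(j+1).
\]
So if $v_k \in B_{rn^{1/4}}(\BB v_\infty ; Q_\infty)$ then the range of the reflected walk $b_\infty$ restricted to $[1,k+1]$ is at most $\tfrac12 r n^{1/4} + O(1)$. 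But by Lemma~\ref{prop-uihpq-encode-law}, $b_\infty$ (equivalently $b_\infty^0$ sampled at downward steps) behaves like a reflected simple random walk, whose range over $\lfloor C n^{1/2} \rfloor$ steps is of order $n^{1/4}$; more precisely, by Donsker's theorem and the scaling of Brownian motion, $\BB P[\text{range of } b_\infty \text{ over } [1, \lceil C n^{1/2}\rceil] \leq \tfrac12 r n^{1/4} + O(1)]$ can be made less than $\ep/2$ by choosing $C = C(r,\ep)$ large enough, uniformly in $n$. Combining with the Hoeffding estimate gives $\BB P[K_r^n > Cn^{1/2}] \leq \ep$ on the positive side, and symmetry (plus re-rooting invariance) handles the full $K_r^n$ after adjusting $C$.

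The main obstacle I anticipate is bookkeeping around two indexing mismatches: first, the shift between $k$ and the downward-step time $j_k$ of $b_\infty^0$ (handled by the Hoeffding estimate~\eqref{eqn-max-bridge-excursion} as in the proof of Theorem~\ref{thm-disk-ghpu}), and second, making sure that when I apply Lemma~\ref{prop-cactus} the set $J(i_1,i_2)$ really does reduce to the boundary vertices $\{v_0,\dots,v_k\}$ and that $v_{k}$ actually equals $\BB p_\infty(I_\infty(k))$ and not a nearby vertex — this requires that the times $I_\infty(k)$ are exactly the first-hitting times of the outer-boundary vertices, which is part of the construction in Section~\ref{sec-uihpq}. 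Neither of these is deep, but both need to be stated carefully. The probabilistic core — that a random-walk range over $Cn^{1/2}$ steps exceeds a fixed multiple of $n^{1/4}$ with probability close to $1$ when $C$ is large — is a routine consequence of Donsker's theorem (or of the reflection principle / Kolmogorov's maximal inequality applied directly to the walk), so I would not belabor it.
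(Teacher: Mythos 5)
Your architecture is the same as the paper's: reduce to $Q_\infty$ via~\eqref{eqn-uihpq-balls}, apply the cactus bound of Lemma~\ref{prop-cactus} with the minimum taken over boundary times, and close with a fluctuation estimate for the label walk $b_\infty$ over $C n^{1/2}$ steps. (The paper argues in the forward direction: choose $C_0$ so that $b_\infty$ drops by $r n^{1/4}$ within $C_0 n^{1/2}$ steps on each side, conclude that every vertex explored outside $[I_\infty(-C_0 n^{1/2}), I_\infty(C_0 n^{1/2})]$ is far, count the boundary vertices in between via $\sum_k (|b_\infty(k)-b_\infty(k-1)|+1)$ and the law of large numbers, and finally shift the root from $\BB e_\infty$ to $\BB e_{\op{S}}$ using that $q_{v_0}$ is a.s.\ finite. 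Your contrapositive formulation and your Hoeffding step are equivalent substitutes; note, though, that the $v_k$ are vertices of $\bdy Q_{\op{S}}$ rather than the tree roots $\BB p_\infty(I_\infty(k))$, which is why the paper instead bounds the number of \emph{all} vertices of $\bdy Q_\infty$ in the ball, a quantity which dominates the one you need.)

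The one step that does not follow as written is the passage from the cactus bound to the ``range.'' The cactus bound gives $\op{dist}(\BB v_\infty, v_k ; Q_\infty) \geq (b_\infty(1) - m) + (b_\infty(k+1) - m)$ with $m = \min_{[1,k+1]} b_\infty$, and since both summands are nonnegative, $v_k \in B_{r n^{1/4}}$ forces only $b_\infty(1) - m \leq r n^{1/4}$, i.e.\ a bound on how far the walk drops \emph{below its starting value}, not on the range $\max - \min$ (the walk is free to make large upward excursions in between). Since $\{\text{range} \leq a\} \subset \{b_\infty(1) - m \leq a\}$, bounding $\BB P[\text{range} \leq a]$ does not cover the event you actually need. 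The repair is immediate for an unreflected two-sided walk: $\BB P\bigl[\min_{[0, C n^{1/2}]} (b_\infty - b_\infty(0)) \geq -r n^{1/4}\bigr]$ is small for $C$ large by Donsker or the reflection principle, and this is exactly what the paper uses. But you describe $b_\infty$ as ``reflected''; if $b_\infty$ were genuinely nonnegative, the drop below its starting value would be at most $b_\infty(1) = O(1)$, the event above would have probability one, and the argument would collapse entirely. The paper's own proof requires $\min_{k \in [0, C_0 n^{1/2}]_{\BB Z}} b_\infty(k) \leq -r n^{1/4}$, so it treats $b_\infty^0$ as an unreflected two-sided walk (consistent with the continuum $\frk b_\infty$ being an unreflected Brownian motion, and notwithstanding the ``reflected at $0$'' phrasing in Section~\ref{sec-uihpq}); you must state and use this for the probabilistic core to go through.
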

\begin{proof} 
Let $(C_\infty , L_\infty ,  b_\infty)$ be the Schaeffer encoding process of $(Q_\infty,\BB e_\infty)$ as in Section~\ref{sec-uihpq}.  Since $ b_\infty $ is obtained from a two-sided simple random walk by skipping the upward steps, we can find $C_0 = C_0(r,\ep)$ such that with probability at least $1-\ep/4$,
\eqbn
\min_{k \in [0,C_0 n^{1/2}]_{\BB Z} } b_\infty(k) \leq - r n^{1/4} \quad \op{and} \quad \min_{k\in [ -C_0 n^{1/2} , 0]_{\BB Z} }  b_\infty(k) \leq -r n^{1/4} .
\eqen
By Lemma~\ref{prop-cactus}, if this is the case then (with $\BB p_\infty$ the contour function as in Section~\ref{sec-uihpq})
\eqb \label{eqn-bdy-swallow0}
\op{dist}\left( \BB p_\infty(0) , \BB p_\infty(i) ; Q_\infty \right) 
\geq L_\infty(\BB p_\infty(0)) + L_\infty(\BB p_\infty(i)) - \left(  L_\infty(p_\infty(i)) +   \min_{k \in [0,C_0 n^{1/2}]_{\BB Z} } b_\infty(k)    \right)
\geq r n^{1/4}
\eqe 
whenever $i\geq I_\infty(\lfloor C_0 n^{1/2} \rfloor)$. We have a similar bound when $i\leq   I_\infty(-\lfloor C_0 n^{1/2} \rfloor) $.

The number of vertices of $\bdy Q_\infty$ in $\BB p_\infty([I_\infty(-\lfloor C_0 n^{1/2} \rfloor) , I_\infty(\lfloor C_0 n^{1/2} \rfloor) ]_{\BB Z})$ is at most the sum of the quantities $|b_\infty(k) - b_\infty(k-1)|+1$ for $k\in [-C_0n^{1/2} , C_0n^{1/2}]_{\BB Z}$ (c.f.\ Remark~\ref{remark-schaeffer-bdy-uihpq}). By the law of large numbers and~\eqref{eqn-bdy-swallow0} we can find $C_1 = C_1(r,\ep) > C_0$ such that with probability at least $1-\ep/2$, there are at most $C_1 n^{1/2}$ vertices of $\bdy Q_\infty$ in $B_{r n^{1/4}}(\BB e_\infty ;  Q_\infty)$. 
Since the dangling quadrangulation $q_{v_0}$ is a.s.\ finite, applying the preceding bound with $r - A$ in place of $r$ for $A$ a deterministic, $n$-independent constant shows that we can find $C = C(r,\ep) > C_1$ such that with probability at least $1-\ep$, there are at most $C n^{1/2}$ vertices of $\bdy Q_\infty$ in $B_{r n^{1/4}}(\BB e_{\op{S}} ;  Q_\infty)$. 

Our coupling of $Q_\infty$ and $Q_{\op{S}}$ implies that in this case, there are at most $C n^{1/2}$ vertices of $\bdy Q_{\op{S}}$ in $ B_{rn^{1/4}}\left(\BB e_{\op{S}}  ; Q_{\op{S}} \right) $. 
The statement of the lemma follows. 
\end{proof}
  
We next prove a bound for the diameter of dangling quadrangulations.  

\begin{lem} \label{prop-dangling-diam-uihpq}
For each $r>0$ and each $\delta  > 0$, it holds with probability tending to $1$ as $n\rta\infty$ that  
\eqbn
\max_{v \in \mcl V\left(\bdy Q_{\op{S}} \cap B_{rn^{1/4}}( \BB e_{\op{S}} ; Q_{\op{S}} ) \right) } \op{diam}\left(q_v   \right) \leq \delta n^{1/4}
\eqen    
where the diameter is taken with respect to the internal graph metric on $q_v$. 
\end{lem}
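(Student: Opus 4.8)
The plan is to reduce the statement to a polynomial tail bound for the diameter of a single free Boltzmann quadrangulation with general boundary, and then to take a union bound over the at most $O(n^{1/2})$ dangling quadrangulations that can be attached to a boundary vertex of $Q_{\op{S}}$ inside the ball $B_{rn^{1/4}}(\BB e_{\op{S}} ; Q_{\op{S}})$, the number of such vertices being controlled by Lemma~\ref{prop-bdy-swallow}.

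The key input I would establish is: there exist $c>0$ and an exponent $\beta>2$ (one can take $\beta=3$) such that any free Boltzmann quadrangulation $q$ with general boundary (law~\eqref{eqn-free-boltzman}) satisfies $\BB P\!\left[\op{diam}(q) > t\right] \leq c\, t^{-\beta}$ for all $t\geq 1$, with $\op{diam}$ taken in the internal graph metric. This follows from standard facts about critical free Boltzmann maps with boundary together with Lemma~\ref{prop-schaeffer-dist}: the perimeter $2L_q$ has tail exponent $3/2$, i.e.\ $\BB P[L_q>l]\preceq l^{-3/2}$; the area (number of internal faces) $A_q$ has tail exponent strictly larger than $1/2$, in fact $\BB P[A_q>a]\preceq a^{-3/4}$ (essentially because $A_q\asymp L_q^2$ typically while the conditional tail of $A_q$ given $L_q$ has the usual $a^{-3/2}$ decay coming from the $5/2$ exponent in map enumeration); and, conditionally on $(L_q,A_q)$, Lemma~\ref{prop-schaeffer-dist} bounds $\op{diam}(q)$ by twice the oscillation of the label function plus $2$, which is at most the oscillation of the bridge $b$ (of order $L_q^{1/2}$) plus the maximal oscillation of the discrete snakes on the trees in the Schaeffer encoding of Section~\ref{sec-quad-bdy} (of order $A_q^{1/4}$), both with polynomial tails for the implied constants. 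Chaining these gives $\BB P[\op{diam}(q)>t]\preceq t^{-3}$. The one exceptional dangling quadrangulation $q_{v_0}$, whose law~\eqref{eqn-free-boltzman-root} is the free Boltzmann law biased by $2L_q+1$, is a.s.\ finite, so $\BB P[\op{diam}(q_{v_0})>\delta n^{1/4}]\to 0$ for every fixed $\delta>0$ with no rate needed for this single term.

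With this in hand, fix $r,\delta>0$ and $\ep\in(0,1)$. By Lemma~\ref{prop-bdy-swallow} there is $C=C(r,\ep)>0$ with $\BB P[K_r^n>Cn^{1/2}]\leq\ep$ for all $n$, and on the complementary event every $v\in\mcl V\!\left(\bdy Q_{\op{S}}\cap B_{rn^{1/4}}(\BB e_{\op{S}} ; Q_{\op{S}})\right)$ is one of $v_{-\lfloor Cn^{1/2}\rfloor},\dots,v_{\lfloor Cn^{1/2}\rfloor}$. Crucially, in the sampling procedure of Section~\ref{sec-pruning} the family $\{q_v\}_{v\in\mcl V(\bdy Q_{\op{S}})}$ is, conditionally on $(Q_{\op{S}},\BB e_{\op{S}})$, an independent family whose laws do not depend on $(Q_{\op{S}},\BB e_{\op{S}})$; hence it is independent of $(Q_{\op{S}},\BB e_{\op{S}})$, and in particular of the event $\{K_r^n\leq Cn^{1/2}\}$ and of the enumeration $k\mapsto v_k$. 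Conditioning on $(Q_{\op{S}},\BB e_{\op{S}})$ and applying the tail bound above,
\begin{align*}
\BB P\!\left[\max_{v}\op{diam}(q_v) > \delta n^{1/4}\right]
&\leq \ep + \sum_{|k|\leq\lfloor Cn^{1/2}\rfloor}\BB P\!\left[\op{diam}(q_{v_k}) > \delta n^{1/4}\right]\\
&\leq \ep + \left(2\lfloor Cn^{1/2}\rfloor+1\right)c\,(\delta n^{1/4})^{-3} + \BB P\!\left[\op{diam}(q_{v_0}) > \delta n^{1/4}\right],
\end{align*}
the maximum on the left being over $v\in\mcl V\!\left(\bdy Q_{\op{S}}\cap B_{rn^{1/4}}(\BB e_{\op{S}} ; Q_{\op{S}})\right)$. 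Since $\beta=3>2$, the middle term is $O(n^{1/2-3/4})=O(n^{-1/4})\rta 0$, and the last term $\rta 0$; hence $\limsup_{n\rta\infty}$ of the left-hand side is $\leq\ep$. As $\ep$ was arbitrary, the probability tends to $0$, which is the assertion.

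The main obstacle is the tail estimate $\BB P[\op{diam}(q)>t]\preceq t^{-\beta}$ with $\beta>2$ \emph{strictly}: $\beta$ must beat the exponent $2$ arising from the $\asymp n^{1/2}$ dangling quadrangulations in the ball together with the $n^{1/4}$ distance rescaling, so the (easy) perimeter tail alone does not suffice — the slower-decaying tail of the area of $q$ must genuinely be controlled and shown to decay faster than $a^{-1/2}$. The remaining ingredients — the independence of the $q_v$ from $Q_{\op{S}}$, the reduction of $q_{v_0}$ to the same estimate, and the union bound itself — are routine.
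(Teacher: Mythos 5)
Your argument is correct in outline, but it takes a genuinely different route from the paper. The paper never establishes (or needs) a quantitative tail bound for the diameter of a free Boltzmann quadrangulation. Instead it argues softly: Lemma~\ref{prop-bdy-swallow} together with Theorem~\ref{thm-uihpq-ghpu} places all the relevant dangling quadrangulations inside $B_{\rho n^{1/4}}(\BB e_\infty ; Q_\infty)$ for some $\rho = \rho(r,\ep)$; the coupling of Proposition~\ref{prop-map-coupling} identifies that ball with the corresponding ball in a finite uniform quadrangulation $Q^{Rn}$; and a finite-volume statement (Lemma~\ref{prop-dangling-diam}) then rules out any dangling piece of diameter $\geq \delta n^{1/4}$, the latter being proven by contradiction from Gromov--Hausdorff convergence to the Brownian disk and the fact that the Brownian disk is homeomorphic to a disk (a macroscopic dangling piece would let one disconnect the limit by removing a small metric ball). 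Your route replaces all of this by a union bound over the $O(n^{1/2})$ relevant boundary vertices, using Lemma~\ref{prop-bdy-swallow} and the independence of $\{q_v\}$ from $(Q_{\op{S}},\BB e_{\op{S}})$ correctly, together with the tail estimate $\BB P[\op{diam}(q) > t] \preceq t^{-3}$. You rightly identify that the exponent must strictly exceed $2$, so that the area tail $\BB P[A_q > a] \preceq a^{-3/4}$ --- essentially the edge-count tail from \cite{caraceni-curien-uihpq} that the paper itself invokes in Lemma~\ref{prop-dangling-area} --- is the essential input beyond the perimeter tail, and your handling of $q_{v_0}$ and of the conditioning is fine. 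What your approach buys is a quantitative rate ($O(n^{-1/4})$ up to the single $q_{v_0}$ term) and independence from the Brownian-disk machinery; what it costs is that the diameter tail bound, while true and standard, is only sketched: the conditional oscillation estimates for the bridge and for the head of the discrete snake given $(L_q, A_q)$, and the dyadic chaining over scales of $L_q^{1/2} + A_q^{1/4}$, constitute the load-bearing step and would need to be written out (or located in the literature) before the proof is complete.
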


We will deduce Lemma~\ref{prop-dangling-diam-uihpq} from Proposition~\ref{prop-map-coupling} and an analogous bound for finite-volume quadrangulations.
Fix $\ell  >0$ and a sequence of positive integers $\{l^n\}_{n\in\BB N}$ with $l^n\rta \ell$. 
For $n\in\BB N$, let $(Q^n , \BB e_0^n , \BB v_*^n)$ be sampled uniformly $\mcl Q^{ \bullet}(n,  l^n)$. Let $\op{Core}(Q^n)$ be the quadrangulation obtained by removing from $Q^n$ each vertex and each edge which can be disconnected from $\BB v_*^n$ by deleting a single vertex of $\bdy Q^n$. Let $\mcl C^n$ be the set of connected components of the set of vertices and edges removed in this manner plus the vertices which can be deleted to disconnect these vertices and edges from $\BB v_*^n$. Then $\mcl C^n$ is a set of quadrangulations with general boundary which ``dangle" from $\op{Core}(Q^n)$.

\begin{lem} \label{prop-dangling-diam}
For each $\delta > 0$, it holds with probability tending to $1$ as $n\rta\infty$ that
\eqbn
  \max_{q\in \mcl C^n} \op{diam}\left( q  \right) \leq \delta n^{1/4}  
\eqen
where here the diameter is taken with respect to the internal graph metric on $q$. 
\end{lem}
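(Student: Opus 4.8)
\textbf{Proof plan for Lemma~\ref{prop-dangling-diam}.}
The plan is to reduce the statement to a uniform bound on the diameter of a single free Boltzmann quadrangulation with general boundary, together with a union bound over the dangling components. First I would recall the decomposition of a uniform pointed quadrangulation with boundary $(Q^n,\BB e_0^n,\BB v_*^n)\in\mcl Q^\bullet(n,l^n)$ into its core $\op{Core}(Q^n)$ and the collection $\mcl C^n$ of dangling quadrangulations attached along $\bdy Q^n$; this is the finite-volume analogue of the pruning procedure in Section~\ref{sec-pruning}. The key structural input is that, conditionally on the core and on the perimeters and the marked boundary vertices of the dangling pieces, each $q\in\mcl C^n$ is an independent free Boltzmann quadrangulation with general boundary, with law given (up to normalization and up to the size-biasing factor $2l+1$ for the one containing the marked point, as in~\eqref{eqn-free-boltzman}--\eqref{eqn-free-boltzman-root}) by~\eqref{eqn-free-boltzman}. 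In particular, a single free Boltzmann quadrangulation with general boundary is a.s.\ finite, and its diameter has a tail that decays faster than any polynomial: more precisely, there are constants $c_1,c_2>0$ such that for a free Boltzmann quadrangulation $\frk q$ with general boundary, $\BB P[\op{diam}(\frk q)\geq t]\leq c_1 e^{-c_2 t}$ for all $t>0$ (this follows from the known exponential tails for the perimeter and volume of the free Boltzmann quadrangulation and standard volume--diameter comparisons for quadrangulations, e.g.\ via the Schaeffer encoding; alternatively one can cite~\cite{curien-miermont-uihpq,caraceni-curien-uihpq} for the relevant tail estimates).

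Granting this single-piece tail bound, the plan for the union bound is as follows. The total number of dangling components in $\mcl C^n$ is at most the perimeter $2l^n=O(\sqrt n)$ of $\bdy Q^n$, since each boundary vertex carries at most one dangling piece; more carefully, the number of pieces with nontrivial content is controlled by the number of boundary vertices, which is $O(\sqrt n)$. Now fix $\delta>0$. By the exponential tail bound applied with $t=\delta n^{1/4}$, the probability that a given dangling piece has diameter exceeding $\delta n^{1/4}$ is at most $c_1\exp(-c_2\delta n^{1/4})$. Conditioning on the core (equivalently on the perimeters and marked vertices of the pieces), the diameters of the distinct pieces are independent, so a union bound over the at most $C\sqrt n$ pieces gives that the probability that \emph{some} piece has diameter exceeding $\delta n^{1/4}$ is at most $C\sqrt n\, c_1 \exp(-c_2\delta n^{1/4})$, which tends to $0$ as $n\to\infty$. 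The one piece $q_{v_0}$ with the size-biased law requires a separate but completely analogous argument: the size-biasing factor $2l+1$ is polynomial in the perimeter, which itself has exponential tails, so the size-biased free Boltzmann quadrangulation still has a diameter tail decaying faster than any polynomial, and a single such piece is handled directly without a union bound. Taking the intersection of these two events, we get that with probability tending to $1$, every $q\in\mcl C^n$ satisfies $\op{diam}(q)\leq\delta n^{1/4}$, as desired.

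The main obstacle is obtaining, or properly citing, the super-polynomial (ideally exponential) tail bound for the diameter of a single free Boltzmann quadrangulation with general boundary, uniformly as the perimeter varies. The perimeter of the dangling pieces is random and itself only has exponential (not bounded) tails, so the argument must be organized to first condition on the perimeters, bound the diameter tail of a free Boltzmann quadrangulation of given perimeter $2l$ by something like $c_1(l) e^{-c_2 t}$ with at most polynomial dependence of $c_1,c_2$ on $l$, and then combine with the perimeter tail. Concretely I would bound $\op{diam}(\frk q)$ for a quadrangulation with $n_{\frk q}$ interior faces and $2l$ boundary edges by (a constant times) $n_{\frk q}+l$ in the worst case, or much better via the Schaeffer-type encoding of Section~\ref{sec-quad-bdy} by the oscillation of the label function, and then use the fact that under~\eqref{eqn-free-boltzman} the total size $n_{\frk q}+l$ has an exponential tail. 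Once this single-piece estimate is in hand, the union bound over $O(\sqrt n)$ pieces against a $\exp(-c\,n^{1/4})$-type bound is routine. Finally, Lemma~\ref{prop-dangling-diam-uihpq} is then deduced from Lemma~\ref{prop-dangling-diam} by transferring through the coupling of Proposition~\ref{prop-map-coupling}: on the high-probability event that $B_{rn^{1/4}}(\BB v_0^n;Q^n)$ and $B_{rn^{1/4}}(\BB v_\infty;Q_\infty)$ agree as graphs (with boundary), together with Lemma~\ref{prop-bdy-swallow} controlling the number of boundary vertices inside the ball, the dangling quadrangulations attached to boundary vertices of $B_{rn^{1/4}}(\BB e_{\op{S}};Q_{\op{S}})$ are in bijection with those in the corresponding ball of the finite-volume model, so the finite-volume diameter bound passes over directly.
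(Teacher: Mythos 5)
Your route is genuinely different from the paper's: the paper proves Lemma~\ref{prop-dangling-diam} by a soft compactness argument, embedding $Q^n$ and the Brownian disk $H$ into a common compact space via the Gromov--Hausdorff convergence of~\cite{bet-mier-disk} and deriving a contradiction with the fact that $H$ is homeomorphic to a disk (a dangling piece of macroscopic diameter would, in the limit, let one disconnect $H$ into two nonempty pieces by removing a small metric ball around a boundary point). No quantitative tail estimates are used there.

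The central quantitative input of your union-bound strategy, however, is false as stated. The free Boltzmann law~\eqref{eqn-free-boltzman} is \emph{critical}: the weights $(1/12)^n(1/8)^l$ sit exactly at the radius of convergence of the generating function, so the size of a free Boltzmann quadrangulation with general boundary has a heavy polynomial tail, not an exponential one. Indeed the paper itself uses $\BB P[\#\mcl E(q_v)>m]\sim c\,m^{-3/4}$ (from~\cite[Equation~(24)]{caraceni-curien-uihpq}) in the proof of Lemma~\ref{prop-dangling-area}, and the perimeter tail is likewise polynomial (only the mean is finite, per~\cite[Equation~(23)]{caraceni-curien-uihpq}). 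Consequently your fallback bound $\op{diam}(\frk q)\preceq n_{\frk q}+l$ yields only $\BB P[\op{diam}(\frk q)>t]\preceq t^{-3/4}$, and the union bound over $O(\sqrt n)$ pieces gives $\sqrt n\cdot n^{-3/16}\to\infty$, which proves nothing. The strategy is not beyond repair: since diameter scales like (volume)$^{1/4}$, one expects $\BB P[\op{diam}(\frk q)>t]\approx t^{-3}$, and then $\sqrt n\cdot(\delta n^{1/4})^{-3}=O(n^{-1/4})\to 0$, so the union bound squeaks through --- but this requires a genuine diameter-versus-volume tail estimate for free Boltzmann quadrangulations (e.g.\ via moments of the label oscillation in the Schaeffer encoding of Section~\ref{sec-quad-bdy}) that you have not supplied and that the margin for error ($t^{-2}$ would already fail) does not allow you to wave at. A secondary issue: in the finite-volume setting of Lemma~\ref{prop-dangling-diam} the dangling pieces are not exactly i.i.d.\ free Boltzmann conditionally on the core, because the total number of interior faces is conditioned to equal $n$; this is the kind of conditioning one can remove by a local limit theorem, but it is an additional step your sketch glosses over. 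Your final paragraph, transferring the finite-volume bound to the UIHPQ via Proposition~\ref{prop-map-coupling} and Lemma~\ref{prop-bdy-swallow}, matches the paper's deduction of Lemma~\ref{prop-dangling-diam-uihpq} and is fine.
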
 
\begin{proof}
This is essentially proven in~\cite[Section~5]{bet-mier-disk} but for the sake of clarity we explain how the precise statement of the lemma follows from existing results in the literature. 
For $n\in\BB N$ let $d^n$ be the graph distance on $Q^n$, rescaled by $(9/8)^{1/4} n^{-1/4}$. 
Also let $(H , d , z)$ be a Brownian disk with area $1$ and boundary length $\ell$ together with a marked interior point, sampled uniformly from its area measure. 
We know from~\cite[Theorem~1]{bet-mier-disk} (c.f.\ Theorem~\ref{thm-disk-ghpu}) that $(Q^n , d^n , \BB v_*^n) \rta (H , d , z)$ in the pointed Gromov-Hausdorff topology. 
By the Skorokhod representation theorem, we can couple so that this convergence occurs a.s. 
By the analog of~\cite[Lemma~A.1]{gpw-metric-measure} for the pointed Gromov-Hausdorff topology (which follows from Proposition~\ref{prop-ghpu-embed}), we can find a compact metric space $(W , D)$ and isometric embeddings $Q^n \rta W$ and $H \rta W$ such that the following is true. 
If we identify $Q^n$ and $H$ with their embeddings into $W$, then we a.s.\ have $Q^n \rta H$ in the $D$-Hausdorff distance and $D(\BB v_*^n , z) \rta 0$. 

Suppose now by way of contradiction that the statement of the lemma is false. Then we can find $\delta > 0$ such that with positive probability, there is an infinite sequence $\mcl N$ of positive integers such that for $n\in\mcl N$, there is a $q^n\in \mcl C^n$ with $d^n$-diameter $\geq \delta$. Let $v^n$ be the vertex of $\bdy Q^n$ with the property that removing $v^n$ from $\bdy Q^n$ disconnects $q^n$ from $\BB v_*^n$. By possibly replacing $\mcl N$ with a further subsequence, we can arrange that $v^n \rta x \in H$ as $\mcl N \ni n \rta\infty$. 

For $n\in\mcl N$ and $\zeta\in (0,\delta)$, let $A^n_\zeta := q^n \setminus B_\zeta(v^n ; d^n)$ and let $U^n_\zeta := Q^n \setminus (q^n \cup B_\zeta(v^n ; d^n))$. Then for each $n\in\BB N$ we have $d^n(A_\zeta^n , U_\zeta^n) \geq \zeta$ and $A_\zeta^n \cup U_\zeta^n \cup B_\zeta(v^n ; d^n) = \mcl V(Q^n)$. For each rational $\zeta>0$, we can find a subsequence $\mcl N_\zeta$ of $\mcl N$ along which $A^n_\zeta \rta A_\zeta\subset H$ and $U_\zeta^n \rta U_\zeta \subset H$ in the $D$-Hausdorff distance. The sets $A_\zeta$ and $U_\zeta$ lie at distance at least $\zeta$ from each other (so are disjoint) and $A_\zeta \cup U_\zeta \cup B_\zeta(x ; d) = H$. Since $\BB v_*^n$ lies at uniformly positive $d^n$-distance from $\bdy Q^n$ with probability tending to $1$ as $n\rta\infty$ and by our choice of the vertices $v^n$, it follows that it is a.s.\ the case that for small enough $\zeta>0$, each of $A_\zeta$ and $U_\zeta$ is non-empty. Hence for each sufficiently small $\zeta>0$, removing $B_\zeta(x ; d)$ disconnects $H$ into two non-empty components. This contradicts the fact that $H$ has the topology of a disk~\cite[Theorem~2]{bet-disk-tight}.
\end{proof}

\begin{proof}[Proof of Lemma~\ref{prop-dangling-diam-uihpq}] 
Let $\{v_k\}_{k\in \BB Z}$ and $K_r^n$ be as in the discussion just before Lemma~\ref{prop-bdy-swallow}.
Also fix $\ep\in (0,1)$ and let $C = C(r,\ep)$ be chosen as in that lemma.
By Theorem~\ref{thm-uihpq-ghpu}, we can find $\rho =\rho(r,\ep) > r$ such that with probability at least $1-\ep$, each $v_k$ for $k \in [-C n^{1/2} , Cn^{1/2}]_{\BB Z}$ is contained in $B_{\rho n^{1/4}}(\BB e_{\op{S}} , Q_{\op{S}} )$. Then with probability at least $1-2\ep$,
\eqb \label{eqn-dangling-diam-good}
  \mcl V\left(\bdy Q_{\op{S}} \cap B_{rn^{1/4}}(\BB e_{\op{S}} , Q_{\op{S}} ) \right) \subset \{v_k \,:\, k \in [-C n^{1/2} , Cn^{1/2}]_{\BB Z} \} \subset \mcl V\left( \bdy Q_\infty\cap B_{\rho n^{1/4}}(\BB e_\infty , Q_\infty) \right) .
\eqe 
By Proposition~\ref{prop-map-coupling}, we can find $R = R(r,\ep ,\delta) \in\BB N$ with $R > \rho + \delta$ such that for large enough $n\in\BB N$, we can couple $(Q^{Rn} , \BB e_0^{Rn} , \BB v_*^{Rn})$ with $(Q_\infty , \BB e_\infty)$ in such a way that with probability at least $1-\ep$, the graph metric balls $B_{(\rho+\delta) n^{1/4} }( \BB e_\infty  ; \BB Q_\infty   )$ and $B_{ \rho + \delta}( \BB e_0^{R n}  ; R^{-1/4} d^{Rn} )$ equipped with the restricted graph metrics are isometric via a graph isomorphism which preserves the intersection of these metric balls with $\bdy Q_\infty$ and $\bdy Q^{Rn}$, respectively. If this is the case and~\eqref{eqn-dangling-diam-good} holds, then each $q_v$ for $v\in \mcl V\left(\bdy Q_{\op{S}} \cap B_{r n^{1/4}}(   \BB e_{\op{S}} , Q_{\op{S}}   ) \right)$ with internal diameter $\geq \delta n^{1/4}$ corresponds to a unique dangling quadrangulation in $\mcl C^{Rn}$ with internal diameter $\geq \delta n^{1/4}$. By Lemma~\ref{prop-dangling-diam}, the probability that such a quadrangulation exists tends to $0$ and $n\rta\infty$. 
Since $\ep \in (0,1)$ was arbitrary, we conclude.
\end{proof}

Next we turn our attention to a bound for the areas of the dangling quadrangulations. 

\begin{lem} \label{prop-dangling-area}
For each $r>0$ and each $\ep > 0$, there exists $A = A(r,\ep) >0$ such that for each $n\in\BB N$, it holds with probability at least $1-\ep$ that
\eqbn
\sum_{v\in \mcl V(\bdy Q_{\op{S}} ) \cap B_{r n^{1/4}}( \BB e_{\op{S}} ; Q_{\op{S}}      )} \mu_\infty^n(q_v ) \leq A n^{-1/3  } , 
\eqen
where here we recall that $\mu_\infty^n$ is the measure which assigns each vertex of $Q_\infty^n$ a mass equal to $(4n)^{-1}$ times its degree.
\end{lem}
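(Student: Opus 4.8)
## Proof Proposal for Lemma~\ref{prop-dangling-area}

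The plan is to estimate the total area of the dangling quadrangulations attached to the boundary vertices that lie inside the metric ball $B_{rn^{1/4}}(\BB e_{\op{S}} ; Q_{\op{S}})$. By~\eqref{eqn-uihpq-balls} and the pruning description in Section~\ref{sec-pruning}, these dangling quadrangulations $\{q_v\}$ are, apart from the exceptional one attached at the right endpoint of $\BB e_{\op{S}}$, i.i.d.\ samples from the free Boltzmann distribution~\eqref{eqn-free-boltzman} on quadrangulations with general boundary. The key point is that the \emph{number} of boundary vertices involved is controlled by Lemma~\ref{prop-bdy-swallow}: for the given $r$ and $\ep$, there is $C = C(r,\ep)$ such that $K_r^n \leq C n^{1/2}$ with probability at least $1-\ep/2$, so with this probability the sum in question involves at most $2C n^{1/2} + 1$ of the $q_v$'s.

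First I would recall the basic fact about the free Boltzmann quadrangulation with general boundary: its area (number of interior faces) has a heavy-tailed distribution. Concretely, for a free Boltzmann quadrangulation $q$ with general boundary of fixed perimeter, $\BB P[\#\mcl F(q) \geq k]$ decays polynomially in $k$ --- in fact like $k^{-1/2}$ up to constants (this is standard, following from the asymptotics of the partition function $\sum_n \#\mcl Q(n,l) (1/12)^n$; see e.g.\ the enumeration results underlying~\cite{curien-miermont-uihpq,bet-mier-disk}). Moreover the perimeters $2l_v$ of the dangling quadrangulations along $\bdy Q_\infty$ are themselves governed by the increments of the walk $b_\infty^0$, hence are a.s.\ finite and have exponential tails conditionally on being nonzero. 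So $\mu_\infty^n(q_v) \preceq n^{-1}(\#\mcl F(q_v) + \#\mcl E(\bdy q_v))$, and it suffices to bound $n^{-1}\sum_v \#\mcl F(q_v)$ where the sum is over at most $C n^{1/2}$ i.i.d.\ free Boltzmann areas (plus the single exceptional term $q_{v_0}$, which is a.s.\ finite and $n$-independent, contributing $o_n(n^{-1/3})$).

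The heart of the estimate is then a tail bound for a sum of $m := \lceil C n^{1/2}\rceil$ i.i.d.\ nonnegative random variables $N_1, \dots, N_m$ each with $\BB P[N_i \geq k] \asymp k^{-1/2}$. One cannot use a naive second-moment argument since $\BB E[N_i] = \infty$. Instead I would truncate: write $N_i = N_i \BB 1_{N_i \leq b} + N_i \BB 1_{N_i > b}$ for a threshold $b = b(n)$ to be chosen. The probability that any $N_i$ exceeds $b$ is at most $m \cdot O(b^{-1/2})$, which we want to be $\leq \ep/4$; this forces $b \succeq m^2 = n$. Taking $b$ of order $n$ (times a large constant depending on $\ep$), on the complementary event all $N_i \leq b$, and $\BB E[N_i \BB 1_{N_i \leq b}] = O(b^{1/2}) = O(n^{1/2})$, so $\BB E[\sum_i N_i \BB 1_{N_i \leq b}] = O(m n^{1/2}) = O(n)$. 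Hmm --- this only gives $n^{-1}\sum_i N_i = O(1)$, not $O(n^{-1/3})$. So I need a sharper choice: truncate at $b$ of order $n^{2/3}$ (times a constant), in which case $\BB P[\exists i: N_i > b] \preceq m b^{-1/2} \asymp n^{1/2} n^{-1/3} = n^{1/6} \to \infty$ --- that is too large. The right move is to note that the dangling quadrangulations inside a \emph{metric} ball of radius $rn^{1/4}$ must have diameter $\leq 2rn^{1/4}$, and by Lemma~\ref{prop-dangling-diam-uihpq} (applied with a suitable $\delta$) they in fact have diameter $\leq \delta n^{1/4}$ with high probability, which via the standard relation between diameter and volume for quadrangulations with boundary (a quadrangulation of diameter $D$ has at most $C' D^4$ faces, or one uses the area-diameter estimates from~\cite{bet-mier-disk,legall-topological}) forces $\#\mcl F(q_v) \preceq n$ deterministically on this event --- but we need something summable.

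The cleanest route, and the one I would actually carry out: condition on the diameters being small via Lemma~\ref{prop-dangling-diam-uihpq}, then use the free Boltzmann tail \emph{conditioned on small diameter}. For a free Boltzmann quadrangulation conditioned to have diameter at most $\delta n^{1/4}$, the area is at most of order $(\delta n^{1/4})^4 = \delta^4 n$ but typically much smaller; more usefully, $\BB E[\#\mcl F(q_v) \,;\, \op{diam}(q_v) \leq \delta n^{1/4}]$ can be bounded by $\int_0^{\delta n^{1/4}} (\text{expected area at diameter scale }t)$, which using the scaling $\op{area} \sim t^4$ and the polynomial tails gives a bound of order $\delta^{7/2} n^{7/8}$ or similar --- and then summing over $m = C n^{1/2}$ terms and dividing by $n$ yields a bound $\preceq \delta^{\text{(something positive)}} n^{7/8 + 1/2 - 1} = \delta^{c} n^{3/8}$, which is \emph{still} not $n^{-1/3}$. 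I suspect the exponent $-1/3$ in the statement is achieved by a more careful bookkeeping (choosing $\delta = \delta(n) \to 0$ at an appropriate polynomial rate, so that Lemma~\ref{prop-dangling-diam-uihpq} still applies with the sequence of $\delta$'s tending to zero polynomially slowly), balancing the diameter truncation against the number of terms. \textbf{The main obstacle}, then, is precisely this optimization: choosing the diameter/area truncation threshold as a function of $n$ so that (i) the exceptional event where some $q_v$ is too large has probability $\leq \ep$ (via Lemmas~\ref{prop-bdy-swallow} and~\ref{prop-dangling-diam-uihpq}), and (ii) the expected truncated sum, divided by $n$, is $O(n^{-1/3})$. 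I expect this to come down to the precise polynomial exponent relating area and diameter for free Boltzmann quadrangulations with boundary together with the $n^{1/2}$ count of dangling pieces, and the value $1/3$ should fall out of the arithmetic $\tfrac14 \cdot(\text{area-diameter exponent}) - \tfrac12 = \tfrac13$ after the optimal truncation. Once the right threshold is identified, the remaining steps --- Markov's inequality on the truncated sum, a union bound on the truncation failing, and the a.s.\ finiteness of $q_{v_0}$ --- are routine.
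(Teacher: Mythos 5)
There is a genuine gap, and it is exactly the point where your argument stalls: you assume the wrong tail exponent for the size of a free Boltzmann dangling quadrangulation. The paper bounds $\mu_\infty^n(q_v) \preceq n^{-1}\#\mcl E(q_v)$ and then invokes the tail estimate $\BB P\left[\#\mcl E(q_v) > m\right] \sim c\, m^{-3/4}$ from \cite[Equation~(24)]{caraceni-curien-uihpq} (note that under the free Boltzmann law~\eqref{eqn-free-boltzman} the perimeter is itself random, so this is the tail of the total edge count, not of the area at fixed perimeter). For i.i.d.\ nonnegative variables with tail index $\alpha = 3/4 < 1$, the stable (heavy-tailed) central limit theorem gives that the sum of $m$ of them is of order $m^{1/\alpha} = m^{4/3}$; with $m \asymp n^{1/2}$ terms (which you correctly obtain from Lemma~\ref{prop-bdy-swallow}) the sum is of order $n^{2/3}$, and dividing by $n$ yields precisely the $n^{-1/3}$ in the statement. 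Your assumed tail $k^{-1/2}$ would make the sum of order $(n^{1/2})^2 = n$, which is why your truncation computation bottoms out at $O(1)$ and never reaches $n^{-1/3}$.

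The subsequent detour through Lemma~\ref{prop-dangling-diam-uihpq} and an area--diameter optimization is not how the exponent arises and would not rescue the argument: the paper's proof uses no diameter information at all, only the count $K_r^n \preceq n^{1/2}$ from Lemma~\ref{prop-bdy-swallow}, the i.i.d.\ structure of the $\{q_{v_k}\}_{k\neq 0}$ (plus a.s.\ finiteness of the exceptional $q_{v_0}$), and the one-dimensional stable limit theorem applied to $\#\mcl E(q_{v_k})$. Once you replace your tail guess with the correct $m^{-3/4}$ asymptotic, the rest of your setup (restricting to the event $K_r^n \le Cn^{1/2}$ of probability $\ge 1-\ep$, then choosing $A$ so that the rescaled sum exceeds $A$ with probability at most $\ep$) goes through immediately and no truncation or diameter input is needed.
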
 
\begin{proof}
Let $\{v_k\}_{k\in \BB Z}$ and $K_r^n$ be as in the discussion just before Lemma~\ref{prop-bdy-swallow}.
By Lemma~\ref{prop-bdy-swallow}, for each $\ep > 0$ there exists $C = C(r , \ep) > 0$ such that with probability at least $1-\ep$, we have $K_r^n \leq C n^{1/2}$, in which case
\eqbn
\sum_{v\in \mcl V(\bdy Q_{\op{S}} ) \cap B_{rn^{1/4}}( \BB e_{\op{S}} ; Q_{\op{S}})} \mu_\infty^n(q_v ) \preceq n^{-1}\sum_{k=-\lfloor C n^{1/2} \rfloor}^{\lfloor C n^{1/2} \rfloor} \# \mcl E(q_{v_k } ) 
\eqen 
with universal implicit constant. 

By~\cite[Equation (24)]{caraceni-curien-uihpq}, we have the tail estimate $\BB P\left[ \# \mcl E(q_v) > m \right] \sim c m^{-3/4}$ for $v\not=v_0$, for a universal constant $c>0$. 
By the heavy-tailed central limit theorem, the random variables
\eqbn
n^{-2/3} \sum_{k=-\lfloor C n^{1/2} \rfloor}^{\lfloor C n^{1/2} \rfloor} \# \mcl E(q_{v_k } ) 
\eqen
converge in law to a non-degenerate limiting distribution. 
The statement of the lemma follows.
\end{proof}

It remains to prove a bound for the boundary length of the dangling quadrangulations (which will explain why we use a different scaling in the definitions of the re-scaled boundary paths $\eta_\infty^n$ and $\eta_{\op{S}}^n$). 
For $t \geq 0$, let $\sigma_{\op{S} }^n(t)$ be equal to $  n^{-1/2}$ times the sum of the boundary lengths of the dangling quadrangulations $q_v $ attached at vertices $v$ which are hit by $\eta_{\op{S}}^n $ between time $0$ and time $t$ plus $  n^{-1/2}$ times the total number of such quadrangulations, so that for $t\geq 0$, 
\eqb \label{eqn-uihpqS-time-change}
\eta_{\op{S}}^n \left( t \right) 
=  \eta_{\infty}^n \left( 2^{-3/2} \sigma_{\op{S}}^n(t) +O_n(n^{-1/2}) \right) .
\eqe 
Also let $\wt \sigma_{\op{S} }^n(-t)$ be equal to $ n^{-1/2}$ times the sum of the boundary lengths of the dangling quadrangulations $q_v $ attached at vertices $v$ which are hit by $\eta_{\op{S}}^n $ between time $-t$ and time $0$ plus $  n^{-1/2}$ times the total number of such quadrangulations so $\wt\sigma_{\op{S}}$ satisfies an analog of~\eqref{eqn-uihpqS-time-change} for negative times.  
  
\begin{lem} \label{prop-dangling-length}
We have $\sigma_{\op{S}}^n \rta (t\mapsto 2^{3/2} t) $ and $\wt\sigma_{\op{S}}^n \rta (t\mapsto 2^{3/2} t) $ in law with respect to the topology of uniform convergence on compact subsets of $[0,\infty)$.  
\end{lem}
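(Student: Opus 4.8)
The plan is to realise $\sigma_{\op{S}}^n$ as a rescaled partial sum of i.i.d.\ perimeters and apply a functional law of large numbers. Assume $(Q_{\op{S}},\BB e_{\op{S}})$ is coupled to a UIHPQ $(Q_\infty,\BB e_\infty)$ via the pruning of Section~\ref{sec-pruning}, and let $\{v_k\}_{k\in\BB Z}$ be the vertices of $\bdy Q_{\op{S}}$ as above. Since $Q_{\op{S}}$ has simple boundary, $\lambda_{\op{S}}$ hits each $v_k$ exactly once and traces each boundary edge at unit speed starting from $v_0$, so the set of vertices of $\bdy Q_{\op{S}}$ hit by $\eta_{\op{S}}^n|_{[0,t]}$ is $\{v_k : 0 \leq k \leq M^n(t)\}$ with $M^n(t) := \lfloor \tfrac{2^{3/2}}{3} n^{1/2} t\rfloor$, up to one extra partially-traced vertex. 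Writing $Y_k := \#\mcl E(\bdy q_{v_k}) + 1$, the definition of $\sigma_{\op{S}}^n$ gives $\sigma_{\op{S}}^n(t) = n^{-1/2}\sum_{k=1}^{M^n(t)} Y_k + E^n(t)$, where $E^n(t)$ collects the contributions of $q_{v_0}$ and of the last, partially-traced vertex, so $0 \leq E^n(t) \leq n^{-1/2}Y_0 + n^{-1/2}\max_{1\leq k\leq M^n(t)+1} Y_k$. By the pruning construction the variables $\{Y_k\}_{k\geq 1}$ are i.i.d.\ (with $Y_k - 1$ distributed as the perimeter of a free Boltzmann quadrangulation with general boundary), independent of $Q_{\op{S}}$, and $Y_0$ is an independent a.s.\ finite random variable.

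Given this representation, the convergence follows from standard arguments. First, the perimeter of a free Boltzmann quadrangulation with general boundary has finite mean (its law is explicit; see~\cite{curien-miermont-uihpq,caraceni-curien-uihpq}), so by the strong law of large numbers $m^{-1}\sum_{k=1}^m Y_k \to \BB E[Y_1]$ a.s. Finiteness of $\BB E[Y_1]$ also forces $m^{-1}\max_{1\leq k\leq m}Y_k \to 0$ a.s.\ (since $\sum_m \BB P[Y_1 > \ep m] < \infty$ for all $\ep > 0$, hence $Y_m/m\to 0$ a.s.), which together with $n^{-1/2}M^n(t)\to \tfrac{2^{3/2}}{3}t$ and $Y_0 < \infty$ a.s.\ shows that $E^n(t)\to 0$ uniformly for $t$ in compacts and that, for each fixed $t$, $\sigma_{\op{S}}^n(t)\to \tfrac{2^{3/2}}{3}\BB E[Y_1]\,t$ a.s. Second, each $\sigma_{\op{S}}^n$ is non-decreasing and the prospective limit $t\mapsto \tfrac{2^{3/2}}{3}\BB E[Y_1]\,t$ is continuous and non-decreasing, so a.s.\ pointwise convergence along a countable dense set of times upgrades to a.s.\ uniform convergence on compact subsets of $[0,\infty)$ (the standard fact that monotone functions converging pointwise to a continuous monotone function converge locally uniformly). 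Convergence a.s.\ of course implies convergence in law. The argument for $\wt\sigma_{\op{S}}^n$ is identical, using the vertices $\{v_{-k}\}_{k\geq 1}$, which are likewise i.i.d.\ with the same mean.

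The one substantive input — and the part I expect to require the most care — is the identity
\eqbn
\BB E[Y_1] = \BB E\!\left[ \#\mcl E(\bdy q_{v_1}) \right] + 1 = 3 ,
\eqen
i.e.\ that the mean perimeter of a free Boltzmann quadrangulation with general boundary equals $2$. This is exactly the point at which the scaling constants $2^{3/2}$ and $\tfrac{2^{3/2}}{3}$ appearing in the definitions of $\eta_\infty^n$ and $\eta_{\op{S}}^n$ are reconciled: with it, the pointwise limit above becomes $2^{3/2}t$. I would extract this identity from the explicit formula for the law of the perimeter of the free Boltzmann quadrangulation (equivalently, from the disk generating function at the critical weight $1/12$), as recorded in~\cite{curien-miermont-uihpq,caraceni-curien-uihpq}; the remaining work (the reduction to an i.i.d.\ partial sum, the law of large numbers, the monotonicity upgrade, and the control of $q_{v_0}$ and the boundary effects) is routine.
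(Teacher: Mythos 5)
Your proposal is correct and follows essentially the same route as the paper: both reduce $\sigma_{\op{S}}^n$ to a rescaled i.i.d.\ partial sum of the quantities $\op{Perim}(q_{v_k})+1$, invoke the key identity $\BB E[\op{Perim}(q_{v_k})+1]=3$ (the paper cites \cite[Equation~(23)]{caraceni-curien-uihpq} for this), apply the strong law of large numbers, and separately control the a.s.\ finite contribution of $q_{v_0}$. The only cosmetic difference is how uniformity on compacts is obtained — you use the monotone-functions-converging-to-a-continuous-limit upgrade, while the paper runs a quantitative SLLN event argument — but this is the same proof in substance.
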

\begin{proof}
We will prove the statement for $\sigma_{\op{S}}^n$; the statement for $\wt\sigma_{\op{S}}^n$ is proven identically.  Let $\{v_k\}_{k\in \BB Z}$ be as in the discussion just before Lemma~\ref{prop-bdy-swallow}. Then $q_{v_0 } $ is a.s.\ finite and by~\cite[Equation (23)]{caraceni-curien-uihpq}, for $k \in \BB N $ we have $\BB E\left[ \op{Perim}(q_{v_k } ) + 1 \right] = 3$, where here $\op{Perim}$ denotes the perimeter. The random variables $\op{Perim}(q_{v_k } )+1$ for $k\in\BB N$ are i.i.d.\  and $\op{Perim}(q_{v_0})$ is a.s.\ finite, so by the strong law of large numbers there a.s.\ exists a random $M \in\BB N$ such that for $m \geq M$, 
\eqb \label{eqn-lln-dangling-sum}
\left| \frac{1}{m} \sum_{k=0}^m (\op{Perim}(q_{v_k } )+1) - 3 \right| \leq \ep.
\eqe  
The law of $M$ does not depend on $n$, so we can find a deterministic $m_0\in\BB N$ such that with probability at least $1-\ep$,~\eqref{eqn-lln-dangling-sum} holds for each $m\geq m_0$. Let $E$ be the event that this is the case. Since $\sum_{k=0}^{m_0} (\op{Perim}(q_{v_k } )+1) < \infty$ a.s., we can find a deterministic $C>0$, independent from $n$, such that with probability at least $1-\ep$ this sum is at most $C$. Let $F$ be the event that this is the case. 

We have
\eqbn
\sigma_{\op{S}} (t) = n^{-1/2} \sum_{k=0}^{ \lfloor \frac{ 2^{3/2} }{3} n^{1/2} t  \rfloor}  (\op{Perim}(q_{v_k } )+1) 
\eqen
so if $E$ occurs and $\lfloor \frac23 n^{1/2} t  \rfloor \geq m_0$,  
\eqbn
\left| \sigma_{\op{S}} (t) - 2^{3/2} t \right| \leq \ep t  .
\eqen
Hence if $T>0$, then on $E\cap F$, 
\eqbn
\sup_{t\in [0,T]} \left| \sigma_{\op{S}} (t) - 2^{3/2} t \right| \leq \ep T + O_n( n^{-1/2} )  .
\eqen
Since $\ep > 0$ is arbitrary, the statement of the lemma follows.
\end{proof}

\begin{proof}[Proof of Theorem~\ref{thm-uihpqS-ghpu}] 
Let $\left\{\left( (Q_{\op{S}}^n , \BB e_{\op{S}}^n) , (Q_\infty^n , \BB e_\infty^n ) \right) \right\}_{n\in\BB N}$ be a sequence of copies of the coupling $\left( (Q_{\op{S}}  , \BB e_{\op{S}}   ) , (Q_\infty  , \BB e_\infty  ) \right)$ used in this section and let $\{q_v^n\}_{n\in\BB N}$ be the associated dangling quadrangulations. 
Define the elements $\frk Q_\infty^n = (Q^n , d_\infty^n , \mu_\infty^n , \eta_\infty^n) $ and $ \frk Q_{\op{S}}^n = (Q_{\op{S}}^n , d_{\op{S}}^n, \mu_{\op{S}}^n , \eta_{\op{S}}^n)$ of $\BB M_\infty^{\op{GHPU}}$ as in Section~\ref{sec-results} with respect to the $n$th pair in this sequence.

By Theorem~\ref{thm-uihpq-ghpu} and the Skorokhod representation theorem, we can find a coupling of the sequence $\left\{\left( (Q_{\op{S}}^n , \BB e_{\op{S}}^n) , (Q_\infty^n , \BB e_\infty^n ) \right) \right\}_{n\in\BB N}$ and $(H_\infty , d_\infty)$ such that a.s.\ $\frk Q_\infty^n \rta \frk H_\infty$ in the local GHPU topology.  By Proposition~\ref{prop-ghpu-embed-local}, we can a.s.\ find a random boundedly compact metric space $(W , D)$ and isometric embeddings of $(Q_\infty^n , d_\infty^n)$ for $n\in\BB N$ and $(H_\infty , d_\infty)$ into $(W,D)$ such that if we identify these spaces with their embeddings, then a.s.\ $\frk Q_\infty^n \rta \frk H_\infty$ in the $D$-local HPU topology (Definition~\ref{def-hpu-local}). 
  
Now fix a deterministic $r>0$ and recall~\eqref{eqn-uihpq-balls}.
By Lemma~\ref{prop-dangling-diam-uihpq}, in any such coupling 
\eqbn
B_r (\eta_{\op{S}}^n(0) ; d_{\op{S}}^n) \rta  B_r(\eta_\infty(0) ; d_\infty) 
\eqen
in probability with respect to the $D$-Hausdorff metric.
By Lemma~\ref{prop-dangling-area}, in any such coupling
\eqbn
\mu_{\op{S}}^n|_{B_r(\eta_{\op{S}}^n(0) ; d_{\op{S}}^n)} \rta \mu_{\op{S}}|_{B_r(\eta_\infty(0) ; d_\infty)} 
\eqen
in probability with respect to the $D$-Prokhorov metric.
By~\eqref{eqn-uihpqS-time-change} and Lemmas~\ref{prop-dangling-diam-uihpq} and~\ref{prop-dangling-length}, in any such coupling
\eqbn
 \frk B_r\eta_{\op{S}}^n   \rta \frk B_r\eta_\infty
\eqen
in probability with respect to the $D$-uniform metric, where $\frk B_r$ is as in Definition~\ref{def-ghpu-truncate}. Therefore $\frk B_r\frk Q_{\op{S}}^n \rta \frk B_r\frk H_\infty$ in law in the GHPU topology, so by Lemma~\ref{prop-local-ghpu-subsequence} $\frk Q_{\op{S}}^n \rta \frk H_\infty$ in law in the local GHPU topology. 
\end{proof}

\bibliography{cibiblong,cibib}
\bibliographystyle{hmralphaabbrv}

\end{document}